\newif\ifarxiv
\titleformat*{\section}{\Large\bfseries}
\titleformat*{\subsection}{\large\bfseries}
\newcommand{\suppref}[1]{%
    \ifarxiv
        \cref{#1}%
    \else
        the Supplement%
    \fi
}
\newcommand{\supprefcap}[1]{%
    \ifarxiv
        \cref{#1}%
    \else
        The Supplement%
    \fi
}
\newcommand{\mainlabel}[1]{%
    \ifarxiv
        #1%
    \else
        main-#1%
    \fi
}
\newcommand{\op}[1]{{\operatorname{#1}}}
\newcommand{\m}[1]{\mathbf{#1}}
\newcommand{\mh}[1]{\widehat{\mathbf{#1}}}
\newcommand{\mc}[1]{\mathcal{#1}}
\newcommand{\hc}[1]{\widehat{\mathcal{#1}}}
\newcommand{\Rs}[2]{\m{R}(#1, #2)}
\newcommand{\RU}{\m{R}_{\m{U}}}
\newcommand{\dd}{\mathrm{d}}
\newcommand{\T}{^\mathsf{T}}
\DeclareMathOperator*{\argmin}{arg\,min}
\newcommand{\R}{\mathbb{R}}
\newcommand{\C}{\mathbb{C}}
\newcommand{\all}{\boldsymbol{\cdot}}
\newcommand{\rank}{\operatorname{rank}}
\newcommand{\plr}[1]{\left(#1\right)}
\newcommand{\blr}[1]{\left[#1\right]}
\newcommand{\clr}[1]{\left\{#1\right\}}
\newcommand{\alr}[1]{\left|#1\right|}
\newcommand{\dpar}[1]{[\![ #1 ]\!]}
\newcommand{\Norm}[2]{\left\| #1 \right\|_{#2}}
\newcommand{\norm}[2]{\| #1 \|_{#2}}
\newcommand{\tnorm}[1]{\norm{#1}{2}}
\newcommand{\fnorm}[1]{\norm{#1}{\op{F}}}
\newcommand{\tinorm}[1]{\norm{#1}{2,\infty}}
\newcommand{\bbmat}{\begin{bmatrix}}
\newcommand{\ebmat}{\end{bmatrix}}
\newcommand{\inv}{^{-1}}
\newcommand{\I}[1]{\mathds{1}{\left\{#1\right\}}}
\newcommand{\Is}[1]{\mathds{1}{\{#1\}}}
\newcommand{\1}{\mathbf{1}}
\newcommand{\ir}{\mathrm{i}} % imaginary number
\newcommand{\spt}{\widehat{t}} % notation for saddle point
\newcommand{\svec}{{\m{s}}}
\newcommand{\svectilde}{\tilde{\m{s}}}
\newcommand{\tstat}{{T_{\svec,n}}}
\newcommand{\tstata}{{T_{\svec,n,0}}}
\newcommand{\tplug}{{\widetilde{T}_{\svectilde,n}}}
\newcommand{\tfrob}{{T_{\operatorname{Frob}}}}
\newcommand{\tfplug}{{\widetilde{T}_{\operatorname{Frob}}}}
\newcommand{\tpluguc}{{\widetilde{T}_{\hat{\m{s}},n}^{\operatorname{uc}}}}
\newcommand{\abias}{A_{\svec,n}}
\newcommand{\abplug}{\widetilde{A}_{\svectilde,n}}
\newcommand{\gumbelrv}{{G}}
\newcommand{\sgn}{\operatorname{{sgn}}}
\newcommand{\loc}[1]{\operatorname{RD}_{n}(#1)}
\newcommand{\mul}{\ell}
\newcommand{\rlog}{{\zeta_{r,n}}}
\newcommand{\acdf}{{F_{n,0}}}
\newcommand{\xp}{{X^{\prime}}}
 \newcommand{\nmc}{N_{\operatorname{MC}}}
\newcommand{\NN}{{\mathcal{N}}}
\newcommand{\ber}[1]{\operatorname{Bernoulli}\!\left(#1\right)}
\newcommand{\pois}[1]{\operatorname{Poisson}\!\left(#1\right)}
\newcommand{\eqd}{\overset{\operatorname{d}}{=}}
\renewcommand{\P}{{\mathbb{P}}}
 \DeclareMathOperator{\E}{\mathbb{E}}      %expectation
\newcommand{\hypn}{\operatorname{H}_{0}}
\newcommand{\hypa}{\operatorname{H}_{\operatorname{A}}}
\newcommand{\vs}{\text{versus}}
 \newcommand{\diag}{\operatorname{diag}}
\newtheorem{theorem}{Theorem}
\newtheorem{lemma}[theorem]{Lemma}
\newtheorem{corollary}[theorem]{Corollary}
\newtheorem{proposition}[theorem]{Proposition}
\theoremstyle{definition}
\newtheorem{definition}{Definition}
\newtheorem{assumption}{Assumption}
\newtheorem{remark}{Remark}
\crefname{theorem}{Theorem}{Theorems}
\crefname{lemma}{Lemma}{Lemmas}
\crefname{corollary}{Corollary}{Corollaries}
\crefname{proposition}{Proposition}{Propositions}
\crefname{definition}{Definition}{Definitions}
\crefname{assumption}{Assumption}{Assumptions}
\crefname{remark}{Remark}{Remarks}
\crefname{comment}{Comment}{Comments}
\setlist{itemsep=0.5mm, leftmargin=5mm}
\title{Extreme value theory for singular subspace estimation in the matrix denoising model}
\author{Junhyung Chang\footnote{Department of Statistics, University of Wisconsin--Madison. Email: \textsf{jchang243@wisc.edu}} \and Joshua Cape\footnote{Department of Statistics, University of Wisconsin--Madison. Email: \textsf{jrcape@wisc.edu}}}
\date{\today}
\begin{document}

\maketitle

\begin{abstract}
    This paper studies fine-grained singular subspace estimation in the matrix denoising model where a deterministic low-rank signal matrix is additively perturbed by a stochastic matrix of Gaussian noise.
We establish that the maximum Euclidean row norm (i.e., the two-to-infinity norm) of the aligned difference between the leading sample and population singular vectors approaches the Gumbel distribution in the large-matrix limit, under suitable signal-to-noise conditions and after appropriate centering and scaling.
We apply our novel asymptotic distributional theory to test hypotheses of low-rank signal structure encoded in the leading singular vectors and their corresponding principal subspace.
We provide de-biased estimators for the corresponding nuisance signal singular values and show that our proposed plug-in test statistic has desirable properties.
Notably, compared to using the Frobenius norm subspace distance, our test statistic based on the two-to-infinity norm empirically has higher power to detect structured alternatives that differ from the null in only a few matrix entries or rows.
Our main results are obtained by a novel synthesis of and technical analysis involving row-wise matrix perturbation analysis, extreme value theory, saddle point approximation methods, and random matrix theory.
Our contributions complement the existing literature for matrix denoising focused on minimaxity, mean squared error analysis, unitarily invariant distances between subspaces, component-wise asymptotic distributional theory, and row-wise uniform error bounds.
Numerical simulations illustrate our main results and demonstrate the robustness properties of our testing procedure to non-Gaussian noise distributions.
\end{abstract}

\section{Introduction}
\label{body-section:introduction}

This paper considers noisy observable data matrices $\mh{M} \in \R^{n \times m}$ of the form
\begin{align}
    \label{body-eq:matrix-denoising}
    \mh{M}
    \coloneqq
    \m{M}
    +
    \m{E}
\end{align}
where $\m{M} \in \R^{n \times m}$ is an unobserved deterministic signal matrix with $r \coloneqq \rank(\m{M}) \ll \min\{n, m\}$, and where $\m{E} \in \R^{n \times m}$ is an unobserved stochastic noise matrix having mean-zero Gaussian entries.
Our goal is to infer properties of the signal matrix $\m{M}$ whose singular value decomposition (SVD) is written as $\m{USV}\T$.
Here, $\m{U}$ and $\m{V}$ denote matrices whose orthonormal columns are the leading left and right singular vectors of $\m{M}$, respectively, while the diagonal matrix $\m{S} = \diag(s_{1}, \dots, s_{r})$ contains the non-zero singular values of $\m{M}$, where $s_{1} \ge \dots \ge s_{r} > 0$.
We write the corresponding top-$r$ SVD of $\mh{M}$ as $\mh{U}\mh{S}\mh{V}\T$ provided it is well defined.

This paper focuses on $\m{U}$ and $\mh{U}$, though the results and machinery herein can be similarly adapted to $\m{V}$ and $\mh{V}$.
In particular, this paper studies fine-grained estimation of the left singular vectors, $\m{U}$, using the observable noisy counterpart $\mh{U}$.
To do so, we consider the difference $\mh{U}\RU - \m{U}$, where $\RU$ denotes an $r \times r$ orthogonal matrix that accounts for the rotational ambiguity inherent in subspace estimation.
Concretely, we take $\RU \coloneqq \sgn(\mh{U}\T \m{U})$, where the sign of a square matrix, $\sgn(\cdot)$, is defined as the orthogonal factor of its polar decomposition, which minimizes the Frobenius norm of $\mh{U}\m{O} - \m{U}$ over the set of $r \times r$ orthogonal matrices $\m{O}$ \citep{schonemann_generalized_1966}.

Instead of considering the more classical Frobenius norm or spectral norm, both of which are insufficient for fine-grained analysis, this paper undertakes a detailed and novel investigation of
\begin{equation}
    \label{body-eq:2-inf-norm-property}
    \tinorm{\mh{U} \RU - \m{U}}
\end{equation}
where $\tinorm{\cdot}$ denotes the two-to-infinity norm of a matrix \citep{cape_two--infinity_2019,chen_spectral_2021}.
In particular, the two-to-infinity norm, also known as the maximum Euclidean row norm, is given in terms of the $\ell_{2}$ and $\ell_{\infty}$ vector norms by
\begin{equation}
    \label{body-eq:tinorm-statement}
    \tinorm{\m{A}}
    \coloneqq
    \sup_{\|\m{x}\|_{2} = 1}
    \|\m{Ax}\|_{\infty}
    \equiv
    \max_{1 \le i \le n}
    \tnorm{\m{A}_{i,\all}}
\end{equation}
where $\m{A}_{i,\all}$ denotes the $i$-th row vector of the matrix $\m{A}$.
This norm has received considerable attention within statistics and data science in recent years, as detailed in \cref{body-section:related_work}.
Although numerous two-to-infinity norm bounds and row-specific estimation results have been obtained in the literature on matrix denoising and related problems, such as principal component analysis, factor analysis, matrix completion, and random graph inference, no prior works develop the distributional properties of the two-to-infinity norm or its rigorous use for statistical hypothesis testing.
Given the inherent extremal nature of the two-to-infinity norm, our analysis necessitates novel technical analysis encompassing spectral methods, function approximation, and extreme value theory.

\subsection{Contributions}

The main purpose of this paper is to establish the asymptotic distributional properties of $\tinorm{\mh{U} \RU - \m{U}}$ and to subsequently develop downstream inferential applications.
To do so, we employ a combination of row-wise eigenvector perturbation analysis, random matrix theory, saddle point density approximation methods, and extreme value theory but in a non-traditional high-dimensional setting involving a maximum of the form $\max_{1 \le i \le n} X_{i,n,m}$, where each random variable $X_{i,n,m}$ changes with $n$ and $m \equiv m(n)$ as $n, m \to \infty$.

As will be explained later, our distributional theory enables methodology for addressing hypothesis testing problems that involve signal subspace structure, loosely speaking of the form
$
\hypn:
\m{U} = \m{U}_{0}
$
versus
$
\hypa:
\m{U} = \m{U}_{1}
$.
Our maximum-type test statistic involving $\tinorm{\mh{U} \RU - \m{U}}$ is sensitive to alternatives $\m{U}_{1}$ that differ from the null $\m{U}_{0}$ in only a few entries or rows, unlike summation-type test statistics utilizing the Frobenius norm difference between orthogonal projection matrices. 

A key technical contribution of this paper is that we permit the smallest non-zero signal singular value $s_{r}$ to have general multiplicity $\mul$ where $1 \leq \mul \leq r$.
Our analysis characterizes the influence of $\mul$ on the extreme value asymptotics for subspace estimation using the two-to-infinity norm.

Our main contributions are summarized as follows.
\begin{enumerate}
    \item We establish novel extreme value distributional theory for fine-grained singular subspace estimation in the Gaussian matrix denoising model by proving that $\tinorm{\mh{U} \RU - \m{U}}$ converges in distribution to a standard Gumbel random variable in the large-matrix limit after suitable centering and scaling.
    See \cref{body-proposition:saddle-point-tail-equivalence,body-theorem:Gumbel-convergence}.
    
    \item We derive a non-asymptotic Kolmogorov--Smirnov distance error bound between the cumulative distribution function of $\tinorm{\mh{U} \RU - \m{U}}$, after suitable centering and scaling, and that of the limiting Gumbel distribution.
    See \cref{body-theorem:CDF-bounds}.
    
    \item Viewing singular values as nuisance parameters in the subspace estimation framework, we derive de-biased estimators of the signal singular values to establish the convergence in distribution of a data-driven plug-in test statistic.
    See \cref{body-proposition:debiased-singular-values,body-theorem:Gumbel-convergence-plugin}.
    
    \item Our theoretical developments enable consistent testing for singular subspace structure in the signal matrix for which we provide a power analysis under simple alternatives.
    See \cref{body-theorem:power-analysis}.
    
    \item Our extreme value-based test empirically has higher power to detect structured alternatives that differ from the null in only a few matrix entries or rows, compared to an existing asymptotically Gaussian test statistic based on the Frobenius norm and differences between orthogonal projections.
    See \cref{body-section:power-against-local-alternatives}.
    
    \item Simulation studies illustrate our main results and empirically demonstrate the robustness of our testing framework to departures from stated modeling assumptions.
    See \cref{body-section:departures-from-Gaussianity} and the Supplement.
\end{enumerate}

Although not the main focus of this paper, we also briefly discuss extending our extreme value analysis to non-Gaussian noise settings.
See \cref{body-proposition:non-gaussian-noise-rank-1}.

\subsection{Related work}
\label{body-section:related_work}

The problem of \emph{matrix denoising} commonly refers to estimating an unobserved deterministic low-rank signal matrix from its observed noisy counterpart, often modeled as in \cref{body-eq:matrix-denoising}.
Historically, interest in estimating the low-rank signal matrix has led to the development of \emph{singular value shrinkage} methods, with the goal of transforming sample singular values to correct for the inflation or bias caused by the noise matrix.
Notable examples include low-rank thresholding \citep{cai_singular_2010,gavish_optimal_2014,donoho_minimax_2014,chatterjee_matrix_2015} and non-linear shrinkage  \citep{shabalin_reconstruction_2013,nadakuditi_optshrink_2014,gavish_optimal_2017,donoho_optimal_2018,gavish_matrix_2023,su_data-driven_2025}.
More recently, considerable attention has been given to the related problem of \emph{subspace estimation} which pertains to estimating the principal singular subspaces of the signal matrix.
The study of subspace estimation originally focused on error bounds under the Frobenius or spectral norm \citep{davis_rotation_1970,wedin_perturbation_1972,cai_rate-optimal_2018,ding_high_2020,cai_subspace_2021,orourke_matrices_2023} and has advanced to a more delicate understanding of row-wise and entry-wise error analysis, via fine-grained eigenvector perturbation theory \citep{fan_ell_infty_2018,cape_signal-plus-noise_2019,cape_two--infinity_2019,abbe_entrywise_2020,damle_uniform_2020,agterberg_entrywise_2021,wang_analysis_2024,tang_eigenvector_2025}. 

Beyond the derivation of error bounds, the study of asymptotic distributional theory and the goal of developing inference procedures for principal subspace estimation have both emerged as highly active research topics \citep{cape_signal-plus-noise_2019,xia_normal_2021,bao_singular_2021,xia_statistical_2021,cheng_tackling_2021,agterberg_entrywise_2021,fan_asymptotic_2022,fan_simple_2022,xie_entrywise_2024,yan_inference_2024,athreya_limit_2016,tang_semiparametric_2017,tang_limit_2018,li_two-sample_2018,du_hypothesis_2023,yan_entrywise_2024}.
At a high level, the study of asymptotic distributional theory for subspace estimation in the matrix denoising model can be divided into two categories.
On the one hand, there is distributional theory for quantities related to the subspace spanned by a subset of singular vectors or eigenvectors, such as the norm of the difference of orthogonal projection matrices 
\citep{xia_normal_2021,bao_singular_2021,tang_nonparametric_2017,li_two-sample_2018}. 
On the other hand, there is distributional theory pertaining to individual rows or entries of concatenated matrices of singular vectors or eigenvectors \citep{cape_signal-plus-noise_2019,xie_entrywise_2024,fan_simple_2022,yan_inference_2024,athreya_limit_2016,tang_semiparametric_2017,tang_limit_2018,agterberg_entrywise_2021,rubin-delanchy_statistical_2022,du_hypothesis_2023,yan_entrywise_2024}.
At the intersection of these two categories lies the distributional theory of linear forms of singular vectors or eigenvectors, enabling the study of individual components as well as linear combinations thereof \citep{fan_asymptotic_2022,cheng_tackling_2021,xia_statistical_2021,agterberg_distributional_2024}.
Both categories pertain to this paper and are thus discussed in more detail below.

Within the first category, several existing works study the distributional theory of singular subspace estimation in (typically unitarily invariant) matrix norms \citep{xia_normal_2021,bao_singular_2021,tang_nonparametric_2017,li_two-sample_2018}.
\citet{xia_normal_2021} proved a non-asymptotic normal approximation for a statistic involving the Frobenius norm distance between subspaces via a representation formula for differences of orthogonal projections under i.i.d. Gaussian noise.
\citet{bao_singular_2021} showed that a statistic also related to the Frobenius norm projection distance is asymptotically normal under i.i.d. Gaussian noise.
Other related works include \citet{tang_semiparametric_2017,li_two-sample_2018} which developed two-sample testing methodology based on the limiting distribution of Procrustes-type subspace distances in the Frobenius norm.

In the second category, which focuses on the asymptotic normality of individual rows or entries of matrices of singular vectors or eigenvectors, \citet{cape_signal-plus-noise_2019} established row-wise normality of the Procrustes type subspace difference $\mh{U}\RU - \m{U}$.
\citet{xie_entrywise_2024} further developed non-asymptotic Berry--Esseen-type bounds for the rows of the Procrustes-type subspace differences $\mh{U}\RU - \m{U}$ in a weak signal regime.
\citet{agterberg_entrywise_2021} established the limiting distribution of rows and entries under heteroskedastic and possibly dependent noise.
In the related setting of latent space network analysis, \citet{athreya_limit_2016,tang_limit_2018,rubin-delanchy_statistical_2022} derived the limiting distribution of individual rows or entries of spectral graph embeddings involving the adjacency matrix and graph Laplacian.
\citet{fan_simple_2022,du_hypothesis_2023} used the limiting distribution of individual rows of the adjacency or Laplacian embedding to conduct node-wise latent position network hypothesis tests.

As for the distributional theory of linear forms of singular vectors or eigenvectors, at the intersection of the aforementioned two categories,
\citet{cheng_tackling_2021} showed the asymptotic normality of inner products between a fixed vector and an individual observed eigenvector under asymmetry, heteroskedastic noise, and small eigengaps.
\citet{agterberg_distributional_2024} established a Berry--Esseen-type bound for a de-biased inner product between a fixed vector and an individual observed eigenvector under a small-eigengap condition in the context of both matrix denoising and principal component analysis.
\citet{fan_asymptotic_2022} derived the asymptotic normality of linear and bilinear forms involving individual signal eigenvectors and observed eigenvectors of symmetric matrices.
\citet{xia_statistical_2021} extended the result of \citet{xia_normal_2021} to enable inference for linear forms in the trace regression setting.

A closely related topic is the distributional theory of eigenvectors of sample covariance matrices under the spiked covariance model \citep{yan_inference_2024,bao_statistical_2020,koltchinskii_asymptotics_2016,koltchinskii_efficient_2020,bloemendal_principal_2014}. 
Notably, \citet{yan_inference_2024} established a non-asymptotic Gaussian approximation for the rows of the principal subspace matrix, as well as individual entries in the covariance matrix estimator under heteroskedastic noise and missing data.
En route, the authors also obtain performance guarantees for matrix denoising using the output of their estimation algorithm.

Finally, we note that the Gumbel distribution (the limiting distribution of our test statistic) has appeared elsewhere in the literature on extreme value-based testing procedures though for different problem settings \citep{donoho_higher_2004,hu_using_2021,fan_simple-rc_2022}.
Briefly, \citet{donoho_higher_2004} developed a higher criticism test statistic for the multiple testing problem, where the test statistic is obtained by taking the maximum over an empirical distribution of $p$-values.
\citet{hu_using_2021} proposed a goodness-of-fit test for the stochastic blockmodel by examining the maximum entry-wise deviation of the centered and rescaled adjacency matrix of a network.
\citet{fan_simple-rc_2022} considered testing whether a group of network nodes share similar membership profiles by apply the test in \citet{fan_simple_2022} to pairs of nodes in a group and then computing the maximum thereof.

\subsection{Notation}
\label{body-section:notation}

The symbols $\coloneqq$ and $\eqqcolon$ are used to assign definitions.
The upper-case blackboard bold letters $\R$, $\C$, and $\mathbb{N}$ are reserved for the set of real numbers, complex numbers, and natural numbers, respectively. 
Given $N \in \mathbb{N}$, let $\dpar{N}$ denote the set $\{1, 2, \dots,N\}$.
Given a matrix $\m{A} \in \R^{n \times m}$ having rank $r$, let $s_{1}(\m{A}) \ge s_{2}(\m{A}) \ge \dots \ge s_{r}(\m{A}) > 0$ denote its non-zero singular values.
The $i$-th row of $\m{A}$ is denoted by $\m{A}_{i,\all}$ whereas the $i$-th column is denoted by $\m{A}_{\all,i}$, both of which are treated as column vectors.
In particular, $\m{A}\T_{i,\all}$ denotes the $i$-th row of $\m{A}$ as a row vector. 
The $(i,j)$-th entry of $\m{A}$ is denoted by $A_{i,j}$.
We write $\1_{n}$ to denote the vector of dimension $n$ with entries all equal to one.

The (univariate) normal or Gaussian distribution with implicit mean and variance is denoted by $\NN(\cdot, \cdot)$, whereas $\NN_{d}(\cdot, \cdot)$ denotes the multivariate setting with dimension $d \ge 2$.
Given a sequence of random variables $(X_{n})_{n \ge 1}$ and a target random variable $X$, we write $X_{n} \rightsquigarrow X$ when $(X_{n})_{n \ge 1}$ convergences in distribution to $X$.
For two random variables $X$ and $Y$ that are equal in distribution, we write $X \eqd Y$.
Given a real-valued random variable $X$, let $F_{X}$ denote its cumulative distribution function, and let $f_{X}$ denote its probability density function provided $F_{X}$ is differentiable almost everywhere.
The moment generating function (MGF) of $X$ is given by $M_{X}(t) \coloneqq \E[\exp(t X)]$, provided that the expectation exists for all real-valued $t$ in an open neighborhood of zero.
The cumulant generating function (CGF) of $Z$ is given by $K_{Z}(t) \coloneqq \log(M_{Z}(t))$.
The characteristic function (CF) of $Z$ is given by $\varphi_{Z}(t)\coloneqq \E[\exp(\ir t Z)]$, where $\ir \coloneqq \sqrt{-1}$.

Given two real-valued sequences $(a_{n})_{n \ge 1}$ and $(b_{n})_{n \ge 1}$, we write $a_{n} \lesssim b_{n}$ if there exists a constant $C > 0$ and a natural number $n_{0}$ depending only on $C$ such that $|a_{n}| \leq C |b_{n}|$ for all $n \geq n_{0}$.
We write $a_{n} \gtrsim b_{n}$ if $b_{n} \lesssim a_{n}$.
We also write $a_{n} = O(b_{n})$ if $a_{n} \lesssim b_{n}$ and $a_{n} = \Omega(b_{n})$ if $a_{n} \gtrsim b_{n}$.
We write $a_{n} \ll b_{n}$ if for all $\varepsilon > 0$ there exists a natural number $n_{0}$ only depending on $\varepsilon$ such that $|a_{n}| \leq \varepsilon |b_{n}|$ when $n \geq n_{0}$.
The notation $a_{n}\gg b_{n}$ indicates that $b_{n} \ll a_{n}$, and $a_{n} = o(b_{n})$ if $a_{n} \ll b_{n}$.
We write $a_{n} \simeq b_{n}$ if $a_{n} \lesssim b_{n}$ and $a_{n} \gtrsim b_{n}$.

\subsection{Organization}

The remainder of this paper is organized as follows.
\cref{body-section:preliminaries} specifies the matrix denoising model assumptions and presents a first-order approximation of singular vector perturbations, as a starting point for the subsequent distributional theory. 
\cref{body-section:Main-results} presents the main results of this paper, namely the asymptotic distributional characterization of test statistics based on the two-to-infinity norm, both when the population signal singular values are assumed known and when they first need to be estimated.
These test statistics are in turn leveraged for hypothesis testing involving the left signal singular vectors.
This section concludes with a brief treatment of non-Gaussian noise in a rank-one signal matrix setting with delocalized singular vectors.
\cref{body-section:Numerical-simulations} provides numerical examples illustrating the main results and their finite-sample properties across a variety of simulation settings, including comparisons to a different, less fine-grained test statistic based instead on the Frobenius norm, and explores the robustness properties of the test statistic under non-Gaussian noise.
\cref{body-section:discussion} provides additional discussion and concluding remarks. 
The Supplement contains proofs of the main results, supporting technical derivations, and additional experiments investigating the robustness of the main results to departures from stated assumptions.

\section{Preliminaries}
\label{body-section:preliminaries}

\subsection{Model assumptions}
\label{body-section:model-asm}

This paper considers the matrix denoising model in \cref{body-eq:matrix-denoising} under the following specifications.

\begin{definition}[Signal matrix quantities]
\label{body-defn:signal-matrix-quantities}
The following quantities are related to the signal matrix $\m{M}$, and are frequently used throughout the paper.
    \begin{enumerate}[label=(\roman*)]
        \item Let $\mul$ denote the multiplicity of the smallest non-zero signal singular value $s_{r}$.
        
        \item Let $\mu$ denote the coherence parameter
        \begin{equation*}
            \mu
            \coloneqq
            \max
            \clr{
            \frac{n}{r}\tinorm{\m{U}}^{2}, 
            \frac{m}{r}\tinorm{\m{V}}^{2}
            },
            \qquad
            1
            \le
            \mu
            \le
            \frac{\max\{n, m\}}{r}
            .
        \end{equation*}
        
        \item Define $\rlog \coloneqq r \log n + \log^{2} n$.
    \end{enumerate}
    
\end{definition}

\begin{assumption}[Row-wise Gaussian noise]
    \label{body-assumption:noise}
    The $n \times m$ noise matrix $\m{E}$ has independent and identically distributed rows satisfying $\m{E}_{i,\all} \sim \NN_{m}(\m{0}, \m{D})$ as follows.
    \begin{itemize}
        \item If $\mul = 1$, then assume $\m{D} = \diag(\sigma_{1}^{2}, \dots, \sigma_{m}^{2})$, where 
        $
        0
        <
        \sigma_{m}^{2}
        \leq
        \dots
        \leq
        \sigma_{1}^{2}
        \leq
        \sigma^{2}
        < \infty
        $
        for some $\sigma^{2}$.
        
        \item If $\mul \geq 2$, then assume $\m{D} = \sigma^{2} \m{I}_{m}$ for some
        $
        0
        <
        \sigma^{2}
        <
        \infty
        $.
    \end{itemize}
\end{assumption}

\begin{assumption}[Matrix aspect ratio]
    \label{body-assumption:matrix-size}
    The matrix dimensions $n$ and $m \equiv m(n)$ satisfy $n / m \simeq 1$ as $n \to \infty$.
\end{assumption}

\begin{assumption}[Signal singular vectors]
    \label{body-assumption:delocalization}
    The coherence parameter satisfies $\mu \ll n / (r  \rlog)$, where $\mu$ and $\rlog$ are defined in \cref{body-defn:signal-matrix-quantities}.
\end{assumption}

\begin{assumption}[Signal singular values]
    \label{body-assumption:snr}
    The rank of the signal matrix, denoted by $r$, and the multiplicity of the smallest non-zero signal singular value $s_{r}$, denoted by $\mul$, are fixed.
    Further, 
    $s_{r} \gg \sqrt{ \mu \rlog } (\sigma \sqrt{n})$ holds, where $\mu$ and $\rlog$ are defined in \cref{body-defn:signal-matrix-quantities}.
\end{assumption}

\cref{body-assumption:noise} posits Gaussian noise which is by far the most common and widely studied setting considered in the matrix denoising literature.
The ordering of the noise variances is made for notational convenience and holds without loss of generality, since it can always be achieved by applying an appropriate permutation.
Here, the influence of the multiplicity $\mul$ on the permissible structure of the covariance matrix $\m{D}$ reflects a trade-off between flexibility and specificity when modeling the signal and noise.
We emphasize that there are no restrictions on the multiplicities of the other non-zero signal singular values $s_{1}, \dots, s_{r-\mul}$ when $r \geq \mul + 1$.

\cref{body-assumption:matrix-size} posits that the matrix dimensions grow at a comparable rate.
This condition is often encountered in the existing literature and is used here to enable studying the concentration properties of linear and quadratic forms involving $\m{E}$ and $\m{E}\T$.

\cref{body-assumption:delocalization} posits a mild upper bound on the permitted localization of mass or energy in the individual entries and rows of the matrices of left and right signal singular vectors.
This assumption allows the entries of the signal singular vectors to be highly non-uniform in magnitude, preventing only near-localization edge cases.
Assumptions of this type (moreover, often stronger assumptions requiring delocalization) are ubiquitous in the modern literature on entry-wise and row-wise matrix perturbation analysis, extending well beyond the matrix denoising problem studied in this paper.
Here, the condition on $\mu$ is crucial in our development of asymptotic distributional theory, rather than for preliminary singular subspace perturbation analysis.

\cref{body-assumption:snr} posits that the smallest non-zero signal singular value diverges sufficiently quickly, interpretable as a form of population-level signal strength.
Conditions of this type, too, are pervasive in the literature on spectral methods.
Concretely, when $\mul, r, \mu \simeq 1$,
then $s_{r}$ is required to grow only slightly faster than the noise level rate $\sigma \sqrt{n}$ which reflects the operator norm concentration of $\m{E}$ when $n$ and $m$ are comparable.

\subsection{Row-wise perturbation analysis and first-order approximation}
\label{body-section:row-wise-analysis-of-first-order-approx}

The row-specific concentration and distributional properties of $\mh{U}\RU - \m{U}$ are prerequisite for obtaining the limiting distribution of $\tinorm{\mh{U}\RU - \m{U}}$.
To achieve our stated goals, we employ \cref{body-lemma:first-order-approximation}, a consequence of \citet[Proposition 2]{yan_entrywise_2024}, which establishes uniform row-wise error bounds for approximating the leading left singular vectors of the signal matrix.
The proof of \cref{body-lemma:first-order-approximation} is provided in \suppref{supp-pf:first-order-approximation}.

\begin{lemma}[First-order approximation of the left principal subspace]
    \label{body-lemma:first-order-approximation}
    Under \cref{body-assumption:noise,body-assumption:matrix-size,body-assumption:snr}, the bounds
    \begin{align}
        \tinorm{\mh{U}\RU - \m{U}}
        \lesssim
        \frac{\sigma\sqrt{r + \log n}}
        {s_{r}} 
        +
        \frac{\sigma^{2}\sqrt{\mu r n}}
        {s_{r}^{2}}
        \label{body-eq:perturbation-bound}
    \end{align}
    and
    \begin{align}
        \tinorm{\mh{U}\RU - \m{U} - \m{EVS}\inv}
        \lesssim
        \frac{\sigma\sqrt{r + \log n}}{s_{r}}
        \plr{
            \frac{\sigma\sqrt{n}}{s_{r}}
            +
            \sqrt{
            \frac{\mu r}{n}
            }
        }
        +
        \frac{\sigma^{2}\sqrt{\mu r n}}
        {s_{r}^{2}}
        \label{body-eq:first-order-approximation}
    \end{align}
    each hold with probability at least $1 - O(n^{-9})$.
\end{lemma}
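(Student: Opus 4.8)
The plan is to establish both bounds by combining the standard Neumann-type series expansion for perturbed singular subspaces with a careful row-wise (i.e., $\tinorm{\cdot}$) bookkeeping of each term, and then controlling the Procrustes alignment error $\RU$ separately. I would begin from the well-known leave-one-out / contour-integral representation: writing $\m{P} = \mh{U}\mh{U}\T$ and recalling that $\mh{U}\RU - \m{U}$ can be decomposed as $(\mh{U}\mh{U}\T - \m{U}\m{U}\T)\m{U}$ plus a second-order-in-the-alignment term, the leading contribution is the linear term $\m{E}\m{V}\m{S}\inv$, which arises from the first-order expansion $\mh{U}\RU - \m{U} \approx (\m{I} - \m{U}\m{U}\T)\m{E}\m{V}\m{S}\inv$. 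The strategy is therefore: (i) bound the linear term $\tinorm{\m{E}\m{V}\m{S}\inv}$ and $\tinorm{\m{U}\m{U}\T\m{E}\m{V}\m{S}\inv}$; (ii) bound all higher-order remainder terms; (iii) deduce \cref{body-eq:perturbation-bound} from the triangle inequality and \cref{body-eq:first-order-approximation}; (iv) verify the stated probability $1 - O(n^{-5})$ via a union bound over the $n$ rows and over the relevant spectral-norm concentration events for $\m{E}$.

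For step (i), the $i$-th row of $\m{E}\m{V}\m{S}\inv$ is $\m{S}\inv\m{V}\T\m{E}\T_{i,\all}$, a Gaussian vector in $\R^{r}$ with covariance $\m{S}\inv\m{V}\T\m{D}\m{V}\m{S}\inv$ whose operator norm is $\lesssim \sigma^{2}/s_{r}^{2}$; a standard Gaussian-norm tail bound plus a union over $i \in \dpar{n}$ gives $\tinorm{\m{E}\m{V}\m{S}\inv} \lesssim \sigma\sqrt{r\log n}/s_{r}$ with probability $1 - O(n^{-5})$, which is exactly the prefactor in both displays. The term $\m{U}\m{U}\T\m{E}\m{V}\m{S}\inv$ has $i$-th row $\m{U}\T_{i,\all}(\m{U}\T\m{E}\m{V}\m{S}\inv)$; since $\tinorm{\m{U}} \lesssim \sqrt{r\mu/n}$ and $\snorm{\m{U}\T\m{E}\m{V}\m{S}\inv} \lesssim \sigma\sqrt{n}/s_{r}$ (spectral-norm concentration of a $r\times r$ Gaussian-type matrix, with an extra $\sqrt{\log n}$ absorbed to get a high-probability bound), this term contributes at the scale $\frac{\sigma\sqrt{r\log n}}{s_r}\cdot\sqrt{\frac{\mu r}{n}}$, matching the third summand in \cref{body-eq:first-order-approximation}. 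For step (ii), I expand $\mh{U}\RU - \m{U} - (\m{I}-\m{U}\m{U}\T)\m{E}\m{V}\m{S}\inv$ using either the Neumann series in powers of $\m{S}\inv\m{U}\T\m{E}\m{V}$ (as in the signal-plus-noise perturbation literature) or a resolvent expansion; each higher-order term carries an extra factor of order $\sigma\sqrt{n}/s_{r}$ relative to the previous one. The dominant remainder term is quadratic and one shows it is $\lesssim \frac{\sigma\sqrt{r\log n}}{s_r}\cdot\frac{\sigma\sqrt n}{s_r}$, using $\tinorm{\m{E}\m{V}}\lesssim \sigma\sqrt{n r\log n}$-type estimates together with $\snorm{\m{S}\inv\m{U}\T\m{E}\m{V}\m{S}\inv}$ control. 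The Procrustes term $\RU - \HU$ is $O(\snorm{\HU - \m{I}})^{2} = O((\sigma\sqrt{n}/s_r)^{2})$ by the standard $\sin\Theta$ bound under the eigengap hypothesis $\kappa\sigma\sqrt{rn}\log n \ll s_r$ from \cref{body-assumption:snr}, and multiplying by $\tinorm{\m{U}}$ keeps it below the stated tolerance; the term involving $\frac{\sigma\sqrt n}{s_r}\frac{\kappa\sqrt\mu}{\sqrt{\log n}}$ is the contribution of $\tinorm{\mh{U}}\cdot\snorm{\cdots}$-type bounds where one must use the \emph{perturbed} delocalization $\tinorm{\mh{U}} \lesssim \tinorm{\m{U}} + (\text{remainder})$, which is precisely why that mixed factor reappears.

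I expect the main obstacle to be step (ii) done to the right precision: naive application of off-the-shelf entrywise bounds (Abbe--Fan--Wang--Zhong or Cape--Tang--Priebe) either loses logarithmic factors or is stated under eigengap/incoherence normalizations that do not cleanly yield the two-term structure $1 + \frac{\sigma\sqrt n}{s_r}\frac{\kappa\sqrt\mu}{\sqrt{\log n}}$. The resolution is to run a bespoke leave-one-out argument: fix row $i$, define $\mh{M}^{(i)}$ with the $i$-th row of $\m{E}$ zeroed out, note $\mh{U}^{(i)}$ is independent of $\m{E}_{i,\all}$, control $\tinorm{\mh{U}\mh{U}\T - \mh{U}^{(i)}(\mh{U}^{(i)})\T}$ via a rank-one-perturbation $\sin\Theta$ estimate, and then bound $\m{U}\T_{i,\all}$-weighted quantities using the independence to convert row norms of random matrices into Gaussian concentration. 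The $\kappa$ and $\mu$ dependence must be tracked through every $\sin\Theta$ invocation and every incoherence bound; getting the $\sqrt{\log n}$ in the denominator of the mixed term (rather than, say, $\log n$ or $1$) is the delicate point and comes from balancing the union-bound inflation $\sqrt{\log n}$ against the $\sqrt{\log n}$-free spectral-norm concentration of the relevant $r\times r$ Gaussian blocks. Once all terms are assembled, \cref{body-eq:first-order-approximation} follows, and \cref{body-eq:perturbation-bound} is immediate by adding back $\tinorm{(\m{I}-\m{U}\m{U}\T)\m{E}\m{V}\m{S}\inv} \le \tinorm{\m{E}\m{V}\m{S}\inv} + \tinorm{\m{U}\m{U}\T\m{E}\m{V}\m{S}\inv}$ and absorbing the $\sqrt{\mu r/n} \lesssim 1$ term into the constant.
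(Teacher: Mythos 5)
Your plan is directionally sound and does recognize the essential technical obstacle, but it follows a genuinely different algebraic route from the paper, and there is one arithmetic slip worth flagging.

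\paragraph{Different decomposition.} You start from the textbook first-order Procrustes expansion $\mh{U}\RU - \m{U} \approx (\m{I}-\m{U}\m{U}\T)\m{E}\m{V}\m{S}\inv$ and propose to control the remainder via a Neumann/resolvent series, reinstating the extra $\m{U}\m{U}\T\m{E}\m{V}\m{S}\inv$ piece at the end. The paper instead exploits the exact algebraic identity $\mh{M}\m{V}\m{S}\inv = \m{U} + \m{E}\m{V}\m{S}\inv$ and bounds $\tinorm{\mh{U}\RU\m{S} - \mh{M}\m{V}}\tnorm{\m{S}\inv}$ via a three-term telescoping decomposition: replace $\mh{M}\m{V}$ by $\mh{U}\mh{S}\mh{V}\T\m{V}$ (a truncation error, handled by leave-one-out since $\m{E}$ and $\mh{V}$ are dependent), then replace $\mh{V}\T\m{V}$ by $\RV$, then commute $\mh{S}\RV$ to $\RU\m{S}$. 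Each step produces one of the three summands in \cref{body-eq:first-order-approximation} and is powered by the same ingredients you list (leave-one-out independence, $\sin\Theta$ for rank-one perturbations, incoherence of $\mh{U}$ inherited from $\m{U}$ plus the row-wise error). The two routes converge on the same leave-one-out machinery; the paper's telescoping is somewhat cleaner because every intermediate object is an exact rearrangement, so no truncation of a series is needed. Your Neumann-series phrasing would in practice have to be done to second order and then shown to terminate, which is more bookkeeping. Also worth noting: given \cref{body-eq:first-order-approximation}, you correctly observe that \cref{body-eq:perturbation-bound} follows by adding back $\tinorm{\m{E}\m{V}\m{S}\inv}\lesssim\sigma\sqrt{r\log n}/s_r$ and absorbing the sub-unit factors, whereas the paper derives it separately from its auxiliary incoherence lemma for $\tinorm{\mh{U}\mh{U}\T\m{U}-\m{U}}$; both are valid.

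\paragraph{An arithmetic glitch.} When you estimate the contribution of $\m{U}\m{U}\T\m{E}\m{V}\m{S}\inv$ you write $\tnorm{\m{U}\T\m{E}\m{V}\m{S}\inv}\lesssim\sigma\sqrt{n}/s_r$. That is far too coarse: $\m{U}\T\m{E}\m{V}$ is an $r\times r$ Gaussian block, and a matrix-Bernstein bound using the delocalization of $\m{U}$ and $\m{V}$ gives $\tnorm{\m{U}\T\m{E}\m{V}}\lesssim\sigma\sqrt{r\log n}$, hence $\tnorm{\m{U}\T\m{E}\m{V}\m{S}\inv}\lesssim\sigma\sqrt{r\log n}/s_r$. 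With the $\sigma\sqrt{n}/s_r$ rate you quote, the product with $\tinorm{\m{U}}\lesssim\sqrt{\mu r/n}$ gives $\sigma\sqrt{\mu r}/s_r$, which is larger than the third summand in \cref{body-eq:first-order-approximation} by a factor of order $\sqrt{n/(r\log n)}$ and would not close. Your stated conclusion (matching the $\sqrt{\mu r/n}$ term) is the right one, but it requires the sharper $\sqrt{r\log n}$ concentration, not the $\sqrt{n}$ spectral-norm bound; these do not differ by "an absorbed $\sqrt{\log n}$."

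Beyond that slip, the hardest part of the argument—the leave-one-out control of $\tnorm{\m{E}_{l,\all}\T(\m{V}-\mh{V}\mh{V}\T\m{V})}$ uniformly in $l$, with the recursive coupling between $\tinorm{\mh{U}\mh{U}\T\m{U}-\m{U}}$ and $\tinorm{\mh{V}\mh{V}\T\m{V}-\m{V}}$ that has to be unwound to isolate the $\kappa\sqrt\mu$ factor—is only sketched in your proposal, so the plan is not yet a proof. But the structure you describe is the right one.
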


Importantly, \cref{body-lemma:first-order-approximation} establishes that, under certain conditions, $\m{U} + \m{EVS}\inv$ is a better uniform row-wise approximation of $\mh{U}\RU$ than is $\m{U}$ alone.
Moreover, these perturbation bounds justify treating $\m{EVS}\inv$ as a fine-grained approximation of $\mh{U}\RU - \m{U}$, in pursuit of establishing the limiting distribution of $\tinorm{\mh{U}\RU - \m{U}}$.

Given that we are considering the maximum Euclidean row norm, $\tinorm{\cdot}$, and since the rows of $\m{EVS}\inv$ are independent and identically distributed as a consequence of \cref{body-assumption:noise}, we are led to investigate the largest order statistic
\begin{align}
    \label{body-eq:Xi-quadratic-form}
    \max_{i \in \dpar{n}}
    X_{i}
    ,
    \qquad
    \text{ where }
    X_{i}
    \equiv
    X_{i,n,m}
    \coloneqq
    \tnorm{\m{E}_{i, \all}\T\m{VS}\inv}
    .
\end{align} 
Here, $\m{E}_{i, \all}\T\m{VS}\inv \sim \NN_{r}(\m{0}, \m{S}\inv\m{V}\T\m{D}\m{V}\m{S}\inv)$ for each $i$, so it follows that $X_{i}^{2}$ is a sum of $r$ (possibly non-identical and non-independent) squared zero-mean Gaussian random variables.
Though admitting non-trivial structure, the moment generating function of quadratic forms of Gaussian vectors such as $X_{i}^{2}$ is understood \citep{mathai_quadratic_1992}, and this is used heavily in the proofs of our main results via the following technical lemma.
The proof of \cref{body-lemma:MGF} is provided in \suppref{supp-pf:MGF}.
    
\begin{lemma}[Moment generating function of $X_{i}^{2}$]
    \label{body-lemma:MGF}
    Define the matrix $\m{P} \coloneqq 2\m{S}\inv\m{V}\T\m{D}\m{V}\m{S}\inv \in \R^{r \times r}$.
    The following properties hold.
        \begin{enumerate}
            \item The matrix $\m{P}$ has eigenvalues $\lambda_{1} \geq \dots \geq \lambda_{r} > 0$ where $\lambda_{1} \in [2\sigma_{m}^{2} / s_{r}^{2}, 2\sigma_{1}^{2} / s_{r}^{2}]$.
            In particular, if $\m{D} = \sigma^{2}\m{I}_{m}$, then 
        \begin{align*}
            (\lambda_{1},
            \dots,
            \lambda_{r})
            =
            \left(
            \frac{2\sigma^{2}}{s_{r}^{2}}, 
            \dots,
            \frac{2\sigma^{2}}{s_{1}^{2}}
            \right)
            .
        \end{align*}
            \item Under \cref{body-assumption:noise}, the moment generating function of $X_{i}^{2}$ is given by
        \begin{align*}
            M_{X_{i}^{2}}(t)
            =
            \det(\m{I}_{r} - t\m{P})^{-1/2}
            =
            \prod_{j=1}^{r}
            (1-\lambda_{j} t)^{-1/2},
            \qquad
            t < 1/\lambda_{1}
            .
        \end{align*}
        \end{enumerate}
\end{lemma}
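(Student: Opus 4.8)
The plan is to reduce $X_{i}^{2}$ to a weighted sum of independent chi-squared random variables and then read off both parts from the resulting product formula. Fix $i \in \dpar{n}$ and set $\m{w} \coloneqq \m{S}^{-1}\m{V}\T\m{E}_{i,\all} \in \R^{r}$, so that $X_{i}^{2} = \tnorm{\m{w}}^{2} = \m{w}\T\m{w}$. By \cref{body-assumption:noise}, $\m{w} \sim \NN_{r}(\m{0}, \bs{\Sigma})$ with $\bs{\Sigma} \coloneqq \m{S}^{-1}\m{V}\T\m{D}\m{V}\m{S}^{-1}$; since $\m{D} \succ \m{0}$ in both regimes of \cref{body-assumption:noise} and $\m{V}$ has orthonormal columns, $\bs{\Sigma}$ is symmetric positive definite. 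Writing its spectral decomposition $\bs{\Sigma} = \m{Q}\bs{\Lambda}\m{Q}\T$ with $\m{Q}$ orthogonal and $\bs{\Lambda} = \diag(\Lambda_{1}, \dots, \Lambda_{r})$, $\Lambda_{1} \ge \dots \ge \Lambda_{r} > 0$, we obtain $\m{w} \eqd \m{Q}\bs{\Lambda}^{1/2}\m{z}$ for $\m{z} \sim \NN_{r}(\m{0}, \m{I}_{r})$, hence $X_{i}^{2} \eqd \m{z}\T\bs{\Lambda}\m{z} = \sum_{j=1}^{r}\Lambda_{j} z_{j}^{2}$, a sum of $r$ independent scaled $\chi^{2}_{1}$ variables.

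The one algebraic observation I would record is that $\m{P} = \m{S}^{-1}(2\bs{\Sigma})\m{S}$, since $\m{S}^{-1}\bigl(2\m{S}^{-1}\m{V}\T\m{DV}\m{S}^{-1}\bigr)\m{S} = 2\m{S}^{-2}\m{V}\T\m{DV}$. Thus $\m{P}$ is similar to the symmetric positive definite matrix $2\bs{\Sigma}$, so it is diagonalizable with the same real, positive spectrum: $\lambda_{j} = 2\Lambda_{j}$ for all $j$, and $\det(\m{I}_{r} - t\m{P}) = \det(\m{I}_{r} - 2t\bs{\Sigma}) = \prod_{j=1}^{r}(1 - \lambda_{j} t)$. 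For part~2 I would then compute, for every real $t$ with $t < 1/\lambda_{1}$ (equivalently $2t\Lambda_{j} < 1$ for all $j$),
\[
    M_{X_{i}^{2}}(t) = \prod_{j=1}^{r}\E\bigl[\exp(t\Lambda_{j} z_{j}^{2})\bigr] = \prod_{j=1}^{r}(1 - 2t\Lambda_{j})^{-1/2} = \prod_{j=1}^{r}(1 - \lambda_{j} t)^{-1/2} = \det(\m{I}_{r} - t\m{P})^{-1/2},
\]
using independence of the $z_{j}$ together with the standard identity $\E[\exp(s z_{j}^{2})] = (1-2s)^{-1/2}$ for $s < 1/2$; alternatively one may cite the quadratic-form MGF formula of \citet{mathai_quadratic_1992} directly.

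For part~1, positivity of the $\lambda_{j}$ is now immediate. To locate $\lambda_{1}$, note $\sigma_{m}^{2}\m{I}_{m} \preceq \m{D} \preceq \sigma_{1}^{2}\m{I}_{m}$ (with $\sigma_{1} = \sigma_{m} = \sigma$ when $\m{D} = \sigma^{2}\m{I}_{m}$); conjugating by $\m{V}$ and then by $\m{S}^{-1}$ preserves the Loewner order, giving $\sigma_{m}^{2}\m{S}^{-2} \preceq \bs{\Sigma} \preceq \sigma_{1}^{2}\m{S}^{-2}$, so that taking largest eigenvalues and using $\mu_{\max}(\m{S}^{-2}) = s_{r}^{-2}$ yields $2\sigma_{m}^{2}/s_{r}^{2} \le \lambda_{1} = 2\Lambda_{1} \le 2\sigma_{1}^{2}/s_{r}^{2} \le 2r\sigma_{1}^{2}/s_{r}^{2}$; the last, deliberately wasteful, inequality matches the stated bracket (it also follows from $\lambda_{1} \le \tr(\m{P}) = 2\tr(\m{V}\m{S}^{-2}\m{V}\T\m{D}) \le 2\sigma_{1}^{2}\sum_{j=1}^{r}s_{j}^{-2} \le 2r\sigma_{1}^{2}/s_{r}^{2}$). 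Finally, if $\m{D} = \sigma^{2}\m{I}_{m}$ then $\m{P} = 2\sigma^{2}\m{S}^{-2}\m{V}\T\m{V} = 2\sigma^{2}\m{S}^{-2} = \diag(2\sigma^{2}/s_{1}^{2}, \dots, 2\sigma^{2}/s_{r}^{2})$, and sorting these entries in decreasing order (using $s_{1} \ge \dots \ge s_{r}$) gives exactly $(\lambda_{1}, \dots, \lambda_{r}) = (2\sigma^{2}/s_{r}^{2}, 2\sigma^{2}/s_{r-1}^{2}, \dots, 2\sigma^{2}/s_{1}^{2})$.

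The only genuine subtlety — the main obstacle, though a mild one — is that $\m{P}$ is itself not symmetric, so its spectral properties cannot be read off directly; routing everything through the similarity $\m{P} \sim 2\bs{\Sigma}$ is what makes the eigenvalues well-defined, real, and positive and what legitimizes the determinant identity. Every remaining step is a routine Gaussian quadratic-form computation.
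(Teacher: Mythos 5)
Your proof is correct and follows essentially the same route as the paper: both exploit the similarity of the non-symmetric matrix $\m{P}$ to a symmetric positive-definite matrix (you conjugate $2\bs{\Sigma}$ by $\m{S}$; the paper conjugates $\m{P}$ to obtain $(\m{D}^{1/2}\m{VS}^{-1})\T\m{D}^{1/2}\m{VS}^{-1}$), and both then reduce to the standard Gaussian quadratic-form MGF. The only cosmetic differences are that you re-derive the product formula from scratch via the weighted-$\chi^2_1$ decomposition rather than citing \citet[Theorem 3.2a.1]{mathai_quadratic_1992}, and you spell out the Loewner-order argument for the interval containing $\lambda_1$ (in fact obtaining the sharper upper bound $2\sigma_1^2/s_r^2$), steps the paper treats as immediate and omits.
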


Notably, under \cref{body-assumption:noise}, it holds that $\lambda_{1} = \cdots = \lambda_{\mul}$.
In addition, the eigenvalues $(\lambda_{j})_{j \in \dpar{r}}$ are functions of the (diverging) signal singular values $(s_{j})_{j \in \dpar{r}}$ which change with $n$ and $m$.
Put differently, each random variable $X_{i}$ in $\max_{i \in \dpar{n}} X_{i}$ changes with $n$ and $m$, which in turn affects the scaling of the maximum, thus requiring careful analysis.

To avoid possible degenerate or edge-type cases in \cref{body-section:Main-results} when $r > 1$ and $\mul < r$, we impose an additional mild assumption on the eigenstructure of $\m{P}$.
In particular, \cref{body-assumption:gap-lam1-lam2} prevents $\lambda_{\mul + 1} / \lambda_{\mul} \to 1$ as $n \to \infty$, which can be described as $\lambda_{\mul}$ being asymptotically indistinguishable from $\lambda_{\mul + 1}$.
\cref{body-assumption:gap-lam1-lam2} holds for many popular choices of $\m{M}$ and $\m{D}$.
For example, if $\m{D} = \sigma^{2}\m{I}_{m}$, then this condition holds for any signal matrix $\m{M}$ satisfying $s_{r} / s_{r - \mul} < \sqrt{c_{0}}$ for some constant $0 < c_{0} < 1$ when $n$ is sufficiently large.

\begin{assumption}[Distinguishable multiplicity]
    \label{body-assumption:gap-lam1-lam2}
    If $r > 1$ and $\mul < r$, assume there exist constants $n_{0} \in \mathbb{N}$ and $c_{0} \in (0,1)$ such that $\lambda_{\mul + 1} / \lambda_{\mul} < c_{0}$ holds for all $n \geq n_{0}$.
\end{assumption}

Before proceeding to the next section, we reiterate that $\lambda_{1} = \lambda_{\mul}$ holds no matter the value of $\mul$ in \cref{body-assumption:noise}.
Thus, for simplicity of presentation and to avoid potential confusion regarding the downstream role of $\mul$ in what follows, we write $\lambda_{1}$ rather than $\lambda_{\mul}$.

\section{Main results: distributional theory and hypothesis testing}
\label{body-section:Main-results}

\subsection{Extreme value analysis for two-to-infinity norm subspace estimation}
\label{body-section:gumbel-convergence}

For ease of notation, in this section we let $X$ denote an independent and identically distributed copy of $X_{1}$ in \cref{body-eq:Xi-quadratic-form}.
In order to obtain normalizing sequences in $n$ for $X$, this section identifies a random variable (more precisely, a sequence of random variables indexed by $n$) that is tail equivalent to $X$ \citep[Section 1.5]{resnick_extreme_2007}, namely a random variable whose distribution shares upper-tail properties with $X$ and whose normalizing sequences are comparatively more straightforward to compute.

\begin{definition}[Tail equivalence]
    \label{body-defn:tail-equivalence}
    Two cumulative distribution functions $S$ and $T$ are said to be \emph{tail equivalent} if they have the same right endpoint $x_{0} \coloneqq \sup\{x: S(x) < 1\} = \sup\{x: T(x) < 1\}$ and satisfy the relationship
    \begin{align*}
        \lim_{x \to x_{0}}
        \frac{1-S(x)}{1-T(x)}
        =
        A \in (0, \infty)
        .
    \end{align*}
\end{definition}

With this concept in mind, let $H$ denote a random variable following the generalized gamma distribution with scale parameter $\sqrt{\lambda_{1}}$ and shape parameters $\mul$ and $2$, i.e., $H \sim \operatorname{GG}(\sqrt{\lambda_{1}},\mul,2)$.
In particular, $H$ has probability density function $f_{H}(x) = \frac{2}{\lambda_{1}^{\mul/2}\Gamma(\mul/2)}x^{\mul-1}\exp(-x^{2}/\lambda_{1})$ for $x \ge 0$.
For reference, this is equivalent to a $\sqrt{\lambda_{1}/2}$-scaled chi distribution with $\mul$ degrees of freedom, corresponding to the square root of a sum of $\mul$ squared i.i.d. Gaussian random variables with mean zero and variance $\lambda_{1}/2$.
Important special cases of $H$ are $\mul = 1$, which reduces to the half-normal distribution with scale parameter $\sqrt{\lambda_{1}/2}$, and $\mul = 2$, which reduces to the Rayleigh distribution with scale parameter $\sqrt{\lambda_{1}/2}$.

\cref{body-proposition:saddle-point-tail-equivalence} establishes the tail equivalence of $X$ and $H$, a key technical finding in this paper.
The proof of \cref{body-proposition:saddle-point-tail-equivalence} is provided in \suppref{supp-section:proof-of-saddle-point-approximation} and makes crucial use of saddle point density approximations \citep{daniels_saddlepoint_1954}, discussed in detail in \suppref{supp-section:deriving-saddle-point}.

\begin{proposition}
[Tail equivalence of $X$ and $H$]
\label{body-proposition:saddle-point-tail-equivalence}
    Under \cref{body-assumption:noise,body-assumption:snr,body-assumption:gap-lam1-lam2}, for all $n, m \geq r$, it holds that
    \begin{align*}
        \lim_{x\to\infty}
        \frac{1-F_{H}(x)}{1-F_{X}(x)}
        =
        \abias
        \coloneqq
        \begin{cases}
            1
            &
            \text{if }\;
            r = 1 
            \text{ or }\;
            \mul = r
            , \\
            \prod_{j=\mul+1}^{r}\left(1-\frac{\lambda_{j}}{\lambda_{1}}\right)^{1/2}
            &
            \text{if }\;
            r > 1
            \text{ and }\;
            \mul < r
            .
        \end{cases}
    \end{align*}
    Necessarily, $\abias \in (0,1]$.
\end{proposition}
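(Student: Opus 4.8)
The plan is to compare the upper tails of $X$ and $H$ directly through their densities near infinity, extracting the constant $\abias$ as the limiting ratio. The starting point is \cref{body-lemma:MGF}, which gives the MGF $M_{X^{2}}(t) = \prod_{j=1}^{r}(1-\lambda_{j}t)^{-1/2}$, valid for $t < 1/\lambda_{1}$; note the MGF has a pole at $t = 1/\lambda_{1}$, and because $\lambda_{1} = \cdots = \lambda_{\mul}$ the pole has order $\mul/2$, which is exactly the "effective degrees of freedom" matching $H \sim \operatorname{GG}(\sqrt{\lambda_{1}},\mul,2)$. The cleanest route is to work with $Y \coloneqq X^{2}$ and $W \coloneqq H^{2}$: here $W$ is a $(\lambda_{1}/2)$-scaled $\chi^{2}_{\mul}$ variate with an explicit density, and $Y$ is a weighted sum of $\chi^{2}_{1}$ variables whose density is recovered by saddle point approximation (cited as \suppref{supp-section:deriving-saddle-point}). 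Since $x \mapsto x^{2}$ is monotone increasing on $[0,\infty)$, tail equivalence of $X$ and $H$ is equivalent to tail equivalence of $Y$ and $W$, so it suffices to show $\lim_{y\to\infty}\frac{1-F_{W}(y)}{1-F_{Y}(y)} = \abias$.

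The key steps, in order, would be: (i) Write the CGF $K_{Y}(t) = -\tfrac12\sum_{j=1}^{r}\log(1-\lambda_{j}t)$ and locate the saddle point $\spt = \spt(y)$ solving $K_{Y}'(\spt) = y$; as $y\to\infty$ one has $\spt \uparrow 1/\lambda_{1}$, and a careful expansion shows $\tfrac{1}{\lambda_1} - \spt \simeq \tfrac{\mul}{2y}$ to leading order, with the remaining factors $(1-\lambda_{j}\spt)^{-1/2}$ for $j > \mul$ converging to $(1-\lambda_{j}/\lambda_{1})^{-1/2}$. (ii) Apply the saddle point density formula $f_{Y}(y) \approx \frac{\exp(K_{Y}(\spt) - \spt y)}{\sqrt{2\pi K_{Y}''(\spt)}}$ and simplify: the dominant exponential is $e^{-y/\lambda_{1}}$, the power-of-$y$ prefactor is $y^{\mul/2-1}$ (up to constants), and the product $\prod_{j>\mul}(1-\lambda_{j}/\lambda_{1})^{-1/2}$ appears as a multiplicative constant — this is precisely where $1/\abias$ is born. (iii) Compare with the exact gamma-type density $f_{W}(y) \propto y^{\mul/2-1}e^{-y/\lambda_{1}}$; the ratio $f_{W}(y)/f_{Y}(y) \to \abias$. (iv) Promote the density ratio limit to the tail (survival function) ratio limit via L'Hôpital / a monotone-density Karamata-type argument — since both densities are ultimately monotone and $1-F(y) = \int_{y}^{\infty} f$, the ratio of tails inherits the ratio of densities. (v) Check the boundary cases: when $r=1$ or $\mul=r$ the empty product is $1$, and $H$ and $X$ then have the \emph{same} distribution (not merely tail-equivalent), so $\abias = 1$ trivially; and $\abias \in (0,1]$ follows since each factor $1-\lambda_{j}/\lambda_{1} \in (0,1)$ by \cref{body-assumption:gap-lam1-lam2} and the ordering $\lambda_{1} > \lambda_{j}$ for $j > \mul$.

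The main obstacle I expect is controlling the \emph{error} in the saddle point approximation uniformly as $y\to\infty$ with enough precision to justify the density-ratio limit — the naive saddle point formula is only a leading-order asymptotic, and one must verify that the relative error term is $o(1)$ (not merely $O(1)$) as the saddle approaches the pole $1/\lambda_{1}$, where $K_{Y}''(\spt)\to\infty$. This requires either invoking a Watson's-lemma-style expansion of the Bromwich integral around the pole with explicit remainder control, or exploiting the specific rational structure of $M_{Y}$: one can factor $M_{Y}(t) = (1-\lambda_{1}t)^{-\mul/2}\cdot g(t)$ where $g(t) = \prod_{j>\mul}(1-\lambda_{j}t)^{-1/2}$ is analytic and bounded near $t=1/\lambda_{1}$, then invert the product as a convolution of the exact scaled-$\chi^{2}_{\mul}$ density with the (finite-measure) density corresponding to $g$, and use a dominated-convergence / Tauberian argument on the convolution tail. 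This second, more elementary route avoids delicate uniform saddle point error bounds and may be the one actually carried out; the saddle point machinery then primarily serves the companion statements elsewhere in the paper. A minor secondary point is verifying that \cref{body-assumption:gap-lam1-lam2} is genuinely used only to keep $\abias$ bounded away from $0$ (via $\lambda_{\mul+1}/\lambda_{1} < c_{\lambda}$), while the limit itself holds regardless, as the proposition's "for any $n\in\mathbb{N}$" phrasing makes the statement one about a fixed collection of $\lambda_j$'s.
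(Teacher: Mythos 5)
Your overall skeleton matches the paper's: pass to $Y = X^{2}$ versus $W = H^{2}$, compare densities, then lift the density-ratio limit to a tail-ratio limit via L'H\^opital (the paper's final chain of equalities is exactly $\lim (1-F_{H})/(1-F_{X}) = \lim (1-F_{H})/(1-F_{X^{2}}\circ \text{sq}) = \lim f_{H}/(2x f_{X^{2}}(x^{2}))$). Your boundary-case treatment ($r=1$, $\mul = r$, and the bound $\abias\in(0,1]$) is also what the paper does, with one refinement: the paper observes that in those two cases $H\eqd X$ exactly (half-normal when $r=1$; scaled chi when $\mul=r$ since $\m{D}=\sigma^2\m{I}_m$ forces $\m{S}=s_r\m{I}_r$), so the ratio is identically $1$ rather than merely having limit $1$.

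There is one conceptual misstep in your anticipated "main obstacle." You write that one must verify the saddle-point relative error is $o(1)$. It is not, and the paper does not need it to be. The paper (\cref{supp-lemma:fW-convergence}) shows that the relative-error factor $1+\epsilon(x) = f_{W_x}(w_x(x))$ converges to the \emph{nonzero constant} $(\mul/2)^{(\mul-1)/2} e^{-\mul/2}/\Gamma(\mul/2)$; the standard saddle-point approximation is therefore only "tail exact up to a multiplicative constant." That constant then cancels exactly against the prefactors $2^{(\mul+1)/2}/\bigl(\lambda_{1}^{\mul/2}\mul^{(\mul-1)/2}\bigr)\cdot e^{\mul/2}$ appearing in the explicit formula for $\widetilde{f}_{X^{2}}$ (\cref{supp-lemma:saddle-point-approx-explicit}), leaving precisely $\prod_{j>\mul}(1-\lambda_{j}/\lambda_{1})^{1/2}$ after comparison with $f_{H}$. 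If you insist on relative error $\to 0$ you will never finish, because the limit constant is strictly less than $1$ for $\mul\neq 2$.

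Your proposed alternative route — factor $M_{Y}(t) = (1-\lambda_{1}t)^{-\mul/2}\,g(t)$, realize $Y=Y_{1}+Y_{2}$ with $Y_{1}$ the exact scaled $\chi^{2}_{\mul}$ and $Y_{2}$ the independent light-tailed remainder with $M_{Y_{2}}(1/\lambda_{1})<\infty$, and show $\overline{F}_{Y}(y)\sim M_{Y_{2}}(1/\lambda_{1})\,\overline{F}_{Y_{1}}(y)$ by dominated convergence on the convolution — is genuinely different from the paper and is sound. It is the classical "light-tailed perturbation of a gamma tail" argument; the constant $M_{Y_{2}}(1/\lambda_{1}) = \prod_{j>\mul}(1-\lambda_{j}/\lambda_{1})^{-1/2} = 1/\abias$ emerges immediately, and no saddle-point error analysis is needed. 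The trade-off is that the paper's saddle-point machinery also produces the explicit control needed for \cref{body-theorem:CDF-bounds} (the non-asymptotic Kolmogorov--Smirnov rate), so the machinery is not wasted; the convolution route would give the limit but would still leave the convergence-rate analysis to be done separately. Your side remark — that \cref{body-assumption:gap-lam1-lam2} is used only to keep $\abias$ bounded away from $0$ uniformly in $n$, while the pointwise limit for fixed $n$ holds regardless — is correct and consistent with the paper's discussion.
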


In words, \cref{body-proposition:saddle-point-tail-equivalence} establishes that the right tail of the random variable $X$, which is equal in distribution to the Euclidean norm of the Gaussian random vector $\m{E}_{1,\all}\T \m{V} \m{S}\inv$, is governed by the eigenvalues $(\lambda_{j})_{j \in \dpar{r}}$, which intricately depend on the interplay between spectral properties of the signal and noise matrices.
Here, the term $\abias$ quantifies the discrepancy between the upper tail of $X$ and of the reference $H$.
This non-negative term is bounded away from zero due to \cref{body-assumption:gap-lam1-lam2}, though more tightly spaced values of $(\lambda_{j})_{j \in \dpar{r}}$ lead to smaller values of $\abias$, indicating a heavier tail of $X$ relative to the generalized gamma tail of $H$.

In order to establish \cref{body-theorem:Gumbel-convergence}, one of our main results, the remaining key ingredient is to apply a fundamental result in extreme value theory \citep[Proposition 1.19]{resnick_extreme_2007} which establishes a correspondence between normalizing sequences for tail equivalent random variables.
This enables obtaining normalizing sequences for $X$ using easier-to-compute normalizing sequences for $H$.
Finally, using \cref{body-lemma:first-order-approximation}, we obtain the limiting distribution of $\tinorm{\mh{U}\RU - \m{U}}$ in \cref{body-theorem:Gumbel-convergence} which is proved \suppref{supp-section:proof-of-Gumbel-convergence}.

\begin{theorem}[Extreme value asymptotics for singular subspace estimation]
\label{body-theorem:Gumbel-convergence}
    Let $\gumbelrv$ denote a standard Gumbel random variable. 
    Under \cref{body-assumption:noise,body-assumption:matrix-size,body-assumption:delocalization,body-assumption:snr,body-assumption:gap-lam1-lam2}, it holds that
    \begin{align*}
       \tstat
       \coloneqq
       a_{n}^{-1}
       \plr{\tinorm{\mh{U}\RU - \m{U}} - b_{n}}
       +
       \log(\abias)
       \rightsquigarrow
       \gumbelrv
    \end{align*}
    as $n \to \infty$, where $(a_{n})_{n \ge 1}$ and $(b_{n})_{n \ge 1}$ are normalizing sequences given by 
    \begin{align*}
        a_{n}
        \coloneqq
        \frac{\sqrt{\lambda_{1}}}{2\sqrt{\log n}},
        \quad
        b_{n}
        \coloneqq
        \sqrt{\lambda_{1}\log n}
        +
        \frac{\sqrt{\lambda_{1}} (\mul-2)\log\log n}{4\sqrt{\log n}}
        -
        \frac{\sqrt{\lambda_{1}}\log \Gamma(\mul/2)}{2\sqrt{\log n}}
    \end{align*}
    and where $\Gamma(\cdot)$ denotes the gamma function.
\end{theorem}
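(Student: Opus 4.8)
The plan is to prove \cref{body-theorem:Gumbel-convergence} in three steps: reduce the two-to-infinity norm to the maximum of the i.i.d.\ variables $X_i$ from \eqref{body-eq:Xi-quadratic-form}; compute the Gumbel norming sequences for the generalized gamma reference $H$ of \cref{body-proposition:saddle-point-tail-equivalence}; and transfer the extreme value asymptotics from $H$ to $X$ via tail equivalence. For the \emph{reduction}, \cref{body-lemma:first-order-approximation} gives, on an event of probability $1-O(n^{-5})$,
\[
\bigl|\, \tinorm{\mh{U}\RU - \m{U}} - \tinorm{\m{EVS}\inv} \,\bigr|
\le
\tinorm{\mh{U}\RU - \m{U} - \m{EVS}\inv}
\le
\Delta_n ,
\]
where $\Delta_n$ is the right-hand side of \eqref{body-eq:first-order-approximation}, and since the rows of $\m{EVS}\inv$ are i.i.d.\ we have $\tinorm{\m{EVS}\inv}=\max_{i\in\dpar n}X_i$. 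Using \cref{body-lemma:MGF} (which gives $\sqrt{\lambda_1}\simeq 1/s_r$), the boundedness of $r,\mu,\kappa,\sigma$, and \cref{body-assumption:snr}, one checks $a_n\simeq 1/(s_r\sqrt{\log n})$ and $\Delta_n/a_n \lesssim \sqrt n\log n/s_r + \log n/\sqrt n \to 0$; by Slutsky's theorem it therefore suffices to prove the stated convergence with $\tinorm{\mh{U}\RU-\m{U}}$ replaced by $\max_{i\in\dpar n}X_i$.

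For the \emph{norming sequences}, note that since $\mul$ is fixed by \cref{body-assumption:snr}, $H \eqd \sqrt{\lambda_1}\,W$ with $W\sim\operatorname{GG}(1,\mul,2)$ a \emph{single} fixed distribution. Writing $W^2$ as a $\operatorname{Gamma}(\mul/2,1)$ variable, standard incomplete-gamma asymptotics give $1-F_W(t)=\Gamma(\mul/2,t^2)/\Gamma(\mul/2)\sim t^{\mul-2}e^{-t^2}/\Gamma(\mul/2)$ as $t\to\infty$, and a direct check of the von Mises condition $\tfrac{d}{dt}\bigl((1-F_W(t))/f_W(t)\bigr)\to 0$ shows $W$ lies in the Gumbel domain with auxiliary function $\sim 1/(2t)$. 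Taking $\tilde b_n$ with $1-F_W(\tilde b_n)=1/n$ and $\tilde a_n=1/(2\tilde b_n)\sim 1/(2\sqrt{\log n})$, inversion of the tail asymptotics yields the expansion $\tilde b_n=\sqrt{\log n}+\tfrac{(\mul-2)\log\log n}{4\sqrt{\log n}}-\tfrac{\log\Gamma(\mul/2)}{2\sqrt{\log n}}+o(1/\sqrt{\log n})$. I would then verify \emph{directly} that $n\bigl(1-F_W(x/(2\sqrt{\log n})+\tilde b_n)\bigr)\to e^{-x}$, which confirms that the closed-form $a_n,b_n$ stated in the theorem (not merely asymptotically equivalent constants) are valid norming sequences; rescaling by $\sqrt{\lambda_1}$ gives $n\bigl(1-F_H(a_n x+b_n)\bigr)\to e^{-x}$, uniformly for $x$ in compact sets.

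For the \emph{transfer}, \cref{body-proposition:saddle-point-tail-equivalence} provides $1-F_X(t)=\abias^{-1}(1-F_H(t))(1+o(1))$ as $t\to\infty$; since $\abias\in(0,1]$ is bounded away from $0$ by \cref{body-assumption:gap-lam1-lam2}, combining this with the previous step evaluated at the shifted argument $a_n(x-\log\abias)+b_n$ gives $n\bigl(1-F_X(a_n(x-\log\abias)+b_n)\bigr)\to \abias^{-1}\cdot\abias\,e^{-x}=e^{-x}$, whence $F_X^{\,n}(a_n(x-\log\abias)+b_n)=\bigl(1-n^{-1}(e^{-x}+o(1))\bigr)^n\to e^{-e^{-x}}$, i.e.\ $a_n^{-1}(\max_{i}X_i-b_n)+\log\abias\rightsquigarrow\gumbelrv$. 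Together with the reduction step this proves the theorem. Conceptually this is the norming-constant correspondence for tail-equivalent distributions \citep[Proposition~1.19]{resnick_extreme_2007}, adapted to the triangular array at hand, with the $\log\abias$ offset accounting for the ratio of the right tails of $X$ and $H$.

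The main obstacle is the triangular-array structure: both $F_X=F_{X,n}$ and $F_H=F_{H,n}$ change with $n$ through the diverging signal singular values that enter $\lambda_1,\dots,\lambda_r$, so the tail-equivalence ratio in \cref{body-proposition:saddle-point-tail-equivalence} cannot simply be used at a fixed $n$ but must be controlled \emph{uniformly in $n$} over the relevant range $t\simeq\sqrt{\lambda_1\log n}$ before it can be evaluated along the sequence $t_n=a_n x+b_n\to\infty$ (and at the $\abias$-shifted argument, using that $\abias$ stays in a fixed compact subinterval of $(0,1]$); this uniformity is precisely what the saddle-point density estimates underlying \cref{body-proposition:saddle-point-tail-equivalence} are designed to furnish. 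A secondary, more routine, point is the bookkeeping needed to match the explicit closed-form $a_n,b_n$ to the classical von Mises norming constants, which is why the second step verifies $n(1-F_W(\cdot))\to e^{-x}$ by hand rather than merely invoking a domain-of-attraction theorem.
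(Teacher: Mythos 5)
Your proposal is correct and follows essentially the same route as the paper: reduce to $\max_i X_i$ via the first-order approximation of \cref{body-lemma:first-order-approximation} and Slutsky's lemma (the paper's \cref{supp-lemma:first-order-convergence}), derive norming sequences for the generalized gamma reference using the scaling identity $H\eqd\sqrt{\lambda_1}W$ (the paper's \cref{supp-lemma:generalized-gamma-normalizing-constants}, justified by \cref{body-remark:scaled-chi-properties}), and transfer by tail equivalence and Resnick's Proposition 1.19 (the paper's \cref{supp-lemma:tail-equivalence}). The only notable difference is expository: you perform the Slutsky reduction first and flag the triangular-array uniformity concern explicitly, whereas the paper handles it implicitly through the scaling property, which fixes the distribution of $W = H/\sqrt{\lambda_1}$ and confines the $n$-dependence of $X/\sqrt{\lambda_1}$ to the bounded ratios $(\lambda_j/\lambda_1)_{j>\mul}$ controlled by \cref{body-assumption:gap-lam1-lam2}; both treatments are sound.
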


In \cref{body-theorem:Gumbel-convergence}, $\log(\abias) \in (-\infty, 0]$ is a necessary bias term that simultaneously reflects properties of the signal singular values and of the upper tail comparison between $X$ and $H$ discussed above.
Further, the normalizing sequences $(a_{n})_{n \ge 1}$ and $(b_{n})_{n \ge 1}$ themselves depend on the diverging signal singular values, which accounts for the scaling of the distribution of $H$ as $n \to \infty$.

\begin{remark}[Scaling property of the generalized gamma distribution]
\label{body-remark:scaled-chi-properties} 
    The distribution of $H$ implicitly depends on the number of rows $n$ via the parameter $\sqrt{\lambda_{1}}$ which is a function of the non-zero singular values of $\m{M}$ (see \cref{body-lemma:MGF}).
    For instance, if $\m{D} = \sigma^{2}\m{I}_{m}$, then $\lambda_{1} = 2\sigma^{2} / s_{r}^{2}$ and so $\sqrt{\lambda_{1}} = o(1)$ by \cref{body-assumption:snr}.
    Nevertheless, degeneracy is avoided here by the scaling property of the generalized gamma distribution.
    In particular, for any $\rho > 0$ it holds that $H_{\rho} \eqd \rho H_{1}$, where $H_{\rho} \sim \operatorname{GG}(\rho,\mul,2)$.
    Consequently, given scalars $\alpha_{1,n} > 0$ and $\beta_{1,n}$,
    \begin{align}
        \alpha_{1,n}\inv
        \plr{\max_{i \in \dpar{n}} H_{1,i} - \beta_{1,n}}
        \eqd 
        (\rho \alpha_{1,n})\inv
        \plr{\max_{i \in \dpar{n}} H_{\rho,i} - \rho\beta_{1,n}}
        ,
        \label{body-eq:generalized-gamma-sequences-scaling}
    \end{align}
    where $H_{\rho,1}, \dots, H_{\rho,n}$ denote i.i.d. copies of $H_{\rho}$.
    Thus, if $(\alpha_{1,n})_{n \ge 1}$ and $(\beta_{1,n})_{n \ge 1}$ are real-valued sequences such that the left-hand side of \cref{body-eq:generalized-gamma-sequences-scaling} converges in distribution to an extreme value distribution as $n \to \infty$, then $(\rho\alpha_{1,n})_{n \ge 1}$ and $(\rho\beta_{1,n})_{n \ge 1}$ are corresponding normalizing sequences for $H_{\rho}$ such that the right-hand side of \cref{body-eq:generalized-gamma-sequences-scaling} converges in distribution to the same extreme value distribution, irrespective of whether $\rho$ changes with $n$.
    Consequently, the normalizing sequences for the maximum of $H$ can be obtained by first using classical techniques in extreme value theory to compute the normalizing sequences for the maximum of $H_{1}$ and then rescaling by $\sqrt{\lambda_{1}}$. 
\end{remark}

Below, \cref{body-theorem:CDF-bounds} bounds the Kolmogorov--Smirnov distance between the cumulative distribution function of the statistic $\tstat$ and that of the standard Gumbel distribution, yielding a non-asymptotic distributional guarantee that strengthens \cref{body-theorem:Gumbel-convergence}.
The proof of \cref{body-theorem:CDF-bounds} is provided in \suppref{supp-pf:CDF-bounds}.

\begin{theorem}[Extreme value CDF convergence rate for singular subspace estimation]
\label{body-theorem:CDF-bounds}
    Let $F_{n}$ denote the cumulative distribution function of $\tstat$ in \cref{body-theorem:Gumbel-convergence}, and let $F_{\gumbelrv}(x) = \exp(-\exp(-x))$ denote the cumulative distribution function of a standard Gumbel random variable $\gumbelrv$.
    Under \cref{body-assumption:noise,body-assumption:matrix-size,body-assumption:delocalization,body-assumption:snr,body-assumption:gap-lam1-lam2}, there exists $n_{0} \in \mathbb{N}$ and a constant $C > 0$ such that if $n \geq n_{0}$, then
    \begin{align*}
        \sup_{x \in \R} |F_{n}(x) - F_{\gumbelrv}(x)|
        \leq
        C
        \clr{
            {\sqrt{\rlog}}
            \plr{
                \frac{\sigma\sqrt{n}}
                {s_{r}}
                +
                \sqrt{
                    \frac{\mu r}{n}
                }
            }
            +
            \frac{\sigma\sqrt{\mu r n \log n}}
            {s_{r}}
            +
            \frac{\mul \log\log n}{\log n}
        }.
    \end{align*}
\end{theorem}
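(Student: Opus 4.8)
The plan is to bound the Kolmogorov--Smirnov distance by decomposing the error into two sources: the discrepancy introduced by replacing $\tinorm{\mh{U}\RU - \m{U}}$ with its first-order surrogate $\max_{i \in \dpar{n}} X_i$ (controlled by \cref{body-lemma:first-order-approximation}), and the rate of convergence of the normalized maximum of the $X_i$'s to the Gumbel law. Write $\tstat = a_n^{-1}(\tinorm{\mh{U}\RU - \m{U}} - b_n) + \log(\abias)$ and let $\tstat^{\star} = a_n^{-1}(\max_{i} X_i - b_n) + \log(\abias)$ be the idealized statistic built from the surrogate. First I would show that on the high-probability event from \cref{body-lemma:first-order-approximation}, the difference $|\tinorm{\mh{U}\RU - \m{U}} - \max_i X_i|$ is at most $\tinorm{\mh{U}\RU - \m{U} - \m{EVS}\inv}$, which is bounded by the right-hand side of \cref{body-eq:first-order-approximation}; dividing by $a_n = \sqrt{\lambda_1}/(2\sqrt{\log n})$ and using $\sqrt{\lambda_1} \simeq \sigma/s_r$ converts this into a bound of order $\frac{\kappa\sigma\sqrt{rn}\log n}{s_r}(\kappa^{-1} + \sqrt{\mu/\log n})$ on $|\tstat - \tstat^{\star}|$, matching the first bracketed term. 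Since the Gumbel CDF is Lipschitz with constant $1/e$, a deterministic shift of size $\delta$ changes $\sup_x|F_n(x) - F_{\gumbelrv}(x)|$ by at most $\delta + \P(\text{bad event})$, and the failure probability $O(n^{-5})$ is absorbed.

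The second and more delicate part is bounding $\sup_x |F_{\tstat^{\star}}(x) - F_{\gumbelrv}(x)|$, the rate of Gumbel convergence for the normalized maximum of $n$ i.i.d.\ copies of $X$. Here I would combine three ingredients. (i) From \cref{body-proposition:saddle-point-tail-equivalence}, $X$ is tail equivalent to $H \sim \operatorname{GG}(\sqrt{\lambda_1},\mul,2)$ with constant $\abias$; the saddle point analysis in the proof of that proposition in fact yields a quantitative tail expansion, $1 - F_X(x) = \abias(1 - F_H(x))(1 + o(1))$ with an explicit $o(1)$ controlled by powers of $1/\log n$ and the saddle point remainder. (ii) For the generalized gamma reference, the tail is $1 - F_H(x) \sim \frac{2}{\lambda_1^{\mul/2}\Gamma(\mul/2)} \cdot \frac{\lambda_1}{2} x^{\mul - 2} e^{-x^2/\lambda_1}$ as $x \to \infty$, a Gaussian-type (Mills-ratio) tail. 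Plugging the normalizing sequences $a_n, b_n$ from \cref{body-theorem:Gumbel-convergence} into $n(1 - F_X(a_n x + b_n))$ and expanding $b_n^2/\lambda_1 = \log n + \tfrac{(\mul-2)}{2}\log\log n - \log\Gamma(\mul/2) + \cdots$ shows the leading behavior is $e^{-x}$, with correction terms of order $\frac{\log\log n}{\log n}$ and $\frac{1}{\log n}$ arising from the Taylor expansion of the exponent and the polynomial prefactor around $x = b_n$. (iii) A standard extreme-value bound: $|F_{\tstat^{\star}}(x) - F_{\gumbelrv}(x)| = |(1 - (1-F_X(a_n x + b_n)))^n - e^{-e^{-x}}|$, which by $|(1-u/n)^n - e^{-u}| \lesssim u^2/n$ (and a uniform-over-$x$ truncation argument for large $|x|$) reduces to controlling $|n(1 - F_X(a_n x + b_n)) - e^{-x}|$ uniformly. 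The upshot is a bound of order $\frac{\mul \log\log n}{\log n}$, matching the second bracketed term; the multiplicity $\mul$ enters through the $(\mul-2)\log\log n$ term in $b_n$ and the $x^{\mul-1}$ prefactor in $f_H$.

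The main obstacle I anticipate is making the saddle point tail expansion \emph{uniform in $x$ over a growing range} — specifically over $x \in [a_n x' + b_n : |x'| \le c\log\log n]$, which is the window that matters after the truncation in step (iii) — rather than merely pointwise as $x \to \infty$ for fixed $n$. The statement of \cref{body-proposition:saddle-point-tail-equivalence} is a pointwise limit, so I would need to revisit its proof and extract a remainder bound that is uniform over this window and explicit in $n$ through $\lambda_1$, $\mul$, and the gap constant $c_\lambda$. A secondary subtlety is that $\lambda_1 = o(1)$ (by \cref{body-remark:scaled-chi-properties}), so all tail estimates must be stated in terms of the scale-free variable $x/\sqrt{\lambda_1}$; the scaling property in \cref{body-eq:generalized-gamma-sequences-scaling} lets me reduce to the case $\lambda_1 \simeq 1$, after which the argument becomes a genuinely classical (if careful) Gumbel-rate computation for a fixed-shape generalized gamma distribution perturbed by the tail-equivalence factor. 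Finally, I would verify that the threshold $n_*$ and constant $C_*$ can be chosen depending only on the model constants $(c_\sigma, \sigma^2, \mu, \kappa, r, \mul, c_\lambda)$ and not on $n$, which follows by tracking constants through each of the three ingredients above.
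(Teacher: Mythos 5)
Your proposal is correct in its essentials and follows the same broad strategy as the paper: first-order approximation error plus Gumbel convergence rate, with the saddle-point tail expansion from \cref{body-proposition:saddle-point-tail-equivalence} driving the latter. The one genuine structural difference is that you \emph{decouple} the two error sources: you replace $\tstat$ by the surrogate $\tstat^{\star}$ built from $\max_i X_i$, control the random shift $\gamma_n = a_n^{-1}(\tinorm{\mh{U}\RU - \m{U}} - \tinorm{\m{EVS}\inv})$ by \cref{body-lemma:first-order-approximation}, and absorb it into a KS bound via the Lipschitz constant $1/e$ of $F_{\gumbelrv}$. The paper instead carries $\gamma_n$ directly through the survival-function argument, writing $F_n(x) = F_X^n(a_n\{x - \log\abias + \gamma_n\} + b_n)$ and Taylor-expanding the exponent with the $\gamma_n$ term inside; this forces $\gamma_n$ to appear in the expansion on the central region and to be tracked through the tail-region truncation. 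Your decoupling is more modular and makes the two error terms in the final bound transparently additive; the paper's approach is slightly tighter in constants but entails more bookkeeping. Beyond that, your step (iii) via $|(1-u/n)^n - e^{-u}| \lesssim u^2/n$ plus truncation is equivalent in substance to the paper's four-region partition $(D_{1,n},\dots,D_{4,n})$ and the $n\log(1-\cdot)$ expansion on the central window.

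The obstacle you flag — that \cref{body-proposition:saddle-point-tail-equivalence} gives only a pointwise $x\to\infty$ limit, whereas the application needs uniformity over a $\sqrt{\lambda_1\log n}$-scale window with $\lambda_1 \to 0$ — is indeed the delicate point, and the paper addresses it only implicitly (the ``$(1+o(1))$'' in the first display of the proof is silently a $(1 + O(1/\log n))$ coming from the explicit remainders $g$, $q$, $r$ in \cref{supp-lemma:saddle-point,supp-lemma:fW-convergence,supp-lemma:survival-function}). Two remarks so you don't lose constants when you make this rigorous. First, every ``$O(x^{-2})$'' remainder in those lemmas in fact scales as $O(\lambda_1/x^2)$ in the $X^2$-scale variable, so at $x^2 \simeq b_n^2 \simeq \lambda_1 \log n$ the remainders are $O(1/\log n)$, independent of $\lambda_1$; this is exactly the scale-free reduction you anticipate via \cref{body-remark:scaled-chi-properties}, and \cref{body-assumption:gap-lam1-lam2} is what keeps the factors $1/(1-\lambda_j\spt_x)$ bounded uniformly in $n$. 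Second, when you say step (iii) ``reduces to controlling $|n(1-F_X(a_nx+b_n)) - e^{-x}|$ uniformly,'' be slightly more careful on the left edge of the window: for $x$ moderately negative, $e^{-x}$ can be $O(\log n)$ and the absolute error $|u - e^{-x}|$ is not small, but the quantity you actually need, $|e^{-u} - e^{-e^{-x}}|$, is still small because of the $e^{-\min(u,e^{-x})}$ prefactor. The paper's expansion (see its use of $\exp(-x)F_{\gumbelrv}(x)$, $x\exp(-x)F_{\gumbelrv}(x)$, $x^2\exp(-x)F_{\gumbelrv}(x)$ as uniformly bounded functions) handles this automatically; in your formulation, you should insert the double-exponential damping factor before taking the supremum.
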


The upper bound appearing in \cref{body-theorem:CDF-bounds} consists of three additive terms.
The first two terms arise from the first-order approximation error established in \cref{body-lemma:first-order-approximation} using perturbation analysis and do not involve $\mul$.
In contrast, the third term reflects the intrinsic extreme value convergence rate per the study of \cref{body-eq:Xi-quadratic-form} and does involve $\mul$.
Both terms vanish asymptotically as $n \to \infty$ under \cref{body-assumption:snr}, with the overall rate depending on the magnitude of $s_{r}$.
Concretely, letting $\sigma, \mul, r, \mu \simeq 1$ for the sake of discussion, if $n^{1/2} \log n \ll s_{r} \lesssim n^{1/2} \log^{2} n / (\log\log n)$, which can be interpreted as corresponding to a weak signal strength regime, then the first two terms due to the perturbation analysis dominates the overall rate.
Conversely, if $s_{r} \gg n^{1/2} \log^{2} n / (\log\log n)$, which can be interpreted as corresponding to a moderate or strong signal strength regime, then the third term due to the underlying extreme value convergence dominates the overall rate.

\subsection{Plug-in extreme value test statistic using de-biased sample singular values}
\label{body-section:debiased-singular-values}

In what follows, we pursue hypothesis testing and inference for $\m{U}$, and doing so leads us to treat the unknown population signal singular values as nuisance parameters.
This section obtains shrinkage-type de-biased estimators $(\widetilde{s}_{j})_{j \in \dpar{r}}$ for the signal singular values using the leading sample singular values of $\mh{M}$, thus yielding a data-driven version of \cref{body-theorem:Gumbel-convergence} that begets statistical hypothesis testing.
To enable explicit calculations, and as is often done in the existing literature, we consider some additional simplifying model assumptions as described below.

\begin{assumption}[Model simplification for de-biased plug-in estimation]
    \label{body-assumption:plug-in-asms}
    The following items hold.
    \begin{enumerate}[label=(\roman*)]
        \item (Entry-wise Gaussian noise)
        The entries of the noise matrix $\m{E}$ are i.i.d. $\NN(0,\sigma^{2})$ with $0 < \sigma^{2} < \infty$.
        \label{body-item:E-iid-entries}
        
        \item (Limiting matrix aspect ratio)
        The matrix dimensions $n$ and $m \equiv m(n)$ satisfy $\min\{n / m, m / n\} \to c \in (0, 1]$ as $n \to \infty$.
        \label{body-item:matrix-aspect-ratio}
        
        \item (Signal singular value separation)
        If $r \geq 2$, there exist constants $n_{0}, c_{0} > 1$ such that $\min_{j \in \dpar{r-1}}\{s_{j}/s_{j+1}\} \geq c_{0}$ for all $n \geq n_{0}$. 
        \label{body-item:sval-separation}
    \end{enumerate}
\end{assumption}

\cref{body-item:E-iid-entries} posits that the entries of $\m{E}$ are i.i.d. $\NN(0, \sigma^{2})$ random variables to enable computing the almost sure limits of $(\widehat{s}_{j})_{j \in \dpar{r}}$ using the techniques in \citet{benaych-georges_singular_2012}. 
\cref{body-item:matrix-aspect-ratio} posits that the matrices $\mh{M}$, $\m{M}$, and $\m{E}$ have a specified limiting aspect ratio, to facilitate direct calculations.
\cref{body-item:sval-separation} is a singular value gap condition between the non-zero signal singular values.
This separation ensures that the fluctuations of $(\widehat{s}_{j})_{j\in\dpar{r}}$ around their deterministic locations are uniformly of order $n^{-1/2}$, which is used to invoke \citet[Theorem 1]{fan_asymptotic_2022} in the proof of \cref{body-proposition:debiased-singular-values}.
Further, the multiplicity of $s_{r}$ is fixed with $\mul = 1$ as a consequence of \cref{body-item:sval-separation}.

\cref{body-proposition:debiased-singular-values} establishes de-biased singular value estimators and their rates of convergence for our model setting and is proved in \suppref{supp-section:debiased-singular-values}.

\begin{proposition}[De-biased singular value estimators]
    \label{body-proposition:debiased-singular-values}
    Invoke \cref{body-assumption:snr,body-assumption:plug-in-asms}, and let $N \coloneqq \max\{n,m\}$.
    For each $j \in \dpar{r}$, the estimator
    \begin{equation*}
        \widetilde{s}_{j}
        \coloneqq
        \frac{1}{\sqrt{2}}
        \left(
        \widehat{s}_{j}^{2} - (1 + c)\sigma^{2} N
        +
        \sqrt{[
        \widehat{s}_{j}^{2}
        -
        (1 + c)\sigma^{2} N]^{2} - 4 c\sigma^{4} N^{2}}
        \right)^{1/2}
    \end{equation*}
    satisfies
    \begin{equation*}
        \frac{\widetilde{s}_{j}}{s_{j}}
        =
        1
        + 
        O
        \left(
            \frac{\sigma^{4} n^{2}}
                {s_{j}^{4}}
                +
                \frac{\sigma\log^{1/2} n}
                {s_{j}}
         \right)
    \end{equation*}
    with probability at least $1 - O(n^{-6})$.
\end{proposition}

\cref{body-proposition:debiased-singular-values} is the key ingredient for establishing \cref{body-theorem:Gumbel-convergence-plugin} which is a plug-in version of \cref{body-theorem:Gumbel-convergence} that accounts for the signal singular values.
\cref{body-theorem:Gumbel-convergence-plugin} is proved in \suppref{supp-pf:Gumbel-convergence-plugin}.

\begin{theorem}[Data-driven plug-in version of \cref{body-theorem:Gumbel-convergence}]
    \label{body-theorem:Gumbel-convergence-plugin}
    Let $(\widetilde{\lambda}_{j})_{j \in \dpar{r}}$ denote the eigenvalues of $\sigma^{2} \widetilde{\m{S}}^{-2}$ where $\widetilde{\m{S}} = \diag(\widetilde{s}_{1}, \dots, \widetilde{s}_{r})$ per \cref{body-proposition:debiased-singular-values}. 
    Define
    \begin{align*}
       \tplug
       \coloneqq
       \widetilde{a}_{n}^{-1}
       \plr{
       \tinorm{\mh{U}\RU - \m{U}} -\widetilde{b}_{n}}
       +
       \log(\abplug)
    \end{align*}
    where
    $
    \widetilde{a}_{n}
    \coloneqq
    \dfrac{{\widetilde{\lambda}_{1}}^{1/2}}{2\sqrt{\log n}}
    $
    and
    $
    \widetilde{b}_{n}
    \coloneqq
    \widetilde{\lambda}_{1}^{1/2}\sqrt{\log n}
    -
    \dfrac{{\widetilde{\lambda}_{1}}^{1/2}\log\log n}{4\sqrt{\log n}}
    -
    \dfrac{{\widetilde{\lambda}_{1}}^{1/2}\log \Gamma(1/2)}{2\sqrt{\log n}}
    $
    and
    \begin{align*}
        \abplug
        \coloneqq
        \begin{cases}
            1
            &
            \text{if } r = 1, 
            \\
            \prod_{j=2}^{r} \left(1 -\frac{\widetilde{\lambda}_{j}}{\widetilde{\lambda}_{1}}\right)^{1/2}
            &
            \text{if } r > 1.
        \end{cases}
    \end{align*}
    Under \cref{body-assumption:delocalization,body-assumption:snr,body-assumption:gap-lam1-lam2,body-assumption:plug-in-asms}, it holds that $\tplug \rightsquigarrow \gumbelrv$ as $n \to \infty$.
\end{theorem}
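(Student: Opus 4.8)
The plan is to transfer the oracle convergence of \cref{body-theorem:Gumbel-convergence} to the computable statistic $\tplug$ by a Slutsky argument, with the only genuinely new ingredient being the relative-error control of the de-biased singular values supplied by \cref{body-prop:debiased-singular-values}. First I would verify the hypotheses of \cref{body-theorem:Gumbel-convergence} and record that $\mul=1$ in this regime: under \cref{body-assumption:plug-in-asms}\cref{body-item:sval-separation} the signal singular values are strictly decreasing, so $s_r$ has multiplicity $\mul=1$; \cref{body-item:E-iid-entries} is the special case $\m{D}=\sigma^2\m{I}_m$ of \cref{body-assumption:noise}; \cref{body-item:matrix-aspect-ratio} implies \cref{body-assumption:matrix-size}; and, by \cref{body-lemma:MGF}, $\lambda_2/\lambda_1 = s_r^2/s_{r-1}^2 \le c_s^{-2}<1$, so \cref{body-assumption:gap-lam1-lam2} holds automatically. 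Hence $\tstat \rightsquigarrow \gumbelrv$ with $a_n,b_n,\abias$ as in \cref{body-theorem:Gumbel-convergence} specialized to $\mul=1$, and one checks directly that $\widetilde a_n,\widetilde b_n,\abplug$ are exactly those same expressions with each eigenvalue parameter $\lambda_j$ replaced by its plug-in analog $\widetilde\lambda_j$ obtained by substituting $\widetilde s_i$ for $s_i$.

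Next I would establish consistency of the plug-in quantities. By \cref{body-prop:debiased-singular-values}, on an event $\mc{A}_n$ with $\P(\mc{A}_n)\ge 1-O(n^{-6})$ one has $\widetilde s_i/s_i = 1 + O(\sigma^4 n^2/s_i^4 + \sigma\log^{1/2}n/s_i)$ for every $i\in\dpar{r}$; since each $\lambda_j$ and $\widetilde\lambda_j$ is, up to relabeling, proportional to $s_i^{-2}$ and $\widetilde s_i^{-2}$, this gives $\widetilde\lambda_j/\lambda_j = 1 + O(\sigma^4 n^2/s_r^4 + \sigma\log^{1/2}n/s_r)$ uniformly in $j$. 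The signal-strength bound $s_r \gg \sigma\sqrt{n}\,\log n$ implied by \cref{body-assumption:snr} makes both error terms $\lop(1)$, whence $\widetilde a_n/a_n = (\widetilde\lambda_1/\lambda_1)^{1/2} = 1+\lop(1)$ and $\log(\abplug)-\log(\abias)=\lop(1)$, the latter because each factor $1-\lambda_j/\lambda_1$ (and its plug-in version) lies in the fixed compact set $[1-c_\lambda,1]$ on which $\log$ is uniformly continuous. The one delicate estimate is
\begin{align*}
    \frac{\widetilde b_n - b_n}{a_n}
    &= 2\log n\,\bigl((\widetilde\lambda_1/\lambda_1)^{1/2}-1\bigr) + O\!\left(\bigl((\widetilde\lambda_1/\lambda_1)^{1/2}-1\bigr)\log\log n\right) \\
    &= \lop(1),
\end{align*}
which holds because $\log n\cdot(\sigma^4 n^2/s_r^4 + \sigma\log^{1/2}n/s_r)\to 0$ once $s_r\gg\sigma\sqrt{n}\,\log n$ is inserted (the two contributions being $O(\log^{-3}n)$ and $O(\log^{1/2}n/\sqrt{n})$, respectively).

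The Slutsky assembly is then routine. Using $\tinorm{\mh{U}\RU-\m{U}} = a_n(\tstat-\log(\abias))+b_n$ from the definition of $\tstat$ and substituting into $\tplug$ gives
\begin{align*}
    \tplug
    &= \frac{a_n}{\widetilde a_n}\,\tstat
    + \bigl(\log(\abplug) - \log(\abias)\bigr)
    + \Bigl(1-\frac{a_n}{\widetilde a_n}\Bigr)\log(\abias)
    + \frac{b_n-\widetilde b_n}{\widetilde a_n}.
\end{align*}
The three trailing terms are each $\lop(1)$: the first by the previous paragraph; the second because $\log(\abias)$ is bounded (by \cref{body-assumption:gap-lam1-lam2}) while $a_n/\widetilde a_n=1+\lop(1)$; the third because $(b_n-\widetilde b_n)/\widetilde a_n = \bigl((b_n-\widetilde b_n)/a_n\bigr)\bigl(a_n/\widetilde a_n\bigr)=\lop(1)\cdot\bop(1)$. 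Since $\tstat\rightsquigarrow\gumbelrv$ and $a_n/\widetilde a_n\to 1$ in probability, Slutsky's theorem yields $(a_n/\widetilde a_n)\tstat\rightsquigarrow\gumbelrv$, and adding an $\lop(1)$ term preserves the limit, so $\tplug\rightsquigarrow\gumbelrv$. On the complement of $\mc{A}_n$, which has vanishing probability, one defines $\tplug$ arbitrarily; this does not affect the limiting distribution.

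I expect the main obstacle to be precisely the estimate $(\widetilde b_n-b_n)/a_n=\lop(1)$: because $b_n/a_n\simeq\log n\to\infty$, the naive bound $\widetilde b_n = b_n(1+\lop(1))$ is useless and one genuinely needs $\widetilde b_n - b_n = \lop(a_n)$, i.e.\ the relative error of $\widetilde\lambda_1^{1/2}$ must be $\lop(1/\log n)$; this is exactly the strength of \cref{body-prop:debiased-singular-values} combined with the lower bound on $s_r$ in \cref{body-assumption:snr}, and is why that proposition was established with an explicit rate rather than mere consistency. A secondary subtlety is that $\tstat$ and the random normalizers $\widetilde a_n,\widetilde b_n,\abplug$ are all measurable functions of the same data matrix $\mh{M}$, but this causes no difficulty since Slutsky's theorem requires no independence, only convergence in probability of the perturbing factors to constants.
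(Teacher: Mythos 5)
Your proof is correct and takes essentially the same route as the paper's: identify the key relative-error bound $(\widetilde b_n - b_n)/a_n = \lop(1)$ as the only nontrivial estimate, obtain it from \cref{body-prop:debiased-singular-values} combined with the rate $s_r \gg \kappa\sigma\sqrt{rn}\log n$ in \cref{body-assumption:snr}, and conclude via Slutsky. The only cosmetic difference is the algebraic arrangement: you keep the multiplicative factor $a_n/\widetilde a_n$ in front of $\tstat$ and invoke Slutsky for that factor directly, whereas the paper expands $\tplug = \tstat + \mathrm{(a)} + \mathrm{(b)} + \mathrm{(c)}$ into purely additive error terms, using a crude high-probability bound $a_n^{-1}(\tinorm{\mh{U}\RU-\m{U}}-b_n) = O(\log n)$ from \cref{body-lemma:first-order-approximation} to control the cross term; the two bookkeeping schemes are equivalent and yield the same rate $O(\sigma^4 n^2\log n/s_r^4 + \log^{3/2}n/s_r)$. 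Your opening observations that $\mul = 1$ is forced by \cref{body-item:sval-separation}, that \cref{body-item:matrix-aspect-ratio} subsumes \cref{body-assumption:matrix-size}, and that \cref{body-assumption:gap-lam1-lam2} holds automatically with $c_\lambda = c_s^{-2}$ are all correct; the paper leaves these implicit (the first shows through in its use of $\Gamma(1/2)$ and the $\mul=1$ form of $b_n$), and making them explicit is a small improvement in exposition.
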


\subsection{Hypothesis testing for singular subspaces}
\label{body-section:hypothesis-testing-framework}

The asymptotic distributional theory for singular subspace estimation in \cref{body-section:gumbel-convergence,body-section:debiased-singular-values} enables testing for singular subspace structure. Conceptually, we are interested in testing between null and alternative hypotheses of the form 
\begin{align}
    \label{body-eq:T1}
    \hypn:
    \m{U} = \m{U}_{0}
    \quad
    \text{versus}
    \quad
    \hypa:
    \m{U} = \m{U}_{1}
    .
\end{align}
More precisely, we consider sequences of hypotheses indexed by $n$, namely $\hypn^{(n)}: \m{U} = \m{U}_{0,n}$ versus $\hypa^{(n)}: \m{U} = \m{U}_{1,n}$ for $(\m{U}_{0,n})_{n \ge 1}$ and $(\m{U}_{1,n})_{n \ge 1}$, where in practice $n$ is large but fixed and for simplicity we write $\mh{U}$ to denote $\mh{U}_{n}$.
To accommodate \cref{body-assumption:delocalization}, the parameter space of interest is taken to be
\begin{equation}
    \label{body-eq:hypothesis-parameter-space}
    \Theta
    \equiv
    \Theta(r, n)
    \coloneqq
    \clr{
        \m{U} \in \R^{n \times r}:
        \m{U}\T\m{U} = \m{I}_{r}, 
        \;
        \frac{r}{n}
        \leq
        \tinorm{\m{U}}^{2}
        \leq
        \frac{1}{\rlog \log\log n}
    }.
\end{equation}
\cref{body-theorem:Gumbel-convergence,body-theorem:Gumbel-convergence-plugin} imply asymptotic Type I error control for the hypothesis testing problem posed in \cref{body-eq:T1}.

\begin{corollary}[Asymptotic Type I error probability]
\label{body-corollary:type-I-error}
    Consider testing the (sequences of) hypotheses in \cref{body-eq:T1} with significance level $\alpha \in (0,1)$.
    Under \cref{body-assumption:noise,body-assumption:matrix-size,body-assumption:delocalization,body-assumption:snr,body-assumption:gap-lam1-lam2}, the test (sequence) ${\phi}_{\alpha,n}(\mh{U}) = \Is{\tstat\geq F_{\gumbelrv}\inv({1-\alpha})}$ has asymptotic size $\alpha$, i.e.,
    $
        \E_{\hypn}[
            {\phi}_{\alpha,n}(\mh{U})
        ]
        \to
        \alpha
    $
    as $n \to \infty$.
    Similarly, under \cref{body-assumption:delocalization,body-assumption:snr,body-assumption:gap-lam1-lam2,body-assumption:plug-in-asms}, the test (sequence) $\widetilde{\phi}_{\alpha,n}(\mh{U}) = \Is{\tplug\geq F_G\inv({1-\alpha})}$ has asymptotic size $\alpha$, i.e.,
    $
        \E_{\hypn}[
            \widetilde{\phi}_{\alpha,n}(\mh{U})
        ]
        \to
        \alpha
    $
    as $n \to \infty$.
\end{corollary}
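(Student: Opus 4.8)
The plan is to derive both claims directly from the distributional results already in hand, namely \cref{body-theorem:Gumbel-convergence} for $\tstat$ and \cref{body-theorem:Gumbel-convergence-plugin} for $\tplug$, together with the elementary observation that the standard Gumbel law has a continuous, strictly increasing cumulative distribution function $F_{\gumbelrv}(x) = \exp(-\exp(-x))$ on all of $\R$. The only things to verify are that, under the null, the statistic appearing in the test is exactly the statistic whose limit has been identified, and then to evaluate the limiting CDF at the deterministic critical value.

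First I would fix a sequence of null subspaces $(\m{U}_{0,n})_{n \ge 1}$ with $\m{U}_{0,n} \in \Theta(r,n,C_{\mu})$ for every $n$. Membership in $\Theta$ forces $\sqrt{r/n} \le \tinorm{\m{U}_{0,n}} \le \sqrt{C_{\mu} r/n}$, so \cref{body-assumption:delocalization} holds with $\mu \simeq 1$; thus under $\hypn^{(n)}: \m{U} = \m{U}_{0,n}$ every hypothesis of \cref{body-theorem:Gumbel-convergence} is met. Moreover, when $\m{U} = \m{U}_{0,n}$ the alignment matrix $\RU = \sgn(\mh{U}\T \m{U}_{0,n})$ and the subspace distance $\tinorm{\mh{U}\RU - \m{U}_{0,n}}$, and hence $\tstat$ itself, are precisely the quantities studied in \cref{body-theorem:Gumbel-convergence} (computable from $\mh{U}$ and $\m{U}_{0,n}$, with the signal singular values treated as known for this first statement). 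Therefore $\tstat \rightsquigarrow \gumbelrv$ under $\hypn^{(n)}$, and \cref{body-theorem:CDF-bounds} in fact upgrades this to $\sup_{x \in \R}|F_n(x) - F_{\gumbelrv}(x)| \to 0$ at an explicit rate.

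Next I would unwind the test. Set $c_{\alpha} \coloneqq F_{\gumbelrv}\inv(1-\alpha)$, which is a finite real number because $F_{\gumbelrv}$ is a continuous bijection onto $(0,1)$, and note $\P(\gumbelrv = c_{\alpha}) = 0$, so the boundary $\{c_{\alpha}\}$ of the half-line $[c_{\alpha},\infty)$ is $\gumbelrv$-null. Then
\[
\E_{\hypn}[\phi_{\alpha,n}(\mh{U})] = \P_{\hypn}\!\left(\tstat \ge c_{\alpha}\right) = \P_{\hypn}\!\left(\tstat \in [c_{\alpha},\infty)\right),
\]
and the portmanteau theorem applied to $\tstat \rightsquigarrow \gumbelrv$ gives $\P_{\hypn}(\tstat \ge c_{\alpha}) \to \P(\gumbelrv \ge c_{\alpha}) = 1 - F_{\gumbelrv}(c_{\alpha}) = \alpha$, which is the asserted asymptotic size; equivalently, $|\P_{\hypn}(\tstat \ge c_{\alpha}) - \alpha| \le \sup_{x}|F_n(x) - F_{\gumbelrv}(x)|$, so \cref{body-theorem:CDF-bounds} even makes the rate of convergence to $\alpha$ explicit. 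The plug-in claim is proved verbatim: under \cref{body-assumption:plug-in-asms} (which strengthens \cref{body-assumption:matrix-size}), \cref{body-theorem:Gumbel-convergence-plugin} yields $\tplug \rightsquigarrow \gumbelrv$ under $\hypn^{(n)}$, hence $\E_{\hypn}[\widetilde\phi_{\alpha,n}(\mh{U})] = \P_{\hypn}(\tplug \ge c_{\alpha}) \to \alpha$ by the same portmanteau argument.

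I do not expect a genuine obstacle: all of the analytic work has already been carried out in \cref{body-theorem:Gumbel-convergence,body-theorem:Gumbel-convergence-plugin,body-theorem:CDF-bounds}. Two points warrant a line of care. First, these theorems are applied along the particular sequence $(\m{U}_{0,n})$ of null subspaces rather than to a single fixed model; this is legitimate because their conclusions (and the rate in \cref{body-theorem:CDF-bounds}) depend on the model only through the constants in \cref{body-assumption:noise,body-assumption:matrix-size,body-assumption:delocalization,body-assumption:snr,body-assumption:gap-lam1-lam2}, which hold uniformly over $\Theta$. Second, the $O(n^{-5})$-probability exceptional event from \cref{body-lemma:first-order-approximation} underlying \cref{body-theorem:CDF-bounds} has vanishing probability and hence does not affect the limit, and is not needed at all if one argues purely through the weak-convergence statements.
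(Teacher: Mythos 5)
Your proposal is correct and follows the same route the paper implicitly takes: the corollary is stated as a direct consequence of \cref{body-theorem:Gumbel-convergence,body-theorem:Gumbel-convergence-plugin}, and your argument---weak convergence to a limit with continuous CDF, hence $\P(\tstat \ge F_{\gumbelrv}\inv(1-\alpha)) \to 1 - F_{\gumbelrv}(F_{\gumbelrv}\inv(1-\alpha)) = \alpha$ since the critical value is a continuity point---is exactly the routine step the paper leaves unstated. Your additional observation that \cref{body-theorem:CDF-bounds} upgrades this to an explicit Kolmogorov--Smirnov rate for the size error is a correct and worthwhile refinement, though not needed for the corollary as stated.
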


Next, given two matrices $\m{A}$ and $\m{B}$ of the same dimensions and each full column rank, let $\Rs{\m{A}}{\m{B}} \coloneqq \sgn(\m{A}\T\m{B})$ where $\sgn(\cdot)$ is as in \cref{body-section:introduction}.
In particular, $\sgn(\m{A}\T\m{B}) \equiv \m{O}_{1}\m{O}_{2}\T$ where $\m{O}_{1}\m{C}\m{O}_{2}\T$ denotes the singular value decomposition of $\m{A}\T\m{B}$.
In order to characterize the power of our test procedure in the setting of \cref{body-eq:T1}, the (sequences of) matrices $\m{U}_{0}$ and $\m{U}_{1}$ are without loss of generality assumed to be aligned in the following manner.

\begin{assumption}[Aligned hypotheses]
    \label{body-assumption:aligned-alternative}
    The hypothesized matrices $\m{U}_{0}, \m{U}_{1} \in \Theta$ satisfy $\m{R}({\m{U}_{0},\m{U}_{1}}) = \m{I}_{r}$.
\end{assumption}

\cref{body-assumption:aligned-alternative} can always be satisfied by first starting with generic orthonormal matrices $\m{U}_{0}, \m{U}^{\prime}_{1} \in \Theta$ and then specifying $\m{U}_{1} = \m{U}^{\prime}_{1}\m{R}(\m{U}^{\prime}_{1},\m{U}_{0})$, hence $\m{R}({\m{U}_{0},\m{U}_{1}}) = \m{I}_{r}$.
Importantly, the columns of $\m{U}_{1}$ span the same subspace as the columns of $\m{U}^{\prime}_{1}$, and the pairwise Euclidean distances among the rows of $\m{U}_{1}$ are identical to those of $\m{U}^{\prime}_{1}$.

\cref{body-theorem:power-analysis} quantifies the statistical power of our test based on the magnitude of the discrepancy $d_{n} \coloneqq \tinorm{\m{U}_{0} - \m{U}_{1}}$, where for reference $d_{n} \leq 2\sqrt{\mu r/n}$ always holds under \cref{body-assumption:delocalization}.
The proof of \cref{body-theorem:power-analysis} is provided in \suppref{supp-pf:power-analysis}.

\begin{theorem}[Asymptotic power of the testing procedure]
    \label{body-theorem:power-analysis}
    Let $\alpha \in (0,1)$.
    Under \cref{body-assumption:delocalization,body-assumption:snr,body-assumption:gap-lam1-lam2,body-assumption:plug-in-asms,body-assumption:aligned-alternative}, consider the test (sequence) $\widetilde{\phi}_{\alpha,n}(\mh{U}) = \Is{\tplug \geq F_{\gumbelrv}\inv(1-\alpha)}$ for the hypotheses in \cref{body-eq:T1}.
    The following properties hold.
    \begin{enumerate}[label=(\roman*)]
        \item (Consistent regime)
        If ${d}_{n} \gg \widetilde{b}_{n}(\mu r)^{1/2}$, then $\E_{\hypa}[\widetilde{\phi}_{\alpha,n}(\mh{U})] \to 1$ as $n \to \infty$.
        \item (Inconsistent regime)
        If $d_{n} \ll \widetilde{a}_{n}(\mu r)^{-1/2}$, then $\E_{\hypa}[\widetilde{\phi}_{\alpha,n}(\mh{U})] \to \alpha$ as $n \to \infty$.
    \end{enumerate}
\end{theorem}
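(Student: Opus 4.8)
The plan is to work under the alternative $\hypa$, where the data are $\mh{M} = \m{U}_{1}\m{S}\m{V}\T + \m{E}$ with $\Rs{\m{U}_{0}}{\m{U}_{1}} = \m{I}_{r}$, and to reduce the plug-in statistic $\tplug$ (which is evaluated at the null subspace $\m{U}_{0}$, not at the true $\m{U}_{1}$) to a functional of $\Tinorm{(\m{U}_{1} - \m{U}_{0}) + \m{EVS}\inv}$, after which the two regimes follow by soft arguments. First I would apply \cref{body-lemma:first-order-approximation} to the true model, with signal subspaces $(\m{U}_{1}, \m{V})$ and alignment $\Rs{\mh{U}}{\m{U}_{1}}$, to write $\mh{U}\Rs{\mh{U}}{\m{U}_{1}} = \m{U}_{1} + \m{EVS}\inv + \m{R}$ where $\tinorm{\m{R}}$ is negligible relative to $\widetilde a_{n}$ (this is the first error term of \cref{body-theorem:CDF-bounds}, which vanishes under \cref{body-assumption:snr}). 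Next, since $\Rs{\mh{U}}{\m{U}_{0}} = \sgn(\mh{U}\T\m{U}_{0})$ and $\mh{U}\Rs{\mh{U}}{\m{U}_{1}}\approx\m{U}_{1}$, I would combine $\sgn(\m{U}_{1}\T\m{U}_{0}) = \m{I}_{r}$ (which follows from \cref{body-assumption:aligned-alternative}) with a perturbation bound for the orthogonal polar factor to obtain $\Rs{\mh{U}}{\m{U}_{0}} = \Rs{\mh{U}}{\m{U}_{1}}(\m{I}_{r} + \m{\Xi})$, where $\|\m{\Xi}\|$ is controlled by the noise projected onto $\col(\m{U}_{0})$, of order $O_{\P}(\sigma\sqrt{r}/s_{r})$ up to $\tinorm{\m{R}}$; using \cref{body-assumption:delocalization} to bound the representative term $\tinorm{\m{U}_{1}\m{\Xi}} \lesssim \sqrt{\mu r/n}\,\|\m{\Xi}\| = o_{\P}(\widetilde a_{n})$, one gets
\[
\mh{U}\Rs{\mh{U}}{\m{U}_{0}} - \m{U}_{0} = (\m{U}_{1} - \m{U}_{0}) + \m{EVS}\inv + \m{\tilde R}, \qquad \tinorm{\m{\tilde R}} = o_{\P}(\widetilde a_{n}).
\]
Finally, \cref{body-prop:debiased-singular-values,body-theorem:Gumbel-convergence-plugin} let me replace the population normalizing quantities $(a_{n}, b_{n}, \abias)$ by the plug-in versions $(\widetilde a_{n}, \widetilde b_{n}, \abplug)$ at the cost of an $o_{\P}(1)$ additive error, so that under $\hypa$
\[
\tplug = \widetilde a_{n}\inv\plr{\Tinorm{(\m{U}_{1} - \m{U}_{0}) + \m{EVS}\inv} - \widetilde b_{n}} + \log(\abplug) + o_{\P}(1).
\]

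For the inconsistent regime $d_{n} \ll \widetilde a_{n}$: since $|\,\Tinorm{(\m{U}_{1}-\m{U}_{0}) + \m{EVS}\inv} - \Tinorm{\m{EVS}\inv}\,| \le \tinorm{\m{U}_{1} - \m{U}_{0}} = d_{n} = o(\widetilde a_{n})$, the right-hand side above equals the statistic of \cref{body-theorem:Gumbel-convergence-plugin} up to $o_{\P}(1)$, hence $\tplug \rightsquigarrow \gumbelrv$ under $\hypa$; therefore $\E_{\hypa}[\widetilde\phi_{\alpha,n}(\mh{U})] \to \P(\gumbelrv \ge F_{\gumbelrv}\inv(1-\alpha)) = \alpha$.

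For the consistent regime $d_{n} \gg \widetilde a_{n}$, it suffices to show $\tplug \to \infty$ in probability, i.e.\ $\Tinorm{(\m{U}_{1}-\m{U}_{0}) + \m{EVS}\inv} \ge \widetilde b_{n} + c\,\widetilde a_{n}$ with probability tending to one for every fixed $c$. Writing $\m{C} \coloneqq \m{U}_{1} - \m{U}_{0}$ with $\tinorm{\m{C}} = d_{n}$ attained at an index $i_{\star}$, one has the row bound $\Tinorm{\m{C} + \m{EVS}\inv} \ge d_{n} - X_{i_{\star}}$, and more generally $\Tinorm{\m{C} + \m{EVS}\inv} \ge \max_{i \in \dpar{n}}\bigl(\tnorm{\m{C}_{i,\all}} + \ip{\m{C}_{i,\all}/\tnorm{\m{C}_{i,\all}}}{\m{S}\inv\m{V}\T\m{E}_{i,\all}}\bigr)$; I would exploit the i.i.d.\ Gaussian structure of the rows of $\m{EVS}\inv$ (cf.\ \cref{body-eq:Xi-quadratic-form,body-lemma:MGF}) over the rows carrying the mass of $\m{C}$ to push this quantity past $\widetilde b_{n} + c\,\widetilde a_{n}$, while the matching upper bound $\Tinorm{\m{C} + \m{EVS}\inv} \le \tinorm{\m{C}} + \tinorm{\m{EVS}\inv} = d_{n} + \max_{i \in \dpar{n}} X_{i}$ together with \cref{body-theorem:Gumbel-convergence} pins down the behaviour of the statistic otherwise.

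The step I expect to be the main obstacle is precisely this last lower bound. Because the pure-noise maximum $\max_{i \in \dpar{n}} X_{i}$ already sits at height $b_{n} \simeq \widetilde b_{n} \simeq \widetilde a_{n}\log n \gg \widetilde a_{n}$, one must argue that superimposing the deterministic matrix $\m{U}_{1} - \m{U}_{0}$ genuinely lifts the two-to-infinity norm above $\widetilde b_{n}$ by a growing multiple of $\widetilde a_{n}$ as soon as $d_{n} \gg \widetilde a_{n}$; this requires a delicate joint treatment of the deterministic shift and the Gumbel-scale fluctuations of the row maxima of $\m{EVS}\inv$, for instance via a conditioning or restriction argument over the rows where $\m{U}_{1} - \m{U}_{0}$ is supported and the alignment of those rows with the near-extremal rows of the noise. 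A secondary technical point is that the polar-factor perturbation used in the reduction needs the principal angles between $\col(\m{U}_{0})$ and $\col(\m{U}_{1})$ to stay bounded away from $\pi/2$, which is consistent with $\Rs{\mh{U}}{\m{U}_{0}}$ being well defined and can be arranged within the parameter space $\Theta$.
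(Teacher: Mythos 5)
Your inconsistent-regime argument is correct and is essentially the paper's. The paper works with the decomposition $\m{Q}_1 := \m{U}_1 - \m{U}_0$, $\m{Q}_2 := \mh{U}\Rs{\mh{U}}{\m{U}_1} - \m{U}_1$, $\m{Q}_3 := \mh{U}\{\Rs{\mh{U}}{\m{U}_0} - \Rs{\mh{U}}{\m{U}_1}\}$ (your $\m{Q}_3$ is $\mh{U}\Rs{\mh{U}}{\m{U}_1}\m{\Xi}$) and a two-sided triangle-inequality sandwich plus Slutsky. One caveat on your route: your bound $\|\m{\Xi}\| = O_\P(\sigma\sqrt{r}/s_r)$ implicitly needs $s_r(\m{U}_1\T\m{U}_0)$ bounded away from zero, which is not guaranteed by $\m{U}_0, \m{U}_1 \in \Theta$ alone. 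The paper's \cref{supp-lemma:switching-orthogonal-alignment-matrices} instead compares $\sgn(\mh{U}\T\m{U}_0)$ to $\sgn(\mh{U}\T\m{U}_1)$ directly, so the only denominator that appears is $s_r(\mh{U}\T\m{U}_1) \to 1$ (Wedin), yielding $\tinorm{\m{Q}_3} \lesssim \sqrt{\mu r/n}\,\tnorm{\m{U}_0-\m{U}_1} \leq \sqrt{\mu r}\,d_n$, which is $o(\widetilde{a}_n)$ when $d_n \ll \widetilde{a}_n$.

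For the consistent regime there is a genuine gap, and it is slightly different from the one you name. Two things go wrong with your reduction. First, the claimed remainder bound $\tinorm{\tilde{\m{R}}} = o_\P(\widetilde{a}_n)$ is false in general here: the switching piece $\m{Q}_3$ can be of order $\sqrt{\mu r}\,d_n$ in the delocalized case, i.e., the same order as the signal shift itself, and hence $\gg \widetilde{a}_n$ when $d_n \gg \widetilde{a}_n$; so the decomposition $\mh{U}\Rs{\mh{U}}{\m{U}_0} - \m{U}_0 = (\m{U}_1-\m{U}_0) + \m{EVS}\inv + \tilde{\m{R}}$ with a negligible $\tilde{\m{R}}$ does not hold. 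Second (the point you flag yourself), lower bounding $\Tinorm{(\m{U}_1-\m{U}_0)+\m{EVS}\inv}$ past $\widetilde{b}_n$ by a diverging multiple of $\widetilde{a}_n$ via a joint treatment of the deterministic shift and the near-extremal noise rows is delicate and is not what is done. The paper sidesteps both difficulties by never decomposing in this regime: it applies a reverse triangle inequality
\[
\tinorm{\mh{U}\Rs{\mh{U}}{\m{U}_0} - \m{U}_0}
\geq
\tinorm{\m{U}_1\Rs{\m{U}_1}{\mh{U}}\Rs{\mh{U}}{\m{U}_0} - \m{U}_0}
-
\tinorm{\mh{U}\Rs{\mh{U}}{\m{U}_1} - \m{U}_1},
\]
and then invokes the purely geometric \cref{supp-lemma:misaligned-hypotheses}, which shows $\tinorm{\m{U}_1\Rs{\m{U}_1}{\mh{U}}\Rs{\mh{U}}{\m{U}_0} - \m{U}_0} \gtrsim (\mu r)^{-1/2}\,d_n$ with high probability (its proof splits on whether $\m{U}_1-\m{U}_0$ is localized or delocalized and reuses \cref{supp-lemma:switching-orthogonal-alignment-matrices}). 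The second term in the display is exactly the null-calibrated quantity, equal to $\widetilde{b}_n + O_\P(\widetilde{a}_n)$; since $d_n/\widetilde{a}_n \to \infty$, the lower bound diverges and Slutsky gives power one. This misaligned-hypotheses lemma is the key missing ingredient in your proposal: it converts the question into a Procrustes-distance fact that never touches the interaction between $\m{U}_1-\m{U}_0$ and the row maxima of $\m{EVS}\inv$.
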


\subsection{Relaxing the Gaussian noise assumption}
\label{body-section:non-Gaussian-noise}

The focus of this paper is on the Gaussian matrix denoising model.
Nevertheless, here we briefly illustrate that our approach and extreme value results also hold in certain settings without requiring Gaussian noise.
The proof of \cref{body-proposition:non-gaussian-noise-rank-1} is provided in \suppref{supp-section:non-gaussian-noise-rank-1}.

\begin{proposition}[Gumbel convergence under general noise, rank-one signal matrix with equiangular singular vectors]
    \label{body-proposition:non-gaussian-noise-rank-1}
    Consider the matrix denoising model with $n = m$, $r = 1$ and $\m{u} = \m{v} = \1_{n} /\sqrt{n}$.
    Further, suppose that the entries of the noise matrix $\m{E}$ are i.i.d. with mean zero, unit variance, and finite moment generating function on an interval containing zero.
    If $s_{r} \gg \sqrt{n \log^{3} n}$, then the conclusion in \cref{body-theorem:Gumbel-convergence} holds.
\end{proposition}

Generalizations of \cref{body-proposition:non-gaussian-noise-rank-1} are beyond the scope of this paper and thus left for future work.

\begin{remark}[Proof overview of \cref{body-proposition:non-gaussian-noise-rank-1}]
    Proving \cref{body-proposition:non-gaussian-noise-rank-1} involves two key steps, resembling the proof of \cref{body-theorem:Gumbel-convergence} but requiring numerous modifications throughout.
    
    The first key step is to establish a high-probability uniform row-wise approximation of the form
    $
    \mh{u}\sgn(\mh{u}\T\m{u}) - \m{u}
    \approx
    \m{E}\m{v}s_{r}^{-1}
    $.
    This is possible by replacing the Gaussian matrix concentration inequalities in \citet[Lemma 19]{yan_entrywise_2024} that were central in proving \cref{body-lemma:first-order-approximation} with concentration results for more general noise distributions.
    In the rank-one setting, the $\ell_{2,\infty}$ norm reduces to the $\ell_{\infty}$ norm.

    The second key step is to establish that $\tinorm{\m{E}\m{v}s_{r}^{-1}}$ converges in distribution to a standard Gumbel distribution after proper centering and scaling.
    Letting $X_{i,n}$ denote the $i$-th row of $\m{E}\m{v}s_{r}^{-1}$ which satisfies $X_{i,n} = \frac{1}{s_{r}\sqrt{n}}\sum_{j=1}^{n}E_{i,j}$, we consider the reference quantity $Y_{i,n} \coloneqq \frac{1}{s_{r}\sqrt{n}}\sum_{j=1}^{n}Z_{i,j}$, where $\m{Z}\coloneqq (Z_{i,j})_{i,j\in\dpar{n}}$ has i.i.d. $\NN(0,1)$ entries. 
    Notably, by the proof of \cref{body-theorem:Gumbel-convergence} in \suppref{supp-section:proof-of-Gumbel-convergence}, the maximum of $|Y_{i,n}|$ over $i\in\dpar{n}$ is in the Gumbel domain of attraction, more specifically, $[F_{|Y_{i,n}|}(a_{n}x + b_{n})]^{n} \to \exp(-\exp(-x))$ for all $x\in\R$, with $(a_{n})_{n \geq 1}$ and $(b_{n})_{n \geq 1}$ as defined in \cref{body-theorem:Gumbel-convergence}.
    From here, we first show that $\lim_{n\to\infty} [1-F_{|X_{i,n}|}(a_{n}x + b_{n})]/[1-F_{|Y_{i,n}|}(a_{n}x + b_{n})] \to 1$ for all $x\in\R$ using the Bahadur--Rao theorem \citep{bahadur_deviations_1960} in large deviation theory.
    The remainder of the proof uses this tail ratio convergence along with the benchmark Gumbel convergence in the Gaussian noise setting to show that $[F_{|X_{i,n}|}(a_{n}x + b_{n})]^{n} \to \exp(-\exp(-x))$ for all $x\in\R$.
\end{remark}

\section{Numerical examples}
\label{body-section:Numerical-simulations}

This section provides a suite of numerical examples and illustrations using simulations.
For ease of presentation, we sometimes refer to parametrized sequences of matrices (corresponding to sequences of null and alternative hypotheses) simply by their representatives.
For example, we may identify $\{\1_{n}\}_{n \ge 1}$ simply by $\1_{n}$, where large values of $n$ are of primary interest.

\subsection{Test statistic distribution under the null hypothesis}
\label{body-section:simulation-setup-convergence}

This section empirically investigates the finite-sample properties of \cref{body-theorem:Gumbel-convergence,body-theorem:Gumbel-convergence-plugin}. 
Here, we consider $\m{M}$ having $n = 3000$ rows and $m = 1.2 n = 3600$ columns with rank $r = 5$. 
The left and right matrices of signal singular vectors, $\m{U}$ and $\m{V}$, are given by orthonormalizing the columns of two independent random matrices $\m{N}_{1} \in \R^{n \times r}$ and $\m{N}_{2} \in \R^{m \times r}$ each having i.i.d. standard normal entries, namely $\m{U} = \operatorname{QR}(\m{N}_{1})$ and $\m{V} = \operatorname{QR}(\m{N}_{2})$, where $\operatorname{QR}(\cdot)$ denotes the orthonormal factor obtained from an economy-size QR decomposition.
This configuration results in $\m{U}$ and $\m{V}$ being sampled from a unitarily invariant distribution on the the set of $r$-frames in $\R^{n}$ and $\R^{m}$, respectively, so that $\mu$ is treated as a constant.
The noise matrix is taken to have i.i.d. entries of the form $E_{i,j} \sim \NN(0,\sigma^{2})$ with $\sigma = 1$. 

\cref{body-fig:convergence-in-distribution} displays three pairs of histograms and corresponding quantile-quantile plots for three statistics involving the two-to-infinity norm, each compared to the theoretical density function and quantiles of the standard Gumbel distribution.
The first (oracle) statistic corresponds to \cref{body-theorem:Gumbel-convergence} and uses knowledge of the signal singular values.
The second (de-biased) statistic corresponds to \cref{body-theorem:Gumbel-convergence-plugin} and de-biases the sample singular values without knowledge of the signal singular values.
For comparison, the third (uncorrected) statistic corresponds to na\"{i}vely estimating the population singular values with their sample counterparts.
Here, the signal singular values are taken to be equally spaced between $s_{r} = 0.25\sigma \sqrt{rn}\log^{1.01} n$ and $s_{1} = \sigma \sqrt{rn}\log^{1.01} n$, and the noise matrix $\m{E}$ is independently simulated $\nmc = 1800$ times to create a random sample of matrices $\mh{M}$, keeping $\m{M}$ fixed.
In this example, the signal singular values only slightly exceed the minimum signal strength imposed by the condition $s_{r} \gg \sqrt{\mu \rlog}(\sigma\sqrt{n})$ in \cref{body-assumption:snr}.
\cref{body-fig:convergence-in-distribution} illustrates the fidelity of asymptotic approximations in finite samples in this near-boundary setting and demonstrates the importance of correctly de-biasing the sample singular values.

\cref{body-tab:size-simulations} reports tail probability and interval probability estimates under $\hypn$ in \cref{body-eq:T1}.
Here, the signal singular values are equally spaced between $s_{r}$ and $s_{1} = 3 s_{r}$ for different values of $s_{r}$.
The noise matrix $\m{E}$ is independently simulated $\nmc = 5000$ times to compute the Monte Carlo probability estimates, keeping $\m{M}$ fixed.
This procedure is then repeated for various choices of $n$ and $s_{r}$ to observe their effect on the convergence of $\tplug$ to $G$.
Notably, if $s_{r}$ is sufficiently large, then the probability estimates no longer appreciably improve when further increasing $s_{r}$, all else equal, corroborating \cref{body-theorem:CDF-bounds}.

\begin{figure}[t]
    \centering
    \begin{minipage}{.33\textwidth}
        \includegraphics[width=\linewidth]{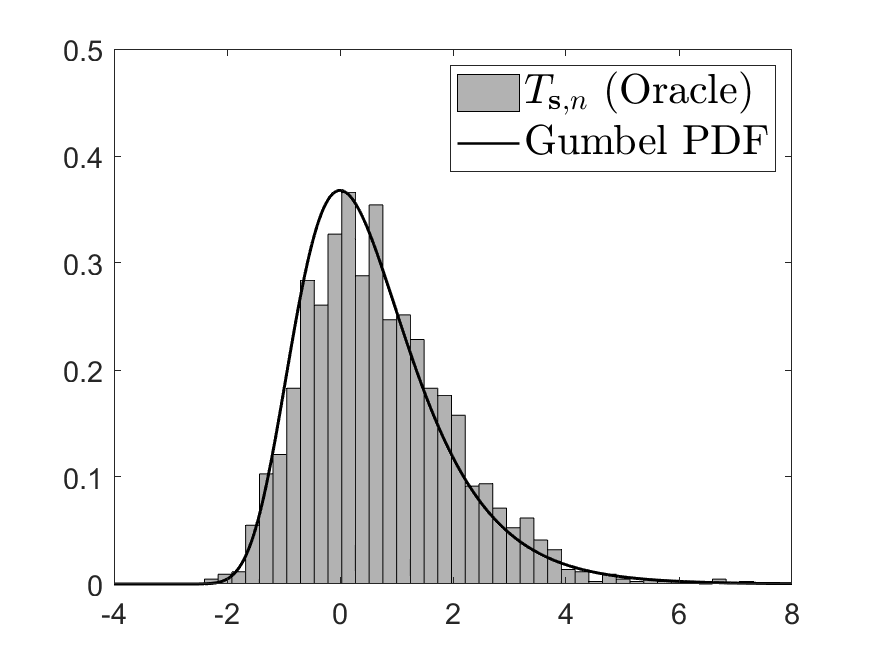}
    \end{minipage}%
    \begin{minipage}{.33\textwidth}
        \includegraphics[width=\linewidth]{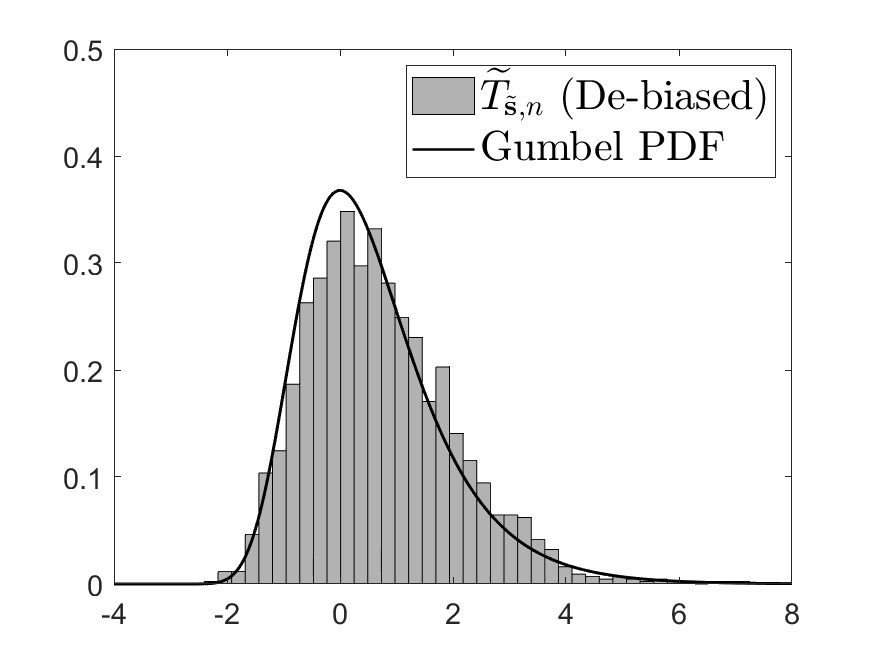}
    \end{minipage}%
    \begin{minipage}{.33\textwidth}
        \includegraphics[width=\linewidth]{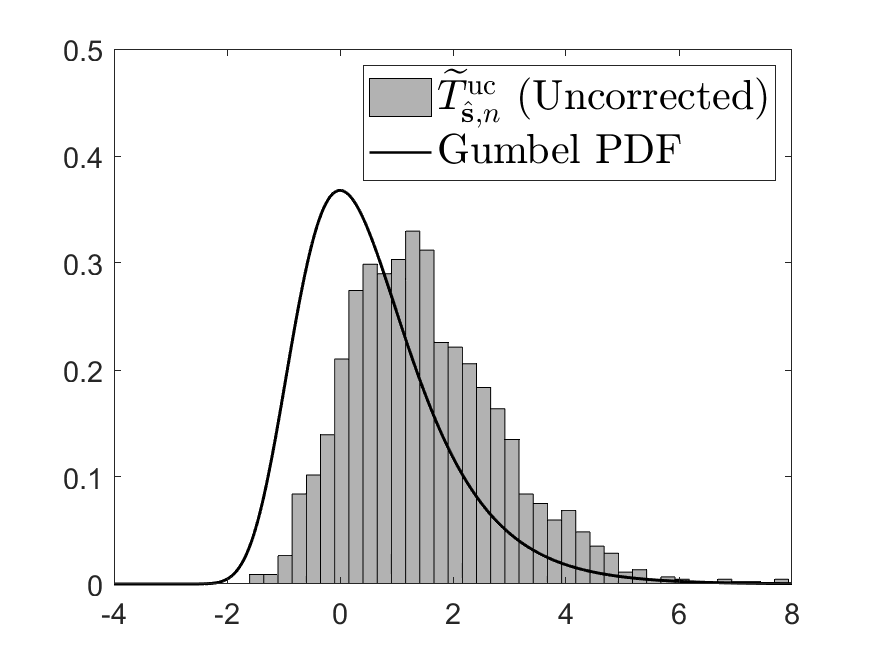}
    \end{minipage}
    \begin{minipage}{.33\textwidth}
        \includegraphics[width=\linewidth]{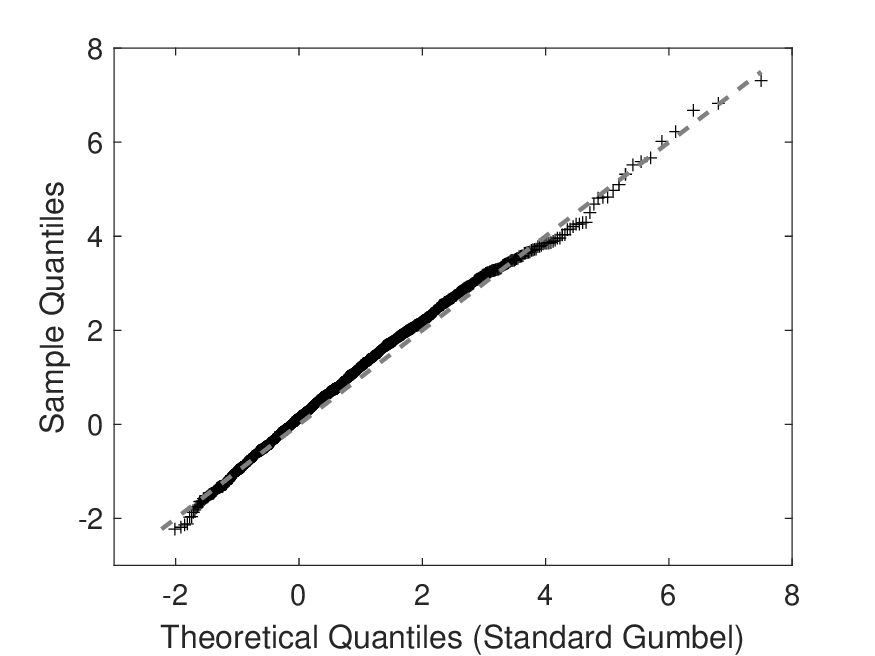}
    \end{minipage}%
    \begin{minipage}{.33\textwidth}
        \includegraphics[width=\linewidth]{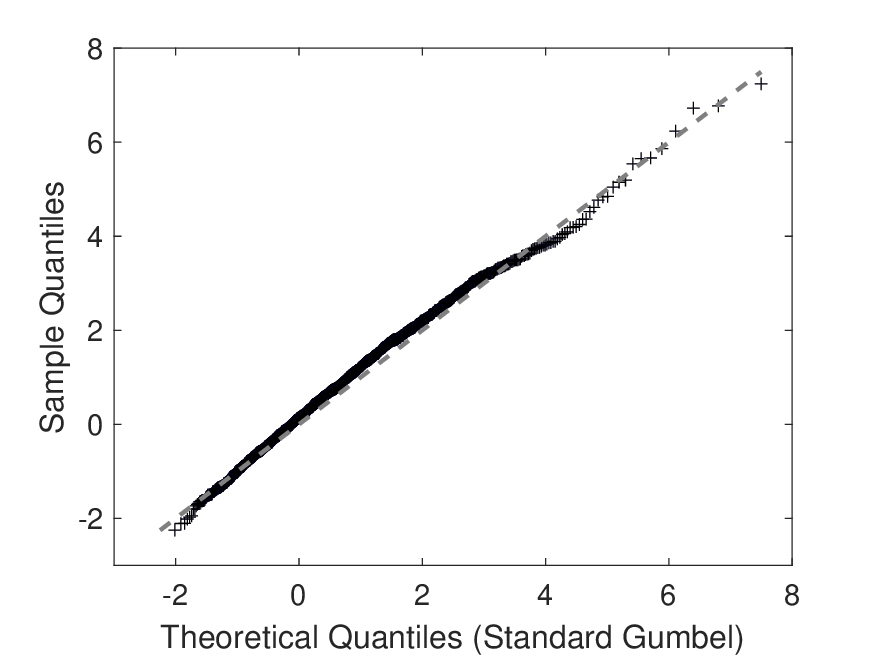}
    \end{minipage}%
    \begin{minipage}{.33\textwidth}
        \includegraphics[width=\linewidth]{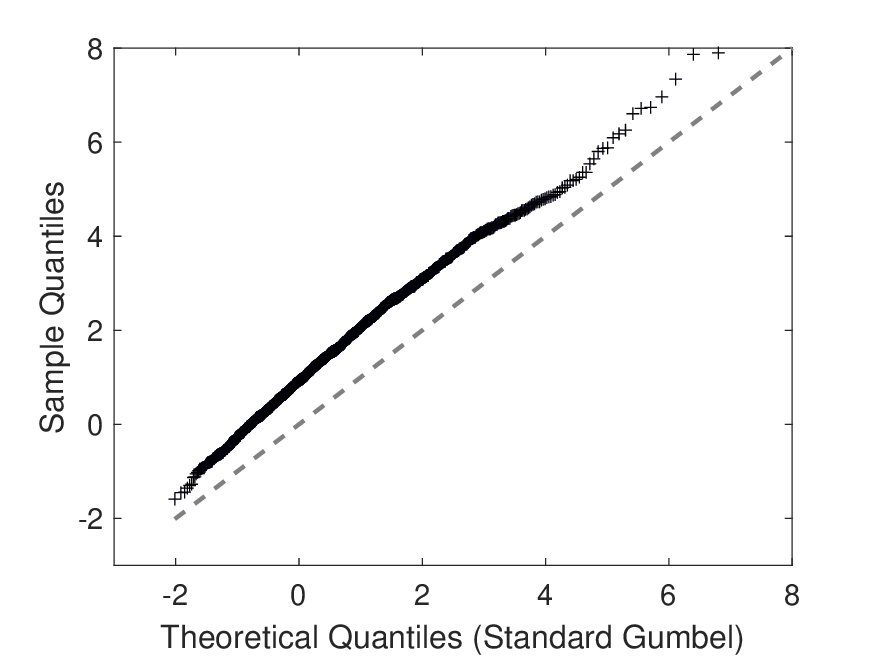}
    \end{minipage}
    \caption{Empirical test statistic distributions for simulated data under $\hypn$ in \cref{body-eq:T1}.
    Left column: test statistic $\tstat$ using the signal singular values.
    Middle column: test statistic $\tplug$ using the de-biased sample singular values.
    Right column: test statistic $\tpluguc$ using the uncorrected sample singular values.
    See \cref{body-section:simulation-setup-convergence}.} 
    \label{body-fig:convergence-in-distribution}
\end{figure}

\begin{table}[t]
    \centering
    \begin{tabular}{p{1cm}|p{2cm}p{2cm}p{2cm}p{2cm}p{2cm}}
    \hline
        \multicolumn{6}{c}{Estimate of upper tail probability $\P\{\tplug\geq F_G\inv(0.9)\}$}\\
        \hline
        \diagbox[width=1.4cm, height=0.7cm]{$n$}{$s_r$}&\centering $\sigma n^{0.5}$&\centering $\sigma n^{0.625}$&\centering $\sigma n^{0.750}$&\centering $\sigma n^{0.875}$ & \centering \arraybackslash $\sigma n$\\
       \hline
       \centering 100 &\centering 0.332 &\centering 0.084 &\centering 0.069 &\centering 0.066 & \centering \arraybackslash  0.066 \\
       \centering 800 &\centering 0.712 &\centering 0.131 &\centering 0.100 &\centering 0.081 & \centering \arraybackslash  0.089\\
       \centering 1600 &\centering  0.863 &\centering 0.128&\centering 0.091&\centering  0.100& \centering \arraybackslash 0.093\\
       \centering $\infty$ &\centering  NA &\centering 0.1 &\centering 0.1&\centering  0.1& \centering \arraybackslash 0.1\\
       \hline
       \multicolumn{6}{c}{Estimate of lower tail probability $\P\{\tplug\leq F_G\inv(0.1)\}$}\\
       \hline
       \centering 100 &\centering 0.056 &\centering 0.144 &\centering 0.147 &\centering 0.139 & \centering \arraybackslash  0.147 \\
       \centering 800 &\centering 0.001 &\centering 0.068 &\centering 0.110 &\centering 0.117 & \centering \arraybackslash  0.121\\
       \centering 1600&\centering 0.000 &\centering 0.070&\centering  0.108 &\centering  0.104& \centering \arraybackslash 0.111\\
       \centering $\infty$ &\centering  NA &\centering 0.1 &\centering 0.1&\centering  0.1& \centering \arraybackslash 0.1\\
       \hline
       \multicolumn{6}{c}{Estimate of interval probability $\P\{\tplug\in [F_G\inv(0.2),F_G\inv(0.8)]\}$}\\
       \hline
       \centering 100 &\centering 0.414 &\centering 0.579 &\centering 0.581 &\centering 0.592 & \centering \arraybackslash  0.580 \\
       \centering 800 &\centering 0.144 &\centering 0.591 &\centering 0.593 &\centering 0.602 & \centering \arraybackslash  0.588\\
       \centering 1600&\centering 0.051 &\centering 0.602&\centering  0.605 &\centering  0.597& \centering \arraybackslash 0.602\\
       \centering $\infty$ &\centering  NA &\centering 0.6 &\centering 0.6&\centering  0.6& \centering \arraybackslash 0.6\\
       \hline
    \end{tabular}
    \caption{Estimates for upper tail, lower tail, and interval probabilities under $\hypn$ in \cref{body-eq:T1} when $\sigma = 1$. 
    See \cref{body-section:simulation-setup-convergence}.}
    \label{body-tab:size-simulations}
\end{table}

\subsection{Statistical power}

This section empirically investigates the finite-sample power of our proposed testing methodology, thereby complementing the theoretical findings in \cref{body-theorem:power-analysis}.

\subsubsection{Row discrepant alternatives and power comparison}
\label{body-section:power-against-local-alternatives}

Here, we investigate the performance of our testing methodology as a function of the closeness between $\m{U}_{0}$ and $\m{U}_{1}$ in \cref{body-eq:T1}.
Towards this end, for aligned hypotheses as in \cref{body-assumption:aligned-alternative}, we introduce and consider the concept of \emph{row discrepancy} which simply counts the number of rows of $\m{U}_{1}$ that differ from $\m{U}_{0}$. 

\begin{definition}[Row discrepancy of an alternative matrix]
\label{body-defn:locality}
    Under \cref{body-eq:T1} and \cref{body-assumption:aligned-alternative}, the row discrepancy of the alternative matrix $\m{U}_{1}$ with respect to the null matrix $\m{U}_{0}$ is defined as
    \begin{align*}
        \loc{\m{U}_{1};\m{U}_{0}}
        \coloneqq
        \sum_{i=1}^{n}
        \I{(\m{U}_{0})_{i,\all}
        \neq
        (\m{U}_{1})_{i,\all}}
        .
    \end{align*}
    Further, the alternative matrix $\m{U}_{1}$ is said to be $\xi$-row discrepant from $\m{U}_{0}$ if $\loc{\m{U}_{1};\m{U}_{0}} = \xi$. 
    Necessarily, $\xi \in \{0, 1,\dots, n\}$.
\end{definition}

Concretely, consider the rank-one setting $\m{M} = s_{r}\m{uv}\T$ with $n = m$, where for clarity $r=1$.
We consider testing the hypotheses
\begin{align*}
    \hypn:
    \m{u}=\m{u}_{0}
    \quad
    \vs
    \quad
    \hypa:
    \m{u}=\m{u}_{1}
    ,
\end{align*}
where 
\begin{align}
\label{body-eq:T2}
    \m{u}_{0}
    \coloneqq
    \frac{1}{\sqrt{n}}\1_{n}
    \quad
    \text{ and }
    \quad
    \m{u}_{1}
    \coloneqq
    \frac{1}{\sqrt{n}}
    \begin{bmatrix}
        \1_{(n-\xi)} \\
        -\1_{\xi}
    \end{bmatrix}
    .
\end{align}
Here, $\m{u}_{1}$ is implicitly parametrized by $\xi$ and is $\xi$-row discrepant from $\m{u}_{0}$ for $\xi \in \{0, 1, \dots, \lfloor (n-1)/2 \rfloor\}$.
Further, here
\begin{equation*}
    d_{n}
    \coloneqq
    \tinorm{\m{u}_{0} - \m{u}_{1}}
    =
    2/\sqrt{n}
\end{equation*}
irrespective of $\xi$.
By comparing the value of $d_{n}$ to the threshold for consistent testing stated in \cref{body-theorem:power-analysis}, i.e., $\widetilde{b}_{n}(\mu r)^{1/2} \simeq {\sigma (\mu r \log n)^{1/2}}/s_{r}$, we have $d_{n} \gg \widetilde{b}_{n}(\mu r)^{1/2}$ provided $s_{r} \gg \sqrt{\mu r \log n}(\sigma \sqrt{n})$, which is satisfied by \cref{body-assumption:snr}.
Hence, by \cref{body-theorem:power-analysis}, the vector $\m{u}_{1}$ is always in the consistent testing regime for our proposed procedure under the present model assumptions.

A major advantage of using a test statistic based on the two-to-infinity norm lies in its sensitivity to fine-grained structure and correspondingly its ability to distinguish between null and alternative matrices that have small row discrepancy.
For comparison, we also consider existing distributional theory for the Frobenius norm projection distance developed in \citet{xia_normal_2021} which yields an alternative approach to testing the hypotheses.
In particular, under i.i.d. standard Gaussian noise, \citet[Theorem 4]{xia_normal_2021} applies and establishes that
\begin{align*}
     \tfrob
     \coloneqq
     \frac{\fnorm{\mh{U}\mh{U}\T-\m{U}\m{U}\T}^{2} + \fnorm{\mh{V}\mh{V}\T - \m{V}\m{V}\T}^{2} - 2(n+m-2r)\fnorm{\m{S}\inv}^{2}}
     {\sqrt{8(n+m-2r)}\fnorm{\m{S}^{-2}}}
     \rightsquigarrow
     Z
     \sim
     \NN(0,1)
\end{align*}
as $n \to \infty$ provided $s_{r} \gg \sigma\sqrt{rn}$ and $r \ll n$, both of which hold in our setting.

\begin{figure}[t]
    \centering
    \begin{minipage}{.5\textwidth}
        \includegraphics[width=\linewidth]{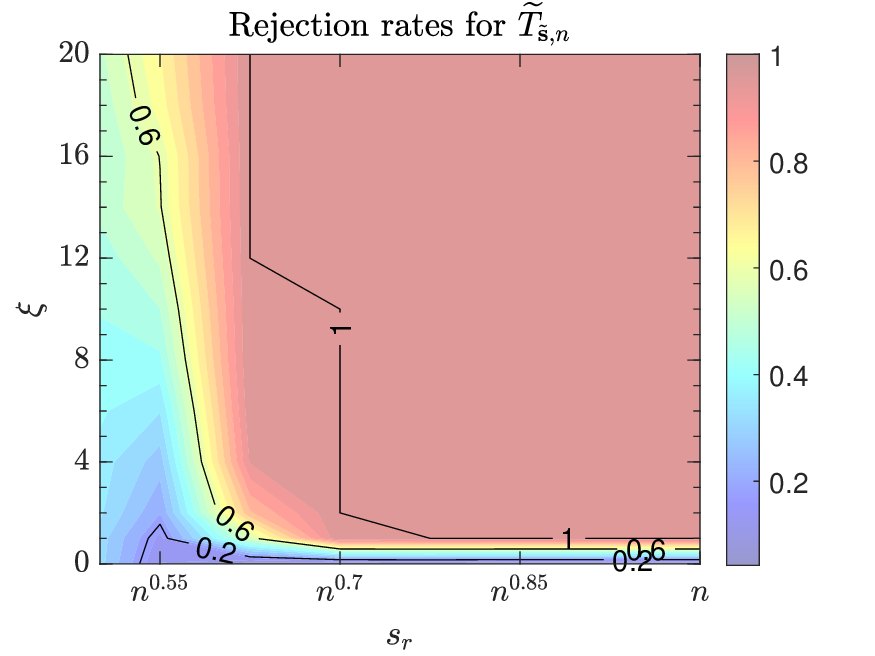}
    \end{minipage}%
    \begin{minipage}{.5\textwidth}
        \includegraphics[width=\linewidth]{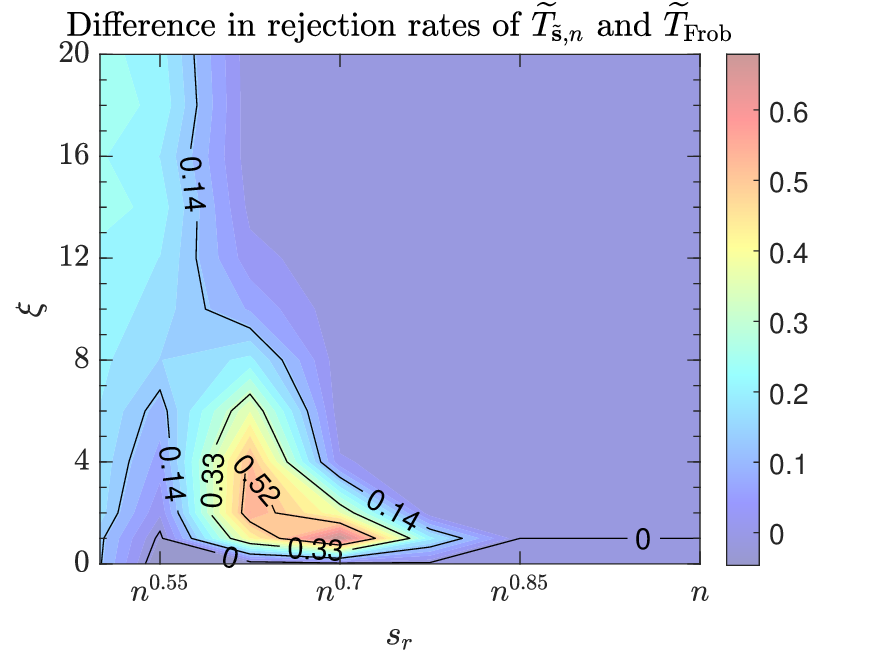}
    \end{minipage}
    \caption{Contour plots depicting empirical rejection rates for the tests with rejection rules $\tplug \geq F\inv_{G}(0.95)$ and $\tfplug \geq F_{Z}\inv(0.95)$, respectively, for various combinations of the signal strength $s_{r}$ and the row discrepancy $\xi$.
    See \cref{body-section:power-against-local-alternatives}.}
    \label{body-fig:power-local}
\end{figure}

\begin{table}[t]
    \centering
    \begin{tabular}{p{1cm}|p{2cm}p{2cm}|p{2cm}p{2cm}}
    \hline
        \multicolumn{5}{c}{Rejection rate under $\hypa$ in \cref{body-eq:T2}; $\loc{\m{u}_{1};\m{u}_{0}}=1$}\\
        \hline
        &\multicolumn{2}{c|}{$\tplug$}&\multicolumn{2}{c}{$\tfplug$}\\
        \hline
        \diagbox[width=1.4cm, height=0.6cm]{$n$}{$s_{r}$}& \centering $\sigma {n}^{1/2}\log^{2/3}n$& \centering $\sigma n^{4/5}$& \centering $\sigma {n}^{1/2}\log^{2/3}n$& \centering \arraybackslash $\sigma n^{4/5}$\\
       \hline
        \centering 20& \centering 0.937& \centering 0.923& \centering 0.596& \centering \arraybackslash  0.576\\
        \centering 100& \centering 0.999& \centering 1.000& \centering 0.592& \centering \arraybackslash  0.832\\
        \centering 400& \centering 1.000& \centering 1.000& \centering 0.433& \centering \arraybackslash  0.948\\
        \centering 700& \centering 1.000& \centering 1.000& \centering 0.349& \centering \arraybackslash  0.970\\
        \centering 1000& \centering 1.000& \centering 1.000& \centering 0.308& \centering \arraybackslash  0.982\\
       \hline
    \end{tabular}
    \caption{Empirical rejection rates under $\hypa$ in \cref{body-eq:T2} for the two tests with rejection rules $\tplug \geq F\inv_{G}(0.95)$ and $\tfplug \geq F_{Z}\inv(0.95)$, respectively.
    See \cref{body-section:power-against-local-alternatives}.}
    \label{body-tab:power-simulations}
\end{table}

\cref{body-fig:power-local,body-tab:power-simulations} compare the testing approaches $\widetilde{\phi}_{\alpha,n}(\mh{u}) = \Is{\tplug \geq F_{\gumbelrv}\inv(1-\alpha)}$ and $\widetilde{\phi}_{\alpha,n,\operatorname{F}}(\mh{u}) = \Is{\tfplug \geq F_{Z}\inv(1-\alpha)}$, where
\begin{align*}
    \tplug
    =
    \widetilde{a}_{n}\inv (\tinorm{\mh{u}\sgn(\mh{u}\T\m{u}_{0}) - \m{u}_{0}}
    -
    \widetilde{b}_{n})
    +
    \log(\abplug)
    ,
\end{align*}
and
\begin{align*}
    \tfplug
    =
    \frac{\fnorm{\mh{u}\mh{u}\T -\m{u}_{0}\m{u}_{0}\T}^{2}
    +
    \fnorm{\mh{v}\mh{v}\T - \m{v}\m{v}\T}^{2}
    -
    4(n-r){\widetilde{s}_{r}^{-2}}}{\sqrt{16(n-r)}{\widetilde{s}_{r}^{-2}}}
    .
\end{align*}
In more detail, under $\hypa$ in \cref{body-eq:T2}, the signal matrix is $\m{M} = s_{r}\m{u}_{1}\m{v}\T \in \R^{n \times m}$, where $\m{u}_{1}$ is specified in \cref{body-eq:T2}, $\m{v} \coloneqq \1_{m}/\sqrt{m}$, $n = m$, and $\m{E}$ is generated with i.i.d. $\NN(0,\sigma^{2})$ entries with $\sigma=1$.
Here, by assumption, $\m{u}_{0}$, $\m{u}_{1}$, and $\m{v}$ are all delocalized, hence $\mu=1$.

In \cref{body-fig:power-local}, we compare the empirical rejection rates of the two testing approaches when $n = 400$ is fixed while varying $s_{r}$ and $\xi$.
For each $(s_{r}, \xi)$ pair, we compute the plug-in test statistics $\tplug$ and $\tfplug$ for $\nmc = 3000$ simulation replicates each and report the empirical rejection rates under nominal level $\alpha = 0.05$.
The bottom-center regions of the contour plots demonstrate that our test is better able to detect the alternative than using $\tfplug$, especially for small values of row discrepancy, $1 \leq \xi \leq 8$.

\cref{body-tab:power-simulations} further compares the empirical rejection rates between the two testing approaches when $\xi = 1$ is fixed, while varying $n$ under two signal regimes. 
For each $(s_{r}, n)$ pair, the plug-in test statistics are computed $\nmc=8000$ times.
\cref{body-tab:power-simulations} confirms that the test based on $\tplug$ consistently rejects the null for both weak and strong signals as $n$ grows.
In contrast, the performance of the test based on $\tfplug$ deteriorates under the weaker signal setting as $n$ grows and improves only gradually under the stronger signal setting.

Above, we have shown the advantage of a two-to-infinity norm test statistic compared to a Frobenius norm test statistic for parameter space $\Theta$ per \cref{body-eq:hypothesis-parameter-space}.
That being said, we emphasize that \citet[Theorem 4]{xia_normal_2021} holds under a weaker assumption on $s_{r}$ than required in \cref{body-assumption:snr} and does not require restricting $\tinorm{\m{U}}$ per \cref{body-eq:hypothesis-parameter-space}.
The slightly stronger assumptions employed in this paper enable more fine-grained inferential theory compared to adopting weaker structural assumptions on the matrix denoising model.

\subsubsection{Power phase transition}
\label{body-section:power-phase-transition}

In this section, we consider testing the hypotheses in \cref{body-eq:T1}.
We compute the empirical rejection rate based on various combinations of $n$ and $d_{n}$ to illustrate \cref{body-theorem:power-analysis}.
Matrices $\m{U}_{1}$ are constructed by interpolating between $\m{U}_{0}$ and a delocalized $r$-frame $\m{W} \in \R^{n \times r}$ whose columns span a random $r$-dimensional subspace in the orthogonal complement of the column space of $\m{U}_{0}$.
To ensure that each $\m{U}_{1}$ has orthonormal columns, we generate $\m{U}_{1} =\operatorname{QR}( (1-t)\m{U}_{0} + t\m{W} )$ for $t \in [0,1]$.

\begin{figure}[t]
    \centering
    \begin{minipage}{.5\textwidth}
        \includegraphics[width=\linewidth]{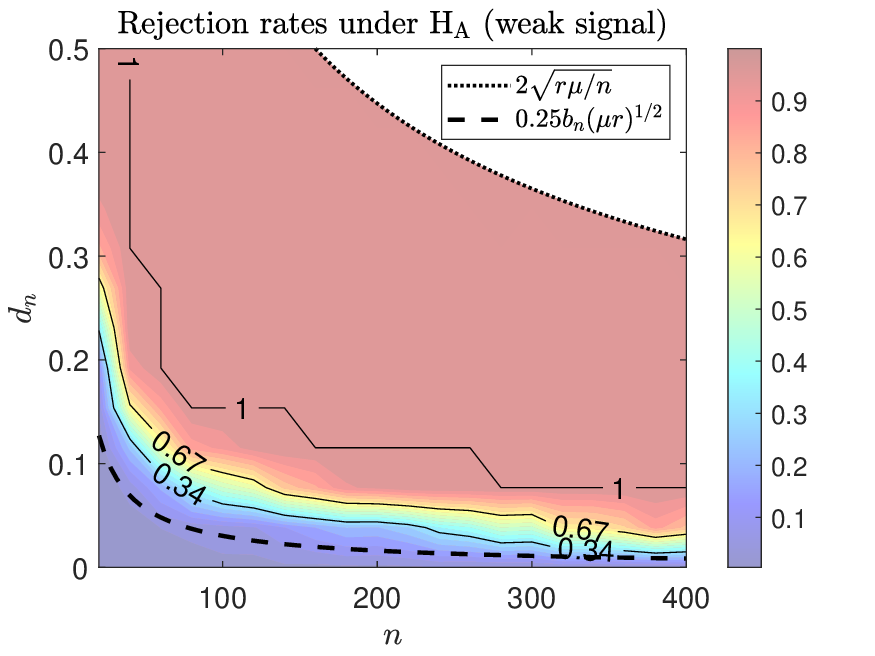}
    \end{minipage}%
    \begin{minipage}{.5\textwidth}
        \includegraphics[width=\linewidth]{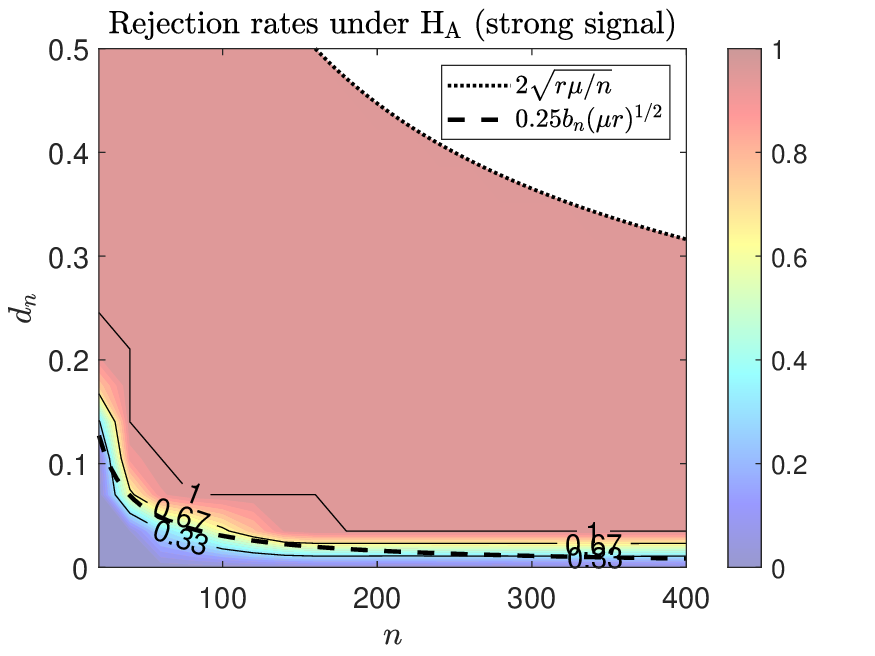}
    \end{minipage}
    \caption{Empirical rejection rates using $\tplug$ for various pairs of $(n,d_{n})$.
    Above the dotted line is the exterior of the parameter space $\Theta$, where the assumptions for \cref{body-theorem:power-analysis} do not hold.
    The dashed line represents the threshold for consistent testing.
    See \cref{body-section:power-phase-transition}.}
    \label{body-fig:power-phase}
\end{figure}        

In both panels of \cref{body-fig:power-phase}, the underlying signal matrix is specified as $\m{M} = \m{U}_{1}\m{SV}\T$ with fixed rank $r = 10$ for each pair $(n,d_{n})$. 
The left panel corresponds to a weak signal regime with the non-zero signal singular values being equally spaced between $s_{r} = 1.2\sigma {n}^{1/2}\log^{2/3}n$, and $s_{1} = 3\sigma {n}^{1/2}\log^{2/3}n$, while the right panel corresponds to a strong signal regime with the non-zero signal singular values being equally spaced between $s_{r} = n$, and $s_{1} = 3n$.
For each pair $(n,d_{n})$, the noise matrix $\m{E}$ is generated independently $\nmc = 2000$ times, with entries drawn i.i.d. from $\NN(0, \sigma^{2})$ with $\sigma = 1$.
The empirical rejection rate is computed as the proportion of samples for which $\tplug \geq F^{-1}_{G}(0.95)$.
\cref{body-fig:power-phase} corroborates the phase transition behavior predicted by \cref{body-theorem:power-analysis}.

\subsection{Robustness to misspecification of noise distribution}
\label{body-section:departures-from-Gaussianity}

\begin{figure}[t]
    \centering
    \begin{minipage}{.33\textwidth}
        \includegraphics[width=\linewidth]{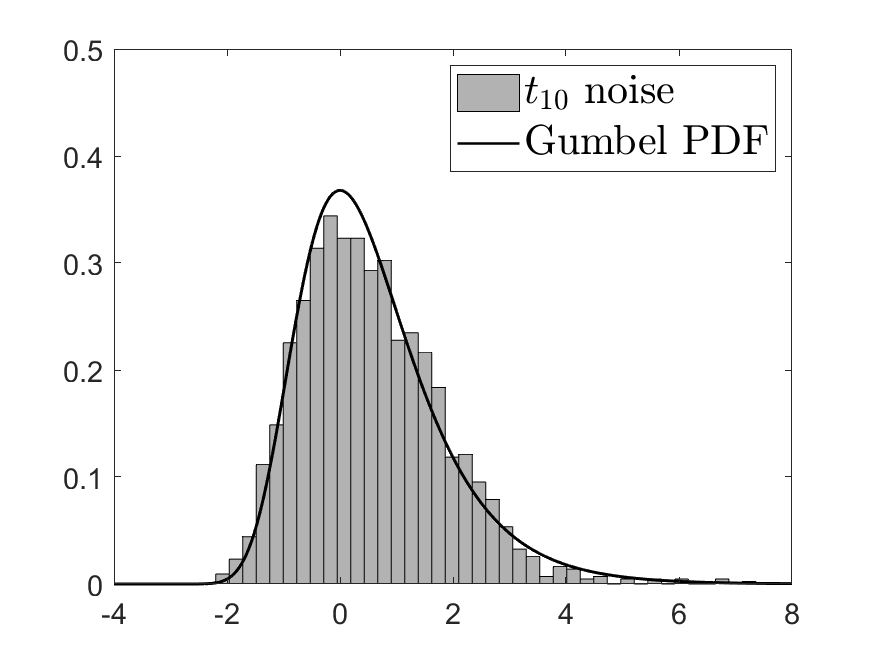}
    \end{minipage}%
    \begin{minipage}{.33\textwidth}
        \includegraphics[width=\linewidth]{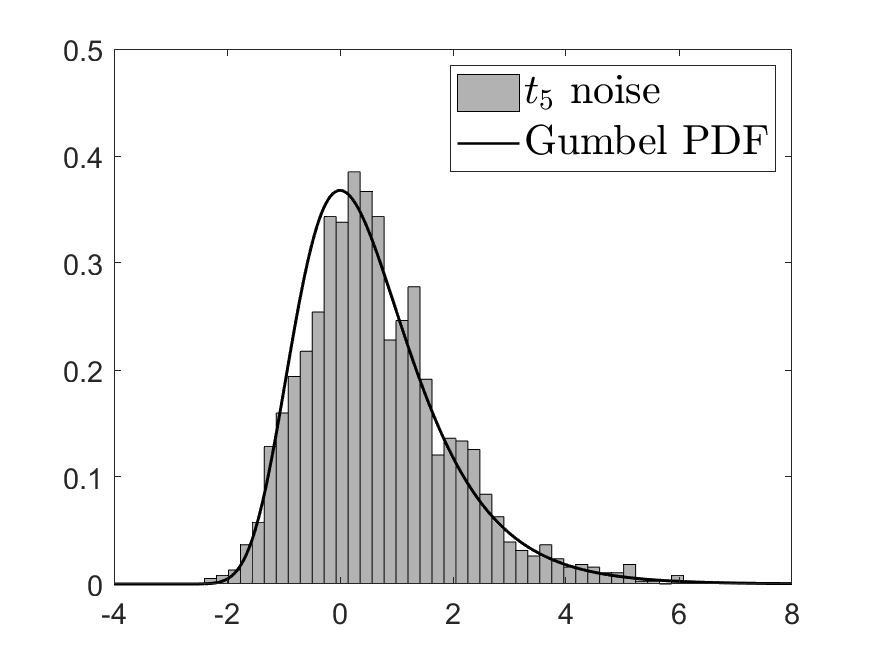}
    \end{minipage}%
    \begin{minipage}{.33\textwidth}
        \includegraphics[width=\linewidth]{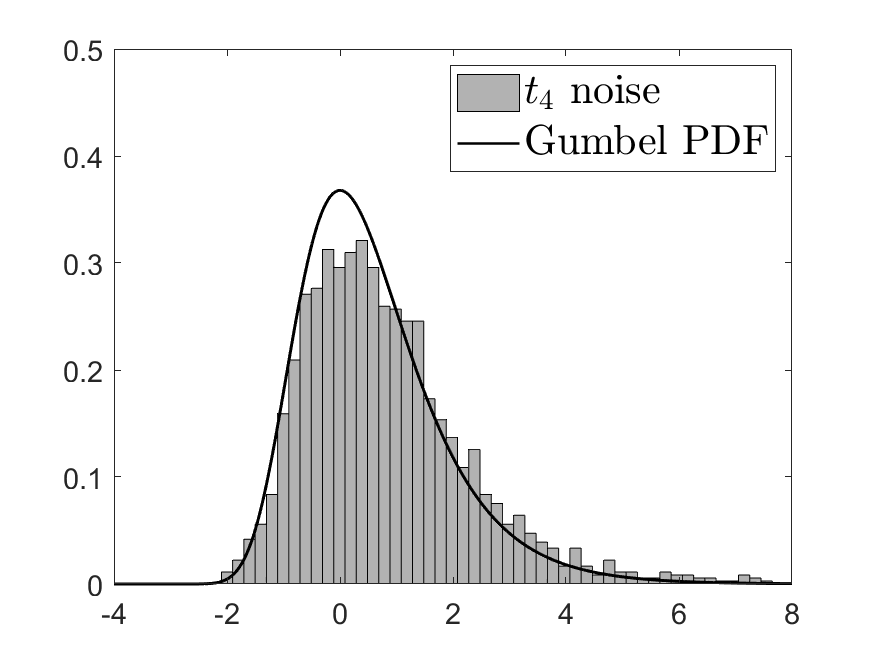}
    \end{minipage}
    \begin{minipage}{.33\textwidth}
        \includegraphics[width=\linewidth]{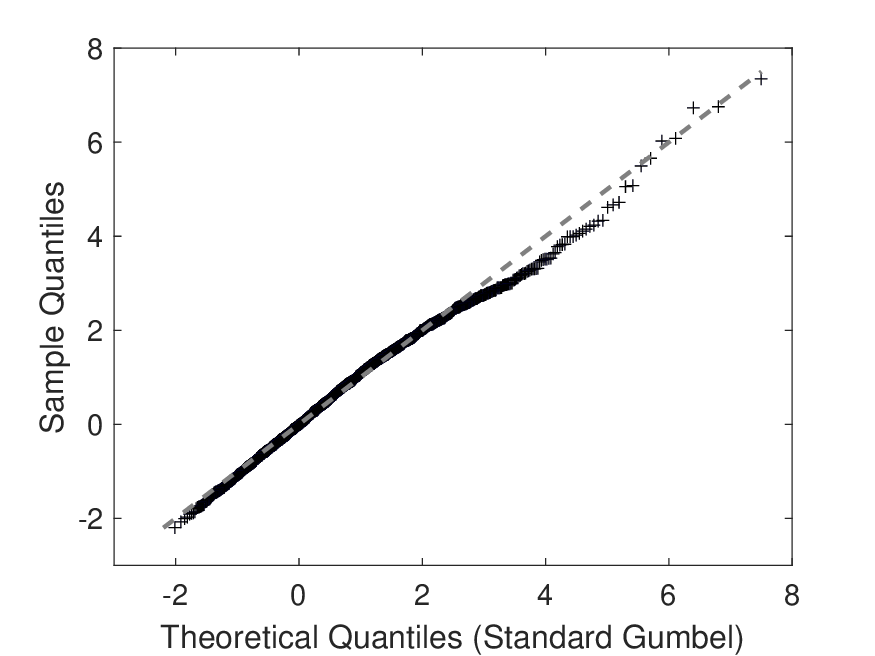}
    \end{minipage}%
    \begin{minipage}{.33\textwidth}
        \includegraphics[width=\linewidth]{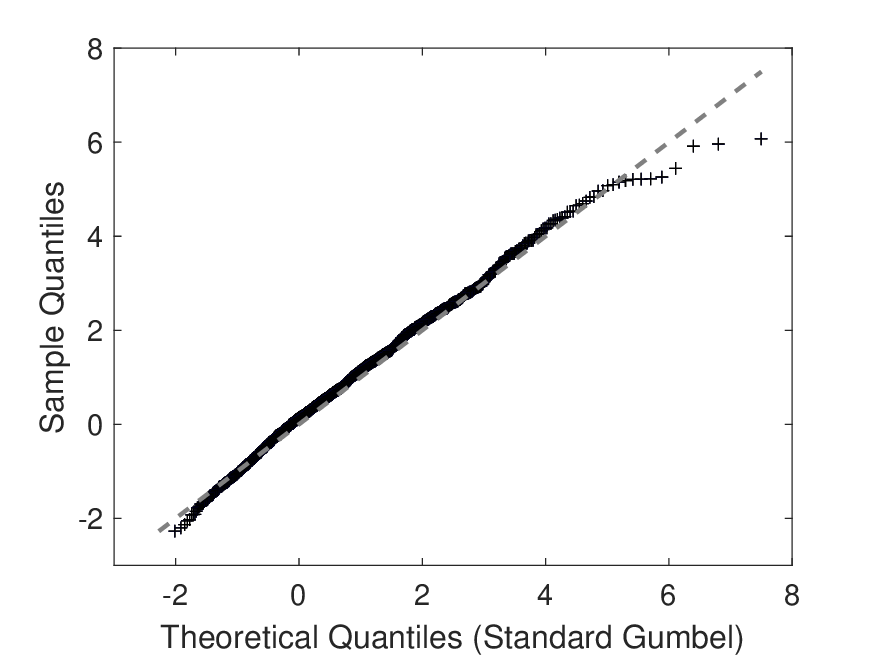}
    \end{minipage}%
    \begin{minipage}{.33\textwidth}
        \includegraphics[width=\linewidth]{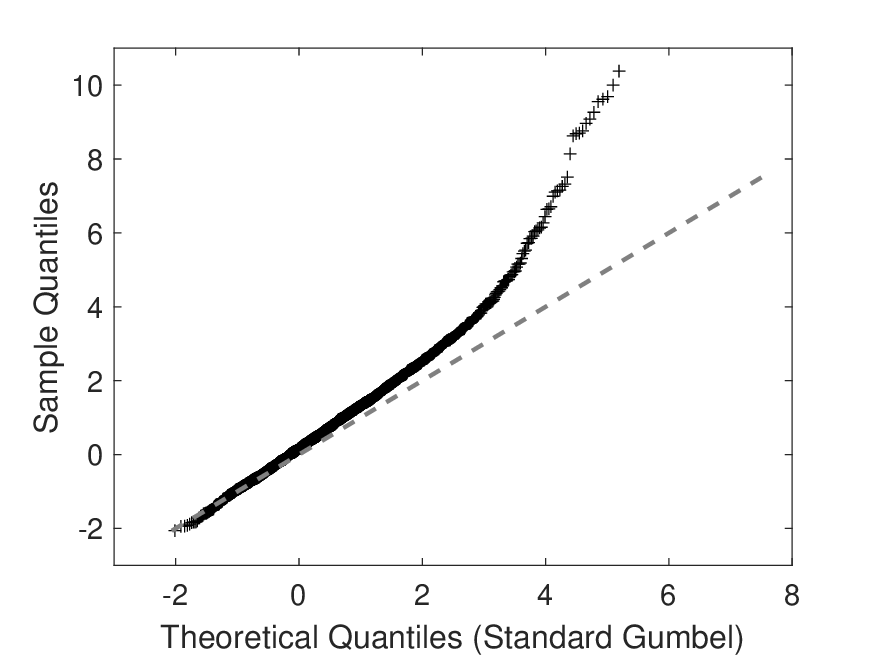}
    \end{minipage}
    \caption{Values of $\tplug$ under noise from different standardized $t$ distributions.
    See \cref{body-section:departures-from-Gaussianity}.} 
    \label{body-fig:robustness-nongaussian}
\end{figure}

This section empirically investigates the extent to which the convergence in distribution in \cref{body-theorem:Gumbel-convergence-plugin} approximately holds for non-Gaussian noise distributions.
To this end, the entries of $\m{E}$ are taken to be i.i.d. from the standardized Student $t_{\nu}$ distribution, $E_{i,j} \overset{\dd}{=} \sigma \tau_{\nu} \sqrt{(\nu-2)/\nu}$, where $\tau_{\nu} \sim t_{\nu}$.
We consider degrees of freedom $\nu = 4, 5, 10$, noting that the $t_{\nu}$ distribution exhibits heavier tails as $\nu$ decreases.
In particular, when $\nu = 10$ the distribution has mean zero and variance one but slightly heavier tails than $\NN(0,1)$, whereas when $\nu = 4$ the distribution has mean zero and variance one but no finite fourth moment.
Here, $\nu = 5$ is the smallest choice of $\nu$ that results in the distribution having finite excess kurtosis.

We set $\sigma=1$, $r=5$, $n=2500$, and $m=3000$.
The matrices $\m{U}$ and $\m{V}$ are generated as in \cref{body-section:simulation-setup-convergence} by orthonormalizing the columns of Gaussian random matrices.
The matrix $\m{S}$ has equally spaced diagonal entries between $s_{r}=n$ and $s_{1}=3n$.
For each noise distribution, $\nmc = 1800$ i.i.d. copies of $\tplug$ are obtained.
\cref{body-fig:robustness-nongaussian} shows that the empirical distribution of $\tplug$ aligns reasonably well with the Gumbel distribution under $t_{10}$ distributed noise, and to a slightly lesser extent under $t_{5}$ distributed noise.
In contrast, the distribution of $\tplug$ is no longer well-approximated by the Gumbel distribution under $t_{4}$ noise.
These examples suggest that the conclusions of \cref{body-theorem:Gumbel-convergence,body-theorem:Gumbel-convergence-plugin} may be moderately robust to certain moment-matched non-Gaussian heavy-tailed departures from \cref{body-assumption:noise}.

\supprefcap{supp-section:additional-experiments-non-Gaussian} provides further numerical experiments regarding the relaxation of the noise distribution assumption, highlighting in particular the large-sample behavior of the test statistic under non-Gaussian block-structured models, where the noise matrix has centered Bernoulli or Poisson entries.

\section{Discussion}
\label{body-section:discussion}

This paper establishes novel extreme value distributional theory for $\ell_{2,\infty}$ singular subspace estimation and inference in the matrix denoising model, with a focus on Gaussian noise.
In particular, we characterize properties of the two-to-infinity norm for estimation and testing when considering a low-rank truncation of an observable data matrix.
Our results develop upon and bring together previously separate tools from the study of spectral methods, saddle point approximation methods, and extreme value theory.

The proposed test statistic, involving the maximum row-wise Euclidean norm, is well-suited for detecting alternative hypotheses that differ from the null in only a few entries or rows.
In contrast, existing works described throughout \cref{body-section:related_work} either
(i) test for aggregate differences in subspace structure but are unable to detect small row-wise differences between hypotheses or 
(ii) test for differences between one or several specified rows in singular subspace matrices but ignore aggregate subspace structure.

In this paper, \cref{body-assumption:noise,body-assumption:matrix-size,body-assumption:delocalization,body-assumption:snr,body-assumption:plug-in-asms} stem from general considerations and developments in the use of subspace perturbation analysis and random matrix theory techniques to studying the matrix denoising model.
On the other hand, \cref{body-assumption:gap-lam1-lam2,body-assumption:aligned-alternative} are specific to this paper, preventing potential ambiguities and boundary-type situations.
Taken together, these assumptions, which are well motivated and mild given the state of the existing literature, enable our development of extreme value theory for singular subspace estimation.

We conclude by mentioning several possible extensions and opportunities for future work.

\begin{itemize}
    \item \emph{Plug-in inference under heteroskedastic noise.}
    \cref{body-theorem:Gumbel-convergence} holds under the assumption that the rows of the noise matrix are i.i.d. Gaussian vectors with a diagonal covariance matrix, while the plug-in version, \cref{body-theorem:Gumbel-convergence-plugin}, holds under the assumption that the covariance matrix is isotropic, to facilitate the singular value de-biasing procedure in \cref{body-proposition:debiased-singular-values}.
    In principle, to handle a general diagonal covariance matrix satisfying \cref{body-assumption:noise}, one could instead consider applying a whitening transformation to the data, though at the expense of more involved analysis and presentation.
    Approaches for whitening rows or columns in the presence of heteroskedastic noise have been recently studied in the context of matrix denoising \citep{gavish_matrix_2023} and principal component analysis \citep{hong_asymptotic_2018,hong_heppcat_2021,hong_optimally_2023}.

    \item \emph{Imbalanced matrix dimensions.}
    The first-order approximation in \cref{body-lemma:first-order-approximation} no longer holds when the matrix dimensions $n$ and $m$ are highly imbalanced, say, when $n$ is much smaller than $m$.
    In such settings, it is preferable to instead consider the eigendecomposition of a diagonally modified Gram-type matrix  \citep{cai_subspace_2021,yan_inference_2024}.
    There, a quadratic term with respect to the noise matrix appears in the principal subspace perturbation analysis, and a more delicate treatment of bias is required.

    \item \emph{Non-Gaussian noise.}
    The main objective of this paper is to provide comprehensive extreme value theory for singular subspace estimation in the Gaussian matrix denoising model.
    In addition to doing so, this paper also provides a partial theoretical and empirical investigation of non-Gaussian noise settings.
    Characterizing non-Gaussian noise settings, to the extent that doing so is possible, remains an interesting direction for future work.
    
    \item \emph{Two-sample testing.}
    In settings where more than one data matrix is available, it may be of interest to extend the ideas in this paper to accommodate, for example, two-sample spectral-based testing problems, as have been previously considered elsewhere but for network data \citep{tang_semiparametric_2017,tang_nonparametric_2017,li_two-sample_2018}.
\end{itemize}

\section*{Acknowledgment}
\label{section:acknowledgment}
J. Chang was supported in part by a Summer Research Fellowship from the Department of Statistics at the University of Wisconsin--Madison.
J. Cape was supported in part by the National Science Foundation under grant DMS 2413552 and by the University of Wisconsin--Madison, Office of the Vice Chancellor for Research and Graduate Education, with funding from the Wisconsin Alumni Research Foundation.
The authors thank Yuling Yan for helpful comments and technical discussions.

\bibliographystyle{apalike}
\bibliography{references}   

\appendix

\section{Proofs of results in \cref{\mainlabel{body-section:preliminaries}}}

\subsection{Proof of \cref{\mainlabel{body-lemma:first-order-approximation}}}
\label{supp-pf:first-order-approximation}
\begin{proof}
    By \cref{\mainlabel{body-assumption:noise},\mainlabel{body-assumption:snr}}, the noise matrix has zero-mean Gaussian entries with variances bounded above by $\sigma^{2}$, and $s_{r}\gg \sqrt{\mu\rlog}(\sigma\sqrt{n})\gg \sigma \sqrt{n}$, satisfying the requirements for \citet[Proposition 2]{yan_entrywise_2024} to hold. 
    Thus, alongside \cref{\mainlabel{body-assumption:matrix-size}}, it holds that for any $\mathcal{I}\subseteq \dpar{n}$ such that $|\mathcal{I}| = I$,
    \begin{align*}
        \tnorm{
            (\mh{U}\RU - \m{U})_{\mathcal{I},\all}
        }
        \lesssim
        \frac{\sigma \sqrt{I + r + \log n}}{s_{r}}
        +
        \frac{\sigma^{2}n}{s_{r}^{2}}
        \sqrt{\frac{\mu r}{n}}
    \end{align*}
    and
    \begin{align*}
        \tnorm{
            (\mh{U}\RU - \m{U} - \m{EVS}\inv)_{\mathcal{I},\all}
        }
        &\lesssim
        \frac{\sigma\sqrt{I + r + \log n}}{s_{r}}
        \blr{
            \frac{\sigma\sqrt{I + n + \log n}}{s_{r}} 
            + 
            \frac{\sigma^{2}n}{s_{r}^{2}}
        }
        \nonumber
        \\
        &\qquad +
        \plr{
            \frac{\sigma^{2}n}{s_{r}^{2}} 
            + 
            \frac{\sigma\sqrt{r + \log n}}{s_{r}}
        }
        \sqrt{
            \frac{\mu r}{n}
        },
    \end{align*}
    each with probability exceeding $1-O(n^{-10})$.
    Taking $\mathcal{I}_{i}\coloneqq \{i\}$, and applying the Bonferroni inequality, then the union bound to obtain an upper bound on the maximum over all $i\in \dpar{n}$ yields that
    \begin{align*}
        \max_{i\in\dpar{n}} 
            \tnorm{
                (\mh{U}\RU - \m{U})_{i,\all}
            }
            \lesssim
            \frac{\sigma\sqrt{r + \log n}}{s_{r}}
            + \frac{\sigma^{2}\sqrt{\mu r n}}{s_{r}^{2}}
    \end{align*}
    and
    \begin{align*}
            \max_{i\in\dpar{n}} 
            \tnorm{
                (\mh{U}\RU - \m{U} - \m{EVS}\inv)_{i,\all}
            }
            \lesssim
            \frac{\sigma\sqrt{r + \log n}}{s_{r}}
            \plr{
                \frac{\sigma\sqrt{n}}{s_{r}} 
                +
                \sqrt{
                    \frac{\mu r}{n}
                }
            }
            + 
            \frac{\sigma^{2}\sqrt{\mu r n}}{s_{r}^{2}}
    \end{align*}
    each hold with probability exceeding $1-O(n^{-9})$.
\end{proof}

\subsection{Proof of \cref{\mainlabel{body-lemma:MGF}}}
\label{supp-pf:MGF}
\begin{proof}
By hypothesis, the diagonal matrices $\m{D}$ and $\m{S}$ have strictly positive main diagonal elements, and $\m{V}$ has full column rank, hence the real symmetric matrix $\m{P} \coloneqq 2 \m{S}\inv \m{V}\T \m{D} \m{V} \m{S}\inv \in \R^{r \times r}$ is positive definite with eigenvalues denoted by $\lambda_{1} \geq \dots \geq \lambda_{r} > 0$.
The special case $\m{D} = \sigma^{2}\m{I}_{m}$ yields $\m{P} = 2\sigma^{2}\m{S}^{-2}$, hence $\lambda_{j} = 2\sigma^{2}/s_{r-j+1}^{2}$ for $1 \leq j \leq r$.

Applying \citet[Lemma 2.1]{chen_spectral_2021} gives the stated, general upper and lower bounds for $\lambda_{1}$, namely
\begin{equation*}
    \lambda_{1}
    =
    \tnorm{
    2 \m{S}\inv \m{V}\T \m{D} \m{V} \m{S}\inv
    }
    \leq
    2
    \tnorm{ \m{S}\inv }
    \tnorm{ \m{V}\T \m{D} \m{V} } 
    \tnorm{ \m{S}\inv }
    \leq
    2 \sigma_{1}^{2} / s_{r}^{2}
\end{equation*}
and
\begin{equation*}
    \lambda_{1}
    =
    \tnorm{
    2 \m{S}^{-2} \m{V}\T \m{D} \m{V}
    }
    \geq
    2
    \tnorm{ \m{S}^{-2} }
    \lambda_{\min}(\m{V}\T \m{D} \m{V}) 
    \geq
    2 \sigma_{m}^{2} / s_{r}^{2}
    ,
\end{equation*}
where we have used the fact that $\m{S}\inv \m{V}\T \m{D} \m{V} \m{S}\inv$ and $\m{S}^{-2} \m{V}\T \m{D} \m{V}$ have the same eigenvalues.

For each $1 \leq i \leq n$, recall that
$
X_{i}
=
\tnorm{\m{E}_{i, \all}\T\m{VS}\inv}
$
where
$
\m{E}_{i, \all}\T\m{VS}\inv
\sim
\NN_{r}(\m{0}, \m{S}\inv\m{V}\T\m{D}\m{V}\m{S}\inv)
$.
A direct application of \citet[Theorem 3.2a.1]{mathai_quadratic_1992} yields that the moment generating function of the quadratic form $X_{i}^{2}$ is given by
\begin{align*}
        M_{X_{i}^{2}}(t)
        =
        \det(
        \m{I}_{r} - t\m{P}
        )^{-1/2}
        =
        \prod_{j=1}^{r}
        (1-\lambda_{j} t)^{-1/2}
        ,
        \quad
        \text{for }
        t < 1/\lambda_{1}
        ,
\end{align*}
where the final equality uses the fact that the eigenvalues of $\m{I}_{r} - t\m{P}$ are of the form $1 - \lambda_{j} t$ and that the matrix determinant equals the product of the matrix eigenvalues.
This completes the proof of \cref{\mainlabel{body-lemma:MGF}}.
\end{proof}

\section{Proofs of main results}
\label{supp-section:proof-of-main-results}

\subsection{Towards proving \cref{\mainlabel{body-proposition:saddle-point-tail-equivalence}}}
\label{supp-section:proof-of-saddle-point-approximation}

\cref{\mainlabel{body-proposition:saddle-point-tail-equivalence}} asserts that the distribution of $X$ is tail-equivalent to that of the generalized gamma random variable $H$ defined in \cref{\mainlabel{body-section:Main-results}}. The proof relies on a saddle point density approximation \citep{daniels_saddlepoint_1954}, which we derive in detail. \cref{supp-section:saddle-point-approx-overview} briefly discusses saddle point approximations in more generality.

\subsubsection{Deriving the saddle point density approximation}
\label{supp-section:deriving-saddle-point}

The density in question is that of the random variable $X$, or equivalently, the density of the Euclidean norm of each length-$r$ row of $\m{EVS}\inv$. To avoid complications involving the $\sqrt{\lambda_{1}}$ scaling that changes the distribution of $H$ as $n$ grows, we work with the re-scaled random variables $\xp\coloneqq \lambda_{1}^{-1/2}X$ and $H_{1} = \lambda_{1}^{-1/2} H \sim \operatorname{GG}(1,\mul,2)$, which are stable in $n$. Also, to avoid complications involving square roots, we first study the density of $\xp^2$, and then transform it back to that of $\xp$, using the fact that $\xp\geq 0$ with probability one, hence $\P(\xp\leq x) = \P(\xp^2\leq x^2)$ for all $x\geq 0$. In other words, 
\begin{equation*}
    f_{\xp}(x) 
    = 
    2xf_{\xp^{2}}(x^2),
    \qquad 
    x\geq 0.
\end{equation*}
By \cref{\mainlabel{body-lemma:MGF}}, and the fact that $M_{\xp^{2}}(t) = M_{X^{2}}(t/\lambda_{1})$, we have $M_{\xp^2}(t)=\prod_{j=1}^r (1- t\lambda_j / \lambda_{1})^{-{1}/{2}}$ and $K_{X^2}(t) = -\frac{1}{2}\sum_{j=1}^r \log(1-t \lambda_{j}/\lambda_{1})$, provided $t<1$. The following lemma asserts that the function $\ell(t)\coloneqq tx-K_{\xp^2}(t)$ has a unique saddle point on $t\in(-\infty,1)$, which serves as the basis for the ensuing saddle point density approximation. The proof of \cref{supp-lemma:saddle-point} is provided in \cref{supp-pf:saddle-point}.

\begin{lemma}\label{supp-lemma:saddle-point}
    For $x>0$, the function $\ell(t)\coloneqq tx-K_{\xp^2}(t)$ has a unique real-valued saddle point ${\spt_{x}}<1$, which satisfies $K_{\xp^2}'({\spt_{x}})=x$ and $K_{\xp^2}''({\spt_{x}}) > 0$. In addition, if $x>\E[\xp^{2}]$, then 
    \begin{align}
        {\spt_{x}}
        = 
        1
        -
        \frac{\mul}{2x} 
        +
        g(x)
        > 0,
        \label{supp-eq:saddle-point}
    \end{align}
    where $g(x) = O(x^{-2})$ as $x\to\infty$. Moreover, if $r=1$, then ${\spt_{x}} = 1 - \frac{1}{2x}$.
\end{lemma}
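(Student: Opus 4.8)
The plan is to reduce everything to the monotonicity of $K_{X^2}'$ on its natural domain. By \cref{\mainlabel{body-lemma:MGF}}, for $t < 1/\lambda_1$ we have
\[
  K_{X^2}'(t) = \frac{1}{2}\sum_{j=1}^{r}\frac{\lambda_j}{1-\lambda_j t}, \qquad K_{X^2}''(t) = \frac{1}{2}\sum_{j=1}^{r}\frac{\lambda_j^{2}}{(1-\lambda_j t)^{2}} > 0,
\]
so $\ell'(t) = x - K_{X^2}'(t)$, $\ell''(t) = -K_{X^2}''(t) < 0$, and $\ell$ is strictly concave on $(-\infty,1/\lambda_1)$. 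First I would note that each summand of $K_{X^2}'$ decreases to $0^{+}$ as $t \to -\infty$, while the $\mul$ summands with $\lambda_j = \lambda_1$ diverge to $+\infty$ as $t \uparrow 1/\lambda_1$; hence $K_{X^2}'\colon (-\infty,1/\lambda_1) \to (0,\infty)$ is a continuous, strictly increasing bijection. Consequently, for each $x > 0$ there is a unique $\spt_x \in (-\infty,1/\lambda_1)$ with $K_{X^2}'(\spt_x) = x$, i.e.\ a unique critical point of $\ell$ (its global maximizer), and $K_{X^2}''(\spt_x) > 0$ holds automatically. The sign of $\spt_x$ then follows from $K_{X^2}'(0) = \tfrac{1}{2}\sum_{j=1}^{r}\lambda_j = \tfrac{1}{2}\tr(\m{P}) = \E[X^{2}]$ together with monotonicity: $\spt_x > 0$ exactly when $x > \E[X^{2}]$.

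For the asymptotic expansion, I would set $u \coloneqq 1/\lambda_1 - \spt_x > 0$ and observe that $u \downarrow 0$ as $x \to \infty$ (forced by $K_{X^2}'(\spt_x) = x \to \infty$ and the bijection above). Writing $1 - \lambda_j \spt_x = \lambda_1 u$ for $j \le \mul$ and $1 - \lambda_j \spt_x = (1 - \lambda_j/\lambda_1) + \lambda_j u$ for $j > \mul$, the defining equation $K_{X^2}'(\spt_x) = x$ rearranges to
\[
  x = \frac{\mul}{2u} + c(u), \qquad c(u) \coloneqq \frac{1}{2}\sum_{j=\mul+1}^{r}\frac{\lambda_j}{(1-\lambda_j/\lambda_1) + \lambda_j u}.
\]
Since $1 - \lambda_j/\lambda_1$ is bounded away from $0$ for $j > \mul$ (automatic for fixed $n$, and quantitatively $1 - \lambda_{\mul+1}/\lambda_1 \ge 1 - c_\lambda > 0$ by \cref{\mainlabel{body-assumption:gap-lam1-lam2}}), the remainder $c(u)$ stays bounded as $u \downarrow 0$. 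Rearranging to $u = \mul/(2x) + u\,c(u)/x$ and feeding back the crude consequence $u = O(1/x)$ yields $u = \mul/(2x) + O(x^{-2})$, whence $\spt_x = 1/\lambda_1 - \mul/(2x) + g(x)$ with $g(x) \coloneqq \mul/(2x) - u = O(x^{-2})$, and $\spt_x > 0$ for $x$ large (a special case of the sign characterization). When $r = 1$ everything is explicit: $K_{X^2}'(t) = \lambda_1/\bigl(2(1-\lambda_1 t)\bigr)$, so $K_{X^2}'(\spt_x) = x$ gives $1 - \lambda_1 \spt_x = \lambda_1/(2x)$, i.e.\ $\spt_x = 1/\lambda_1 - 1/(2x)$.

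I do not anticipate a genuine obstacle: the argument rests only on strict convexity of $K_{X^2}$ and a one-step inversion of $x = \mul/(2u) + c(u)$. The single point needing care is making $g(x) = O(x^{-2})$ rigorous, which reduces to bounding $c(u)$ uniformly for $u$ near $0$; for fixed $n$ the $\lambda_j$ are constants and this is immediate, whereas \cref{\mainlabel{body-assumption:gap-lam1-lam2}} is precisely what renders such a bound uniform in $n$, as needed downstream toward \cref{\mainlabel{body-proposition:saddle-point-tail-equivalence}}. A minor bookkeeping item is to record that ``saddle point'' here means the unique real zero of $\ell'$ on $(-\infty,1/\lambda_1)$, as required by the ensuing saddle point density approximation.
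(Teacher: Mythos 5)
Your proof is correct and follows essentially the same route as the paper: strict monotonicity of $K_{X^2}'$ on $(-\infty,1/\lambda_1)$ plus the intermediate value theorem gives existence and uniqueness of $\spt_{x}$ (and $\spt_{x}>0$ iff $x>K_{X^2}'(0)=\E[X^2]$), while isolating the $\lambda_1$-pole and performing a one-step self-referential rearrangement of $K_{X^2}'(\spt_{x})=x$ yields the $O(x^{-2})$ expansion. Your change of variable $u=1/\lambda_1-\spt_{x}$ is a minor stylistic cleanup of the paper's direct manipulation of $\spt_{x}$, and your remark that the gap assumption is what renders the error bound uniform in $n$ is a pertinent aside the paper leaves implicit.
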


The condition $x>\E[\xp^2]$ eventually holds since $x\to\infty$, thus \cref{supp-eq:saddle-point} is used throughout the subsequent derivations. 

The \emph{saddle point approximation} \citep{daniels_saddlepoint_1954} of the density $f_{\xp^2}$ evaluated at $x$ is defined as
\begin{align}
    \label{supp-eq:saddle-point-approximation}
    \widetilde{f}_{\xp^2}(x) 
    \coloneqq
    \frac{\exp\plr{K_{\xp^2}({\spt_{x}})-{\spt_{x}}}}
    {\sqrt{K_{\xp^2}''({\spt_{x}})}},
\end{align}
where $\spt_{x}$ is the unique saddle point of the function $\ell(t)\coloneqq tx - K_{\xp^2}(t)$ in \cref{supp-lemma:saddle-point}. The relative error of the saddle point density approximation at the value $x$ is defined by
\begin{equation*}
    \epsilon(x) 
    \coloneqq
    \frac{f_{\xp^2}(x) - \widetilde{f}_{\xp^2}(x)}
    {\widetilde{f}_{\xp^2}(x)}.
\end{equation*}
In order to facilitate the analysis of the relative error at $x$, where $x>\E[\xp^2]$ is considered to be a fixed value for now, define a random variable $\xp^{2}[\spt_{x}]$ whose probability density function is the exponentially tilted density of ${\xp^2}$ by $\spt_{x}$, namely
\begin{align}
    \label{supp-eq:exponential-tilt}
    f_{\xp^{2}[\spt_{x}]}(y)  
    \coloneqq
    \exp\plr{\spt_{x} y-K_{\xp^2}(\spt_x)}
    f_{\xp^2}(y),
\end{align}
where the support of this density is $y\geq 0$.
Notice that the tilted density $f_{\xp^{2}[\spt_{x}]}$ is a proper density function since it is non-negative and satisfies
\begin{align*}
    \int_{-\infty}^\infty f_{\xp^{2}[\spt_{x}]}(y)\dd y 
    = 
    \frac{1}{{M_{\xp^2}(\spt_{x})}}\int_{-\infty}^\infty {\exp({\spt_{x} y}) f_{\xp^2}(y)}\dd y 
    = 1.
\end{align*}
Furthermore, the moment generating function of $\xp^{2}[\spt_{x}]$ is related to that of $\xp^2$ (given in \cref{\mainlabel{body-lemma:MGF}}) by
\begin{align*}
    M_{\xp^{2}[\spt_{x}]}(s) 
    =
    \frac{1}{M_{\xp^2}(\spt_{x})}
    \int_{-\infty}^\infty 
    \exp(\spt_{x} y +sy)f_{\xp^2}(y)\dd y
    =
    \frac{M_{\xp^2}(\spt_x+s)}{M_{\xp^2}(\spt_x)},
    \quad
    \text{ for } s < 1 - \spt_{x},
\end{align*}
while the characteristic function of $\xp^{2}[\spt_{x}]$ is
\begin{align}
    \label{supp-eq:characteristic-function-s-tilted-density}
    \varphi_{\xp^{2}[\spt_{x}]}(s)
    =
    \frac{M_{\xp^2}(\spt+\ir s)}
    {M_{\xp^2}(\spt)},
\end{align}
for $s\in\R$.
In particular, rearranging the terms in \cref{supp-eq:exponential-tilt} and plugging in $y=x$ more clearly shows the relationship
\begin{align}
    \label{supp-eq:saddle-point-standard-form}
    f_{\xp^2}(x) 
    =
    \plr{\frac{\exp\plr{K_{\xp^2}({\spt_{x}})-{\spt_{x}}x}}{\sqrt{K_{\xp^2}''({\spt_{x}})}}}
    \plr{\sqrt{K_{\xp^2}''({\spt_{x}})}  f_{\xp^{2}[\spt_{x}]}(x)}
    \eqqcolon
    {\widetilde{f}_{\xp^2}(x)}
    \plr{1+\epsilon(x)}.
\end{align}

To further study the term $1+\epsilon(x) = \sqrt{K_{\xp^2}''({\spt_{x}})}  f_{\xp^{2}[\spt_{x}]}(x)$ in \cref{supp-eq:saddle-point-standard-form}, we construct a random variable whose density evaluated at a specific point is equal to $1+\epsilon(x)$. Consider the random variable $W_x \coloneqq {\xp^{2}[\spt_{x}]}/{\sqrt{K_{\xp^2}''(\spt_{x})}}$, where $x>\E[\xp^{2}]$ is still held fixed. Since $W_{x}$ is a scalar multiple of the random variable $\xp^{2}[{\spt_{x}}]$, it has the density
\begin{align}
\label{supp-eq:density-change-of-variables}
    f_{W_x}(w_x(z))
    =
    f_{\xp^{2}[\spt_{x}]}(z)
    \sqrt{K_{\xp^2}''({\spt_{x}})}  ,
\end{align}
where $w_x(z)\coloneqq {z}/{\sqrt{K_{\xp^2}''({\spt_{x}})}}$, and the support of this density is $z \geq 0$. Thus, plugging in $z=x$ to \cref{supp-eq:density-change-of-variables} yields $1+\epsilon(x) = f_{W_x}(w_x(x))$, that is, \cref{supp-eq:saddle-point-standard-form} can be written as
\begin{equation}
    f_{\xp^2}(x) 
    =
    {\widetilde{f}_{\xp^2}(x)}
    f_{W_x}(w_x(x)).
    \label{supp-eq:saddle-point-simplified}
\end{equation}
In other words, the analysis of the relative error of the saddle point density approximation reduces to analyzing the density function of the random variable $W_x$ at the point $w_x(x)={x}/{\sqrt{K_{\xp^2}''({\spt_{x}})}}$. The motivation for constructing the random variable $W_{x}$ is that its characteristic function satisfies
\begin{align}
    \varphi_{W_x}(s) 
    = \int_{-\infty}^{\infty} \exp(\ir s w_x) f_{W_x}(w_x)\dd w_x 
    &=
    \int_{-\infty}^{\infty} 
    \exp\left(\frac{\ir sz}{\sqrt{K_{\xp^2}''(\spt_{x})}}\right)
    f_{\xp^{2}[\spt_{x}]}\left(z\right)
    \dd z 
    \nonumber\\
    &= 
    \varphi_{\xp^{2}[\spt_{x}]}
    \left(\frac{s}{\sqrt{K_{\xp^2}''(\spt_{x})}}\right)
    \nonumber \\
    &\overset{\eqref{supp-eq:characteristic-function-s-tilted-density}}{=}
    \frac{M_{\xp^2}\left({\spt_{x}}+\frac{\ir s}{\sqrt{K_{\xp^2}''({\spt_{x}})}}\right)}
    {M_{\xp^2}({{\spt_{x}}})},
    \label{supp-eq:transformed-cf}
\end{align}
which is a key result used in \cref{supp-section:lemmas-for-saddlepoint-approx} to show that $\lim_{x\to\infty} (1+\epsilon(x)) = \lim_{x\to\infty}f_{W_x}(w_x(x))
=
\frac{
    (\mul/2)^{\frac{\mul-1}{2}}
    \exp(-\mul/2)
}
{
\Gamma(\mul/2)
}$,
implying that the saddle point density approximation of the density $f_{\xp^{2}}$ becomes exact up to a multiplicative constant as $x\to\infty$.

\subsubsection{Tail behavior of the saddle point approximation error}
\label{supp-section:lemmas-for-saddlepoint-approx}

\cref{supp-lemma:saddle-point-approx-explicit} provides the form of the saddle point density approximation derived in \cref{supp-section:deriving-saddle-point}. The proof of \cref{supp-lemma:saddle-point-approx-explicit} is provided in \cref{supp-pf:saddle-point-approx-explicit}.
\begin{lemma}
    \label{supp-lemma:saddle-point-approx-explicit}
    For $r\geq 2$, $\mul < r$, and $x > \E[\xp^{2}]$, the saddle point density approximation satisfies
    \begin{align*}
        \widetilde{f}_{\xp^2}(x)
        =
        \frac{
            2^{\frac{\mul-1}{2}}
            x^{\frac{\mul - 2}{2}}
            \clr{
                1
                +
                \frac{2x^2 g(x)}{\mul x- 2 x^2 g(x)}
            }^{\mul/2}
            \clr{
                \prod_{j=\mul + 1}^r (1- \spt_{x}\lambda_j /\lambda_{1})^{-1/2}
            }
            \exp
            \left(-x+\frac{\mul}{2}-xg(x)\right)
        }
        {
            \mul^{\frac{\mul-1}{2}}
            \sqrt{
                \frac{1}{\left\{1- 2xg(x)/\mul\right\}^2} 
                +
                \frac{\mul}{4x^{2}}
                \sum_{j=\mul + 1}^r 
                \frac{\lambda_j^2}{{(1-\lambda_j \spt_{x})^2}}
            }
        },
    \end{align*}
    where $g(x)= O(x^{-2})$ is as in \cref{supp-lemma:saddle-point}. 
\end{lemma}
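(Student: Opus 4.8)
The plan is to evaluate the saddle point density approximation
$\widetilde{f}_{X^2}(x) = \exp\bigl(K_{X^2}(\spt_{x}) - \spt_{x} x\bigr)/\sqrt{K_{X^2}''(\spt_{x})}$
directly, by substituting the closed forms $K_{X^2}(t) = -\tfrac12\sum_{j=1}^{r}\log(1-\lambda_{j}t)$, $K_{X^2}'(t) = \tfrac12\sum_{j=1}^{r}\lambda_{j}/(1-\lambda_{j}t)$, and $K_{X^2}''(t) = \tfrac12\sum_{j=1}^{r}\lambda_{j}^{2}/(1-\lambda_{j}t)^{2}$ from \cref{\mainlabel{body-lemma:MGF}}, together with the asymptotic expansion $\spt_{x} = 1/\lambda_{1} - \mul/(2x) + g(x)$ with $g(x) = O(x^{-2})$ from \cref{supp-lemma:saddle-point}. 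Since the regime of interest is $x > \E[X^{2}]$, \cref{supp-lemma:saddle-point} gives $0 < \spt_{x} < 1/\lambda_{1} \le 1/\lambda_{j}$ for every $j$, so each factor $1-\lambda_{j}\spt_{x}$ is strictly positive and all the half-integer powers and logarithms that appear below are well defined.

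The central step is to split the index set $\dpar{r}$ into $\dpar{\mul}$, on which $\lambda_{j} = \lambda_{1}$ by \cref{\mainlabel{body-assumption:noise}}, and $\{\mul+1,\dots,r\}$, on which the factors $1-\lambda_{j}\spt_{x}$ are bounded away from zero by \cref{\mainlabel{body-assumption:gap-lam1-lam2}} and are kept symbolic. Plugging the expansion of $\spt_{x}$ into the first block yields the \emph{key identity}
$1 - \lambda_{1}\spt_{x} = \tfrac{\lambda_{1}\mul}{2x}\bigl(1 - \tfrac{2x g(x)}{\mul}\bigr)$.
From here: (i) $\exp\bigl(K_{X^2}(\spt_{x})\bigr) = (1-\lambda_{1}\spt_{x})^{-\mul/2}\prod_{j=\mul+1}^{r}(1-\lambda_{j}\spt_{x})^{-1/2}$, into which the key identity is substituted; (ii) $\spt_{x} x = x/\lambda_{1} - \mul/2 + x g(x)$ produces the exponential factor $\exp(-x/\lambda_{1} + \mul/2 - x g(x))$; and (iii) the key identity turns $K_{X^2}''(\spt_{x})$ into $\tfrac{2x^{2}}{\mul}\bigl[(1 - 2x g(x)/\mul)^{-2} + \tfrac{\mul}{4x^{2}}\sum_{j=\mul+1}^{r}\lambda_{j}^{2}(1-\lambda_{j}\spt_{x})^{-2}\bigr]$, so that $\sqrt{K_{X^2}''(\spt_{x})}$ equals $\tfrac{\sqrt{2}\,x}{\sqrt{\mul}}$ times precisely the square-root factor appearing in the denominator of the claimed expression.

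It then remains to collect the elementary factors. The factor $(\lambda_{1}\mul/(2x))^{-\mul/2}$ coming out of (i), divided by the $\tfrac{\sqrt{2}\,x}{\sqrt{\mul}}$ coming out of (iii), simplifies to the prefactor $2^{(\mul-1)/2}x^{(\mul-2)/2}/\bigl(\lambda_{1}^{\mul/2}\mul^{(\mul-1)/2}\bigr)$, and the residual factor $(1 - 2x g(x)/\mul)^{-\mul/2}$ is rewritten, via the one-line identity $1 - 2x g(x)/\mul = \bigl(1 + \tfrac{2x^{2}g(x)}{\mul x - 2x^{2}g(x)}\bigr)^{-1}$, as $\bigl(1 + \tfrac{2x^{2}g(x)}{\mul x - 2x^{2}g(x)}\bigr)^{\mul/2}$; combined with $\prod_{j=\mul+1}^{r}(1-\lambda_{j}\spt_{x})^{-1/2}$ and the exponential from (ii), this reproduces the stated formula verbatim. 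The argument is entirely computational and I do not anticipate a genuine obstacle; the only point requiring care is keeping the remainder $g(x) = O(x^{-2})$ symbolic throughout rather than expanding it, while the positivity of $1 - \lambda_{j}\spt_{x}$ noted above guarantees that every step is legitimate.
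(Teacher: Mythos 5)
Your proposal is correct and follows essentially the same route as the paper's proof: direct substitution of the saddle-point expansion $\spt_{x} = 1/\lambda_{1} - \mul/(2x) + g(x)$ into the closed forms of $K_{X^2}$ and $K_{X^2}''$, splitting the index set at $\mul$ via the identity $1-\lambda_{1}\spt_{x} = \tfrac{\lambda_{1}}{2x}(\mul - 2xg(x))$ (your "key identity" in a slightly different but equivalent factorization), and then collecting the elementary factors. The only cosmetic difference is that the paper carries the factor as $\tfrac{\lambda_1}{2x}(\mul-2xg(x))$ while you write $\tfrac{\lambda_1\mul}{2x}(1-2xg(x)/\mul)$; also note that \cref{\mainlabel{body-assumption:gap-lam1-lam2}} is not actually needed here since the lemma is an exact algebraic identity in $x$, not an asymptotic statement.
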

Next, \cref{supp-lemma:fW-convergence} shows that $f_{W_x}(w_x(x))$, which equals $1+\epsilon(x)$, converges to a constant as $x\to\infty$. The proof of \cref{supp-lemma:fW-convergence} is provided in \cref{supp-pf:fW-convergence}.
\begin{lemma}
    \label{supp-lemma:fW-convergence}
    For $r\geq 2$ and $\mul < r$, it holds that
    \begin{align*}
        f_{W_{x}}(w_x(x)) 
        =
        \clr{
            \frac{
                (\mul/2)^{\frac{\mul-1}{2}}
                \exp(-\mul/2)
            }
            {
            \Gamma(\mul/2)
            }
        }
        \clr{
            1+O(x^{-1})
        }
    \end{align*}
    as $x\to\infty$, where $\Gamma(\cdot)$ denotes the gamma function.
\end{lemma}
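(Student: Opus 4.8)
The plan is to realize $W_x$ --- whose density at the drifting point $w_x(x)=x/\sqrt{K_{X^2}''(\spt_{x})}$ equals $1+\epsilon(x)$ by \cref{supp-eq:saddle-point-simplified} --- as the sum of an \emph{exactly} Gamma-distributed variable and a vanishing independent perturbation, exploiting the coincidence $\lambda_1=\cdots=\lambda_\mul$ noted below \cref{body-lemma:MGF}. By \cref{body-lemma:MGF} we may write $X^2\eqd\tfrac{\lambda_1}{2}\chi^{2}_{\mul}+R$, where $R\coloneqq\sum_{j=\mul+1}^{r}\tfrac{\lambda_j}{2}Y_j^2$ with $Y_j$ i.i.d.\ standard Gaussian, the two summands independent. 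Exponential tilting by $\spt_{x}$ preserves this independence and tilts each summand marginally, so $X^2[\spt_{x}]\eqd A_x+B_x$, where $A_x$ has the Gamma law of shape $\mul/2$ and rate $u_x\coloneqq 1/\lambda_1-\spt_{x}$, and $B_x\coloneqq R[\spt_{x}]$ is independent of $A_x$. Dividing by $\sqrt{K_{X^2}''(\spt_{x})}$ yields $W_x=W_x^{(1)}+W_x^{(2)}$, with $W_x^{(1)}$ \emph{exactly} Gamma of shape $\mul/2$ and rate $u_x\sqrt{K_{X^2}''(\spt_{x})}$, and $W_x^{(2)}\coloneqq B_x/\sqrt{K_{X^2}''(\spt_{x})}$. (Equivalently, the characteristic function \cref{supp-eq:transformed-cf} factors as $\varphi_{W_x}=\varphi_{W_x^{(1)}}\varphi_{W_x^{(2)}}$ with $\varphi_{W_x^{(1)}}$ an explicit Gamma characteristic function.)

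Next I would pin down the relevant asymptotics as $x\to\infty$. By \cref{supp-lemma:saddle-point} and \cref{supp-eq:saddle-point}, $u_x=\tfrac{\mul}{2x}+g(x)$ with $g(x)=O(x^{-2})$, hence $u_x\to0$ and $xu_x\to\mul/2$. In $K_{X^2}''(\spt_{x})=\tfrac12\sum_{j=1}^{r}\lambda_j^2/(1-\lambda_j\spt_{x})^2$, the $\mul$ terms with $\lambda_j=\lambda_1$ contribute exactly $\mul/(2u_x^2)$, whereas \cref{body-assumption:gap-lam1-lam2} keeps $\spt_{x}\to 1/\lambda_1$ bounded away from the next pole $1/\lambda_{\mul+1}$, so the remaining $r-\mul$ terms stay bounded; therefore $K_{X^2}''(\spt_{x})=\mul/(2u_x^2)+O(1)$. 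Consequently $u_x\sqrt{K_{X^2}''(\spt_{x})}\to\sqrt{\mul/2}$ and $w_x(x)=(xu_x)\big/\big(u_x\sqrt{K_{X^2}''(\spt_{x})}\big)\to\sqrt{\mul/2}$, so $W_x^{(1)}$ converges in law to the Gamma variable $V$ of shape $\mul/2$ and rate $\sqrt{\mul/2}$, whose density $f_V(w)=(\mul/2)^{\mul/4}w^{\mul/2-1}e^{-\sqrt{\mul/2}\,w}/\Gamma(\mul/2)$ satisfies $f_V(\sqrt{\mul/2})=(\mul/2)^{(\mul-1)/2}e^{-\mul/2}/\Gamma(\mul/2)$, the asserted limit. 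At the same time, since $\spt_{x}$ stays bounded away from $1/\lambda_{\mul+1}$, the tilted variable $B_x=R[\spt_{x}]$ has uniformly (in $x$) bounded mean and variance and a uniform exponential tail bound, so $W_x^{(2)}=B_x/\sqrt{K_{X^2}''(\spt_{x})}\to0$ in probability, exponentially fast since $\sqrt{K_{X^2}''(\spt_{x})}\to\infty$.

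Finally I would upgrade this to the density-level statement at the moving point. By independence, $f_{W_x}(w_x(x))=\E\big[f_{W_x^{(1)}}\!\big(w_x(x)-W_x^{(2)}\big)\big]$. The explicit Gamma densities $f_{W_x^{(1)}}$ (fixed shape $\mul/2$, rate $\to\sqrt{\mul/2}$) converge uniformly to $f_V$ on every compact subset of $(0,\infty)$ and are uniformly integrable near their possible singularity at $0$. Splitting the expectation over $\{W_x^{(2)}\le\delta\}$, on which the argument of $f_{W_x^{(1)}}$ stays in a fixed compact subinterval of $(0,\infty)$, and its complement, whose $W_x^{(2)}$-mass is $o(1)$ and whose contribution is killed by the uniform concentration of $W_x^{(2)}$ at $0$, and then letting $\delta\to0$ using continuity of $f_V$, gives $f_{W_x}(w_x(x))\to f_V(\sqrt{\mul/2})$, which is the claim.

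The main obstacle is exactly this last step: the limiting (and pre-limiting) density can be singular at its left endpoint $0$ when $\mul\le2$ --- equivalently $\varphi_{W_x}$ need not be integrable --- so a naive Fourier inversion of \cref{supp-eq:transformed-cf} does not suffice, and it is the splitting into the closed-form Gamma piece $W_x^{(1)}$ and the vanishing piece $W_x^{(2)}$ that makes the density-level limit accessible. The supporting uniform-in-$x$ estimates --- on $K_{X^2}''(\spt_{x})$ and on the mean, variance, and tails of $R[\spt_{x}]$ --- are more routine and rest on \cref{body-assumption:gap-lam1-lam2} keeping the saddle point $\spt_{x}$ below the pole $1/\lambda_{\mul+1}$. (As an alternative route one may divide the closed form of $\widetilde{f}_{X^2}(x)$ from \cref{supp-lemma:saddle-point-approx-explicit} by the large-$x$ equivalent $f_{X^2}(x)\sim e^{-x/\lambda_1}x^{\mul/2-1}\prod_{j=\mul+1}^{r}(1-\lambda_j/\lambda_1)^{-1/2}\big/\big(\lambda_1^{\mul/2}\Gamma(\mul/2)\big)$, the latter following from the convolution representation $f_{X^2}=f_{(\lambda_1/2)\chi^{2}_{\mul}}\ast f_R$ by dominated convergence; this yields the same constant.)
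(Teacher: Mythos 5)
Your proposal is correct and reaches the stated limit by a genuinely different (and arguably cleaner) route than the paper. The paper also decomposes $W_x$ via a convolution, but it peels off only a \emph{single} $\operatorname{Gamma}(1/2,\cdot)$ factor $\omega_{1,k}$ and lumps the remaining $r-1$ tilted chi-square terms into $S_k$; it then proves uniform boundedness and uniform equicontinuity of $(f_{W_k})_k$ on a compact interval via \cref{supp-lemma:gamma-Lipschitz}, applies Arzel\`a--Ascoli and Scheff\'e to upgrade weak convergence to uniform density convergence, and finally evaluates at the moving point via \citet[Exercise 7.9]{rudin_principles_1976}. You instead exploit $\lambda_1=\dots=\lambda_\mul$ fully, splitting $X^2[\spt_x]$ into an \emph{exact} $\operatorname{Gamma}(\mul/2,\,\text{rate }u_x)$ piece $A_x$ and an independent remainder $B_x=R[\spt_x]$; after rescaling, $W_x^{(1)}$ is exactly Gamma with rate $u_x\sqrt{K_{X^2}''(\spt_x)}\to\sqrt{\mul/2}$, and $W_x^{(2)}\to 0$. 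This bypasses Arzel\`a--Ascoli and Scheff\'e entirely, replacing them with explicit Gamma asymptotics plus concentration of the perturbation --- a nice simplification that the paper leaves on the table. Two small points to tighten: (i) a sign slip, $u_x=\mul/(2x)-g(x)$, not $+g(x)$, inconsequential since $g=O(x^{-2})$; (ii) in the final step, when $\mul<2$ the Gamma density $f_{W_x^{(1)}}$ is unbounded near $0$, so the tail contribution $\E\bigl[f_{W_x^{(1)}}(w_x(x)-W_x^{(2)})\,\Isub{W_x^{(2)}>\delta}\bigr]$ cannot be controlled by the $W_x^{(2)}$-probability alone; you should bound it by $\sup_{b\ge\delta}f_{W_x^{(2)}}(b)\cdot\int f_{W_x^{(1)}}\le\sup_{b\ge\delta}f_{W_x^{(2)}}(b)$, and observe that since $W_x^{(2)}=B_x/\sqrt{K_{X^2}''(\spt_x)}$ with $B_x$ having a uniform exponential tail (thanks to \cref{\mainlabel{body-assumption:gap-lam1-lam2}}) and the scaling factor diverging, $\sup_{b\ge\delta}f_{W_x^{(2)}}(b)\to 0$. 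With that made explicit, your argument is complete.
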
 

\subsubsection{Proof of \cref{\mainlabel{body-proposition:saddle-point-tail-equivalence}}}

The lemmas in \cref{supp-section:lemmas-for-saddlepoint-approx} enable the proof of \cref{\mainlabel{body-proposition:saddle-point-tail-equivalence}}.

\begin{proof}
\noindent
\textbf{(i)} If $r=1$, then $\xp^{2}$ is gamma distributed with shape parameter $1/2$ and scale parameter $1$, hence $f_{\xp^2}(x) = \exp(-x)/\sqrt{\pi x}$.  Since $X\geq 0$ with probability one, it holds that $\P(\xp\leq x) = \P(\xp^2\leq x^2)$ for all $x\geq 0$,  hence 
\begin{equation*}
    f_{\xp}(x) 
    =
    2xf_{\xp^2}(x^2)\I{x\geq 0} 
    = 
    \frac{2\exp(-{x^2})}
    {\sqrt{\pi}}
    \I{x\geq 0},
\end{equation*}
which is the half-normal density function $f_{H_{1}}(x)$ with parameter $\rho=\sqrt{1/2}$. It follows that
\begin{align}
    \label{supp-eq:tail-exact-r-1}
    \frac{1-F_{H_{1}}(x)}{1-F_{\xp}(x)}
    = 1
    \quad \text{ for all } x\geq 0.
\end{align}

\noindent
\textbf{(ii)} If $r \geq 2$ and $\mul = r$, then by \cref{\mainlabel{body-assumption:noise}}, $\m{D}= \sigma^{2}\m{I}_{m}$, and $\m{S} = s_{r}\m{I}_{r}$. In particular, $\xp$ is a squared sum of $r$ i.i.d.~$\NN(0,1/2)$ random variables, which by definition is equal in distribution to $H_{1}$. This implies that
\begin{align}
    \label{supp-eq:tail-exact-r-eq-mul}
    \frac{1-F_{H_{1}}(x)}{1-F_{\xp}(x)}
    = 1
    \quad \text{ for all } x\geq 0.
\end{align}

\noindent
\textbf{(iii)} For $r\geq 2$ and $\mul < r$, by \cref{supp-eq:saddle-point-simplified,supp-lemma:saddle-point-approx-explicit}, we have
\begin{align*}
    f_{\xp}(x)
    &=
    2xf_{\xp^2}(x^2)\I{x\geq 0}
    \\
    &= 
    2x \widetilde{f}_{\xp^2}(x^2)f_{W_{x^2}}(w_{x^2}(x^2))\I{x\geq 0}
    \\
    &=
    \blr{
        \frac{
            2^{\frac{\mul + 1}{2}}
            x^{\mul - 1}
            \clr{
                1
                +
                \frac{2x^{4} g(x^{2})}{\mul x^{2}- 2 x^{4} g(x^{2})}
            }^{\mul/2}
            \clr{
                \prod_{j=\mul + 1}^r (1- \spt_{x}\lambda_j /\lambda_{1})^{-1/2}
            }
            \exp
            \plr{
                -x^{2}+\frac{\mul}{2}-x^{2}g(x^{2})
            }
        }
        {
            \mul^{\frac{\mul-1}{2}}
            \sqrt{
                \frac{1}{\left\{1- 2x^{2}g(x^{2})/\mul\right\}^2} 
                +
                \frac{\mul}{4x^{4}}
                \sum_{j=\mul + 1}^r 
                \frac{\lambda_j^2}{{(1-\lambda_j \spt_{x})^2}}
            }
        }
    }
    \\
    &\qquad
    \times 
    f_{W_{x^2}}(w_{x^2}(x^2))
    \I{x\geq 0}
    \\
    &=
    \blr{
        \frac{
            2^{\frac{\mul+1}{2}}
            x^{\mul - 1}
            \clr{
                \prod_{j=\mul + 1}^r (1-  \lambda_j /\lambda_{1})^{-1/2}
            }
            \exp
            \plr{
                -x^{2}
                +
                \frac{\mul}{2}
            }
        }
        {
            \mul^{\frac{\mul-1}{2}}
        }
    }
    \clr{
        1+O(x^{-1})
    }
    \\
    &\qquad
    \times 
    \clr{
            \frac{
                (\mul/2)^{\frac{\mul-1}{2}}
                \exp(-\mul/2)
            }
            {
            \Gamma(\mul/2)
            }
        }
        \clr{
            1+O(x^{-2})
        }
    \I{x\geq 0}
    \\
    &=
    \frac{2}{\Gamma(\mul/2)}x^{\mul-1} \exp(-x^{2})
    \clr{
        \prod_{j=\mul + 1}^r 
        \plr{1-  \frac{\lambda_j}{\lambda_{1}}
        }
    }^{-1/2}
    \clr{
        1+ O(x^{-1})
    }
    \I{x\geq 0}
    \\
    &= f_{H_{1}}(x)
    \clr{
        \prod_{j=\mul + 1}^r 
        \plr{1-  \frac{\lambda_j}{\lambda_{1}}
        }
    }^{-1/2}
    \clr{
        1+ O(x^{-1})
    },
\end{align*}
for $x$ large enough, where we have used the fact that $g(x)= O(x^{-2})$ and ${\spt_{x}} =  1 + O(x^{-1})$ by \cref{supp-lemma:saddle-point}, alongside \cref{supp-lemma:fW-convergence}.
It follows that
\begin{align}
    \label{supp-eq:tail-convergence-2xfx}
    f_{H_{1}}(x)
    = 
    f_{\xp}(x)
    \clr{
        \prod_{j=\mul + 1}^r 
        \plr{1-  \frac{\lambda_j}{\lambda_{1}}
        }
    }^{-1/2}
    \clr{
        1 + O(x^{-1}) 
    }
\end{align}
for $x$ large enough, hence, for any $n,m\geq r$,
\begin{align}
    \label{supp-eq:tail-exact-r-gtr-1}
    \frac{1-F_{H_{1}}(x)}{1-F_{\xp}(x)} 
    =  
    \frac{\int_{x}^{\infty}f_{H_{1}}(x)\dd x}
    {\int_{x}^{\infty}f_{\xp}(x)\dd x}
    =
    \clr{
        \prod_{j=\mul + 1}^r 
        \plr{1-  \frac{\lambda_j}{\lambda_{1}}
        }
    }^{-1/2}
    \clr{
        1+ O(x^{-1})
    }
\end{align}
for $x$ large enough.
By \cref{supp-eq:tail-exact-r-1,supp-eq:tail-exact-r-eq-mul,supp-eq:tail-exact-r-gtr-1}, for any $n,m\geq r$,
\begin{align*}
    \frac{1-F_{H}(x)}{1-F_{X}(x)} 
    =
    \frac{1-F_{H_{1}}(\lambda_{1}^{-1/2}x)}{1-F_{\xp}(\lambda_{1}^{-1/2}x)} 
    =
    \begin{cases}
            1
            &
            \text{if }\;
            r = 1 
            \text{ or }\;
            \mul = r
            , \\
            \clr{
                \prod_{j=\mul+1}^{r}\left(1-\frac{\lambda_{j}}{\lambda_{1}}\right)
            }^{1/2}
            \clr{1 + O(\lambda_{1}^{1/2}x^{-1})}
            &
            \text{if }\;
            r > 1
            \text{ and }\;
            \mul < r
        \end{cases}
\end{align*}
for $x$ large enough, which completes the proof of \cref{\mainlabel{body-proposition:saddle-point-tail-equivalence}}.
\end{proof}

\subsubsection{Saddle point density approximations: a brief overview}
\label{supp-section:saddle-point-approx-overview}

Saddle point approximations are often employed to approximate densities and interval probabilities involving a sum of random variables \citep{kolassa_series_2006,butler_saddlepoint_2007}. \citet{daniels_saddlepoint_1954} showed that for a sample of $N$ independent and identically distributed random variables $Z_{1},\dots,Z_{N}$ with a non-degenerate MGF and a continuous and integrable characteristic function, the relative error (see \cref{supp-eq:density-change-of-variables}) of the saddle point density approximation of $f_{\overline{Z}_{N}}(x)$, the density of the sample mean $\overline{Z}_{N}$, satisfies
$
    1+\epsilon(x) 
    =
    c_0 + O(N^{-1})
$
for all $x$ and some constant $c_0$. That is, the {relative} error is uniformly controlled for all $x$ as $N$ grows, meaning that with a large enough sample size, the re-scaled saddle point density approximation $c_0 \widetilde{f}_{\overline{Z}_{N}}(x)$ is accurate even for large values of $x$ where ${f}_{\overline{Z}_{N}}(x)$ is very small. This control over relative error is a distinguishing feature of saddle point density approximations compared to Edgeworth expansions, where only the absolute error is controlled over the domain of the density function. The approach of \citet{daniels_saddlepoint_1954} relies on approximating the inverse Fourier transform of the characteristic function to recover the corresponding density function, utilizing the method of steepest descent for approximating integrals of complex-valued functions. Although the derivation of the saddle point approximation in \cref{supp-eq:saddle-point-standard-form} avoids the inverse Fourier transform entirely, the presence of the saddle point $\spt$ in the saddle point approximation is a result of choosing the steepest descent contour that passes through the saddle point $\spt$.

The random variable of interest, $X^{2}$, is typically not a sum of i.i.d.~random variables, yet \cref{supp-lemma:fW-convergence} shows that $1+\epsilon(x)$, equivalently $f_{W_x}(w_x(x))$, converges to a constant $c_0$ as $x\to \infty$, which, by \cref{supp-eq:saddle-point-simplified}, results in ${f}_{X^{2}}(x)/\widetilde{f}_{X^{2}}(x)$ converging to a constant as $x\to\infty$. In this case, the saddle point density approximation is called \emph{tail exact} \citep{barndorff-nielsen_tail_1999}. In general, tail exactness does not guarantee convergence of the approximation to the true density for small, fixed values of $x$ near the mean of $X^{2}$. Nevertheless, a tail exact approximation of $X^{2}$ is sufficient to show the tail equivalence (see \cref{\mainlabel{body-defn:tail-equivalence}}) of $X$ and the generalized gamma random variable $H$.

\subsection{Towards proving \cref{\mainlabel{body-theorem:Gumbel-convergence}}}
\label{supp-section:proof-of-Gumbel-convergence}
              
\subsubsection{Technical lemmas}

The following lemma from extreme value theory enables deriving the normalizing sequences of a distribution via those of a tail equivalent distribution. This tool is useful when it is more straightforward to compute the normalizing sequences of the tail equivalent distribution than to compute those of the original distribution.
\begin{lemma}[\citealp{resnick_extreme_2007}, Proposition 1.19]
    \label{supp-lemma:tail-equivalence}
    Suppose $S$ is a distribution function in the domain of attraction of the standard Gumbel distribution with distribution function $F_{\gumbelrv}$, that is, there exist sequences $(\alpha_{n})_{n\geq 1}$ and $(\beta_{n})_{n\geq 1}$ such that as $n\to\infty$,
    \begin{align*}
        S^n(\alpha_{n} x + \beta_{n}) 
        \to 
        F_{\gumbelrv}(x),
        \quad
        \forall x\in \R.
    \end{align*}
    Moreover, let $T$ be a distribution function that is tail equivalent to $S$ per \cref{\mainlabel{body-defn:tail-equivalence}}, namely
    \begin{align*}
        \lim_{x \to x_{0}}
        \frac{1-S(x)}{1-T(x)}
        =
        A \in (0, \infty),
    \end{align*}
    where $x_{0}$ is the common right endpoint of $S$ and $T$.
    Then, as $n\to\infty$,
    \begin{align*}
        T^n(\alpha_{n}(x-\log(A)) + \beta_{n})
        \to
        F_{\gumbelrv}(x), 
        \quad
        \forall x\in \R.
    \end{align*}
\end{lemma}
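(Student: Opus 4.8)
The plan is to pass to the equivalent tail formulation of the domain-of-attraction hypothesis, transport the resulting tail asymptotics from $S$ to $T$ using the tail-equivalence ratio, and then undo the passage. Throughout, write $\bar{S} \coloneqq 1-S$, $\bar{T} \coloneqq 1-T$, and $u_{n}(x) \coloneqq \alpha_{n}x + \beta_{n}$, and let $x_{0}$ be the common right endpoint of $S$ and $T$. First I would record the standard equivalence: for each fixed $x \in \R$, one has $S^{n}(u_{n}(x)) \to \exp(-e^{-x})$ if and only if $n\,\bar{S}(u_{n}(x)) \to e^{-x}$. Indeed, taking logarithms of the left-hand convergence gives $n\log S(u_{n}(x)) \to -e^{-x} \in (-\infty,0)$, which forces $S(u_{n}(x)) \to 1$ (otherwise a subsequence with $S \le 1-\delta$ would send $n\log S \to -\infty$), and then $\log S(u_{n}(x)) = \log(1-\bar{S}(u_{n}(x))) = -\bar{S}(u_{n}(x))(1+o(1))$ yields $n\,\bar{S}(u_{n}(x)) \to e^{-x}$; the converse is the same computation run backwards, using $\bar{S}(u_{n}(x)) \to 0$.

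Second, I would verify that the evaluation points approach the right endpoint strictly from below, i.e. $u_{n}(x) < x_{0}$ for all large $n$ and $u_{n}(x) \to x_{0}$. Since $n\,\bar{S}(u_{n}(x))$ converges to the finite limit $e^{-x}$, we get $\bar{S}(u_{n}(x)) \to 0$, hence $S(u_{n}(x)) \to 1$; because $S(t) < 1$ for every $t < x_{0}$ by definition of $x_{0} = \sup\{t : S(t) < 1\}$, any subsequence of $(u_{n}(x))_{n}$ staying at distance at least $\varepsilon$ below $x_{0}$ would keep $\bar{S}$ bounded away from $0$, a contradiction; and $S^{n}(u_{n}(x)) \to \exp(-e^{-x}) < 1$ forces $S(u_{n}(x)) < 1$, hence $u_{n}(x) \le x_{0}$, eventually. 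Since $S$ and $T$ share the right endpoint $x_{0}$ by hypothesis, this is exactly the regime where \cref{\mainlabel{body-defn:tail-equivalence}} applies along the sequence, giving $\bar{T}(u_{n}(x))/\bar{S}(u_{n}(x)) \to 1/A$ as $n \to \infty$. This is the step I expect to be the main obstacle: one must be careful that $u_{n}(x) \to x_{0}$ from below along the \emph{entire} sequence, not merely along a subsequence, so that the tail-equivalence limit — stated as $t \to x_{0}$ — can legitimately be composed with $n \mapsto u_{n}(x)$.

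Third, I would combine the two facts. Multiplying, $n\,\bar{T}(u_{n}(x)) = (n\,\bar{S}(u_{n}(x)))\,(\bar{T}(u_{n}(x))/\bar{S}(u_{n}(x))) \to e^{-x}/A$, and running the logarithm step of the first paragraph in reverse (now with $\bar{T}(u_{n}(x)) \to 0$, so that $T(u_{n}(x)) \in (0,1)$ eventually) gives $T^{n}(u_{n}(x)) \to \exp(-e^{-x}/A)$ for every $x \in \R$. Substituting $x \mapsto x - \log A$ and using the identity $e^{-(x-\log A)}/A = e^{-x}$ then yields $T^{n}(\alpha_{n}(x-\log A) + \beta_{n}) \to \exp(-e^{-x}) = F_{G}(x)$ for all $x \in \R$, which is the claim. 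Apart from the endpoint-convergence point flagged above, the argument is elementary bookkeeping with $\log(1-y) = -y + O(y^{2})$ as $y \to 0^{+}$.
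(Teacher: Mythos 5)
Your proof is correct and is essentially the standard argument for Proposition~1.19 in Resnick (the paper cites the result without reproducing a proof): pass from $S^{n}(u_{n})\to\exp(-e^{-x})$ to $n\bar S(u_{n})\to e^{-x}$, verify $u_{n}\uparrow x_{0}$ strictly from below along the whole sequence, transport via the tail-equivalence ratio to $n\bar T(u_{n})\to e^{-x}/A$, convert back, and reparametrize $x\mapsto x-\log A$. The step you flag as the main obstacle (ensuring $u_{n}(x)\to x_{0}$ from below so the tail-equivalence limit composes legitimately) is indeed the one point that needs care, and you handle it correctly via monotonicity of $\bar S$ and the fact that $S<1$ strictly below $x_{0}$.
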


In other words, \cref{supp-lemma:tail-equivalence} implies that $\alpha_{n}\inv(\max_{i\in\dpar{n}} T_i - \beta_{n}) +\log(A)\rightsquigarrow \gumbelrv$, where $T_{1},\dots,T_{n}$ are i.i.d. samples from the distribution $T$.

\cref{supp-lemma:survival-function} characterizes the asymptotic behavior of the survival function of the reference distribution, $\overline{F}_{H}(x) \coloneqq 1-F_{H}(x)$. The proof of \cref{supp-lemma:survival-function} is provided in \cref{supp-pf:survival-function}.

\begin{lemma}[Survival function for generalized gamma distribution]
    \label{supp-lemma:survival-function}
    Let $H$ denote a random variable following the generalized gamma distribution with scale parameter $\sqrt{\lambda_{1}}$ and shape parameters $\mul$ and $2$, namely $H \sim \operatorname{GG}(\sqrt{\lambda_{1}},\mul,2)$, with density $f_{H}(x) =\frac{2}{\lambda_{1}^{\mul/2}\Gamma(\mul/2)}x^{\mul-1}\exp\left(-\frac{x^2}{\lambda_{1}}\right)\I{x\geq 0}$. The survival function $\overline{F}_{H}(x) \coloneqq 1- F_{H}(x)$ satisfies
    \begin{align*}
        \overline{F}_{H}(x) 
        =
        \begin{cases}
            \frac{\lambda_{1}^{1-\mul/2}}{\Gamma(\mul/2)}x^{\mul-2}\exp(-x^{2}/{\lambda_{1}})(1+ r(x)) 
            &
            \text{if } x > 0,
            \\
            1 
            &
            \text{if } x \leq 0,
        \end{cases}
    \end{align*}
    where $0 < r(x) \leq \frac{\lambda_{1}(\mul -2)}{2x^2 - \lambda_{1}(\mul -2)} = O(x^{-2})$ as $x\to\infty$.
\end{lemma}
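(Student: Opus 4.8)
The plan is to reduce $\overline{F}_{H}$ to a closed form and then extract the leading term by a single integration by parts. For $x \le 0$ the claim is immediate since $H \ge 0$ almost surely, so $\overline{F}_{H}(x) = 1$. For $x > 0$, substituting $u = t^{2}/\lambda_{1}$ in $\overline{F}_{H}(x) = \frac{2}{\lambda_{1}^{\mul/2}\Gamma(\mul/2)}\int_{x}^{\infty} t^{\mul-1}\exp(-t^{2}/\lambda_{1})\,\dd t$ gives
\begin{equation*}
    \overline{F}_{H}(x)
    =
    \frac{1}{\Gamma(\mul/2)}\int_{x^{2}/\lambda_{1}}^{\infty} u^{\mul/2 - 1}\exp(-u)\,\dd u
    =
    \frac{\Gamma(\mul/2,\, x^{2}/\lambda_{1})}{\Gamma(\mul/2)},
\end{equation*}
which already signals why an $O(x^{-2})$ multiplicative correction is the right order; alternatively, and more conveniently for bookkeeping, one works directly with the integral $\int_{x}^{\infty} t^{\mul-1}\exp(-t^{2}/\lambda_{1})\,\dd t$ as below.

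Next I would integrate by parts using $t^{\mul-1}\exp(-t^{2}/\lambda_{1}) = -\tfrac{\lambda_{1}}{2}\, t^{\mul-2}\,\tfrac{\dd}{\dd t}\exp(-t^{2}/\lambda_{1})$. The boundary term at $+\infty$ vanishes (super-exponential decay dominates the polynomial prefactor for every $\mul \ge 1$), while the boundary term at $x$ contributes $\tfrac{\lambda_{1}}{2}x^{\mul-2}\exp(-x^{2}/\lambda_{1})$ and the residual integral is $\tfrac{\lambda_{1}(\mul-2)}{2}\int_{x}^{\infty} t^{\mul-3}\exp(-t^{2}/\lambda_{1})\,\dd t$. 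Multiplying through by $\tfrac{2}{\lambda_{1}^{\mul/2}\Gamma(\mul/2)}$ reproduces the asserted identity with
\begin{equation*}
    r(x)
    =
    \frac{(\mul-2)\int_{x}^{\infty} t^{\mul-3}\exp(-t^{2}/\lambda_{1})\,\dd t}{x^{\mul-2}\exp(-x^{2}/\lambda_{1})},
\end{equation*}
which is $\ge 0$ when $\mul \ge 2$ (and identically $0$ when $\mul = 2$, matching the exact evaluation $\overline{F}_{H}(x) = \exp(-x^{2}/\lambda_{1})$ in that case).

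For the upper bound I would bootstrap: since $t^{\mul-3} \le x^{-2} t^{\mul-1}$ for $t \ge x$, the numerator integral is at most $x^{-2}\int_{x}^{\infty} t^{\mul-1}\exp(-t^{2}/\lambda_{1})\,\dd t$, and re-expressing that integral through the identity just derived as $\tfrac{\lambda_{1}}{2}x^{\mul-2}\exp(-x^{2}/\lambda_{1})(1 + r(x))$ yields the self-referential inequality $r(x) \le \tfrac{\lambda_{1}(\mul-2)}{2x^{2}}(1 + r(x))$. Whenever $2x^{2} > \lambda_{1}(\mul-2)$ this rearranges to $r(x) \le \tfrac{\lambda_{1}(\mul-2)}{2x^{2} - \lambda_{1}(\mul-2)} = O(x^{-2})$, exactly the claimed bound; the restriction to large $x$ is harmless because tail equivalence only concerns $x \to \infty$.

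This lemma is essentially elementary, so the only real care is bookkeeping: checking that the $+\infty$ boundary term vanishes for small $\mul$, that the self-referential inequality is solvable (which needs $x$ large), and the handling of the special cases $\mul \in \{1,2\}$ — the case $\mul = 2$ is exact with $r \equiv 0$, while for $\mul = 1$ the law $H$ is half-normal, so $\overline{F}_{H}$ is a Gaussian tail and the correction is controlled by standard Mills-ratio estimates rather than the bootstrap above. I do not anticipate any genuine obstacle; the bulk of the work is simply tracking the explicit constant $\lambda_{1}(\mul-2)/(2x^{2} - \lambda_{1}(\mul-2))$ through the integration by parts.
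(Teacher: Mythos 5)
Your proposal follows exactly the paper's argument: integrate by parts once to extract the leading $x^{\mul-2}\exp(-x^{2}/\lambda_{1})$ factor, bound the residual integral via the pointwise inequality $t^{\mul-3}\le x^{-2}t^{\mul-1}$ for $t\ge x$, re-use the integration-by-parts identity to obtain the self-referential inequality $r(x)\le \frac{\lambda_{1}(\mul-2)}{2x^{2}}\{1+r(x)\}$, and rearrange whenever $2x^{2}>\lambda_{1}(\mul-2)$. There is no genuine methodological difference.

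One small correction to your remark about $\mul=1$: no separate Mills-ratio argument is needed, because the bootstrap works uniformly in $\mul$. When $\mul=1$ the coefficient $\mul-2=-1$ is negative, so the rearranged inequality reads $\bigl(1+\lambda_{1}/(2x^{2})\bigr)\int_{x}^{\infty}u^{-2}e^{-u^{2}/\lambda_{1}}\,\dd u\le \tfrac{\lambda_{1}}{2x^{2}}x^{-1}e^{-x^{2}/\lambda_{1}}$, giving $|r(x)|\le \lambda_{1}/(2x^{2}+\lambda_{1})=O(x^{-2})$; there is nothing for a Mills-ratio estimate to add. The only casualty is the sign of $r(x)$: for $\mul=1$ one finds $r(x)<0$, and for $\mul=2$ one finds $r(x)\equiv 0$ (as you note). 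This is in fact a place where the lemma's stated range $r(x)\in(0,\lambda_{1}(\mul-2)/(2x^{2}-\lambda_{1}(\mul-2))]$ is formally empty when $\mul\le 2$; the paper's own proof produces the same thing. Since the lemma is only invoked downstream for the $r(x)=O(x^{-2})$ conclusion, the slip is harmless, but the correct statement valid across all $\mul\in\dpar{r}$ would bound $|r(x)|$ rather than assert strict positivity.
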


\cref{supp-lemma:generalized-gamma-normalizing-constants} computes the normalizing sequences of the generalized gamma random variable $H$ which is tail equivalent to the target random variable $X$ by \cref{\mainlabel{body-proposition:saddle-point-tail-equivalence}}. The proof of \cref{supp-lemma:generalized-gamma-normalizing-constants} is provided in  \cref{supp-pf:generalized-gamma-normalizing-constants}.
\begin{lemma}[Normalizing sequences for generalized gamma distribution]
    \label{supp-lemma:generalized-gamma-normalizing-constants}
    Let $H_{(1)}, \dots, H_{(n)}$ be i.i.d.~random variables from the generalized gamma distribution $\operatorname{GG}(\sqrt{\lambda_{1}},\mul,2)$. Then, as $n\to\infty$,  
    \begin{align*}
        a_n\inv\plr{\max_{1\leq i\leq n} H_{(i)} - b_n} 
        \rightsquigarrow  
        \gumbelrv,
    \end{align*}
    where  
    $a_{n}
    =
    \dfrac{\sqrt{\lambda_{1}}}{2\sqrt{\log n}}$
    and
    $b_{n}
    = 
    \sqrt{\lambda_{1}\log n} 
    + \dfrac{\sqrt{\lambda_{1}}(\mul-2)\log\log n}{4\sqrt{\log n}}
    - \dfrac{\sqrt{\lambda_{1}}\log\Gamma(\mul/2)}{2\sqrt{\log n}}$.
    Furthermore, the sequence $(b_{n})_{n\geq 1}$ satisfies
    $n\overline{F}_{H}({b}_{n}) 
    =
    1+ O\plr{\frac{\log\log n}{\log n}}$. 
\end{lemma}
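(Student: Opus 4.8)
The plan is to treat this as a classical extreme-value computation once the $\lambda_{1}$-dependence has been removed, and then to verify the stated normalizing sequences by direct substitution into the survival-function asymptotics of \cref{supp-lemma:survival-function}. The first step is to reduce to the unit-scale case: by the scaling property recorded in \cref{body-remark:scaled-chi-properties} we have $H \eqd \sqrt{\lambda_{1}}\,H^{(1)}$ with $H^{(1)} \sim \operatorname{GG}(1,\mul,2)$, so $\max_{i} H_{i} \eqd \sqrt{\lambda_{1}}\max_{i} H_{i}^{(1)}$; it therefore suffices to exhibit sequences $a_{n}^{(1)},b_{n}^{(1)}$ with $(a_{n}^{(1)})^{-1}(\max_{i} H_{i}^{(1)} - b_{n}^{(1)}) \rightsquigarrow \gumbelrv$ and then read off $a_{n} = \sqrt{\lambda_{1}}\,a_{n}^{(1)}$ and $b_{n} = \sqrt{\lambda_{1}}\,b_{n}^{(1)}$. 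This reduction is what lets the remainder of the argument proceed as if $\lambda_{1}$ were a fixed constant, which is essential because $\lambda_{1}$ may vanish. In the unit-scale case $\overline{F}_{H^{(1)}}(x) = \Gamma(\mul/2)^{-1} x^{\mul-2} e^{-x^{2}}(1+r(x))$ with $|r(x)| = O(x^{-2})$ by \cref{supp-lemma:survival-function}, and I will use the standard fact that $F^{n}(a_{n}x + b_{n}) \to \exp(-e^{-x})$ for every $x$ follows once (i) $n\overline{F}(b_{n}) \to 1$ and (ii) $\overline{F}(b_{n}+a_{n}x)/\overline{F}(b_{n}) \to e^{-x}$, via the identity $\log F^{n}(a_{n}x+b_{n}) = n\log(1-\overline{F}(a_{n}x+b_{n})) = -n\overline{F}(a_{n}x+b_{n})(1+o(1))$.

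Next I would pin down $b_{n}^{(1)}$. Writing $\beta \coloneqq (b_{n}^{(1)})^{2}$ and imposing $n\,\overline{F}_{H^{(1)}}(b_{n}^{(1)}) \approx 1$, i.e.\ $\log n - \log\Gamma(\mul/2) + \tfrac{\mul-2}{2}\log\beta - \beta + o(1) = 0$, a bootstrap substitution starting from $\beta = \log n + O(\log\log n)$ yields $\beta = \log n + \tfrac{\mul-2}{2}\log\log n - \log\Gamma(\mul/2) + o(1)$; taking the square root and expanding $\sqrt{1+u}$ reproduces the displayed formula $b_{n}^{(1)} = \sqrt{\log n} + \tfrac{(\mul-2)\log\log n}{4\sqrt{\log n}} - \tfrac{\log\Gamma(\mul/2)}{2\sqrt{\log n}}$ up to an $o(1/\sqrt{\log n})$ correction, which is $o(a_{n}^{(1)})$ and hence immaterial for the limit. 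Substituting this $b_{n}^{(1)}$ back into the survival-function formula, the powers of $\log n$, the $\Gamma(\mul/2)$ factors, and the factor $n$ all cancel, and the residual discrepancies come only from $(1+u)^{(\mul-2)/2}$ in the polynomial prefactor, from $r(b_{n}^{(1)}) = O(1/\log n)$, and from the sub-leading terms dropped in forming the closed-form sequence; tracking these gives the quantitative estimate $n\,\overline{F}_{H}(b_{n}) = 1 + O(\log\log n/\log n)$ claimed in the lemma. Finally, $a_{n}^{(1)}$ is the natural von Mises auxiliary scale $\overline{F}_{H^{(1)}}(b_{n}^{(1)})/f_{H^{(1)}}(b_{n}^{(1)}) = \tfrac{1}{2 b_{n}^{(1)}}(1+o(1)) = \tfrac{1}{2\sqrt{\log n}}(1+o(1))$, which matches $a_{n} = \sqrt{\lambda_{1}}/(2\sqrt{\log n})$ after undoing the scaling.

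It then remains to check (ii). For fixed $x$ we have $a_{n}^{(1)}/b_{n}^{(1)} = O(1/\log n) \to 0$, so $(b_{n}^{(1)}+a_{n}^{(1)}x)^{2} = (b_{n}^{(1)})^{2} + 2 a_{n}^{(1)} b_{n}^{(1)} x + (a_{n}^{(1)})^{2} x^{2} = (b_{n}^{(1)})^{2} + x + o(1)$, using $2 a_{n}^{(1)} b_{n}^{(1)} = 1+o(1)$; moreover $(b_{n}^{(1)}+a_{n}^{(1)}x)^{\mul-2} = (b_{n}^{(1)})^{\mul-2}(1+o(1))$ and $r(b_{n}^{(1)}+a_{n}^{(1)}x) = o(1)$. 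Hence $\overline{F}_{H^{(1)}}(b_{n}^{(1)}+a_{n}^{(1)}x)/\overline{F}_{H^{(1)}}(b_{n}^{(1)}) = e^{-x}(1+o(1))$, so by (i) we get $n\,\overline{F}_{H^{(1)}}(a_{n}^{(1)}x+b_{n}^{(1)}) \to e^{-x}$, and the $\log F^{n}$ identity from the first paragraph closes the argument for $H^{(1)}$; the scaling relation then transfers it verbatim to $H$, yielding $a_{n}^{-1}(\max_{i} H_{i} - b_{n}) \rightsquigarrow \gumbelrv$ with the stated $a_{n},b_{n}$.

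The main obstacle I anticipate is purely in the bookkeeping of the second step: expressing every error term in terms of $\log n = (b_{n}^{(1)})^{2}$ rather than in terms of $b_{n}$ directly (so that the possible vanishing of $\lambda_{1}$ causes no trouble once the reduction is in place), matching the three-term expansion of $b_{n}$ against the exact solution of $n\overline{F}_{H}(b_{n}) = 1$, and justifying that replacing that exact solution by the displayed closed form — a perturbation of order $o(a_{n})$ — leaves the limiting distribution unchanged. Everything else is a routine application of the Gumbel domain-of-attraction criterion.
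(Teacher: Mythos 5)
Your proposal is correct and follows essentially the same route as the paper: reduce to unit scale by the scaling property $H \eqd \sqrt{\lambda_1}H^{(1)}$, use the survival-function expansion of \cref{supp-lemma:survival-function} to solve $n\overline{F}_{H^{(1)}}(b_n^{(1)}) \approx 1$ by a one-step bootstrap and take square roots, set $a_n^{(1)} = \overline{F}_{H^{(1)}}(b_n^{(1)})/f_{H^{(1)}}(b_n^{(1)})$ as the von Mises auxiliary scale, and absorb the $o(a_n)$ error from replacing the exact quantile by the closed-form $b_n$ via Slutsky. The only cosmetic difference is that you verify Gumbel attraction by directly checking $n\overline{F}(b_n)\to 1$ and $\overline{F}(b_n + a_n x)/\overline{F}(b_n)\to e^{-x}$, whereas the paper first certifies Gumbel attraction via the von Mises derivative criterion $\big(\overline{F}_{H^{(1)}}/f_{H^{(1)}}\big)'(x)\to 0$ (citing de Haan and Ferreira, Theorem 1.1.8) and then reads off the normalizing sequences from the same equations; these are well-known equivalent characterizations and either presentation is complete.
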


\cref{supp-lemma:first-order-convergence} establishes that the normalizing sequences for the first-order approximation $\tinorm{\m{EVS}\inv}$ computed in \cref{supp-lemma:generalized-gamma-normalizing-constants} can in fact be used for the desired target quantity $\tinorm{\mh{U}\RU-\m{U}}$.
The proof of \cref{supp-lemma:first-order-convergence} is provided in \cref{supp-pf:first-order-convergence}.

\begin{lemma}
    \label{supp-lemma:first-order-convergence}        
    Let $(a_n)_{n\geq 1}$ denote the sequence from \cref{supp-lemma:generalized-gamma-normalizing-constants}. Define
    \begin{align*}
        \gamma_{n} 
        \coloneqq
        a_{n}\inv
        \plr{
            \tinorm{\mh{U}\RU - \m{U}}
            -
            \tinorm{\m{EVS}\inv}
        },
    \end{align*}
    and the event 
    \begin{align*}
        \mathcal{C}_{n}
        \coloneqq
        \clr{
            |\gamma_{n}|
            \leq
            C_{\gamma}
            \blr{
                \sqrt{\rlog}
                \plr{
                    \frac{\sigma\sqrt{n}}
                    {s_{r}}
                    +
                    \sqrt{
                        \frac{\mu r}{n}
                    }
                }
                +
                \frac{\sigma\sqrt{\mu r n\log n}}
                {s_{r}}
            }
        },
    \end{align*}
    where $\rlog$ is defined in \cref{\mainlabel{body-defn:signal-matrix-quantities}}, and $C_{\gamma}>0$ is a universal constant.
    Under \cref{\mainlabel{body-assumption:noise},\mainlabel{body-assumption:matrix-size},\mainlabel{body-assumption:snr}}, it holds that $\P(\mathcal{C}_{n}) = 1-O(n^{-9})$ for $n$ sufficiently large.
\end{lemma}

\subsubsection{Proof of \cref{\mainlabel{body-theorem:Gumbel-convergence}}}
\begin{proof}
Let $(a_n)_{n\geq 1}$ and $(b_n)_{n\geq 1}$ denote the normalizing sequences of the generalized gamma distribution $\operatorname{GG}(\sqrt{\lambda_{1}},\mul,2)$ obtained in \cref{supp-lemma:generalized-gamma-normalizing-constants}. By \cref{\mainlabel{body-proposition:saddle-point-tail-equivalence},supp-lemma:tail-equivalence},  as $n\to\infty$,
\begin{align*}
    a_{n}^{-1}(\tinorm{\m{EVS}\inv}-b_{n}) +\log(\abias) 
    \rightsquigarrow
    \gumbelrv.
\end{align*}
Applying \cref{supp-lemma:first-order-convergence} under \cref{\mainlabel{body-assumption:noise},\mainlabel{body-assumption:matrix-size},\mainlabel{body-assumption:delocalization},\mainlabel{body-assumption:snr},\mainlabel{body-assumption:gap-lam1-lam2}} together with Slutsky's lemma yields
\begin{align*}
    &a_n^{-1}\plr{\|\mh{U}\RU- \m{U} \|_{2,\infty} -b_n} +\log(\abias) \\
    &\qquad
    =
    a_n\inv(\tinorm{\m{EVS}\inv}-b_n) +\log(\abias) 
    + \gamma_{n}\\
    &\qquad
    \rightsquigarrow 
    \gumbelrv,
\end{align*}
since $\gamma_{n} = o_{\P}(1)$. 
This completes the proof of \cref{\mainlabel{body-theorem:Gumbel-convergence}}.
\end{proof}

\subsection{Proof of \cref{\mainlabel{body-theorem:CDF-bounds}}}
\label{supp-pf:CDF-bounds}

\begin{proof}
    To avoid complications involving the $\sqrt{\lambda_{1}}$ scaling that changes the distribution of $H$ as $n$ grows, we work with the re-scaled random variables $\xp\coloneqq \lambda_{1}^{-1/2}X$ and $H_{1} = \lambda_{1}^{-1/2} H \sim \operatorname{GG}(1,\mul,2)$, which are stable in $n$.
    
    Let $\acdf(x)$ denote the CDF of $a_n^{-1}\clr{\|\m{EVS}\inv \|_{2,\infty} -b_n} +\log(\abias)$, and define the quantity
    $
    u_{n}(x)
    \coloneqq
    \lambda_{1}^{-1/2}\blr{a_{n}\clr{x - \log(\abias)} + b_{n}}
    $,
    The CDF $\acdf(x)$ satisfies
    \begin{align}
        \acdf(x) 
        &=
        \P\plr{a_n^{-1}\clr{\|\m{EVS}\inv \|_{2,\infty} -b_n} +\log(\abias) \leq x}
        \nonumber\\
        &=\P\plr{\lambda_{1}^{-1/2}\|\m{EVS}\inv \|_{2,\infty} \leq u_{n}(x)}
        \nonumber\\
        &=F_{\xp}^{n}\plr{u_{n}(x)}
        \nonumber\\
        &=\plr{1-\abias^{-1}\overline{F}_{H_{1}}\plr{u_{n}(x)}[1 + O(\clr{u_{n}(x)}^{-1})]}^{n},
        \label{supp-eq:CDF-survival-form}
    \end{align}
    where the last line is due to the intermediary non-asymptotic result of \cref{\mainlabel{body-proposition:saddle-point-tail-equivalence}} in \cref{supp-eq:tail-exact-r-1,supp-eq:tail-exact-r-eq-mul,supp-eq:tail-exact-r-gtr-1}. Importantly, \cref{supp-eq:CDF-survival-form} only holds if $u_{n}(x)\to \infty$ as $n\to \infty$, as $\overline{F}_{\xp}(x)$ and $\overline{F}_{H_{1}}(x)$ are proportional only in the large $x$ limit. Here, $x\equiv x_{n}$ is treated as a quantity that can change with $n$. 
    
    For the purpose of analysis, we partition the domain into four parts:
    \begin{align*}
        D_{1,n} \coloneqq (-\infty, x_{n,0}),
        \quad
        D_{2,n} \coloneqq [x_{n,0}, x_{n,1}],
        \quad
        D_{3,n} \coloneqq (x_{n,1}, x_{n,2}],
        \quad
        \text{and}
        \quad
        D_{4,n} \coloneqq (x_{n,2}, \infty),
    \end{align*}
    where 
    \begin{align*}
        x_{n,0}
        &\coloneqq
        (-1 + \log^{-1/4}n)a_{n}^{-1}b_{n}
        + 
        \log\abias,
        \\
        x_{n,1}
        &\coloneqq
        -\frac{1}{2}\log\log n
        + \log\Gamma(\mul/2)
        + \log \abias,
        \\
        x_{n,2}
        &\coloneqq
       \log \log n 
        + \log\Gamma(\mul/2)
        +\log \abias.
    \end{align*}
    Since $a_{n}^{-1}b_{n} = 2\log n + O(\log\log n)$,
    it holds that $x_{n,0}$ diverges to $-\infty$ at rate $O(\log n)$, $x_{n,1}$ diverges to $-\infty$ at rate $O(\log\log n)$, and $x_{n,2}$ diverges to $+\infty$ at rate $O(\log\log n)$ as $n\to\infty$ under \cref{\mainlabel{body-assumption:gap-lam1-lam2}}.

    As an overview of the proof, we proceed by uniformly upper bounding $|F_{n}(x)-F_{G}(x)|$ on each of the four sub-domains $D_{1,n}, D_{2,n}, D_{3,n}$, and $D_{4,n}$, where $F_{G}(x)\coloneqq \exp(-\exp(-x))$ is the CDF of the standard Gumbel distribution.
    
    The domain $D_{3,n}$ represents the bulk region, where a Taylor expansion of $\overline{F}_{H_{1}}(u_{n}(x))$ can be employed to obtain the desired leading term of $\acdf(x)$, and subsequently bound the error between $\acdf(x)$ and $F_{G}(x)$.
    The domain $D_{2,n}$ represents the region where $F_{G}(x)$ and $\acdf(x)$ are both small, yet the sequence $u_{n}(x_{n})$ still tends to infinity at a rate of $O(\log^{1/4}n)$ or faster so that \cref{supp-eq:CDF-survival-form} can be invoked.
    The domain $D_{4,n}$ represents the region where $1-F_{G}(x)$ and $1-\acdf(x)$ are both small due to the fast growth of $u_{n}(x_{n})$.
    In both $D_{2,n}$ and $D_{4,n}$, the absolute difference $|\acdf(x)- F_{G}(x)|$ is small due to the triangle inequality. 
    The domain $D_{1,n}$ represents the region including cases where $u_{n}(x_{n})$ grows to infinity at a rate of $O(\log^{1/4} n)$ or slower, or tends to $-\infty$ as $n\to\infty$.
    In this case, the values of $\acdf(x)$ and $F_{G}(x)$ are both negligibly small.
    Finally, we relate the CDFs $F_{n}(x)$ and $\acdf(x)$ via the concentration of the term $\gamma_{n}$ in \cref{supp-lemma:first-order-convergence} to get the final bound between $F_{n}(x)$ and $F_{G}(x)$.

\noindent
\textbf{(i)} (Bulk region)
    Suppose $x \in D_{3,n}$. By \cref{supp-lemma:survival-function},
    \begin{align}
        \overline{F}_{H_{1}}(u_{n}(x))
        =
        \frac{1}{\Gamma(\mul/2)}
        \clr{
            u_{n}(x)
        }^{\mul - 2}
        \exp
        \plr{
            -
            \clr{
                u_{n}(x)
            }^2
        }
        \clr{
            1 + O(\clr{u_{n}(x)}^{-2})
        }.
        \label{supp-eq:Survival-function-plugin}
    \end{align}
    Since $x=O(\log\log n)$, $a_{n} = \sqrt{\lambda_{1}}\plr{2\sqrt{\log n}}^{-1}$, and $b_{n} = O(\sqrt{\lambda_{1} \log n})$, it follows that the polynomial factor in \cref{supp-eq:Survival-function-plugin} satisfies
    \begin{align}
        \plr{
            \lambda_{1}^{-1/2}
            \blr{
                a_{n}\clr{x-\log(\abias)} + b_{n}
            }
        }^{\mul-2}
        &=
        (\lambda_{1}^{-1/2}b_{n})^{\mul - 2}
        \blr{
            1+ a_{n}b_{n}^{-1}
            \clr{
                x -\log(\abias)
            }
        }^{\mul - 2}
        \nonumber
        \\
        &=
        (\lambda_{1}^{-1/2}b_{n})^{\mul - 2}
        \clr{1+ O\plr{\frac{\log\log n}{\log n}}}.
        \label{supp-eq:Survival-function-polynomial-factor}
    \end{align}
    By the fact that
    \begin{align*}
        \frac{2a_{n}b_{n}}{\lambda_{1}} = 1 + \frac{(\mul - 2)\log \log n}{4\log n} - \frac{\log\Gamma(\mul/2)}{2\log n}, 
    \end{align*}
    and
    \begin{align*}
        \frac{a_{n}^{2} \clr{x-\log(\abias)}^{2}}{\lambda_{1}}
        =
        \frac{a_{n}^{2}x^2}{\lambda_{1}} + O\plr{\frac{\log\log n}{\log n}},
    \end{align*}
    the exponential factor in \cref{supp-eq:Survival-function-plugin} satisfies
    \begin{align}
        &\exp\plr{-\frac{\blr{a_{n}\clr{x-\log(\abias)} + b_{n}}^2}{\lambda_{1}}}
        \nonumber\\
        &=
        \exp
        \plr{-\frac{b_{n}^{2}}{\lambda_{1}}}
        \exp
        \plr{
            -\frac{2a_{n}b_{n}\clr{x-\log(\abias)}}
            {\lambda_{1}}
        } 
        \exp
        \plr{
            -\frac{a_{n}^{2} \clr{x-\log(\abias)}^{2}}{\lambda_{1}}
        }
        \nonumber\\
        &=
        \exp\plr{-\frac{b_{n}^{2}}{\lambda_{1}}}
        \exp(-x)
        \abias
        \exp
        \plr{
            -\frac{x(\mul -2)\log\log n}{4\log n} - \frac{a_{n}^2x^{2}}{\lambda_{1}}
            +
            O\plr{\frac{\log\log n}{\log n}}
        }
        \nonumber\\
        &=
        \exp
        \plr{
            -\frac{b_{n}^{2}}{\lambda_{1}}
        }
        \exp(-x)
        \abias
        \clr{
            1-\frac{x(\mul -2)\log\log n}{4\log n} - \frac{a_{n}^2x^{2}}{\lambda_{1}}
            + 
            O\plr{\frac{\log\log n}{\log n}}
        },
        \label{supp-eq:Survival-function-exponential-factor}
    \end{align}
    where in the final line a Taylor expansion of $\exp(y)$ is taken around $y=0$, which is justified because $x = O(\log\log n)$. 
    Plugging \cref{supp-eq:Survival-function-polynomial-factor,supp-eq:Survival-function-exponential-factor} into \cref{supp-eq:Survival-function-plugin} yields
    \begin{align}
        &\overline{F}_{H_{1}}
        \plr{u_{n}(x)} 
        \nonumber\\
        &=
        \overline{F}_{H_{1}}
        (\lambda_{1}^{-1/2}b_{n})
        \exp(-x)\abias
        \clr{
            1-\frac{x(\mul -2)\log\log n}{4\log n} - \frac{a_{n}^2x^{2}}{\lambda_{1}}
            + 
            O\plr{\frac{\log\log n}{\log n}}
        }
        \nonumber\\
        &= 
        \frac{1}{n}\exp(-x)\abias
        \clr{
            1-\frac{x(\mul -2)\log\log n}{4\log n} - \frac{a_{n}^2x^{2}}{\lambda_{1}}
            + 
            O\plr{\frac{\log\log n}{\log n}}
        },    
        \label{supp-eq:Survival-function-plugin-2}
    \end{align}
    where the last line is due to 
    \begin{align*}
        \overline{F}_{H_{1}}(\lambda_{1}^{-1/2}b_{n}) 
        =
        \frac{1}{n}
        \clr{
            1+ O\plr{\frac{\log\log n}{\log n}}
        },
    \end{align*} 
    which is a defining property of the sequence $b_{n}$, which is derived in \cref{supp-eq:quantile-equation-residual} in the proof of \cref{supp-lemma:generalized-gamma-normalizing-constants}.
    Plugging \cref{supp-eq:Survival-function-plugin-2} into \cref{supp-eq:CDF-survival-form} and taking logarithms yields
    \begin{align*}
        \log \acdf(x)
        &=
        n\log
        \plr{
            1
            -\frac{1}{n}
            \exp(-x) 
            \clr{
            1-\frac{x(\mul -2)\log\log n}{4\log n} - \frac{a_{n}^2x^{2}}{\lambda_{1}}
            + 
            O\plr{\frac{\log\log n}{\log n}}
        }
        }
        \\
        &=
        -\exp(-x) 
        \clr{
            1-\frac{x(\mul -2)\log\log n}{4\log n} - \frac{a_{n}^2x^{2}}{\lambda_{1}}
            + 
            O\plr{\frac{\log\log n}{\log n}}
        },
    \end{align*}
    where the last line is due to taking the Taylor expansion of $\log(1-y)$ around $y=0$. This is justified because $\frac{1}{n}\exp(-x)$, $\frac{x}{n}\exp(-x)\frac{(\mul -2)\log\log n}{4\log n}$, and $\frac{x^2}{n}\exp(-x)a_{n}$ are all $o(1)$ quantities since $x=O(\log \log n)$ and $\exp(-x) = O(\sqrt{\log n})$. 
    
    Exponentiating $\log \acdf(x)$ yields
    \begin{align}
        \acdf(x)
        &=
        \exp(-\exp(-x))
        \exp\plr{
            \exp(-x)
            \clr{
            \frac{x(\mul -2)\log\log n}{4\log n} 
            +
            \frac{a_{n}^2x^{2}}{\lambda_{1}}
            + 
            O\plr{\frac{\log\log n}{\log n}}
        }
        }
        \label{supp-eq:Fn-expansion-1}\\
        &=
        \exp(-\exp(-x))
        \left\{
            1
            + \frac{x\exp(-x) (\mul-2)\log\log n}{4\log n}
            + \frac{a_{n}^{2}x^{2}\exp(-x)}{\lambda_{1}}
            + O\plr{\frac{\exp(-x)\log\log n}{\log n}}
        \right\},
        \nonumber
    \end{align}
    where in \cref{supp-eq:Fn-expansion-1}, the Taylor expansion of $\exp(y)$ taken around $y=0$ is justified because $\frac{x\exp(-x)(\mul -2)\log\log n}{4\log n}$ and $a_{n}^{2}x^2\exp(-x)/\lambda_{1}$ are all $o(1)$ quantities since $x\in D_{3,n}$.
    Further using the fact that $F_{G}(x) = \exp(-\exp(-x))$ yields
    \begin{align}
        \acdf(x)
        &=
        F_{G}(x) 
        + \frac{x\exp(-x)F_{G}(x)(\mul -2)\log \log n}{4\log n} 
        + \frac{a_{n}^{2}x^2\exp(-x)F_{G}(x)}{\lambda_{1}}
        + O\plr{\frac{\exp(-x)F_{G}(x)\log\log n}{\log n}}.
        \label{supp-eq:Fn-expansion}
    \end{align}
    Since $\exp(-x)F_{G}(x)$, $x\exp(-x)F_{G}(x)$ and $x^{2}\exp(-x)F_{G}(x)$ are all bounded functions on the real line, \cref{supp-eq:Fn-expansion} implies that there exists a finite $n_{1} \in \mathbb{N}$ and a constant $C_{1}>0$ such that for $n \geq n_{1}$, it holds that
    \begin{align}
        \sup_{x\in D_{3,n}}|\acdf(x) - F_{G}(x)|
        \leq
        C_{1}
        {
            \frac{\mul \log\log n}{\log n}
        }.
        \label{supp-eq:KS-bound-D3}
    \end{align}

\noindent
\textbf{(ii)} (Moderately small $x$)
    Suppose $x \in D_{2,n}$. Taking logarithms on \cref{supp-eq:CDF-survival-form}, then applying the inequality $1-x \leq \exp(-x)$ for all $x\in\R$ yields
    \begin{align}
        \acdf(x) 
        &\leq
        \exp
        \plr{
            -n\abias^{-1}\overline{F}_{H_{1}}
            \plr{
                u_{n}(x)
            }
            \blr{1 + O(\{u_{n}(x)\}^{-1})}
        }.
        \label{supp-eq:CDF-exponential-upper-bound}
    \end{align}
    Since $x\in D_{2,n}$, it holds that 
    \begin{align*}
        u_{n}(x)
        &\leq
        \lambda_{1}^{-1/2}
        \blr{
            a_{n}\clr{x_{n,1}-\log(\abias)} + b_{n}
        }
        \\
        &=
        \lambda_{1}^{-1/2}
        \blr{
            a_{n}\clr{-\frac{1}{2}\log\log n + \log \Gamma(\mul/2)} + b_{n}
        }
        \\
        &=
        \sqrt{\log n} +\frac{(\mul -3)\log \log n}{4\sqrt{\log n}}.
    \end{align*}
    Further, by the monotonicity of the survival function $\overline{F}_{H_{1}}$, and the fact that $\abias^{-1} >1$, \cref{supp-eq:CDF-exponential-upper-bound} is further bounded as
    \begin{align}
        \acdf(x) 
        &\leq
        \exp
        \plr{
            -n\overline{F}_{H_{1}}
            \plr{
                \sqrt{\log n} +\frac{(\mul -3)\log \log n}{4\sqrt{\log n}}
            }
        }
        \nonumber\\
        &=
        \exp
        \left(
            -\frac{n}{\Gamma(\mul/2)}
            \clr{\sqrt{\log n} +\frac{(\mul -3)\log \log n}{4\sqrt{\log n}}}^{\mul - 2}
        \right.
        \nonumber\\
        &\qquad
        \times
        \left.
            \exp
            \plr{
                -\clr{\log n 
                + \frac{(\mul-3)\log\log n}{2} 
                + O\plr{\frac{(\log \log n)^{2}}{\log n}}}
            }
        \right)
        \label{supp-eq:CDF-exponential-further-bound-1}\\
        &\leq 
        \exp
        \left(
            -\frac{n}{4\Gamma(\mul/2)}
            \plr{
                {\log n}
            }^{\frac{\mul - 2}{2}}
            \exp
            \plr{
                -
                \clr{
                \log n 
                + \frac{(\mul-3)\log\log n}{2}
                }
            }
        \right)
        \label{supp-eq:CDF-exponential-further-bound-2}\\
        &=
        \exp
        \plr{
            -\frac{(\log n)^{1/2}}{4\Gamma(\mul/2)}
        }
        \nonumber \\
        &\ll (\log n)^{-1}
        \label{supp-eq:CDF-exponential-further-bound-3}
    \end{align}
    for $n$ sufficiently large,
    where \cref{supp-eq:CDF-exponential-further-bound-1} is due to \cref{supp-lemma:survival-function}, and \cref{supp-eq:CDF-exponential-further-bound-2} is due to
    \begin{align*}
        \plr{
            1 +  \frac{(\mul - 3) \log\log n}{4\log n}
        }^{\mul - 2}
        \geq 
        1/2
        \quad
        \text{ and }
        \quad
        \exp\plr{O\plr{\frac{(\log \log n)^{2}}{\log n}}}
        \geq
        1/2
    \end{align*}
    for $n$ sufficiently large. 
    
    On the other hand, since $x\in D_{2,n}$, it holds that
    \begin{align}
        F_{G}(x)
        \leq
        F_{G}(x_{n,1})
        &=
        \exp
        \plr{
            -\exp
            \plr{
                \frac{1}{2}\log\log n 
                -
                \log \Gamma(\mul/2)
                -\log\abias 
            }
        }
        \nonumber\\
        &\leq
        \exp\plr{-C_{2}^\prime\sqrt{\log n}},
        \label{supp-eq:G-upper-bound-D2}
    \end{align}
    where $C_{2}^\prime>0$ is a constant such that 
    \begin{align*}
        C_{2}^\prime
        <
        \exp
        \plr{
            -\log \Gamma(\mul/2)
            -\log \abias
        }
    \end{align*}    
    for all $n$.
    Combining \cref{supp-eq:CDF-exponential-further-bound-3,supp-eq:G-upper-bound-D2} yields
    \begin{align*}
        \sup_{x\in D_{2,n}} |F_{n}(x) - F_{G}(x)| 
        \leq
        \sup_{x\in D_{2,n}} \clr{|F_{n}(x)| + |F_{G}(x)|} 
        \ll
        (\log n)\inv.
    \end{align*} 
    It follows that there exists $n_{2} \in \mathbb{N}$ such that 
    \begin{align}
        \sup_{x\in D_{2,n}} |F_{n}(x) - F_{G}(x)| 
        \leq
        C_{1}
        \frac{\mul \log\log n}{\log n}
        \label{supp-eq:KS-bound-D2}
    \end{align}
    for $n\geq n_{2}$.

\noindent
\textbf{(iii)} (Right tail)
    Suppose $x\in  D_{4,n}$. Then, it holds that
    \begin{align*}
        u_{n}(x)
        >
        \sqrt{\log n}
        + \frac{\mul \log \log n}{4\sqrt{\log n}},
    \end{align*}
    and since $\overline{F}_{H_{1}}$ is non-increasing, there exists a finite $n_{3}\in \mathbb{N}$ such that $n\geq n_{3}$ implies
    \begin{align}
        &\overline{F}_{H_{1}}(u_{n}(x)) 
        \nonumber\\
        &\leq 
        \overline{F}_{H_{1}}
        \plr{
            \sqrt{\log n}
            + \frac{\mul \log \log n}{4\sqrt{\log n}}
        }
        \nonumber\\
        &=
        \frac{1}{\Gamma(\mul/2)}
        \plr{
            \sqrt{\log n}
            + \frac{\mul \log \log n}{4\sqrt{\log n}}
        }^{\mul -2} 
        \exp
        \plr{
            -
            \clr{
                \sqrt{\log n}
                + \frac{\mul \log \log n}{4\sqrt{\log n}}
            }^2
        }
        \clr{
            1 + O\plr{(\log n)^{-1}}
        }
        \nonumber\\
        &\leq
        \frac{2}{\Gamma(\mul/2)}
        \plr{
            2
            \sqrt{\log n}
        }^{\mul -2} 
        \exp
        \plr{
            -
            \clr{
                \log n
                + \frac{\mul \log \log n}{2}
            }
        }
        \label{supp-eq:survival-function-bound-D4-1}\\
        &\leq
        \frac{
        2^{\mul-1}}{\Gamma(\mul/2)n\log n},
        \label{supp-eq:survival-function-bound-D4-2}
    \end{align}
    where \cref{supp-eq:survival-function-bound-D4-1} follows from the inequality
    $
        \frac{\mul \log \log n}{4\sqrt{\log n}} \leq 2\sqrt{\log n}
    $
    and by bounding the terms satisfying $1+o(1)$ by the constant $2$ for $n \geq n_{3}$. Plugging in \cref{supp-eq:survival-function-bound-D4-2} into \cref{supp-eq:CDF-survival-form} yields
    \begin{align}
        1-\acdf(x)
        &= 
        1-
        \clr{
            1
            - \abias^{-1}
            \overline{F}_{H_{1}}
            \plr{
                \sqrt{\log n}
                + \frac{\mul \log \log n}{4\sqrt{\log n}}
            }
            (1+ o(1))
        }
        ^{n}
        \nonumber\\
        &\leq
        1
        -
        \clr{
        1
        -\frac{C_{3}^{\prime}}{n\log n}
        }^{n}
        \nonumber\\
        &=
        1
        -
        \exp
        \plr{
            n\log 
            \plr{
                1-\frac{C_{3}^{\prime}}{n\log n}
            }
        }
        \nonumber\\
        &=
        1
        -
        \exp
        \plr{
            -\frac{C_{3}^{\prime}}{\log n} 
            + 
            O\plr{
                \{n\log^{2}n\}^{-1}
            }
        }
        \label{supp-eq:cdf-bound-D4-1}
        \\
        &=
        \frac{C_{3}^{\prime}}{\log n} 
        + 
        O\plr{
            \{n\log^{2}n\}^{-1}
        }
        ,
        \label{supp-eq:cdf-bound-D4-2}
    \end{align}
    where $C_{3}^{\prime}>0$ is a constant such that $C_{3}^{\prime} \geq 2\abias\inv$ for $n$ large enough, and \cref{supp-eq:cdf-bound-D4-1} follows from taking the Taylor expansion of $\log(1-y)$ around $y=0$, and \cref{supp-eq:cdf-bound-D4-2} follows from taking the Taylor expansion of $1-\exp(y)$ around $y=0$.

    On the other hand, it holds for $x\in D_{4,n}$ that
    \begin{align}
        1-F_{G}(x)
        \leq
        1-G(x_{n,2})
        &=
        1
        -\exp
        \plr{
            -\exp
            \plr{
                -\log\log n
                -\log \Gamma(\mul/2)
                -\log\abias 
            }
        }
        \nonumber\\
        &\leq
        1
        -\exp
        \plr{
            -\frac{C_{4}^{\prime}}{\log n}
        }
        \nonumber\\
        &=
        \frac{C_{4}^{\prime}}{\log n}
        + O((\log n)^{-2}),
        \label{supp-eq:G-bound-D4}
    \end{align}
    where $C_{4}^{\prime}>0$ is a constant such that
    \begin{align*}
        C_{4}^{\prime} \geq \exp(-\log \Gamma(\mul/2) - \log \abias),
    \end{align*}
    for all $n$, and the last line follows from taking the Taylor expansion of $\exp(-y)$ around $y=0$.

    Finally, 
    combining \cref{supp-eq:cdf-bound-D4-2,supp-eq:G-bound-D4} yields that there exists an $n_{3}'\in\mathbb{N}$ such that 
    \begin{align}
        \sup_{x\in D_{4,n}}
        |F_{n}(x) - F_{G}(x)|
        \leq
        \sup_{x\in D_{4,n}}
        \clr{
            |1 - F_{n}(x)| + |1 - F_{G}(x)|
        }
        &\leq 
        (C_{3}^{\prime} + C_{4}^{\prime}) (\log n)^{-1}
        \nonumber\\
        &\leq
        C_{1}
        \frac{\mul \log\log n}{\log n},
        \label{supp-eq:KS-bound-D4}
    \end{align}
    for $n\geq n_{3}'$.
    
\noindent
\textbf{(iv)} (Left tail)
    Suppose $x \in D_{1,n}$.
    Then, it holds that 
    \begin{align*}
        u_{n}(x)
        \leq
        \lambda_{1}^{-1/2}b_{n}\log^{-1/4}n
        =
        \log^{1/4}n
        +
        O
        \plr{
            \frac{\log\log n}{\log^{3/4}n}
        }.
    \end{align*}    
    We use the fact that $X^{2} \overset{\dd}{=} H^{2} + \delta_{n}$ by the definition of $X$ in \cref{\mainlabel{body-lemma:MGF}}, where $\delta_{n}$ is a sum of squared non-i.i.d. Gaussian random variables, hence non-negative with probability one.
    Hence there exist constants $n_{4}^\prime\in \mathbb{N}$ and $C_{4}^\prime>0$ such that
    \begin{align}
        \acdf(x)
        &=
        [\P(\xp \leq u_{n}(x))]^{n}
        \nonumber
        \\
        &\leq 
        [\P(H_{1} \leq u_{n}(x))]^{n}
        \nonumber
        \\
        &\leq 
        [\P(H_{1} \leq \sqrt{2}\log^{1/4} n)]^{n}
        \nonumber
        \\
        &\leq
        [1-C_{4}^\prime \exp(-2\log^{1/2}n)]^{n}
        \label{supp-eq:left-tail-probability-decay-1}
        \\
        &\leq
        \exp(-C_{4}^\prime n\exp(-2\log^{1/2}n))
        \label{supp-eq:left-tail-probability-decay}
    \end{align}
    holds for $n\geq n_{4}^\prime$, where \cref{supp-eq:left-tail-probability-decay-1} follows from \cref{supp-lemma:survival-function}, and \cref{supp-eq:left-tail-probability-decay} is obtained by taking a log followed by an exponential on \cref{supp-eq:left-tail-probability-decay-1}, then a Taylor expansion of $\log(1-y)$ around $y = 0$.
    Furthermore, due to the monotonicity of $F_{G}$, and $x_{n,0} = -2\log n + O(\log^{3/4}n)$, there exists constants $n_{4}'' \in\mathbb{N}$ and $C_{4}'' >0$ such that
    \begin{align*}
        F_{G}(x) 
        \leq
        F_{G}(x_{n,0})
        =
        \exp(-\exp(-x_{n,0}))
        \leq
        \exp\plr{-n}
    \end{align*}
    for all $n\geq n_{4}''$.
    Since
    \begin{align*}
        \max\{
            \exp(-C_{4}^\prime n\exp(-2\log^{1/2}n))
            ,
            \;
            \exp(-n)
        \}
        \ll
        \frac{\mul \log\log n}{\log n},
    \end{align*}
    there exists $n_{4}\in\mathbb{N}$ such that
    \begin{align}
        \sup_{x\in D_{1,n}} |F_{n}(x) - F_{G}(x)| 
        \leq 
        \sup_{x\in D_{1,n}} \clr{|F_{n}(x)| + |F_{G}(x)|}
        \leq
        C_{1}
        \frac{\mul \log\log n}{\log n}
        \label{supp-eq:KS-bound-D1}
    \end{align}
    for $n \geq n_{4}$.

    Finally, combining \cref{supp-eq:KS-bound-D3,supp-eq:KS-bound-D2,supp-eq:KS-bound-D4,supp-eq:KS-bound-D1} yields that 
    \begin{align}
        \sup_{x\in \R} |\acdf(x) - F_{G}(x)|
        \leq
        C_{1} \frac{\mul \log\log n}{\log n}
        \label{supp-eq:KS-bound-final}
    \end{align}
    for all $n\geq n_{0}\coloneqq \max\{n_{1}, n_{2}, n_{3}', n_{4}\}$.

\noindent
\textbf{(v)} (Relating $\acdf$ with $F_{n}$)
    For the ease of presentation, define $\tstata\coloneqq a_{n}^{-1}(\tinorm{\m{EVS}\inv} -b_{n}) + \log \abias$ so that $\gamma_{n} = \tstat - \tstata$ matches the definition in \cref{supp-lemma:first-order-convergence}. Also, define the deterministic sequence $(k_{n})_{n\geq 1}$ such that
    \begin{align*}
        k_{n} 
        \coloneqq
        C_{\gamma}
        \clr{
            {\sqrt{\rlog}}
            \plr{
                \frac{\sigma\sqrt{n}}
                {s_{r}}
                +
                \sqrt{
                    \frac{\mu r}{n}
                }
            }
            +
            \frac{\sigma\sqrt{\mu r n\log n}}
            {s_{r}}
        }
    \end{align*}
    where $C_{\gamma}>0$ is a universal constant such that $\P(|\gamma_{n}| > k_{n}) \leq c n^{-9}$ for a constant $c>0$ per \cref{supp-lemma:first-order-convergence}.
    Since either $\gamma_{n} > -k_{n}$ or $\gamma_{n} \leq -k_{n}$ holds, it follows that for any $x\in \R$,
    \begin{align*}
        \{\tstata +\gamma_{n} \leq x\} 
        \implies
        \{\tstata \leq x + k_{n}\} 
        \cup
        \{\gamma_{n} \leq -k_{n}\}.
    \end{align*}
    On the other hand, it holds that 
    \begin{align*}
        \{\tstata \leq x - k_{n}\}
        \cap
        \{\gamma_{n} \leq k_{n}\}
        \implies
        \{\tstata + \gamma_{n} \leq x\}.
    \end{align*}
    By the preceding two relations, it holds that
    \begin{align*}
        F_{n}(x)
        =
        \P(\tstat \leq x) 
        &\leq
        \P(\tstata \leq x + k_{n}) 
        +
        \P(\gamma_{n} \leq -k_{n})
        \\
        &\leq 
        \acdf(x + k_{n}) 
        +
        cn^{-9},
    \end{align*}
    and
    \begin{align*}
        F_{n}(x)
        &\geq 
        \P(\tstata \leq x - k_{n})
        - 
        \P(\gamma_{n} > k_{n})
        \\
        &\geq
        \acdf(x - k_{n})
        -
        cn^{-9}.
    \end{align*}
    By \cref{supp-eq:KS-bound-final}, it follows that there exists a constant $C>0$ such that
    \begin{align*}
        \sup_{x\in\R} 
        \{F_{n}(x) - F_{G}(x)\}
        &\leq
        \sup_{x\in\R}
        \alr{
            \acdf(x + k_{n}) 
            -
            F_{G}(x + k_{n})
        }
        +
        \sup_{x\in\R}
        \alr{
            F_{G}(x + k_{n}) 
            -
            F_{G}(x)
        }
        +
        cn^{-9}
        \\
        &\leq
        C
        \blr{
            \frac{\mul \log\log n}{\log n}
            + 
            k_{n}
            +
            n^{-9}
        },
    \end{align*}
    and
    \begin{align*}
        \inf_{x\in\R}
        \{F_{n}(x) - F_{G}(x)\}
        &\geq
        \inf_{x\in \R}
        \clr{
            \acdf(x - k_{n}) 
            -
            F_{G}(x - k_{n})
        }
        -
        \sup_{x\in \R}
        \alr{
            F_{G}(x - k_{n}) 
            -
            F_{G}(x)
        }
        -
        cn^{-9}
        \\
        &\geq
        C
        \clr{
            -\frac{\mul \log\log n}{\log n}
            - 
            k_{n}
            -
            n^{-9}
        },
    \end{align*}
    for $n\geq n_{0}$, where we have used the fact that the Gumbel CDF is Lipschitz, which is due to the boundedness of the Gumbel density.
    Crucially, this implies that
    \begin{align*}
        \sup_{x\in\R}
        |F_{n}(x) - F_{G}(x)|
        \leq
        C
        \clr{
            {\sqrt{\rlog}}
            \plr{
                \frac{\sigma\sqrt{n}}
                {s_{r}}
                +
                \sqrt{
                    \frac{\mu r}{n}
                }
            }
            +
            \frac{\sigma\sqrt{\mu r n\log n}}
            {s_{r}}
            +
            \frac{\mul \log\log n}{\log n}
        }
    \end{align*}
    for all $n\geq n_{0}$, which completes the proof of \cref{\mainlabel{body-theorem:CDF-bounds}}. 
\end{proof}

\subsection{Towards proving \cref{\mainlabel{body-proposition:debiased-singular-values}}}
\label{supp-section:debiased-singular-values}

\cref{\mainlabel{body-proposition:debiased-singular-values},\mainlabel{body-theorem:Gumbel-convergence-plugin}} pertain to the Gumbel convergence of the plug-in version of the proposed two-to-infinity norm test statistic.

\subsubsection{Sample singular value locations and fluctuations}
\label{supp-section:biased-singular-values}
In order to compute data-driven estimators for the signal singular values $(s_{j})_{j\in \dpar{r}}$, \cref{supp-lemma:singular-value-displacement-specific} quantifies both the deterministic locations and deviations of the sample singular values $(\widehat{s}_j)_{j\in\dpar{r}}$. 

\begin{lemma}
    \label{supp-lemma:singular-value-displacement-specific}
    Under \cref{\mainlabel{body-assumption:noise},\mainlabel{body-assumption:snr},\mainlabel{body-assumption:plug-in-asms}}, for each $j\in\dpar{r}$, it holds that
    \begin{align}
        \alr{
            \widehat{s}_j 
            -
            \sqrt{
                \frac{(\sigma^2 N+s_{j}^2)(c\sigma^2 N+s_{j}^2)}
                {s_{j}^2}
            }
        } 
        \leq
        \sqrt{48\sigma^2 \log n},
    \end{align}
    with probability exceeding $1-O(n^{-6})$.
\end{lemma}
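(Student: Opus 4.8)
The plan is to identify the deterministic location of $\widehat{s}_i$ by a Benaych-Georges--Nadakuditi-style secular-equation analysis and then control its fluctuation by Gaussian Lipschitz concentration. Assume without loss of generality that $n \le m$, so $N = m$; with the finite-sample aspect ratio $c = n/m$ the target $\bigl((s_i^2 + \sigma^2 N)(c\sigma^2 N + s_i^2)/s_i^2\bigr)^{1/2}$ equals the familiar spike location $\ell(s_i) := (s_i^2 + (n+m)\sigma^2 + nm\sigma^4/s_i^2)^{1/2}$, and replacing $c$ by its limit changes this by a negligible amount under \cref{body-assumption:plug-in-asms}. All statements below are shown to hold on an event of probability $1 - O(n^{-7})$, so that the final union bound over $i \in \dpar{r}$ (with $r$ fixed) retains probability $1 - O(n^{-6})$.

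First I would reduce to a secular equation. Since $\m{M} = \m{U}\m{S}\m{V}\T$ has rank $r$, one checks that $\mh{M}\mh{M}\T = \m{E}\m{E}\T + \m{B}\m{\Omega}\m{B}\T$ with $\m{B} = [\,\m{U}\ \ \m{E}\m{V}\,] \in \R^{n \times 2r}$ and $\m{\Omega} = \twomat{\m{S}^2}{\m{S}}{\m{S}}{\m{0}}$, i.e., $\mh{M}\mh{M}\T$ is a rank-$\le 2r$ update of $\m{E}\m{E}\T$ (equivalently, pass to the Hermitian dilation of $\mh{M}$). Hence, for $z^2$ outside the spectrum of $\m{E}\m{E}\T$, one has $\widehat{s}_i = z$ iff $\det(\m{I}_{2r} - \m{\Omega}\m{B}\T(z^2 \m{I}_n - \m{E}\m{E}\T)^{-1}\m{B}) = 0$; and using $\m{E}\T(z^2\m{I}_n - \m{E}\m{E}\T)^{-1}\m{E} = z^2(z^2\m{I}_m - \m{E}\T\m{E})^{-1} - \m{I}_m$, the blocks of $\m{B}\T(z^2\m{I}_n - \m{E}\m{E}\T)^{-1}\m{B}$ are exactly the bilinear forms $\m{U}\T\m{R}_1\m{U}$, $\m{V}\T\m{R}_2\m{V}$, $\m{U}\T\m{R}_1\m{E}\m{V}$, where $\m{R}_1 = (z^2\m{I}_n - \m{E}\m{E}\T)^{-1}$ and $\m{R}_2 = (z^2\m{I}_m - \m{E}\T\m{E})^{-1}$.

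The analytic core is then to linearize this determinant near $z = \ell(s_i)$. Under \cref{body-assumption:snr}, $s_i \gg \sigma\sqrt{n} \asymp \|\m{E}\|$, so for $z$ in a small neighborhood of $\ell(s_i)$ one has $z^2 - \|\m{E}\|^2 \gtrsim s_i^2$ (Davidson--Szarek), whence $\|\m{R}_1\|, \|\m{R}_2\| \lesssim s_i^{-2}$. By rotational invariance of $\m{E}\m{E}\T$ and $\m{E}\T\m{E}$, $\E[\m{U}\T\m{R}_1\m{U}] = \beta_1(z^2)\m{I}_r$ and $\E[\m{V}\T\m{R}_2\m{V}] = \beta_2(z^2)\m{I}_r$ exactly, where $\beta_1 = n^{-1}\E\tr\m{R}_1$, $\beta_2 = m^{-1}\E\tr\m{R}_2$ are (up to $O(1/n)$) the Marchenko--Pastur Stieltjes transforms, and $\E[\m{U}\T\m{R}_1\m{E}\m{V}] = \m{0}$ by the symmetry $\m{E} \mapsto -\m{E}$. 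Gaussian Lipschitz concentration of these quadratic forms gives $\m{U}\T\m{R}_1\m{U} = \beta_1(z^2)\m{I}_r + O(\sigma^2\sqrt{n\log n}\,s_i^{-4})$, similarly for $\m{V}\T\m{R}_2\m{V}$, and $\m{U}\T\m{R}_1\m{E}\m{V} = O(\cdot)$, uniformly over the relevant $z$-interval (via a net plus Lipschitz continuity in $z$). Substituting, the off-diagonal blocks of the $2r \times 2r$ secular matrix vanish to leading order because $\m{U}\T\m{U} = \m{V}\T\m{V} = \m{I}_r$, the determinant factors across the $r$ spikes, and the $i$-th factor equals $1 - z^2 s_i^2\beta_1(z^2)\beta_2(z^2)$ up to an error $O(\sigma^2\sqrt{n\log n}\,s_i^{-2})$. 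The deterministic equation $z^2 s_i^2\beta_1(z^2)\beta_2(z^2) = 1$, with $\beta_1,\beta_2$ solving $wn\sigma^2\beta_1^2 + (w-(n+m)\sigma^2)\beta_1 - 1 = 0$ and its $n \leftrightarrow m$ analogue, has a unique supercritical root at $z^2 = \ell(s_i)^2$, where the secular function has derivative of magnitude $\asymp s_i^{-1}$; since the error above is $\ll s_i^{-1}\sqrt{\sigma^2\log n}$ under \cref{body-assumption:snr}, a sign-change argument at $z = \ell(s_i) \pm \sqrt{12\sigma^2\log n}$ confines $\widehat{s}_i$ to that window, and \cref{body-assumption:plug-in-asms} (separation of the $s_j$, hence of the $\ell(s_j)$, which far exceeds the window width) identifies it as precisely the $i$-th sample singular value. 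Alternatively, and more cheaply for the fluctuation: $\widehat{s}_i$ is $1$-Lipschitz in $\m{E}$, hence within $\sqrt{12\sigma^2\log n}$ of $\E[\widehat{s}_i]$ with probability $1 - O(n^{-6})$, while the same secular-equation analysis together with uniform integrability (sub-Gaussian tails of $\|\m{E}\|$) yields $\E[\widehat{s}_i] = \ell(s_i) + o(1)$; either way, $|\widehat{s}_i - \ell(s_i)| \le \sqrt{48\sigma^2\log n}$, with the slack $\sqrt{48}$ absorbing the $o(1)$ gap and the union bound.

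I expect the main obstacle to be the uniform-in-$z$ concentration of the resolvent bilinear forms $\m{U}\T\m{R}_1\m{U}$, $\m{V}\T\m{R}_2\m{V}$ around their deterministic equivalents with an explicit $\sqrt{\log n}$ rate (an isotropic-local-law type estimate, combined with the need to keep $z$ above $\|\m{E}\|$ and hence the resolvents bounded), together with verifying that the relevant root of the perturbed secular equation remains the $i$-th largest sample singular value rather than a neighboring one; the remaining ingredients — the algebraic reduction to the scalar equation and its closed-form solution $\ell(s_i)$ — are routine Benaych-Georges--Nadakuditi-style bookkeeping.
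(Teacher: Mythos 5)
Your reading of the problem is on the right track — the paper does use a Benaych-Georges--Nadakuditi-type identification of the deterministic spike location followed by a Gaussian concentration step — but the paper's route differs in a way that matters, and one of your two proposed alternatives has a real gap.

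The paper first rescales to $\mh{M}^{(w)} = \mh{M}/(\sigma\sqrt{m})$ so that $\m{E}^{(w)}$ has $\NN(0,1/m)$ entries, then cites \citet[Theorem 2.8]{benaych-georges_singular_2012} to read off the closed-form location $\theta_i$. For the fluctuation it passes to the Hermitian dilation $\hc{M} = \mc{M} + \mc{E}$ and invokes \citet[Theorem 1]{fan_asymptotic_2022}, which yields the linearization $\widehat{s}_i - \theta_i = \mc{U}_i\T\mc{E}\mc{U}_i + \eta_{n,i}$ with $\eta_{n,i} = O(\sigma\sqrt{n}/s_r)$. The bilinear form $\mc{U}_i\T\mc{E}\mc{U}_i = 2\mc{U}_{i,(1)}\T\m{E}\mc{U}_{i,(2)}$ is \emph{exactly} Gaussian, $\NN(0,q^2)$ with $q^2\le 4\sigma^2$, so a single Gaussian tail bound conditioned on $|\eta_{n,i}|$ being small gives $\sqrt{48\sigma^2\log n}$ directly. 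This sidesteps both a secular-equation linearization and any resolvent concentration: the entire analytic heavy lifting is delegated to a cited expansion.

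Your alternative (A) — derive the secular equation by hand and control the resolvent bilinear forms $\m{U}\T\m{R}_1\m{U}$, $\m{V}\T\m{R}_2\m{V}$, $\m{U}\T\m{R}_1\m{E}\m{V}$ via isotropic-local-law-type concentration — is doing the same work that \citet{fan_asymptotic_2022} encapsulates, and you correctly flag the uniform-in-$z$ resolvent concentration as the hard step; this would work but recreates a cited theorem from scratch.

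Your alternative (B) has a gap. Lipschitz concentration of $\widehat{s}_i$ in the Gaussian entries indeed gives $|\widehat{s}_i - \E\widehat{s}_i| \le \sqrt{12\sigma^2\log n}$ with probability $1 - 2n^{-6}$ cheaply, but the lemma centers $\widehat{s}_i$ at $\theta_i$, not at $\E\widehat{s}_i$, and your claim that "uniform integrability (sub-Gaussian tails of $\|\m{E}\|$) yields $\E[\widehat{s}_i]=\ell(s_i)+o(1)$" is not an argument. Sub-Gaussian tails and the a.s.\ convergence $\widehat{s}_i \to \theta_i$ at best give $\E\widehat{s}_i = \theta_i(1 + o(1))$; since $\theta_i \asymp s_i \to \infty$, that relative $o(1)$ could be enormous in absolute terms, whereas you need $|\E\widehat{s}_i - \theta_i| \lesssim \sqrt{\sigma^2\log n}$. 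To get that, you need a quantitative expansion of $\widehat{s}_i$ around $\theta_i$ — i.e.\ exactly the content of alternative (A), or of the Fan et al.\ theorem — so alternative (B) does not actually shortcut anything, it just relocates the missing step into the expectation estimate. If you want to keep the Lipschitz route, the fix is to first establish (via the secular equation or a cited expansion) that $\widehat{s}_i$ lies within $O(\sigma\sqrt{\log n})$ of $\theta_i$ with probability at least, say, $1/2$; that pins the median near $\theta_i$, and Lipschitz concentration then transfers the centering from the median to $\theta_i$ — but at that point you've done the hard part already.
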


\begin{proof}
Suppose $n\leq m$, so that $N\coloneqq \max\{n,m\} = m$. Note that if $n > m$, the same argument holds by taking transposes and reversing the roles of $n$ and $m$.
It is convenient to consider the rescaled model $\mh{M}^{(w)} = \m{M}^{(w)} + \m{E}^{(w)}$, where
\begin{align*}
    \mh{M}^{(w)}
    \coloneqq
    \frac{1}{\sigma\sqrt{m}}\mh{M},
    \qquad 
    \m{M}^{(w)}
    \coloneqq 
    \frac{1}{\sigma\sqrt{m}}\m{M},
    \qquad
    \m{E}^{(w)} 
    \coloneqq 
    \frac{1}{\sigma\sqrt{m}}\m{E}.
\end{align*}
By \cref{\mainlabel{body-assumption:plug-in-asms}}, $n/m\to c$ as $n\to\infty$, and the matrix $\m{E}^{(w)}$ has i.i.d.~$\NN(0,1/m)$ entries per \cref{\mainlabel{body-assumption:noise},\mainlabel{body-assumption:plug-in-asms}}. Furthermore, the rescaled signal matrix $\m{M}^{(w)}$ has singular values $s_{j}/(\sigma\sqrt{m})\gg 1$ for all $j\in\dpar{r}$ due to \cref{\mainlabel{body-assumption:snr}} hence are all eventually above any fixed threshold as $n$ grows. The empirical singular value distribution of $\m{E}^{(w)}$, defined as $\widehat{\mu}_{\m{E}^{(w)}}\coloneqq \frac{1}{n}\sum_{i=1}^n \delta_{s_i\plr{\m{E}^{(w)}}}$, converges weakly almost surely to the limiting distribution $\mu_{{(w)}}$ with density
\begin{align}
    \label{supp-eq:MP-density}
    \frac{\dd \mu_{(w)}(x)}{\dd x}
    \coloneqq 
    \frac{\sqrt{((1+\sqrt{c})-x^2)(x^2-(1-\sqrt{c}))}}
    {\pi cx}
    \I{[1-\sqrt{c},1+\sqrt{c}]}(x),
\end{align}
which is equal to $2xf_{\text{MP}}(x^2)$, where $f_{\text{MP}}$ denotes the Marchenko--Pastur density \citep{marchenko_distribution_1967}. By \citet[Theorem 2.8]{benaych-georges_singular_2012}, for each $j\in\dpar{r}$ and for $\phi_{\mu_{(w)}}(z)\coloneqq \int_{\R_{\geq 0}}\frac{z}{z^2-t^2}\dd\mu_{(w)}(t)$, the solution to the deterministic equation 
\begin{align}
    \label{supp-eq:deterministic-equivalent}
    1-\frac{s_{j}^2}{\sigma^{2}m}
    \phi_{\mu_{(w)}}(z)
    \left(c\phi_{\mu_{(w)}}(z) + \frac{1-c}{z}\right) 
    = 
    0
\end{align}
is equal to the deterministic location of the $j$-th singular value of $\mh{M}^{(w)}$ for $j\in\dpar{r}$. Furthermore, since $\m{E}^{(w)}$ has i.i.d.~$\NN(0,1/m)$ entries, the solution of \cref{supp-eq:deterministic-equivalent} is well understood (see for example, \citealp[Section 3.1]{benaych-georges_singular_2012}), namely
\begin{align*}
    \theta^{(w)}_{j} 
    = 
    \sqrt{
        \frac{(\sigma^{2}m+s_{j}^{2})(c\sigma^2 m+s_{j}^{2})}
        {\sigma^{2} m s_{j}^{2}}
    },
    \quad j\in\dpar{r}.
\end{align*} 
The singular values of $\m{E}$ are simply those of  $\m{E}^{(w)}$ multiplied by $\sigma\sqrt{m}$, so the limiting distribution of $\widehat{\mu}_{\m{E}} \coloneqq \frac{1}{n}\sum_{i=1}^n \delta_{s_i(\m{E})}$, denoted by $\mu$, satisfies $\dd \mu(\sigma\sqrt{m} x) = \dd \mu_{(w)}(x)$. Thus, the function $\phi_{\mu}(z) \coloneqq \int \frac{z}{z^2-t^2} \dd\mu(t)$ and $\phi_{\mu_{(w)}}(z)$ are related in the manner
\begin{align*}
    \phi_{\mu_{(w)}}(z) 
    &=
    \int_{\R_{\geq 0}}
    \frac{z}{z^2-t^2} 
    \dd\mu_{(w)}(t)
    \overset{s = \sigma\sqrt{m} t}{=}
    \int_{\R_{\geq 0}}
    \frac{z}{z^2-s^2/(\sigma^2 m)}
    \dd\mu(s) 
    \overset{w = \sigma\sqrt{m} z}{=}
    \sigma\sqrt{m} 
    \int_{\R_{\geq 0}}
    \frac{w}{w^2-s^2}
    \dd\mu(s) \\
    &= 
    \sigma\sqrt{m} \phi_{\mu}(\sigma \sqrt{m}z).
\end{align*}
Hence, if $z$ is a solution to \cref{supp-eq:deterministic-equivalent}, then
\begin{align}
    \label{supp-eq:deterministic-equivalent-full}
    1-{s_{j}^2}\phi_{\mu}(\sigma\sqrt{m}z)\left(c\phi_{\mu}(\sigma\sqrt{m}z) + \frac{1-c}{\sigma\sqrt{m} z}\right) 
    =
    0,
\end{align}
which implies that $\sigma\sqrt{m} z$ is a solution of \cref{supp-eq:deterministic-equivalent-full}, the deterministic equation that gives rise to the locations of the top-$r$ singular values of $\mh{M}$. That is, the deterministic location of $\widehat{s}_{j}$ for $j\in\dpar{r}$ is
\begin{align}
    \label{supp-eq:singular-value-displacement}
    \theta_{j}
    \coloneqq
    \sqrt{
        \frac{(\sigma^2 m+s_{j}^2)(c\sigma^2 m+s_{j}^2)}{s_{j}^2}
        }.
\end{align} 

Next, we derive the deviations of $(\widehat{s}_j)_{j\in\dpar{r}}$ around their deterministic locations $(\theta_j)_{j\in\dpar{r}}$. Towards this end, we write the symmetric dilations of $\mh{M}$, $\m{M}$, and $\m{E}$ as    
\begin{align*}
    \hc{M} 
    = 
    \begin{bmatrix}
    \m{0} & \mh{M}\\ \mh{M}\T & \m{0}
    \end{bmatrix},
    \qquad 
    {\mathcal{M}} 
    = 
    \begin{bmatrix}
    \m{0} & \m{M}\\ \m{M}\T & \m{0}
    \end{bmatrix},
    \qquad 
    \mathcal{E} 
    = 
    \begin{bmatrix}
    \m{0} & \m{E}\\ \m{E}\T & \m{0}
    \end{bmatrix}.
\end{align*}
Per convention, write
\begin{align*}
    \mathcal{U}
    = \frac{1}{\sqrt{2}}\begin{bmatrix}
    \m{U} & \m{U}\\ \m{V} & -\m{V}
    \end{bmatrix}\in\R^{(n+m)\times 2r},
    \qquad
    \mathcal{D}
    = \begin{bmatrix}
    \m{S} & \m{0}\\ \m{0} & -\m{S}
    \end{bmatrix}\in\R^{2r\times 2r},
\end{align*}
from which it is clear that $\mc{U}$ has orthonormal columns $\mc{U}_{1},\dots,\mc{U}_{2r}$, hence $\mathcal{M} = \mathcal{U}\mathcal{D}\mathcal{U}\T$ is an eigenvalue decomposition. It follows that $\hc{M} = \mathcal{M} +\mathcal{E}$ is a symmetric system, and the dilated signal matrix $\mathcal{M}$ has $2r$ nonzero eigenvalues, where $r$ are equal to the singular values of $\m{M}$, and the remaining $r$ are the negatives of the singular values of $\m{M}$. Since all the nonzero singular values of $\mathcal{M}$ are distinct by \cref{\mainlabel{body-assumption:plug-in-asms}}, $\alpha_n \coloneqq \tnorm{\E[\mathcal{E}^2]}^{1/2} = \sigma\max\{\sqrt{n},\sqrt{m}\}$, and $s_r \gg \sigma\sqrt{\zeta_{r,n} \mu n}$ by \cref{\mainlabel{body-assumption:snr}}, it follows from \citet[Theorem 1]{fan_asymptotic_2022} and \cref{\mainlabel{body-assumption:snr}} that
\begin{align*}
    \widehat{s}_j 
    - 
    \theta_j 
    = 
    \mathcal{U}_j\T\mathcal{E}\mathcal{U}_j
    +
    \eta_{n,j},
\end{align*}
where $\eta_{n,j} = O(\alpha_n s_r^{-1}) = o((\sigma\log n)\inv)$ with probability exceeding $1-O(n^{-6})$.
Writing $\mathcal{U}_j = [\mathcal{U}_{j,(1)}\T, \mathcal{U}_{j,(2)}\T]\T$, where $\mathcal{U}_{j,(1)}\in\R^{n}$ and $\mathcal{U}_{j,(2)}\in\R^{m}$, it holds for all $j\in\dpar{r}$ that
\begin{align*}
    \mathcal{U}_j\T\mathcal{E}\mathcal{U}_j
    =
    2\mathcal{U}_{j,(1)}\T\m{E}\mathcal{U}_{j,(2)} 
    =
    2\sum_{i=1}^{n}\sum_{k=1}^{m}
    (\mathcal{U}_{j,(1)})_{i}
    (\mathcal{U}_{j,(2)})_{k}
    E_{i,k} 
    \sim
    \NN(0, \sigma^2),
\end{align*}
where we have used the fact that $\m{E}$ has i.i.d. $\NN(0,\sigma^{2})$ entries by \cref{\mainlabel{body-assumption:plug-in-asms}}, and 
$
    \sum_{i=1}^{n}\sum_{k=1}^{m}
    (\mathcal{U}_{j,(1)})_{i}^{2}
    (\mathcal{U}_{j,(2)})_{k}^{2} 
    =
    \sum_{i=1}^{n}
    (\mathcal{U}_{j,(1)})_{i}^{2}
    \sum_{k=1}^{m}
    (\mathcal{U}_{j,(2)})_{k}^{2}
    = 1/4
$.
Next, for $n\geq 1$ and $j\in\dpar{r}$, define the events $\mathcal{A}_{n,j} \coloneqq \{|\widehat{s}_j - \theta_j|\geq \sqrt{12\sigma^2 \log n}\}$ and $\mathcal{B}_{n,j} \coloneqq \{|\eta_{n,j}| \leq (\sigma\log n)^{-1}\}$. Applying the Gaussian tail bound yields
\begin{align*}
    \P(\mathcal{A}_{n,j} \mid \mathcal{B}_{n,j}) 
    &\le
    2\P\plr{\mathcal{U}_j\T\mathcal{E}\mathcal{U}_j \ge \sqrt{12\sigma^2 \log n} - |\eta_{n,j}| \middle\vert \mathcal{B}_{n,j}}
    \\
    &\le 2\P\plr{\mathcal{U}_j\T\mathcal{E}\mathcal{U}_j \ge \sqrt{12\sigma^2 \log n} - (\sigma\log n)^{-1} }
    \\
    &\le 
    2\exp\plr{-6\log n}
    \underbrace{
    \exp
    \plr{
        \frac{\sqrt{12\sigma^2\log n}}{\sigma^2}(\sigma\log n)^{-1}
    }
    \exp
    \plr{
        -\frac{(\sigma\log n)^{-2}}{2\sigma^2}
    }
    }_{
    \to 1 \; \text{ as } n\to\infty
    }
    \\
    &\le
    3n^{-6},
\end{align*}
for $n$ large enough. Since $\P(\mathcal{B}_{n,j})\ge 1-O(n^{-6})$, it follows that for all $j\in\dpar{r}$,
\begin{align*}
    \P(\mathcal{A}_{n,j})
    =
    \P(\mathcal{A}_{n,j}\mid \mathcal{B}_{n,j})\P(\mathcal{B}_{n,j}) 
    +
    \P(\mathcal{A}_{n,j}\mid \mathcal{B}_{n,j}^c)\P(\mathcal{B}_{n,j}^c) 
    =
    1 - O(n^{-6}),
\end{align*}
which proves \cref{supp-lemma:singular-value-displacement-specific}.
\end{proof}

\subsubsection{Proof of \cref{\mainlabel{body-proposition:debiased-singular-values}}}
\label{supp-pf:debiased-singular-values}
\begin{proof}
For each $j\in\dpar{r}$, define the shrinkage-type estimator $\widetilde{s}_j$ as the positive real-valued solution to the equation
\begin{equation*}
    \widehat{s}_j 
    =
    \sqrt{\frac{(\sigma^2 N+\widetilde{s}_j^2)(c\sigma^2 N+\widetilde{s}_j^2)}
    {\widetilde{s}_j^2}}.
\end{equation*}
Solving the above for $\widetilde{s}_j$ yields 
\begin{align}
    \label{supp-eq:corrected-singular-values}
    \widetilde{s}_j 
    = 
    \frac{1}{\sqrt{2}}
    \left(\widehat{s}_j^2 - (1+c)\sigma^2 N 
    +
    \sqrt{[(1+c)\sigma^2 N -\widehat{s}_j^2]^2 - 4c\sigma^4 N^2}\right)^{1/2}.
\end{align}
Since $s_{j}\gg \sqrt{\mu \rlog}(\sigma \sqrt{n})$ by \cref{\mainlabel{body-assumption:snr}} and $\tnorm{\m{E}}\lesssim \sigma\sqrt{n+m}$ with probability exceeding $1-O(n^{-7})$ \citep[Section 3.2]{chen_spectral_2021}, it holds that $\widehat{s}_j^2 - (1+c)\sigma^2 N \geq 0$ and $[(1+c)\sigma^2 N -\widehat{s}_j^2]^2 - 4c\sigma^4 N^2 \geq 0$ with probability exceeding $1-O(n^{-7})$ due to Weyl's inequality for singular values. Thus, the positive real-valued estimators $(\widetilde{s}_{j})_{j\in\dpar{r}}$ exist with probability exceeding $1-O(n^{-7})$.

To derive a bound for $|\widetilde{s}_{j}/s_{j}-1|$, we have by \cref{supp-lemma:singular-value-displacement-specific} that
\begin{align}
    \widehat{s}_j 
    =
    \sqrt{\frac{(\sigma^2 N+\widetilde{s}_j^2)(c\sigma^2 N+\widetilde{s}_j^2)}{\widetilde{s}_j^2}} 
    =
    \sqrt{\frac{(\sigma^2 N+s_{j}^2)(c\sigma^2 N+s_{j}^2)}{s_{j}^2}} 
    +
    O(\sigma\log^{1/2} n)
    \label{supp-eq:stil-s-relation}
\end{align}
with probability greater than $1-O(n^{-6})$.
Squaring and expanding each side of \cref{supp-eq:stil-s-relation} yields
\begin{align*}
    \widetilde{s}_{j}^2 
    + 
    \frac{c\sigma^4 N^2}{\widetilde{s}_{j}^{2}}
    =
    s_{j}^2 + \frac{c\sigma^4 N^2}{{s}_{j}^{2}}  
    +
    O\left(\sigma s_{j}\log^{1/2} n \right),
\end{align*}
where we have used that $\sigma^2 N\ll s_{j}^2$ and $\sigma^2 N\ll \widetilde{s}_j^2$, hence $\sqrt{{(\sigma^2 N+s_{j}^2)(c\sigma^2 N+s_{j}^2)}/{s_{j}^4}}=O(1)$. It follows that
\begin{align}
    \frac{\widetilde{s}_{j}^2}{s_{j}^2} 
    =
    1
    +
    \frac{c\sigma^4 N^2}{\widetilde{s}_{j}^2 s_{j}^2}  
    +
    \frac{c\sigma^4 N^2}{s_{j}^{4}}  
    +
    O\left(\frac{\sigma\log^{1/2} n}{s_{j}}\right).
    \label{supp-eq:stil-s-squared-ratio}
\end{align}
Under \cref{\mainlabel{body-assumption:noise},\mainlabel{body-assumption:matrix-size}}, we have
\begin{align}
    \label{supp-eq:corrected-singular-value-ratio}
    \frac{\widetilde{s}_{j}}{s_{j}} 
    =
    1
    +
    O\left(\frac{\sigma^4 n^2}{s_{j}^4}+\frac{\sigma\log^{1/2} n}{s_{j}}\right),
\end{align}
with probability greater than $1-O(n^{-6})$. This completes the proof of \cref{\mainlabel{body-proposition:debiased-singular-values}}.
\end{proof}

\begin{remark}
    By \cref{supp-eq:stil-s-relation} and \cref{\mainlabel{body-assumption:plug-in-asms}}, the uncorrected sample singular values satisfy 
    \begin{align*}
        \frac{\widehat{s}_{j}}{s_{j}} 
        &=
        \sqrt{\plr{\frac{\sigma^{2} N}{s_{j}^2} + 1}\plr{\frac{c\sigma^{2} N}{s_{j}^2} + 1}} 
        +
        O\plr{\frac{\sigma\log^{1/2} n}{s_{j}}}
        \\
        &= 1 + O\plr{\frac{\sigma^2 N}{s_{j}^2}},
    \end{align*} 
    with probability exceeding $1-O(n^{-6})$ for all $j\in\dpar{r}$. This shows that although each ratio $\widehat{s}_{j}/s_{j}$ tends to one in probability as $n\to\infty$ under the diverging spikes condition in \cref{\mainlabel{body-assumption:snr}}, the corrected ratios $\widetilde{s}_{j}/s_{j}$ obtained in \cref{supp-lemma:singular-value-displacement-specific} converge to one in probability potentially at a much faster rate. This is demonstrated in the finite-$n$ simulations in \cref{\mainlabel{body-fig:convergence-in-distribution}}, where the bias of the uncorrected test statistic is especially visible under weak signals.
\end{remark}

\subsection{Proof of \cref{\mainlabel{body-theorem:Gumbel-convergence-plugin}}}
\label{supp-pf:Gumbel-convergence-plugin}

\begin{proof}
    All the following statements are either deterministic or hold with probability greater than $1-O(n^{-6})$ unless otherwise specified. By \cref{\mainlabel{body-lemma:MGF}}, we have $\lambda_j \coloneqq 2\sigma^2/s^2_{r-j+1}$ for all $j\in\dpar{r}$. By \cref{\mainlabel{body-proposition:debiased-singular-values}}, we have 
    \begin{align}
        \frac{\lambda_j}{\widetilde{\lambda}_j} 
        =
        \frac{\widetilde{s}_{r-j+1}^2}{{s}_{r-j+1}^2}
        = 
        1+ O\left(\frac{\sigma^4 n^2}{s_{r-j+1}^4}+\frac{\sigma\log^{1/2} n}{s_{r-j+1}}\right). 
        \label{supp-eq:lambda-ratio}
    \end{align}
    Next, since $\widetilde{a}_n = a_n\widetilde{\lambda}^{1/2}_{1}/\lambda_{1}^{1/2}$ and $\widetilde{b}_n = b_n\widetilde{\lambda}_{1}^{1/2}/\lambda_{1}^{1/2}$, we have
    \begin{align}
        \tplug 
        &\coloneqq 
        \widetilde{a}_n^{-1}\plr{\tinorm{\mh{U}\RU -\m{U}}-\widetilde{b}_n} + \log(\abplug)
        \nonumber\\
        &= 
        \frac{\lambda_{1}^{1/2}}{\widetilde\lambda_{1}^{1/2}}a_n^{-1}\plr{\tinorm{\mh{U}\RU-\m{U}}-b_n} +\plr{\frac{\lambda_{1}^{1/2}}{\widetilde\lambda_{1}^{1/2}}-1}a_n\inv b_n  +\log\blr{\prod_{j=2}^r\plr{1-\frac{\lambda_j}{\lambda_{1}}\frac{\widetilde{\lambda}_j/\lambda_j}{\widetilde{\lambda}_{1}/\lambda_{1}}}^{1/2}}
        \nonumber\\
        &= 
        \underbrace{a_n^{-1}\plr{\tinorm{\mh{U}\RU-\m{U}}-b_n}+\log\blr{\prod_{j=2}^r\plr{1-\frac{\lambda_j}{\lambda_{1}}}^{1/2}}}_{= \tstat} 
        + 
        \underbrace{\plr{\frac{\lambda_{1}^{1/2}}{\widetilde\lambda_{1}^{1/2}}-1}a_n\inv b_n}_{\eqqcolon\text{(a)}}
        \nonumber\\
        &
        \qquad +\underbrace{O\clr{\left(\frac{\sigma^4 n^2}{s_{r-j+1}^4}+\frac{\sigma\log^{1/2} n}{s_{r-j+1}}\right)a_n^{-1}\plr{\tinorm{\mh{U}\RU-\m{U}}-b_n}}}_{\eqqcolon\text{(b)}}
        \nonumber\\
        &
        \qquad + \underbrace{\frac{1}{2}\sum_{j=2}^r \log\blr{{1+O\clr{\left(\frac{\sigma^4 n^2}{s_{r-j+1}^4}+\frac{\sigma\log^{1/2} n}{s_{r-j+1}}\right)\frac{\lambda_j}{\lambda_{1}-\lambda_j}}}}}_{\eqqcolon\text{(c)}}.
        \label{supp-eq:plug-in-convergence}
    \end{align}
    
    \noindent
    \textbf{(a)} By \cref{\mainlabel{body-theorem:Gumbel-convergence}}, $a_n\inv b_n = O(\log n)$. Hence, by applying \cref{supp-eq:lambda-ratio}, term (a) satisfies
    \begin{align}
        \label{supp-eq:plug-in-convergence-a}
        \text{(a)} 
        = 
        O\left(\frac{\sigma^4 n^2\log n}{s_{r}^4}+\frac{\log^{3/2} n}{s_{r}}\right).
    \end{align}

    \noindent
    \textbf{(b)} By \cref{\mainlabel{body-lemma:first-order-approximation}},  
    \begin{align*}
        \tinorm{\mh{U}\RU-\m{U}}
        \lesssim
        a_n \log n +b_n 
        = 
        O\plr{\frac{\sigma\sqrt{\log n}}{s_r}}
    \end{align*}
        with probability greater than $1-O(n^{-9})$. Thus, since $a_n\inv = O(s_r\sqrt{\log n}/\sigma)$, term (b) satisfies
    \begin{align}
        \label{supp-eq:plug-in-convergence-b}
        \text{(b)} 
        =
        O\left(\frac{\sigma^4 n^2\log n}{s_{r}^4}
        +
        \frac{\log^{3/2} n}{s_{r}}\right)
    \end{align}
    with probability greater than $1-O(n^{-9})$.

    \noindent
    \textbf{(c)} Using the Taylor series expansion of the function $\log(1+x)$ about $x=0$, we have $\log(1+x) = x + O(x^2)$. Furthermore, by \cref{\mainlabel{body-assumption:snr}}, we have that $\lambda_j/(\lambda_{1}-\lambda_j) = {s_r^2}/\plr{s_{r-j+1}^2-s_r^2} = O(1)$. Thus, term (c) satisfies
    \begin{align}
    \label{supp-eq:plug-in-convergence-c}
        \text{(c)}
        = 
        O\left(\frac{r\sigma^4 n^2}{s_{r}^4}+\frac{r\log^{1/2} n}{s_{r}}\right),
    \end{align}
    which is smaller in order than (a) and (b) under \cref{\mainlabel{body-assumption:snr}}.

    Combining \cref{supp-eq:plug-in-convergence,supp-eq:plug-in-convergence-a,supp-eq:plug-in-convergence-b,supp-eq:plug-in-convergence-c}, we have
    \begin{align*}
        \tplug 
        = 
        \tstat
        +
        O\left(\frac{\sigma^4 n^2\log n}{s_{r}^4}+\frac{\log^{3/2} n}{s_{r}}\right)
    \end{align*}
    with probability greater than $1-O(n^{-5})$.
    Hence, under \cref{\mainlabel{body-assumption:noise},\mainlabel{body-assumption:snr}}, $\tplug \rightsquigarrow \gumbelrv$
    as $n\to\infty$ by Slutsky's lemma. This completes the proof of \cref{\mainlabel{body-theorem:Gumbel-convergence-plugin}}.
\end{proof}

\subsection{Towards proving \cref{\mainlabel{body-theorem:power-analysis}}}
\label{supp-pf:power-analysis}

The key to the power analysis in \cref{\mainlabel{body-theorem:power-analysis}} is understanding how the term $\tinorm{\mh{U}\m{R}({\mh{U},\m{U}_0})-\m{U}_0}$ changes based on the magnitude of the discrepancy $d_{n}\coloneqq \tinorm{\m{U}_{1}-\m{U}_0}$ under $\hypa$ in \cref{\mainlabel{body-eq:T1}}. The remainder of this section assumes that the alternative hypothesis in \cref{\mainlabel{body-eq:T1}} holds, so that $\m{M} = \m{U}_{1}\m{S}\m{V}\T$.

\subsubsection{Technical lemmas}

We first introduce and prove supporting lemmas which will be used to prove \cref{\mainlabel{body-theorem:power-analysis}}. First, \cref{supp-lemma:switching-orthogonal-alignment-matrices} shows that the map $\sgn: \R^{n\times r}\times \R^{n\times r} \to \R^{r\times r}$ is Lipschitz in its arguments. More specifically, holding one argument fixed, the spectral norm difference of the sign functions when the other argument changes is bounded above by the spectral norm difference of the changed arguments. 
\begin{lemma}[Bounding the orthogonal alignment matrix difference]
    \label{supp-lemma:switching-orthogonal-alignment-matrices}
    Under \cref{\mainlabel{body-assumption:noise},\mainlabel{body-assumption:matrix-size},\mainlabel{body-assumption:snr},\mainlabel{body-assumption:aligned-alternative}},
    \begin{align*}
        \tinorm{\mh{U}\{\Rs{\mh{U}}{\m{U}_0} - \Rs{\mh{U}}{\m{U}_{1}}\}}
        \lesssim
        \plr{\frac{\mu r}{n}}^{1/2}\tnorm{\m{U}_0-\m{U}_{1}},
    \end{align*}
    with probability exceeding $1-O(n^{-9})$.
\end{lemma}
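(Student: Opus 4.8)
The plan is to split the two-to-infinity norm into the product of a delocalization factor for $\mh{U}$ and a spectral-norm factor for the difference of the two orthogonal alignment matrices, and to bound each factor separately. For the first step I would use the sub-multiplicativity estimate $\Tinorm{\mh{U}\m{N}} \le \Tinorm{\mh{U}}\,\tnorm{\m{N}}$, valid for any $\m{N}$ with $r$ rows (each row of $\mh{U}\m{N}$ is $\m{N}\T$ applied to the corresponding row of $\mh{U}$), which gives
\begin{align*}
    \Tinorm{\mh{U}\{\Rs{\mh{U}}{\m{U}_0} - \Rs{\mh{U}}{\m{U}_{1}}\}}
    \;\le\;
    \Tinorm{\mh{U}}\,\tnorm{\Rs{\mh{U}}{\m{U}_0} - \Rs{\mh{U}}{\m{U}_{1}}}.
\end{align*}
Since $\RU = \sgn(\mh{U}\T\m{U})$ is orthogonal, right multiplication by it preserves Euclidean row norms, so $\Tinorm{\mh{U}} = \Tinorm{\mh{U}\RU} \le \Tinorm{\m{U}} + \Tinorm{\mh{U}\RU - \m{U}}$. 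Under $\hypa$ the left singular vectors of the signal are $\m{U} = \m{U}_{1}$, so \cref{body-lemma:first-order-approximation} applies; bounding its right-hand side with \cref{body-assumption:snr} yields $\Tinorm{\mh{U}\RU - \m{U}} \ll \sqrt{r/n}$, while $\Tinorm{\m{U}} \le \sqrt{\mu r/n}$ by \cref{body-assumption:delocalization}. Hence $\Tinorm{\mh{U}} \lesssim \sqrt{\mu r/n}$ on the high-probability event of \cref{body-lemma:first-order-approximation}.

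It remains to establish the Lipschitz-type bound $\tnorm{\Rs{\mh{U}}{\m{U}_0} - \Rs{\mh{U}}{\m{U}_{1}}} \lesssim \tnorm{\m{U}_0 - \m{U}_{1}}$. Write $\m{A} \coloneqq \mh{U}\T\m{U}_0$ and $\m{B} \coloneqq \mh{U}\T\m{U}_{1}$, so $\Rs{\mh{U}}{\m{U}_0} = \sgn(\m{A})$ and $\Rs{\mh{U}}{\m{U}_{1}} = \sgn(\m{B}) = \RU$. First I would record that $\sigma_{\min}(\m{B}) \ge \tfrac12$ for $n$ large: since $\fnorm{\mh{U}\RU - \m{U}_{1}} \le \sqrt{n}\,\Tinorm{\mh{U}\RU - \m{U}_{1}} = o(1)$, the identity $\fnorm{\mh{U}\RU - \m{U}_{1}}^{2} = 2\sum_{j=1}^{r}\bigl(1 - \sigma_{j}(\m{B})\bigr)$ forces $1 - \sigma_{\min}(\m{B}) = o(1)$. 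I would then invoke the standard perturbation bound for the unitary polar factor: if $\m{A},\m{B} \in \R^{r\times r}$ are nonsingular with orthogonal polar factors, then $\tnorm{\sgn(\m{A}) - \sgn(\m{B})} \le \frac{2}{\sigma_{\min}(\m{A}) + \sigma_{\min}(\m{B})}\,\tnorm{\m{A} - \m{B}}$; combined with $\tnorm{\m{A} - \m{B}} = \tnorm{\mh{U}\T(\m{U}_0 - \m{U}_{1})} \le \tnorm{\m{U}_0 - \m{U}_{1}}$ (as $\mh{U}$ has orthonormal columns, so $\tnorm{\mh{U}\T} = 1$), this finishes the argument as soon as $\sigma_{\min}(\m{A})$ is bounded away from $0$.

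The main obstacle is exactly this last point: for arbitrary $\m{U}_0,\m{U}_{1} \in \Theta$ the matrix $\m{A} = \mh{U}\T\m{U}_0$ may be nearly singular, and the polar factor is not Lipschitz there, so no uniform Lipschitz constant is available. I would circumvent this with a dichotomy on $\delta \coloneqq \tnorm{\m{U}_0 - \m{U}_{1}}$. If $\delta \le \tfrac14$, then by Weyl's inequality for singular values $\sigma_{\min}(\m{A}) \ge \sigma_{\min}(\m{B}) - \tnorm{\m{A} - \m{B}} \ge \tfrac12 - \delta \ge \tfrac14 > 0$, so the polar factor bound applies and gives $\tnorm{\sgn(\m{A}) - \sgn(\m{B})} \le \tfrac{2}{1/4 + 1/2}\,\delta \lesssim \delta$. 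If instead $\delta > \tfrac14$, then trivially $\tnorm{\sgn(\m{A}) - \sgn(\m{B})} \le 2 < 8\delta \lesssim \delta$, since a difference of two orthogonal matrices has spectral norm at most $2$. In either case $\tnorm{\Rs{\mh{U}}{\m{U}_0} - \Rs{\mh{U}}{\m{U}_{1}}} \lesssim \tnorm{\m{U}_0 - \m{U}_{1}}$, and multiplying by $\Tinorm{\mh{U}} \lesssim \sqrt{\mu r/n}$ from the first paragraph yields the claim. All of the above holds on a high-probability event as in \cref{body-lemma:first-order-approximation}, which can be arranged to have probability at least $1 - O(n^{-6})$.
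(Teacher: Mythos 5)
Your proof is correct and follows essentially the same route as the paper's: first peel off the two-to-infinity norm via $\tinorm{\mh{U}\m{N}} \le \tinorm{\mh{U}}\tnorm{\m{N}}$, then bound $\tinorm{\mh{U}}$ through the first-order approximation lemma plus delocalization under $\hypa$, and finally invoke a perturbation bound for the orthogonal polar factor (the paper cites Bhatia, Corollary 4.1, precisely the bound you state). Your dichotomy on $\delta$ to guarantee nonsingularity of $\m{A} = \mh{U}\T\m{U}_0$ is extra care that the paper glosses over; note, though, that once $\sigma_{\min}(\m{B}) \ge \tfrac12$ is established, the denominator $\sigma_{\min}(\m{A}) + \sigma_{\min}(\m{B})$ is already bounded below by $\tfrac12$ regardless of $\sigma_{\min}(\m{A})$, so the dichotomy serves only to ensure $\m{A}$ is nonsingular (so $\sgn(\m{A})$ is uniquely defined) rather than to salvage the Lipschitz constant.
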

\begin{proof}
    By definition, the matrices $\Rs{\mh{U}}{\m{U}_0}$ and $\Rs{\mh{U}}{\m{U}_{1}}$ are the orthogonal factors of the polar decompositions of the matrices $\mh{U}\T\m{U}_0$ and $\mh{U}\T\m{U}_{1}$, respectively. By properties of the two-to-infinity norm along with perturbation results for the polar factorization of matrices \citep[Corollary 4.1]{bhatia_matrix_1994}, it holds that
    \begin{align}
        \tinorm{\mh{U}\{\Rs{\mh{U}}{\m{U}_{0}} - \Rs{\mh{U}}{\m{U}_{1}}\}}
        &
        \leq \tinorm{\mh{U}}\tnorm{\Rs{\mh{U}}{\m{U}_{0}} - \Rs{\mh{U}}{\m{U}_{1}}} 
        \nonumber\\
        &
        \leq \frac{\tinorm{\mh{U}}}{s_{r}(\mh{U}\T\m{U}_{1})}\tnorm{\m{U}_{0}-\m{U}_{1}}
        \label{supp-eq:Lipschitz-rotations-a}\\ 
        &\lesssim 
        \plr{\frac{\mu r}{n}}^{1/2}\tnorm{\m{U}_{0}-\m{U}_{1}},
        \label{supp-eq:Lipschitz-rotations-b}
    \end{align}
    where the first two inequalities hold with probability one, and the last asymptotic relation holds with probability exceeding $1-O(n^{-6})$. In more detail, \cref{supp-eq:Lipschitz-rotations-a} uses \citep[Corollary 4.1]{bhatia_matrix_1994} along with the submultiplicativity of the spectral norm. \cref{supp-eq:Lipschitz-rotations-b} results from the following two properties. First, by \cref{\mainlabel{body-lemma:first-order-approximation},\mainlabel{body-assumption:noise},\mainlabel{body-assumption:matrix-size},\mainlabel{body-assumption:snr}}, the term $\tinorm{\mh{U}}$ in \cref{supp-eq:Lipschitz-rotations-a} satisfies
    \begin{align}
        \tinorm{\mh{U}} 
        =
        \tinorm{\mh{U}\Rs{\mh{U}}{\m{U}_{1}}} 
        &=
        \tinorm{\m{U}_{1} +(\mh{U}\Rs{\mh{U}}{\m{U}_1}-\m{U}_1)}
        \nonumber \\
        &\leq
        \tinorm{\m{U}_{1}} + \tinorm{\mh{U}\Rs{\mh{U}}{\m{U}_1}-\m{U}_1} 
        \lesssim
        \plr{\frac{\mu r}{n}}^{1/2}
        \label{supp-eq:Uhat-delocalization}
    \end{align}
    with probability exceeding $1-O(n^{-9})$, where \cref{supp-eq:Uhat-delocalization} uses \cref{\mainlabel{body-eq:perturbation-bound}} in \cref{\mainlabel{body-lemma:first-order-approximation}}.
    On the other hand, by Wedin's $\sin\Theta$ theorem and \cref{\mainlabel{body-assumption:noise},\mainlabel{body-assumption:snr}}, it holds that
    \begin{align*}
        \sqrt{1-s_{r}(\mh{U}\T\m{U}_{1})^2}
        =
        \tnorm{\sin\Theta(\mh{U},\m{U}_{1})}
        =
        o(1)
    \end{align*}
    with probability exceeding $1-O(n^{-9})$. 
    Hence, the denominator of the fraction in \cref{supp-eq:Lipschitz-rotations-a} satisfies $s_{r}(\mh{U}\T\m{U}_{1}) \to 1$ with probability exceeding $1-O(n^{-9})$.
\end{proof}

Next, \cref{supp-lemma:misaligned-hypotheses} asserts that the two-to-infinity norm difference of a pair of hypothesis matrices $\m{U}_{1}$ and $\m{U}_{0}$ misaligned in a specific manner must be of asymptotic order greater than or equal to the properly aligned discrepancy $d_n$.

\begin{lemma}[Lower bound for two-to-infinity norm difference of misaligned hypotheses]
    \label{supp-lemma:misaligned-hypotheses}
    Under \cref{\mainlabel{body-assumption:noise},\mainlabel{body-assumption:matrix-size},\mainlabel{body-assumption:delocalization},\mainlabel{body-assumption:snr},\mainlabel{body-assumption:aligned-alternative}}, 
    \begin{align*}
        \tinorm{\m{U}_{1}\Rs{\m{U}_{1}}{\mh{U}}\Rs{\mh{U}}{\m{U}_{0}} - \m{U}_{0}} 
        \gtrsim
        (\mu r)^{-1/2} d_{n}
    \end{align*}
    with probability exceeding $1-O(n^{-6})$.
\end{lemma}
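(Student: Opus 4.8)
Write $\m{O} \coloneqq \Rs{\m{U}_{1}}{\mh{U}}\Rs{\mh{U}}{\m{U}_{0}}$ and let $\theta_{\max}$ denote the largest principal angle between $\col(\m{U}_{0})$ and $\col(\m{U}_{1})$. The plan is to split according to whether the two hypothesized subspaces are "well separated" or "nearly coincident," after recording two facts used in both cases. First, since $\Rs{\m{U}_{1}}{\mh{U}} = \sgn(\m{U}_{1}\T\mh{U}) = \sgn(\mh{U}\T\m{U}_{1})\T = \Rs{\mh{U}}{\m{U}_{1}}\T$, orthogonality gives $\Rs{\m{U}_{1}}{\mh{U}}\Rs{\mh{U}}{\m{U}_{1}} = \m{I}_{r}$, hence the decomposition $\m{U}_{1}\m{O} - \m{U}_{0} = (\m{U}_{1} - \m{U}_{0}) + \m{U}_{1}(\m{O} - \m{I}_{r})$, whose first summand has two-to-infinity norm exactly $d_{n}$. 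Second, I will establish the key preliminary estimate
\[
\tnorm{\m{O} - \m{I}_{r}} \;=\; \tnorm{\Rs{\mh{U}}{\m{U}_{0}} - \Rs{\mh{U}}{\m{U}_{1}}} \;\lesssim\; \tnorm{\m{U}_{0}-\m{U}_{1}}\plr{\tnorm{\m{U}_{0}-\m{U}_{1}} + \snorm{\sin\Theta(\mh{U},\m{U}_{1})}} .
\]
This is the technical heart. It follows from the polar-factor perturbation bound \citep[Corollary 4.1]{bhatia_matrix_1994} applied to $\mh{U}\T\m{U}_{0}$ and $\mh{U}\T\m{U}_{1}$, whose smallest singular values are bounded below (via Wedin's $\sin\Theta$ theorem, since $\mh{U}$ is close to $\m{U}_{1}$ under $\hypa$, together with the near-coincidence hypothesis in the case where the bound is actually invoked), after splitting $\mh{U}\T(\m{U}_{0}-\m{U}_{1}) = (\mh{U}\T\m{U}_{1})\m{U}_{1}\T(\m{U}_{0}-\m{U}_{1}) + (\mh{U}\T\m{U}_{1,\perp})\m{U}_{1,\perp}\T(\m{U}_{0}-\m{U}_{1})$ and using \cref{body-assumption:aligned-alternative}: because $\m{U}_{1}\T\m{U}_{0}$ is then symmetric positive semidefinite, one gets the \emph{quadratic} estimate $\tnorm{\m{U}_{1}\T(\m{U}_{0}-\m{U}_{1})} = 1-\cos\theta_{\max} = \tfrac12\tnorm{\m{U}_{0}-\m{U}_{1}}^{2}$, while $\tnorm{\m{U}_{1,\perp}\T(\m{U}_{0}-\m{U}_{1})} = \sin\theta_{\max} \le \tnorm{\m{U}_{0}-\m{U}_{1}}$ and $\snorm{\sin\Theta(\mh{U},\m{U}_{1})} = o(1)$ by Wedin and \cref{body-assumption:snr}.

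Fix a small constant $\varepsilon \in (0,1)$, to be chosen depending only on $r$, $C_{\mu}$ and absolute constants. In the \textbf{well-separated case} $1-\cos\theta_{\max} \ge \varepsilon$, I will use that $\m{I}_{r} = \Rs{\m{U}_{0}}{\m{U}_{1}}\T = \Rs{\m{U}_{1}}{\m{U}_{0}}$ is, by Schönemann's theorem, the Frobenius-optimal orthogonal alignment of $\m{U}_{1}$ to $\m{U}_{0}$, so $\fnorm{\m{U}_{1}\m{O}-\m{U}_{0}}^{2} \ge \fnorm{\m{U}_{1}-\m{U}_{0}}^{2} = 2\sum_{i}(1-\cos\theta_{i}) \ge 2\varepsilon$; combining this with the elementary inequality $\tinorm{\m{A}} \ge n^{-1/2}\fnorm{\m{A}}$ and the reference bound $d_{n} \le 2\sqrt{\mu r/n}$ yields $\tinorm{\m{U}_{1}\m{O}-\m{U}_{0}} \ge \sqrt{2\varepsilon/n} \ge \sqrt{\varepsilon/2}\,(\mu r)^{-1/2}d_{n}$. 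In the \textbf{nearly coincident case} $1-\cos\theta_{\max} < \varepsilon$, so that $\tnorm{\m{U}_{0}-\m{U}_{1}}^{2} = 2(1-\cos\theta_{\max}) < 2\varepsilon$, I will instead bound a single row: picking $i^{\star}$ with $\tnorm{(\m{U}_{0}-\m{U}_{1})_{i^{\star},\all}} = d_{n}$, the triangle inequality and $\tinorm{\m{U}_{1}} \le \sqrt{C_{\mu}r/n}$ give $\tnorm{(\m{U}_{1}\m{O}-\m{U}_{0})_{i^{\star},\all}} \ge d_{n} - \tinorm{\m{U}_{1}}\tnorm{\m{O}-\m{I}_{r}}$. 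Using the preliminary estimate, together with $\tnorm{\m{U}_{0}-\m{U}_{1}} \le \fnorm{\m{U}_{0}-\m{U}_{1}} \le \sqrt{n}\,d_{n}$ for one factor and $\tnorm{\m{U}_{0}-\m{U}_{1}}+\snorm{\sin\Theta(\mh{U},\m{U}_{1})} \le \sqrt{2\varepsilon}+o(1)$ for the other, I obtain $\tinorm{\m{U}_{1}}\tnorm{\m{O}-\m{I}_{r}} \lesssim \sqrt{r}\,d_{n}\bigl(\sqrt{2\varepsilon}+o(1)\bigr)$; choosing $\varepsilon$ small enough that this is at most $\tfrac12 d_{n}$ for all large $n$ gives $\tinorm{\m{U}_{1}\m{O}-\m{U}_{0}} \ge \tfrac12 d_{n} \ge \tfrac12(\mu r)^{-1/2}d_{n}$. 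Taking $c \coloneqq \min\{\tfrac12,\sqrt{\varepsilon/2}\}$ and noting that the spectral-norm and $\sin\Theta$ bounds invoked all hold on an event of probability $1-O(n^{-6})$ (e.g.\ $\tnorm{\m{E}} \lesssim \sigma\sqrt{n+m}$) completes the proof.

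The main obstacle, and the reason for the case split, is the control of $\tnorm{\m{O}-\m{I}_{r}}$: the naive bound $\tnorm{\m{O}-\m{I}_{r}} \lesssim \tnorm{\m{U}_{0}-\m{U}_{1}}$ is insufficient, since $\tnorm{\m{U}_{0}-\m{U}_{1}}$ can be of order $n^{1/2}d_{n}$, which would make the error $\tinorm{\m{U}_{1}}\tnorm{\m{O}-\m{I}_{r}}$ comparable to $d_{n}$ rather than negligible. The gain is the \emph{quadratic} dependence on $\tnorm{\m{U}_{0}-\m{U}_{1}}$, which forces the use of \cref{body-assumption:aligned-alternative}; but this quadratic estimate only pays off when $\theta_{\max}$ is small, whence the separate Frobenius-averaging argument handles the well-separated regime.
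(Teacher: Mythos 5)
Your proof is correct but takes a genuinely different route from the paper's.

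Both arguments reduce the problem to the same obstacle: the alignment matrix $\m{O} \coloneqq \Rs{\m{U}_{1}}{\mh{U}}\Rs{\mh{U}}{\m{U}_{0}}$ may differ from $\m{I}_{r}$, producing an error term $\tinorm{\m{U}_{1}(\m{O}-\m{I}_{r})}$ that could in principle swamp $d_{n}$. The paper resolves this by splitting on whether $\m{U}_{1}-\m{U}_{0}$ is row-localized relative to its Frobenius norm (the condition $\tinorm{\m{U}_{1}-\m{U}_{0}} \gg \sqrt{\mu r/n}\,\fnorm{\m{U}_{1}-\m{U}_{0}}$). In the delocalized branch it compares the $\ell_{2,\infty}$-optimal and Frobenius-optimal Procrustes solutions and uses $\fnorm{\cdot} \le \sqrt{n}\,\tinorm{\cdot}$ on the $\ell_{2,\infty}$-optimal residual; in the localized branch it applies \cref{supp-lemma:switching-orthogonal-alignment-matrices} (a \emph{linear} bound $\tnorm{\m{O}-\m{I}_{r}} \lesssim \tnorm{\m{U}_{0}-\m{U}_{1}}$ scaled by $\tinorm{\m{U}_{1}}$), and the localization hypothesis makes the result negligible relative to $d_{n}$. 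Your case split is instead on the size of the largest principal angle between $\col(\m{U}_{0})$ and $\col(\m{U}_{1})$. The well-separated branch is similar in flavor to the paper's delocalized branch but avoids introducing the $\ell_{2,\infty}$-optimal Procrustes solution entirely, applying $\tinorm{\cdot} \ge n^{-1/2}\fnorm{\cdot}$ directly. The nearly-coincident branch is where the approaches diverge most: you retain the sharper $\tnorm{\mh{U}\T(\m{U}_{0}-\m{U}_{1})}$ in the Bhatia polar-factor bound rather than bounding it by $\tnorm{\m{U}_{0}-\m{U}_{1}}$, and then exploit that \cref{body-assumption:aligned-alternative} forces $\m{U}_{1}\T\m{U}_{0}$ to be symmetric PSD, yielding the \emph{quadratic} identity $\tnorm{\m{U}_{1}\T(\m{U}_{0}-\m{U}_{1})} = 1-\cos\theta_{\max} = \tfrac12\tnorm{\m{U}_{0}-\m{U}_{1}}^{2}$ on the parallel component. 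That gain of one extra power of $\tnorm{\m{U}_{0}-\m{U}_{1}}$, which is small by the near-coincidence hypothesis, is what makes the error negligible. This is a clean independent insight not present in the paper's argument. The minor slip of writing $\sqrt{r}$ where $\sqrt{\mu r}$ (or $\sqrt{C_{\mu}r}$) is meant is harmless since $\mu \simeq 1$. Overall, the paper's split is the more natural one if one is already thinking in $\ell_{2,\infty}$ geometry (it directly isolates the pathological localized alternatives), while yours is the more natural one if one is thinking in terms of principal-angle geometry and orthogonal-factor perturbation; each resolves the same obstruction by a different device (localization versus a quadratic polar bound).
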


\begin{proof}
    The proof consists of two parts, depending on the delocalization of $\m{U}_{1} - \m{U}_{0}$. 
    
    \noindent
    \textbf{(i)} (Delocalized case). Suppose that $\tinorm{\m{U}_{1} - \m{U}_{0}} \lesssim \sqrt{\mu r/n} \fnorm{\m{U}_{1} - \m{U}_{0}}$. Then, defining the two-to-infinity norm optimal Procrustes problem solution $\m{R}_{2,\infty} \coloneqq \argmin_{\m{R}\in {O}_{r}} \tinorm{\m{U}_{1}\m{R} - \m{U}_{0}}$, where $O_{r}$ denotes the set of $r\times r$ orthogonal matrices, it follows that
    \begin{align*}
        d_n
        =
        \tinorm{\m{U}_{1} - \m{U}_{0}} 
        \lesssim
        \sqrt{\frac{\mu r}{n}} \fnorm{\m{U}_{1} - \m{U}_{0}} 
        \leq
        \sqrt{\frac{\mu r}{n}} \fnorm{\m{U}_{1}\m{R}_{2,\infty} - \m{U}_{0}}
        \leq \sqrt{\mu r} \tinorm{\m{U}_{1}\m{R}_{2,\infty} - \m{U}_{0}},
    \end{align*}
    where we have used the fact that $\m{I}_r$ is the Frobenius norm optimal Procrustes problem solution which is a consequence of \cref{\mainlabel{body-assumption:aligned-alternative}}. This result implies that
    \begin{align}
        \tinorm{\m{U}_{1}\Rs{\m{U}_{1}}{\mh{U}}\Rs{\mh{U}}{\m{U}_{0}} - \m{U}_{0}} 
        \geq
        \tinorm{\m{U}_{1}\m{R}_{2,\infty} - \m{U}_{0}}
        \gtrsim ({\mu r})^{-1/2} d_n.
        \label{supp-eq:misaligned-delocalized}
    \end{align}
    
    \noindent
    \textbf{(ii)} (Localized case). Suppose that $\tinorm{\m{U}_{1} - \m{U}_{0}} \gg \sqrt{\mu r/n} \fnorm{\m{U}_{1} - \m{U}_{0}}$. By the reverse triangle inequality, and the orthogonal invariance of the spectral norm, it holds with probability one that
    \begin{align}
        \tinorm{\m{U}_{1}\Rs{\m{U}_{1}}{\mh{U}}\Rs{\mh{U}}{\m{U}_{0}} - \m{U}_{0}} 
        \ge
        \tinorm{\m{U}_{1} - \m{U}_{0}} 
        -
        \tinorm{
            \m{U}_{1}
            \plr{
                \Rs{\m{U}_{1}}{\mh{U}}\Rs{\mh{U}}{\m{U}_{0}}
                -
                \m{I}_{r}
            }
        }.
        \label{supp-eq:misaligned-localized-1}
    \end{align}
    By \cref{supp-lemma:switching-orthogonal-alignment-matrices}, it holds with probability exceeding $1-O(n^{-9})$ that
    \begin{align*}
        \tinorm{
            \m{U}_{1}
            \{
                \Rs{\m{U}_{1}}{\mh{U}}\Rs{\mh{U}}{\m{U}_{0}} - \m{I}_{r}
            \}
        }
        &\le
        \tinorm{\m{U}_{1}}\tnorm{\Rs{\mh{U}}{\m{U}_{0}} - \Rs{\mh{U}}{\m{U}_{1}}}
        \\
        &\lesssim
        \sqrt{\frac{\mu r}{n}} \tnorm{\m{U}_{1} - \m{U}_0}
        \\
        &\le
        \sqrt{\frac{\mu r}{n}} \fnorm{\m{U}_{1} - \m{U}_0}
        \\
        &\ll 
        \tinorm{\m{U}_{1} - \m{U}_0},
    \end{align*}
    where the last line is by hypothesis. Thus, plugging back into \cref{supp-eq:misaligned-localized-1} yields that for a sufficiently large $n$,
    \begin{align}
        \tinorm{\m{U}_{1}\Rs{\m{U}_{1}}{\mh{U}}\Rs{\mh{U}}{\m{U}_{0}} - \m{U}_{0}} 
        \ge
        d_n/2
        \label{supp-eq:misaligned-localized-2}
    \end{align}
    holds with probability exceeding $1-O(n^{-9})$. Finally, combining \cref{supp-eq:misaligned-delocalized,supp-eq:misaligned-localized-2} finishes the proof of \cref{supp-lemma:misaligned-hypotheses}.
\end{proof}

\begin{remark}
 The proof of \cref{supp-lemma:misaligned-hypotheses} is divided into two cases due to the intricacies of the $\ell_{2,\infty}$ geometry, where the value of $\min_{\m{R}\in {O}_{r}} \tinorm{\m{U}_{1}\m{R} - \m{U}_{0}}$ may potentially be smaller in order than $\tinorm{\m{U}_{1} - \m{U}_{0}}$ when $\m{U}_{1} - \m{U}_{0}$ is localized in a few rows. In words, if $\m{U}_{1}-\m{U}_{0}$ is sufficiently delocalized, then $d_{n}$, which is aligned by the Frobenius norm optimal Procrustes solution, is also nearly optimal in the two-to-infinity norm, hence the desired relation directly holds. On the other hand, if $\m{U}_{1}-\m{U}_{0}$ is highly localized, the error induced by switching the orthogonal alignment matrices (\cref{supp-lemma:switching-orthogonal-alignment-matrices}) is strictly dominated in order by $d_{n}$, hence the reverse triangle inequality in \cref{supp-eq:misaligned-localized-1} reveals the desired relationship.
\end{remark}

\subsubsection{Proof of \cref{\mainlabel{body-theorem:power-analysis}}}

\begin{proof}
    \noindent
    \textbf{(i)} (Inconsistent regime: $d_{n} \ll \widetilde{a}_n(\mu r)^{-1/2}$).
    The following decomposition serves as the basis of the power analysis:
    \begin{align}
        {\mh{U}\m{R}({\mh{U},\m{U}_0})-\m{U}_0} 
        &=
        \underbrace{\m{U}_{1} - \m{U}_0}_{\eqqcolon\m{Q}_{1}} 
        +
        \underbrace{{\mh{U}\Rs{\mh{U}}{\m{U}_{1}} - \m{U}_{1}} }_{\eqqcolon \m{Q}_2} 
        +
        \underbrace{\mh{U}\{\Rs{\mh{U}}{\m{U}_0} - \Rs{\mh{U}}{\m{U}_{1}}\}}_{\eqqcolon \m{Q}_3}. \label{supp-eq:power-decomposition}
    \end{align}
    First, note that $\tinorm{\m{Q}_{1}} = d_{n}$ is a deterministic quantity that measures the distance between $\m{U}_{0}$ and $\m{U}_{1}$. Next, under $\hypa$, the rows of $\m{Q}_2$ are asymptotically i.i.d.~zero mean Gaussian vectors by \cref{\mainlabel{body-lemma:first-order-approximation}}, and \cref{\mainlabel{body-theorem:Gumbel-convergence-plugin}} gives the limiting distribution of $\tinorm{\m{Q}_{2}}$ properly shifted and scaled. Finally, the matrix $\m{Q}_3$ is a random matrix whose two-to-infinity norm is controlled by \cref{supp-lemma:switching-orthogonal-alignment-matrices}, namely
    \begin{align}
        \tinorm{\m{Q}_{3}}
        \lesssim
        \plr{\frac{\mu r}{n}}^{1/2} \tnorm{\m{U}_{0} - \m{U}_{1}}
        \leq (\mu r)^{1/2} d_n,
        \label{supp-eq:Q3-bound}
    \end{align}
    with probability exceeding $1-O(n^{-9})$. Putting all the ingredients together, the test statistic developed in \cref{\mainlabel{body-theorem:Gumbel-convergence-plugin}} satisfies
    \begin{align*}
        \tplug 
        &=\widetilde{a}_n\inv (\tinorm{\m{Q}_{1} + \m{Q}_2 + \m{Q}_3} -\widetilde{b}_n) 
        +
        \log(\abplug). 
    \end{align*}
    Applying the triangle inequality twice based on the decomposition in \cref{supp-eq:power-decomposition}, we obtain the upper bound
    \begin{align*}
        \tplug 
        &\leq \left[\widetilde{a}_n\inv (\tinorm{\m{Q}_2} -\widetilde{b}_n) 
        +
        \log(\abplug)\right] 
        +
        \widetilde{a}_n\inv (d_{n}+\tinorm{\m{Q}_3}),
    \end{align*}
    and the lower bound
    \begin{align*}
        \tplug 
        &\geq \left[\widetilde{a}_n\inv (\tinorm{\m{Q}_2} -\widetilde{b}_n) 
        +
        \log(\abplug)\right]
        -
        \widetilde{a}_n\inv (d_{n}+\tinorm{\m{Q}_3}).
    \end{align*}
    Thus, the test statistic has the form
    \begin{equation*}
        \tplug
        = \left[\widetilde{a}_n\inv (\tinorm{\m{Q}_2} -\widetilde{b}_n)
        +
        \log(\abplug)\right] 
        +
        \psi_n,
    \end{equation*}
    where $|\psi_n| \leq \widetilde{a}_n\inv (d_{n}+\tinorm{\m{Q}_3})$. By the hypothesis that $d_{n} \ll \widetilde{a}_n(\mu r)^{-1/2}$, alongside \cref{supp-eq:Q3-bound}, we have $\widetilde{a}_n\inv (d_{n}+\tinorm{\m{Q}_3})\to 0$ in probability as $n\to\infty$. Furthermore, \cref{\mainlabel{body-theorem:Gumbel-convergence-plugin}} states that under $\hypa$, we have $\widetilde{a}_n\inv (\tinorm{\m{Q}_2} -\widetilde{b}_n) + \log(\abplug)\rightsquigarrow \gumbelrv$ as $n\to\infty$. Hence, invoking Slutsky's lemma yields $\tplug\rightsquigarrow \gumbelrv$ under $\hypa$. That is, the test statistic still converges to the null distribution under the alternative hypothesis. Therefore, given any specified Type I error probability $\alpha \in (0,1)$, the power satisfies
    \begin{equation*}
        \E_{\hypa}[\Is{\tplug\geq F_G\inv(1-\alpha)}]
        =
        \P(\tplug \geq F_G\inv(1-\alpha)\mid \hypa) 
        \to
        \P(G \geq F_G\inv(1-\alpha)) 
        =
        \alpha
    \end{equation*}
    as $n\to\infty$.
    
    \noindent
    \textbf{(ii)} (Consistent regime: $d_{n} \gg \widetilde{b}_n(\mu r)^{1/2}$). Recall $\m{\Psi} \coloneqq \mh{U}\Rs{\mh{U}}{\m{U}_{1}}-\m{U}_{1} - \m{EVS}\inv$ by \cref{\mainlabel{body-lemma:first-order-approximation}}.
    It holds that
    \begin{align*}
        \mh{U}\m{R}({\mh{U},\m{U}_0})-\m{U}_0 
        & = (\m{U}_{1} +\m{EVS}\inv +\m{\Psi})\Rs{\m{U}_{1}}{\mh{U}}\Rs{\mh{U}}{\m{U}_{0}} - \m{U}_{0}
        \\
        &=
        \m{U}_{1}\Rs{\m{U}_{1}}{\mh{U}}\Rs{\mh{U}}{\m{U}_{0}} - \m{U}_0
        +
        (\m{EVS}\inv + \m{\Psi})\Rs{\m{U}_{1}}{\mh{U}}\Rs{\mh{U}}{\m{U}_{0}}.
    \end{align*}
    By applying the two-to-infinity norm, the reverse triangle inequality, and \cref{supp-lemma:misaligned-hypotheses}, there exist constants $N\in\mathbb{N}$ and $C>0$ such that for all $n\geq N$,
    \begin{align*}
       \tinorm{\mh{U}\m{R}({\mh{U},\m{U}_0})-\m{U}_0} 
       &\ge
       \tinorm{\m{U}_{1}\Rs{\m{U}_{1}}{\mh{U}}\Rs{\mh{U}}{\m{U}_{0}}  - \m{U}_0}
       -
       \tinorm{\m{EVS}\inv +\m{\Psi}}
       \\
       &\ge
       C(\mu r)^{-1/2} d_{n} - \tinorm{\mh{U}\Rs{\mh{U}}{\m{U}_{1}}-\m{U}_{1}},
    \end{align*}
    holds with probability exceeding $1-O(n^{-9})$. It follows that 
    \begin{align}
        \tplug 
        &=
        \widetilde{a}_{n}\inv(\tinorm{\mh{U}\m{R}({\mh{U},\m{U}_0})-\m{U}_0} -\widetilde{b}_n) +\log (\abplug)
        \nonumber\\
        &\geq
        \widetilde{a}_{n}\inv\left(C(\mu r)^{-1/2} d_{n} - \tinorm{\mh{U}\Rs{\mh{U}}{\m{U}_{1}}-\m{U}_{1}} -\widetilde{b}_n\right) +\log (\abplug)
        \nonumber\\
        &=
        \underbrace{
            C(\mu r)^{-1/2} \widetilde{a}_{n}\inv d_{n} - 2(\widetilde{a}_{n}\inv\widetilde{b}_n - \log (\abplug))
        }_{\eqqcolon P_{1}}
        \nonumber\\
        &\qquad
        - 
        \underbrace{
            \clr{
                \widetilde{a}_{n}\inv(\tinorm{\mh{U}\Rs{\mh{U}}{\m{U}_{1}}-\m{U}_{1}} -\widetilde{b}_n) 
                +
                \log (\abplug)
            }
        }_{\eqqcolon P_{2}}.
        \label{supp-eq:tstat-consistent-b}
    \end{align}
    By \cref{\mainlabel{body-theorem:Gumbel-convergence-plugin}}, the term $P_{2}$ in \cref{supp-eq:tstat-consistent-b} satisfies
    \begin{align}
        P_{2} 
        \rightsquigarrow 
        \gumbelrv
        \label{supp-eq:B-to-Gumbel}
    \end{align} 
    as $n\to\infty$. Furthermore, under \cref{\mainlabel{body-assumption:delocalization},\mainlabel{body-assumption:snr},\mainlabel{body-assumption:gap-lam1-lam2}}, the term $P_{1}$ in \cref{supp-eq:tstat-consistent-b} satisfies 
    \begin{align}
        P_{1}
        \to
        \infty,
        \label{supp-eq:A-to-infinity}
    \end{align}
    with probability exceeding $1-O(n^{-6})$ since $d_{n}(\mu r)^{-1/2}\gg \widetilde{b}_{n} \gg \widetilde{a}_{n}$ by hypothesis, and $\log (\abplug)$ is bounded below with probability exceeding $1-O(n^{-6})$ by \cref{\mainlabel{body-assumption:gap-lam1-lam2},\mainlabel{body-proposition:debiased-singular-values}}.
    Combining \cref{supp-eq:B-to-Gumbel,supp-eq:A-to-infinity}, the power satisfies
    \begin{align*}
        \E_{\hypa}[\Is{\tplug\geq F_G\inv(1-\alpha)}]  
        \geq 
        \P\left(P_{1}- P_{2}\geq  F_G\inv(1-\alpha)\mid \hypa \right)
        \to 
        \P(G\leq \infty) 
        =
        1
    \end{align*}
    as $n\to\infty$. This completes the proof of \cref{\mainlabel{body-theorem:power-analysis}}.
\end{proof}

\subsection{Towards proving \cref{\mainlabel{body-proposition:non-gaussian-noise-rank-1}}}
\label{supp-section:non-gaussian-noise-rank-1}

\subsubsection{A technical lemma}

First, we define the notion of a sub-exponential random variable \citep[Definition 2.7]{wainwright_high-dimensional_2019}.
\begin{definition}[Sub-exponential random variable]
\label{supp-definition:sub-exponential-random-variable}
    A mean-zero random variable $X$ is $(\nu, K)$-sub-exponential if there exist constants $\nu$ and $K$ such that $\E[\exp(sX)]\leq \exp(\nu^{2}s^{2}/2)$ for all $|s|<1/K$.
\end{definition}

By \citet[Theorem 2.13]{wainwright_high-dimensional_2019}, the existence of the MGF on an interval containing zero is an equivalent definition of a $(\nu, K)$-sub-exponential variable.
In other words, the noise condition in \cref{\mainlabel{body-proposition:non-gaussian-noise-rank-1}} requires that the entries of $\m{E}$ are $(\nu, K)$-sub-exponential with fixed parameters $\nu$ and $K$.
\cref{supp-lemma:E-concentration-subexponential} is an analogue of concentration results in \citet[Lemma 19]{yan_entrywise_2024} for sub-exponential random variables, which enables the extension of \cref{\mainlabel{body-lemma:first-order-approximation}} to the sub-exponential noise setting.

\begin{lemma}[Tail bounds for sub-exponential random variables]
    \label{supp-lemma:E-concentration-subexponential}
    Let $\m{E}= (E_{i,j})_{1\leq i,j \leq n}$ have i.i.d. zero mean, unit variance entries, and there exist non-negative constants $\nu, k$ such that $M_{E_{1,1}}(s)\leq \exp(\nu^{2}s^{2})$ for all $|s|< 1/k$.
    Also, let $\m{Q}\in\m{R}^{n\times r_{1}}$ and $\m{T}\in\m{R}^{m\times r_{2}}$ be matrices independent of $\m{E}$ such that $\m{Q}\T \m{Q} = \m{I}_{r_{1}}$ and $\m{T}\T\m{T} = \m{I}_{r_{2}}$ with probability one, where $r_{1}\in\dpar{n}$ and $r_{2}\in\dpar{m}$.
    Then, under the condition $n/m\simeq 1$,
    \begin{align}
        |E_{i,j}|
        &\lesssim
        k \log n
        \tag{a}
        \label{supp-eq:concentration-subexponential-a}
    \end{align}
    holds with probability exceeding $1-O(n^{-9})$,
    \begin{align}
        \tnorm{\m{E}}
        &\lesssim
        \sigma\sqrt{n}
        + 
        c k \log^{3/2}n
        \tag{b}
        \label{supp-eq:concentration-subexponential-b}
    \end{align}
    holds with probability exceeding $1-O(n^{-8})$, and
    \begin{align}
        \tnorm{\m{Q}\T \m{E}\m{T}}
        \lesssim
        \sigma\sqrt{\max\{r_{1},r_{2}\}\log{n}}
        +
        k \mu_{0}\sqrt{r_{1}r_{2}} 
        \plr{
            \frac{\log^{2}n}{n}
        }
        \tag{c}
        \label{supp-eq:concentration-subexponential-c}
    \end{align}
    holds with probability exceeding $1-O(n^{-7})$, where
    \begin{align*}
        \mu_{0} 
        \coloneqq 
        \max
        \clr{
            \frac{n\tinorm{\m{Q}}^{2}}{r_{1}},
            \frac{m\tinorm{\m{T}}^{2}}{r_{2}}
        }.
    \end{align*}
\end{lemma}
\begin{proof}
    See \cref{supp-pf:E-concentration-subexponential}    
\end{proof}

\subsubsection{Proof of \cref{\mainlabel{body-proposition:non-gaussian-noise-rank-1}}}
\label{supp-pf:non-gaussian-noise-rank-1}

The proof of \cref{\mainlabel{body-proposition:non-gaussian-noise-rank-1}} consists of two parts as outlined in the remark following the statement of \cref{\mainlabel{body-proposition:non-gaussian-noise-rank-1}}.

\begin{proof}

\textbf{(i)} (Perturbation analysis).
    By assumption, there exists a constant $k>0$ such that $M_{E_{1,1}}(s)$ is finite on $|s| < 1/k$.
    Thus, the Taylor series expansion $K_{E_{1,1}}(s) = s^{2}/2 + O(s^{3})$ around $s = 0$ implies there exists a constant $\nu$ such that $M_{E_{1,1}}(s)\leq \exp(\nu^{2} s^{2}/2)$ for all $|s|< 1/k$. Therefore, plugging in $r=\sigma =1$ into \cref{supp-lemma:E-concentration-subexponential}\eqref{supp-eq:concentration-subexponential-b} yields
    $
    \label{supp-eq:E-concentration-subexponential-simplified}
    \tnorm{\m{E}} 
    \lesssim
    \sqrt{n}
    $
    with probability exceeding $1-O(n^{-8})$.
    Furthermore, as long as $\mu_{0}\ll \sigma n (\log n)^{-3/2} \sqrt{\max\{r_{1},r_{2}\}/\min\{r_{1},r_{2}\}}$, it holds that $\tnorm{\m{Q}\T\m{E}\m{T}}\lesssim \sigma\sqrt{\max\{r_{1},r_{2}\}\log n}$ with probability exceeding $1-O(n^{-7})$.
    Thus, a modified version of \cref{\mainlabel{body-lemma:first-order-approximation}} can be derived by replacing the sub-Gaussian concentration results \citet[Lemma 19]{yan_entrywise_2024} in the proof of \citet[Proposition 2]{yan_entrywise_2024} with the sub-exponential concentration results for the spectral norm of $\m{E}$ and $\m{Q}\T\m{E}\m{T}$. 
    Namely, under the simplified model assumptions, it holds that
    \begin{align}
            \tinorm{\mh{u}\sgn(\mh{u}\T\m{u}) - \m{u} - \m{Ev}s_{r}^{-1}}
            \lesssim
            \frac{\sqrt{\log n}}{s_{r}}
            \plr{
                \frac{\sqrt{n\log n}}
                {s_{r}}
            }
            \label{supp-eq:first-order-approximation-subexponential}
        \end{align}
    with probability exceeding $1 - O(n^{-5})$.

\noindent
\textbf{(ii)} (Gumbel convergence of the first-order approximation)
    Here, define the $i$-th row of the first-order approximation as $X_{i,n} \coloneqq \m{E}_{i,\all}\T \m{v} s_{r}^{-1} = \sum_{j=1}^{n} \frac{1}{s_1 \sqrt{n}} \m{E}_{i,j}$, and denote $Y_{i,n} \coloneqq \m{Z}_{i,\all}\T \m{v}s_{r}^{-1}$, where $\m{Z}$ is a matrix which has i.i.d.~$\NN(0,1)$ entries independent of $\m{E}$. We proceed in two steps. First, we show the difference of rate functions (defined below) of $E_{1,1}$ and $Z_{1,1}$ converges to zero sufficiently quickly. Next, we show that the convergence in rate functions implies convergence of the suitably normalized maximum entry of the first-order approximation.

\noindent
\textbf{(ii-a)} (Convergence of rate functions).
    Let $I_{\xi}(x) \coloneqq \sup_{s\in\R}(sx - K_{\xi}(s))$ denote the \textit{rate function} of a random variable $\xi$ whose MGF is finite on an interval containing zero (note that $sx -K_{\xi}(s) = -\infty$ at $s$ for which the MGF is infinite).
    By this definition, it holds that $I_{Z_{1,1}}(y) = \sup_{s\in\R}(sy - s^{2}/2) = y^{2}/2$. On the other hand, the rate function of $E_{1,1}$ is analyzed via its Taylor series expansion around zero.
    For the constant term, we have $I_{E_{1,1}}(0) = \sup_{s\in\R}\{-K_{E_{1,1}}(s)\} = 0$, since $M_{E_{1,1}}(s) = \E[\exp(s E_{1,1})]\geq 1$ by Jensen's inequality, and the lower bound is attained at $s = 0$. 
    
    For the linear term, since $sy - K_{E_{1,1}}(s)$ is a concave function of $s$ due to the convexity of $K_{E_{1,1}}(s)$, it attains its maximum at the saddle point $\widehat{s}_{E_{1,1}}(y)$, which satisfies $y = K_{E_{1,1}}'(\widehat{s}_{E_{1,1}}(y))$. 
    Thus, the rate function can be expressed as $I_{E_{1,1}}(y) = \widehat{s}_{E_{1,1}}(y)y - K_{E_{1,1}}(\widehat{s}_{E_{1,1}}(y))$, hence it holds that 
    \begin{align}
    \label{supp-eq:rate-derivative}
        I'_{E_{1,1}}(y) 
        =
        \widehat{s}'_{E_{1,1}}(y)\{y-K'(\widehat{s}_{E_{1,1}}(y))\} + \widehat{s}_{E_{1,1}}(y) 
        =
        \widehat{s}_{E_{1,1}}(y).    
    \end{align}
    Since $K'_{E_{1,1}}(0) = \E[E_{1,1}] = 0$, it follows that $I'_{E_{1,1}}(0) = \widehat{s}_{E_{1,1}}(0) = 0$.
    
    For the quadratic term, \cref{supp-eq:rate-derivative} together with the saddle point equation yields the relationship $y = K'_{E_{1,1}}(I'_{E_{1,1}}(y))$. Thus, by the inverse function theorem, it holds that
    \begin{align}
        I''_{E_{1,1}}(y) 
        =
        \frac{1}
        {K''_{E_{1,1}}(I'_{E_{1,1}}(y))}.
        \label{supp-eq:rate-second-derivative}
    \end{align}
    Hence, it follows that $I''_{E_{1,1}}(0) = 1/K''_{E_{1,1}}(I'_{E_{1,1}}(0)) = 1/K''_{E_{1,1}}(0)=1$, since $K''_{E_{1,1}}(0) = \E[E_{1,1}^{2}]=1$.

    For the remainder of the proof, assume $x\in\R$ is fixed unless stated otherwise. Define 
    \begin{align}
        u_{n}(x) 
        \coloneqq
        \frac{1}{\sqrt{n}}(s_{r}a_{n}x + s_{r}b_{n}) 
        =
        \sqrt{\frac{2\log n}{n}} 
        +
        O\plr{\frac{\log\log n}{\sqrt{n\log n}}},
        \label{supp-eq:un-definition}
    \end{align} 
    where the last relation is due to the fact that $s_{r}a_{n} = 1/\sqrt{2\log n}$ and $s_{r}b_{n} = \sqrt{2\log n} + O(\log\log n / \sqrt{n})$.
    Under the assumption that $x\in\R$ is fixed, the notation $u_{n}$ may be used in the place of $u_{n}(x)$ at times to simplify notation.
    By \cref{supp-eq:rate-derivative,supp-eq:rate-second-derivative}, the Taylor series expansion of the rate function of $E_{1,1}$ around zero is $I_{E_{1,1}}(y) = y^{2}/2 + O(y^{3})$ for $|y| < 1$, which yields
    \begin{align}
        n \alr{ 
            I_{E_{1,1}}\plr{u_{n}(x)}
            -
            I_{Z_{1,1}}\plr{u_{n}(x)}
            }
        &=
        O(n \{u_n(x)\}^{3})
        \nonumber
        \\
        &= 
        O(n^{-1/2}\{\log n\}^{3/2})
        \to
        0
        \label{supp-eq:rate-difference-convergence}
    \end{align}
    as $n\to\infty$. Furthermore, \cref{supp-eq:rate-derivative,supp-eq:rate-second-derivative} with Taylor expansions of $I'_{E_{1,1}}(y)$ and $I''_{E_{1,1}}(y)$ around zero yield
    \begin{align}
        \lim_{n\to\infty}
        \frac{\widehat{s}_{Z_{1,1}}\plr{u_{n}(x)}}
        {\widehat{s}_{E_{1,1}}\plr{u_{n}(x)}}
        =
        \lim_{n\to\infty}
        \frac{I''_{Z_{1,1}}\plr{u_{n}(x)}}
        {I''_{E_{1,1}}\plr{u_{n}(x)}}
        =
        1,
        \label{supp-eq:saddlepoint-second-derivative-ratio-convergence}
    \end{align}
    since $u_{n}(x) \to 0$ as $n\to\infty$ for any fixed $x\in\R$.

    Notably, since $-E_{1,1}$ also has zero mean, unit variance, and finite higher moments, applying the above procedure to $-E_{1,1}$ yields
    \begin{align}
        n \alr{ 
            I_{-E_{1,1}}\plr{u_{n}(x)}
            -
            I_{Z_{1,1}}\plr{u_{n}(x)}
            }
        \to
        0,
        \label{supp-eq:rate-difference-convergence-negative}
    \end{align}
    and
    \begin{align}
        \lim_{n\to\infty}
        \frac{\widehat{s}_{Z_{1,1}}\plr{u_{n}(x)}}
        {\widehat{s}_{-E_{1,1}}\plr{u_{n}(x)}}
        =
        \lim_{n\to\infty}
        \frac{I''_{Z_{1,1}}\plr{u_{n}(x)}}
        {I''_{-E_{1,1}}\plr{u_{n}(x)}}
        =
        1,
        \label{supp-eq:saddlepoint-second-derivative-ratio-convergence-negative}
    \end{align}
    for any fixed $x\in\R$.
    
\noindent
\textbf{(ii-b)} (Weak convergence of the maximum).
    The Bahadur--Rao theorem \citep{bahadur_deviations_1960} yields 
    \begin{align} 
        1-F_{X_{n}}(a_{n}x + b_{n})
        &=
        \P(s_{r}X_{n} \geq s_{r}a_{n}x + s_{r}b_{n})
        \nonumber
        \\ 
        &=
        \P\plr{
            \frac{1}{n}\sum_{j=1}^n E_{i,j}
            \geq
            u_{n}(x)
            }
        \nonumber
        \\
        &=
        \frac{1 + o(1)}
        {\widehat{s}_{E_{1,1}}(u_n)\sqrt{2\pi n K''_{E_{1,1}}(\widehat{s}_{E_{1,1}}(u_n))}}
        \exp\plr{
            -n I_{E_{1,1}}\plr{u_n(x)}
            },
        \label{supp-eq:bahadur-rao-Xn}
    \end{align}
    \begin{align} 
        1-F_{-X_{n}}(a_{n}x + b_{n})
        &=
        \frac{1 + o(1)}
        {\widehat{s}_{-E_{1,1}}(u_n)\sqrt{2\pi n K''_{-E_{1,1}}(\widehat{s}_{-E_{1,1}}(u_n))}}
        \exp\plr{
            -n I_{-E_{1,1}}\plr{u_n(x)}
            },
        \label{supp-eq:bahadur-rao-Xn-negative}
    \end{align}
    and
    \begin{align} 
        1-F_{Y_{n}}(a_{n}x + b_{n})
        =
        \frac{1 + o(1)}
        {\widehat{s}_{Z_{1,1}}(u_{n})\sqrt{2\pi n K''_{Z_{1,1}}(\widehat{s}_{Z_{1,1}}(u_{n}))}}
        \exp\plr{
            -n I_{Z_{1,1}}\plr{u_{n}(x)}
            }
        \label{supp-eq:bahadur-rao-Yn}
    \end{align}
    as $n\to\infty$.
    By \cref{supp-eq:bahadur-rao-Xn,supp-eq:bahadur-rao-Xn-negative,supp-eq:bahadur-rao-Yn,supp-eq:rate-difference-convergence,supp-eq:saddlepoint-second-derivative-ratio-convergence,supp-eq:rate-difference-convergence-negative,supp-eq:saddlepoint-second-derivative-ratio-convergence-negative}, it holds that
    \begin{align}
        &\lim_{n\to\infty}
        \frac{1-F_{|X_{i,n}|}(a_{n}x + b_{n})}
        {1-F_{|Y_{i,n}|}(a_{n}x + b_{n})}
        \nonumber
        \\
        &\qquad
        =
        \lim_{n\to\infty}
        \frac{\{1-F_{X_{i,n}}(a_{n}x + b_{n})\} + \{1-F_{-X_{i,n}}(a_{n}x + b_{n})\}}
        {2\{1-F_{Y_{i,n}}(a_{n}x + b_{n})\}}
        \nonumber
        \\
        &\qquad
        =
        \frac{1}{2}
        \lim_{n\to\infty} 
        \plr{
            \frac{\widehat{s}_{Z_{1,1}}(u_{n})\sqrt{K''_{Z_{1,1}}(\widehat{s}_{Z_{1,1}}(u_{n}))}}
            {\widehat{s}_{E_{1,1}}(u_{n})\sqrt{K''_{E_{1,1}}(\widehat{s}_{E_{1,1}}(u_{n}))}}
        }
        (1 + o(1))
        \exp
        \plr{
            -n\clr{
                I_{E_{1,1}}\plr{u_{n}(x)} 
                -
                I_{Z_{1,1}}\plr{u_{n}(x)} 
                }
        }
        \nonumber
        \\
        &\qquad\qquad
        +
        \frac{1}{2}
        \lim_{n\to\infty} 
        \plr{
            \frac{\widehat{s}_{Z_{1,1}}(u_{n})\sqrt{K''_{Z_{1,1}}(\widehat{s}_{Z_{1,1}}(u_{n}))}}
            {\widehat{s}_{-E_{1,1}}(u_{n})\sqrt{K''_{-E_{1,1}}(\widehat{s}_{-E_{1,1}}(u_{n}))}}
        }
        (1 + o(1))
        \exp
        \plr{
            -n\clr{
                I_{-E_{1,1}}\plr{u_{n}(x)} 
                -
                I_{Z_{1,1}}\plr{u_{n}(x)} 
                }
        }
        \nonumber
        \\
        &\qquad 
        =
        1.
        \label{supp-eq:tail-ratio-to-1}
    \end{align}
    
    Finally, we use \cref{supp-eq:tail-ratio-to-1} to establish $[F_{|X_{i,n}|}(a_{n} x + b_{n})]^{n} \to \exp(-\exp(-x))$. Let $\xi_{n}$ denote a placeholder which can be either $|X_{i,n}|$ or $|Y_{i,n}|$. Observe that $[F_{\xi_{n}}(a_{n} x + b_{n})]^{n} \to \exp(-\exp(-x))$ as $n\to\infty$ is equivalent to $n\log (F_{\xi_{n}}(a_{n} x + b_{n})) \to -\exp(-x)$ as $n\to\infty$. Further, taking the Taylor series expansion of $\log(1-y)$ around zero yields 
    \begin{align}
        n\log (1-\{1-F_{\xi_{n}}(a_{n} x + b_{n})\}) 
        =
        -n\{1-F_{\xi_{n}}(a_{n} x + b_{n})\} 
        +
        O(n\{1-F_{\xi_{n}}(a_{n} x + b_{n})\}^{2}).
        \label{supp-eq:log-cdf-taylor-prelim}
    \end{align}
    To bound the residual of the Taylor series expansion, first, recalling that $s_{r}Y_{i,n}\sim \mathcal{N}(0,1)$, and $s_{r}(a_{n}x + b_{n}) = \sqrt{2\log n} + o(1)$, a Gaussian tail bound \citep[Proposition 2.1.2]{vershynin_high-dimensional_2018} yields that for $n$ large enough,
    \begin{align}
    \label{supp-eq:gaussian-tail-bound-Y}
        1-F_{|Y_{i,n}|}(a_{n}x + b_{n}) 
        =
        2\{1-F_{s_{r}Y_{i,n}}(s_{r}a_{n}x 
        +
        s_{r}b_{n})\} 
        \leq 
        2\P
        \plr{
            s_{r}Y_{i,n} \geq \sqrt{\frac{3}{8}}\times\sqrt{2\log n}
        }
        \lesssim
        n^{-3/4}.
    \end{align}
    On the other hand, applying the Chernoff inequality yields that for $n$ large enough and for any $t>0$,
    \begin{align}
        \P(
            X_{i,n}
            \geq
            a_{n}x + b_{n}
        )
        &=
        \P(
            n^{1/2}s_{r}X_{i,n}
            \geq
            n^{1/2}s_{r}(a_{n}x + b_{n})
        )
        \nonumber
        \\
        &
        =
        \P
        \plr{
                \sum_{j=1}^{n} E_{i,j}
                \geq
                n^{1/2}s_{r}(a_{n}x + b_{n})
        }
        \nonumber
        \\
        &
        \leq
        \P
        \plr{
                \sum_{j=1}^{n} E_{i,j}
                \geq
                \sqrt{\frac{10}{11}} \times \sqrt{2n\log n}
        }
        \nonumber
        \\
        &\leq
        \exp
        \plr{
            -t
            \sqrt{
                \frac{20n\log n}{11} 
            }
            +
            n K_{E_{1,1}} (t)
        }.
        \label{supp-eq:Chernoff-bound-1}
    \end{align}
    Since $E_{1,1}$ is mean zero, unit variance, and has finite higher moments on an interval containing zero, a Taylor expansion of the cumulant generating function around $t=0$ yields $K_{E_{1,1}}(t) = \frac{1}{2}t^{2} + O(t^{3})\leq \frac{5}{8}t^{2}$ for $t$ sufficiently small. Thus, plugging in a diminishing quantity such as $t=\sqrt{\frac{5\log n}{11n}}$ into \cref{supp-eq:Chernoff-bound-1} yields that for $n$ large enough,
    \begin{align*}
        \P(
            X_{i,n}
            \geq
            a_{n}x + b_{n}
        )
        \leq
        \exp
        \plr{
            -\frac{10}{11}\log n + \frac{5}{8}\times \frac{5}{11}\log n
        }
        = 
        n^{-5/8}.
    \end{align*}
    Also, since $-E_{1,1}$ also has zero mean, unit variance, and finite higher moments, applying the above procedure to $-X_{i,n}$ analogously yields
    \begin{align*}
        \P(
            -X_{i,n}
            \geq
            a_{n}x + b_{n}
        )
        \leq
        n^{-5/8}.
    \end{align*}
    Hence, for $n$ large enough
    \begin{align}
    \label{supp-eq:Chernoff-bound-2}
        1-F_{|X_{i,n}|}(a_{n}x + b_{n}) 
        =
        \P(X_{i,n}\geq a_{n}x + b_{n})
        +
        \P(-X_{i,n}\geq a_{n}x + b_{n})
        \lesssim
        n^{-5/8}.
    \end{align}
    Thus, plugging in \cref{supp-eq:gaussian-tail-bound-Y,supp-eq:Chernoff-bound-2} to the Taylor expansion residual in \cref{supp-eq:log-cdf-taylor-prelim} yields
    \begin{align*}
        n\log (1-\{1-F_{\xi_{n}}(a_{n} x + b_{n})\}) 
        =
        -n\{1-F_{\xi_{n}}(a_{n} x + b_{n})\} 
        +
        O(n^{-1/4}),
    \end{align*}
    which implies that
    \begin{align}
        \lim_{n\to\infty}
        n \{1-F_{\xi_{n}}(a_{n}x + b_{n})\}
        \to
        \exp(-x)
        \iff
        \lim_{n\to\infty} [F_{\xi_{n}}(a_{n}x + b_{n})]^{n}
        \to
        \exp(-\exp(-x)).
        \label{supp-eq:gumbel-convergence-equivalent-condition}
    \end{align} 
    By \cref{\mainlabel{body-proposition:saddle-point-tail-equivalence},supp-lemma:tail-equivalence}, it holds that
    \begin{align}
    \lim_{n\to\infty}[F_{|Y_{i,n}|}(a_{n}x + b_{n})]^{n} 
    =
    \exp(\exp(-x)). 
    \end{align}
    Thus, \cref{supp-eq:gumbel-convergence-equivalent-condition} yields $\lim_{n\to\infty}n\{1-F_{|Y_{i,n}|}(a_{n}x + b_{n})\} =  \exp(-x)$.
    Furthermore, applying \cref{supp-eq:tail-ratio-to-1} yields
    \begin{align*}
        \lim_{n\to\infty} n\{1 - F_{|X_{i,n}|}(a_{n}x + b_{n})\}
        =
        \lim_{n\to\infty} 
        \frac{1-F_{|X_{n}|}(a_{n}x + b_{n})}
        {1-F_{|Y_{n}|}(a_{n}x + b_{n})} 
        n\{1 - F_{|Y_{i,n}|}(a_{n}x + b_{n})\}
        =
        \exp(-x).
    \end{align*}
    Finally, by \cref{supp-eq:gumbel-convergence-equivalent-condition}, it holds that $\lim_{n\to\infty} [F_{|X_{i,n}|}(a_{n}x + b_{n})]^{n}\to \exp(-\exp(-x))$ for all $x\in\R$, or equivalently, $a_{n}\inv(\tinorm{\m{Ev}s_{r}\inv} -b_{n}) \rightsquigarrow \gumbelrv$ as desired. 
    
    To conclude the proof of \cref{\mainlabel{body-proposition:non-gaussian-noise-rank-1}},
    defining $\gamma_{n}\coloneqq a_{n}\inv (\tinorm{\mh{u}\sgn({\mh{u}\T\m{u}})-\m{u}} - \tinorm{\m{Ev}s_{r}\inv})$ as in
    \cref{supp-lemma:first-order-convergence}, and applying the same proof technique therein, replacing \cref{\mainlabel{body-lemma:first-order-approximation}} with \cref{supp-eq:first-order-approximation-subexponential}, there exists a constant $C_{\gamma}' > 0$ such that
    \begin{align}
        \P\plr{
            |\gamma_{n}| 
            \leq 
            C_{\gamma}'
            \clr{
                \log n
                \plr{
                    \frac{\sqrt{n\log n}}
                    {s_{r}}
                }
            }
        }
        \geq 
        1- O(n^{-5}),
    \end{align}
    where we have used the fact that $a_{n}= O(\{s_{r}\log^{1/2} n\}^{-1})$.
    Therefore, since $\gamma_{n} = o_{\P}(1)$ holds under the assumed condition $s_{r}\gg \sqrt{n \log^{3} n}$, applying the Gumbel convergence of the first-order approximation along with Slutsky's lemma yields
    \begin{align}
    \label{supp-eq:first-order-convergence-non-Gaussian}
        a_{n}\inv
        \plr{\tinorm{\mh{u}\sgn(\mh{u}\T\m{u}) -\m{u}} - b_{n}}
        =
        a_{n}\inv
        \plr{\tinorm{\m{Ev}s_{r}\inv} - b_{n}} 
        +
        \gamma_{n} \rightsquigarrow \gumbelrv.
    \end{align}
    This completes the proof of \cref{\mainlabel{body-proposition:non-gaussian-noise-rank-1}}.
\end{proof}

\subsubsection{Proof of \cref{supp-lemma:E-concentration-subexponential}}
\label{supp-pf:E-concentration-subexponential}
\begin{proof}
Here, assume for all $i,j\in\dpar{n}$ that $E_{i,j}$ are sub-exponential with parameters $(\nu, K)$ (See \cref{supp-definition:sub-exponential-random-variable}).

\noindent 
\eqref{supp-eq:concentration-subexponential-a}: 
By the Chernoff bound, it holds that
\begin{align}
\label{supp-eq:chernoff-bound-subexponential}
    \P(|E_{i,j}| > t) 
    \leq
    \begin{cases}
        2\exp
        \plr{
        -\frac{t^{2}}{2\nu^{2}}
        }
        & \text{if} 
        \quad
        0\leq t \leq \frac{\nu^{2}}{k},
        \\
        2\exp
        \plr{
        -\frac{t}{2k} 
        }
        & \text{if}
        \quad
        t > \frac{\nu^{2}}{k}
        .
    \end{cases}
\end{align}
See for example \citet[Proposition 2.9]{wainwright_high-dimensional_2019} for the proof. For a threshold $t$ growing at rate $K\log n$, applying the second bound in \cref{supp-eq:chernoff-bound-subexponential} yields
\begin{align*}
    \P(|E_{i,j}| > 18k\log n) \leq 2n^{-9},
\end{align*}
which completes the proof of \eqref{supp-eq:concentration-subexponential-a}.

\noindent 
\eqref{supp-eq:concentration-subexponential-b}: 
Let $\widetilde{\m{E}}\coloneqq [\widetilde{E}_{i,j}]_{1\leq i,j\leq n}$, where $\widetilde{E}_{i,j} \coloneqq E_{i,j} \Is{|E_{i,j}|\leq 18k \log n}$.
By \eqref{supp-eq:concentration-subexponential-a}, it holds that $\P\{E_{i,j} = \widetilde{E}_{i,j}\}\geq 1- O(n^{-9})$, hence $\P\{\m{E} = \widetilde{\m{E}}\}\geq 1-O(n^{-7})$ by the union bound.
Applying \citet[Corollary 3.12]{bandeira_sharp_2016} (more specifically, the version in Remark 3.13 for square matrices of bounded random variables applied to the symmetric dilation of the matrix $\m{E}$), it holds that for some universal constant $c>0$,
\begin{align*}
    \tnorm{\widetilde{\m{E}}} 
    \leq
    4\sqrt{2}\sigma\sqrt{n} 
    + c k \log^{3/2}n
\end{align*}
for sufficiently large $n$ with probability exceeding $1-O(n^{-7})$.

\noindent
\eqref{supp-eq:concentration-subexponential-c}: 
The concentration of the term $\tnorm{\m{Q}\T\m{E}\m{T}}$ is derived via the truncated matrix Bernstein inequality \citep[Theorem 3.1]{chen_spectral_2021} using the observation that 
\begin{align*}
    \m{Q}\T\m{E}\m{T}
    = 
    \sum_{i=1}^{n}\sum_{j=1}^{m}E_{i,j}\m{Q}_{i,\all }\m{T}_{j,\all }\T
\end{align*}
where each summand is independent and mean zero. 
By the definition of $\mu_{0}$, it holds that
\begin{align}
\label{supp-eq:matrix-bernstein-summand}
    \tnorm{E_{i,j}\m{Q}_{i,\all }\m{T}_{j,\all }\T}
    \leq
    |E_{i,j}|\tnorm{\m{Q}_{i,\all }}\tnorm{\m{T}_{i,\all }}
    \leq
    \mu_{0}\sqrt{\frac{r_{1} r_{2}}{nm}}|E_{i,j}|.
\end{align}
Applying \eqref{supp-eq:concentration-subexponential-a} yields
\begin{align*}
    \P\plr{
        \tnorm{E_{i,j}\m{Q}_{i,\all }\m{T}_{j,\all }\T} 
        \geq
        18 k \mu_{0} \log n
        \sqrt{\frac{r_{1} r_{2}}{nm}}
    }
    &\leq
    \P\plr{|E_{i,j}| \geq 18 k\log n} 
    \\
    &\leq
    2n^{-9} \eqqcolon q_0.
\end{align*}
Further, setting $L \coloneqq 18k\mu_0 \log n \sqrt{r_{1}r_{2} (nm)^{-1}}$, we invoke Jensen's inequality and integration by parts to show that for a constant $C_B>0$,
\begin{align*}
    &\tnorm{
        \E[
            E_{i,j}\m{Q}_{i,\all}\m{T}\T_{j,\all}
            \Is{
                \tnorm{E_{i,j}\m{Q}_{i,\all}\m{T}\T_{j,\all}}
                \geq
                L
            }
        ]
    }
    \\
    &\qquad
    \leq
    \E[
        \tnorm{E_{i,j}\m{Q}_{i,\all}\m{T}\T_{j,\all}}
        \Is{
            \tnorm{E_{i,j}\m{Q}_{i,\all}\m{T}\T_{j,\all}}
            \geq
            L
        }
    ]
    \\
    &\qquad
    = \int_{L}^{\infty} 
    \P(\tnorm{E_{i,j}\m{Q}_{i,\all}\m{T}\T_{j,\all}} > x) 
    \dd x
    +
    L\P(\tnorm{E_{i,j}\m{Q}_{i,\all}\m{T}\T_{j,\all}} > L)    
    \\
    &\qquad
    \leq
    \int_{L}^{\infty} 
    \P\plr{
        |E_{i,j}| 
        >
        x\sqrt{nm(r_{1}r_{2}\mu_{0}^{2})^{-1}} 
    } 
    \dd x
    +
    L
    \P\plr{
        |E_{i,j}| 
        >
        L\sqrt{nm(r_{1}r_{2}\mu_{0}^{2})^{-1}} 
    }
    \\ 
    &\qquad
    \leq
    C_{B} \mu_{0} (r_{1}r_{2})^{1/2} k n^{-10}\log n 
    \eqqcolon
    q_{1},
\end{align*}
where the penultimate line is due to \cref{supp-eq:matrix-bernstein-summand}, and the last line uses \cref{supp-eq:chernoff-bound-subexponential,supp-eq:concentration-subexponential-a}.
Finally, write the matrix variance statistic as
\begin{align*}
    v
    &\coloneqq 
    \max
    \clr{
        \Norm{\sum_{i,j}  \E[E_{i,j}^2]\Norm{\m{T}_{j,\all }}{2}^2\m{Q}_{i,\all }\m{Q}_{i,\all }\T}{2}, 
        \Norm{\sum_{i,j}  \E[E_{i,j}^2]\Norm{\m{Q}_{i,\all }}{2}^2\m{T}_{j,\all }\m{T}_{j,\all }\T}{2}
    }
    \\
    &=
    \sigma^2 
    \max
    \clr{
        \Norm{\sum_{j}\Norm{\m{T}_{j,\all }}{2}^2\sum_i \m{Q}_{i,\all }\m{Q}_{i,\all }\T}{2}, \Norm{\sum_{i}\Norm{\m{Q}_{i,\all }}{2}^2\sum_j \m{T}_{j,\all }\m{T}_{j,\all }\T}{2}
    }
    \\
    &=
    \sigma^2
    \max
    \clr{
        \fnorm{\m{T}}^2 \tnorm{\m{Q}\T\m{Q}}, \fnorm{\m{Q}}^2   \tnorm{\m{T}\T\m{T}}
    } 
    \\
    &=
    \sigma^2 \max\{r_{1},r_{2}\}.
\end{align*}
Thus, the truncated matrix Bernstein inequality paired with \cref{\mainlabel{body-assumption:noise},\mainlabel{body-assumption:matrix-size}} results in
 \begin{align*}
    \tnorm{\m{Q}\T\m{ET}}
    &\leq
    \sqrt{16v\log(\max\{n,m\})} 
    +
    \frac{16}{3}L\log (\max\{n,m\})
    +
    n^{2}q_{1}
    \\
    &\lesssim
    \sigma\sqrt{\max\{r_{1},r_{2}\}\log{n}}
    +
    k \mu_{0}\sqrt{r_{1}r_{2}}
    \plr{
        \frac{\log^{2}n}{n}
    }
 \end{align*}
 with probability greater than $1-2(\max\{n,m\})^{-7} -nmq_0 = 1-O({n}^{-7})$.
\end{proof}

\section{Proofs of auxiliary lemmas for \cref{\mainlabel{body-proposition:saddle-point-tail-equivalence}}}
\label{supp-pf:saddle-point-approximation}

\subsection{Proof of \cref{supp-lemma:saddle-point}}
\label{supp-pf:saddle-point}

\begin{proof}
    From $K(t)=-\frac{1}{2}\sum_{j=1}^r \log(1- t\lambda_j/\lambda_{1} )$, we have $K'(t) = \frac{1}{2}\sum_{j=1}^r \frac{\lambda_j}{\lambda_{1}- \lambda_j t}$. The saddle point $\spt_{x}$ is the solution to the equation
    $K'(t)= x$. The function $K'(t)$ has poles at the points $1, \lambda_{1}/\lambda_{2},\dots, \lambda{1}/\lambda_{r}$ on the positive real line, the smallest of which is $1$.

    \noindent
    \textbf{(i)} ({For $x>0$, $K'(t)=x$ has a unique solution on $t<1$}). 
    Since $K''(t) = \frac{1}{2}\sum_{j=1}^r \frac{\lambda_j^2}{(\lambda_{1}- \lambda_j t)^2}>0$ for all $t<1$, the map $K':(-\infty,\lambda_{1})\to(0,\infty)$ is strictly increasing and hence injective.
    Furthermore, $K'$ is continuous with $\lim_{t\to-\infty} K'(t)= 0$, and $\lim_{t\to 1/\lambda_{1}}K'(t) = \infty$. 
    Hence, $K'$ is a surjective function by the intermediate value theorem.
    Consequently, $K'$ is a bijection, and \cref{supp-fig:saddle-point-equation} provides an illustration. Since $K'(0)=\E[\xp^2]$, it follows that $\widehat t>0$ when $x>\E[\xp^2]$. 
    Moreover, $K'$ is convex since $K^{(3)}(t) = \sum_{j=1}^r\frac{\lambda_j^3}{(\lambda_{1}- \lambda_j t)^3}>0$ for $t<1$ by \cref{\mainlabel{body-lemma:MGF}}.
     
     \noindent
     \textbf{(ii)} ({Expressing $\spt_{x}$ as a function of $x$}). Suppose that $x>\E[\xp^2]$. Since $\lambda_{1}=\lambda_{2} =\dots = \lambda_{\mul}$, the sum in $K'(t)$ can be decomposed into two parts:
    \begin{align}
    \label{supp-eq:K'-decomposition}
        K'(t)
        = 
        \frac{\mul}{2}
        \plr{
            \frac{1}
            {1- t} 
        }
        + 
        \frac{1}{2}
        \sum_{j=\mul +1}^r
        \frac{\lambda_j}{\lambda_{1}- \lambda_j t}.
    \end{align}  
    The first term on the right-hand side tends to infinity as $t\uparrow 1$, while the second term is bounded above by $\frac{(r-\mul)}{2} \frac{\lambda_2}{\lambda_{1}-\lambda_2}$ and below by $0$. Hence, the first term is the dominant term in \cref{supp-eq:K'-decomposition} as $t$ approaches $1$ from below. By multiplying $(1- \spt_{x})$ to both sides of the equation $K'(\spt_{x})=x$, we have
    \begin{align*}
        \frac{\mul}{2} 
        +
        \frac{(1- \spt_{x})}{2}
        \sum_{j=\mul + 1}^r \frac{\lambda_j}{\lambda_{1}-\lambda_j \spt_{x}} 
        = 
        x-x\spt_{x}.
    \end{align*}
    Organizing the terms and dividing both sides by $x$ results in
    \begin{align*}
        \spt_{x} 
        = 
        1 
        - 
        \frac{\mul}{2x}
        - 
        \frac{(1-\spt_{x})}{2x}
        \sum_{j=\mul + 1}^r 
        \frac{\lambda_j}{\lambda_{1} - \lambda_j \spt_{x}}.
    \end{align*}
    Now, plugging in the right-hand side of the equation above into each $\spt_{x}$ in the right-hand side yields
    \begin{align*}
        \spt_{x} 
        = 
        1 
        - 
        \frac{\mul}{2x} 
        - 
        \clr{
        \frac{\mul-  (1- \spt_{x}) 
        \sum_{j=\mul + 1}^r \frac{\lambda_j}{\lambda_{1}-\lambda_j \spt_{x}}}
        {4x^2}
        }
        \sum_{j=\mul + 1}^r \frac{\lambda_j}{\lambda_{1}-\lambda_j \spt_{x}}
        =
        1
        -
        \frac{\mul}{2x}
        +
        g(x),
    \end{align*}
    where 
    \begin{equation*}
        g(x)
        \coloneqq
        -
        \frac{1}{4x^2}
        \left\{
            \mul
            -
            (1- \spt_{x})
            \sum_{j=\mul + 1}^r \frac{\lambda_j}{\lambda_{1}-\lambda_j \spt_{x}}
        \right\}
        \sum_{j=\mul + 1}^r 
        \frac{\lambda_j}
        {\lambda_{1}-\lambda_j \spt_{x}}.
    \end{equation*}
    Since $\spt_{x} \in (0,1)$ when $x>\E[\xp^2]$, it holds that
    \begin{equation*}
        \left\{
            \mul
            -
            (1- \spt_{x})
            \sum_{j=\mul + 1}^r \frac{\lambda_j}{\lambda_{1}-\lambda_j \spt_{x}}
        \right\}
        \sum_{j=\mul + 1}^r 
        \frac{\lambda_j}
        {\lambda_{1}-\lambda_j \spt_{x}}
        =
        O(1)
    \end{equation*}
    as $x\to\infty$. It follows that 
    $g(x)=O(x^{-2})$ as $x\to\infty$. 
    
    In particular, if $r=1$, then the second term in the decomposition in \cref{supp-eq:K'-decomposition} vanishes, and we simply have $\spt_{x} = 1 -\frac{1}{2x}$.
\end{proof}

\begin{figure}[ht]
    \centering
        \includegraphics[width=7cm]{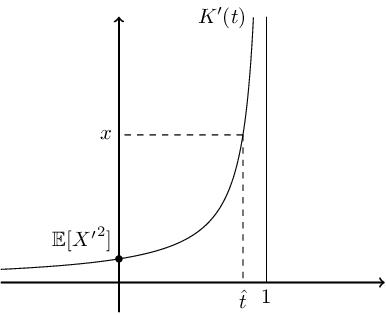}
    \caption{An illustration of the function $K'$. The saddle point $\spt_{x}$ is the value of $t$ at which $K'(t) = x$. The value of $\spt_{x}$ is uniquely defined on $t<1$, and approaches $1$ from below as $x$ grows to infinity.}
    \label{supp-fig:saddle-point-equation}
\end{figure}

\subsection{Proof of \cref{supp-lemma:saddle-point-approx-explicit}}
\label{supp-pf:saddle-point-approx-explicit}
\begin{proof}
    By \cref{\mainlabel{body-lemma:MGF}} and the fact that $M_{\xp^{2}}(t) = M_{X^2}(t/\lambda_{1}) $, we have
    \begin{align}
    \label{supp-eq:mgf-cgf-forms-r-gtr-1}
    M_{\xp^2}(t) 
    =
    \prod_{j=1}^r (1- t\lambda_j / \lambda_{1})^{-1/2}, 
    \quad 
    K_{\xp^2}(t) 
    =
    -\frac{1}{2}\sum_{j=1}^r\log(1- t \lambda_j / \lambda_{1}),
    \quad
    K_{\xp^2}''(t)
    =
    \frac{1}{2}\sum_{j=1}^r\frac{\lambda_j^2}{(\lambda_{1} - \lambda_j t)^2}.
    \end{align}
    Plugging in $\spt_{x} = 1 - \frac{\mul}{2x}+g(x)$ from \cref{supp-lemma:saddle-point} to \cref{supp-eq:mgf-cgf-forms-r-gtr-1} yields
    \begin{align*}
        M_{\xp^2}(\spt_{x})
        =
        \left[
        \frac{1}{2x}\{\mul-2xg(x)\} 
        \right]^{-\mul/2}
        \prod_{j=\mul + 1}^r (1-\spt_{x} \lambda_j / \lambda_{1} )^{-1/2},
    \end{align*}
    and
    \begin{align}
        K_{\xp^2}(\spt_{x})
        &=
        \log
        \left(
            \left[
                {2x\frac{x}{\{\mul x -  2x^2 g(x)\}}}
            \right]^{\mul / 2}
        \right)
        -
        \frac{1}{2}
        \sum_{j=\mul + 1}^r 
        \log(1-\spt_{x}\lambda_j/\lambda_{1})
        \nonumber\\
        &=
        \frac{\mul}{2}
        \log
        \left(
            2x
        \right)
        +\frac{\mul}{2}
        \log
        \left(
            \frac{1}{\mul}
            +\frac{2x^2g(x)}{\mul^{2} x - 2\mul x^2g(x)}
        \right)
        -\frac{1}{2}
        \sum_{j=\mul + 1}^r 
        \log(1- \spt_{x}\lambda_j/\lambda_{1}),
        \label{supp-eq:cgf-r-gtr-1}
    \end{align}
    and
    \begin{align}
        K_{\xp^2}''(\spt_{x}) 
        =
        \frac{\mul}{2}
        \frac{1}{\left\{\frac{\mul}{2x}-g(x)\right\}^2} 
        +
        \frac{1}{2}
        \sum_{j=\mul + 1}^r \frac{\lambda_j^2}{{(\lambda_{1}-\lambda_j \spt_{x})^2}}
        =
        \frac{2 x^2}{\mul \left\{1 - 2xg(x)/\mul\right\}^2}
        +
        \frac{1}{2}
        \sum_{j=\mul + 1}^r
        \frac{\lambda_j^2}{{(\lambda_{1}-\lambda_j \spt_{x})^2}}. 
        \label{supp-eq:cgf2-r-gtr-1}
    \end{align}
    By \cref{supp-eq:cgf-r-gtr-1,supp-eq:cgf2-r-gtr-1}, we have
    \begin{align*}
        \frac{\exp(K_{X^2}(\spt_{x})-\spt_{x}x)}
        {\sqrt{K''_{X^2}(\spt_{x})}} 
        &=
        \frac{
            \plr{2x}^{\mul/2}
            \left(
                \frac{1}{\mul}
                +
                \frac{2x^2 g(x)}{\mul^{2} x- 2\mul x^2 g(x)}
            \right)^{\mul/2}
            \plr{
                \prod_{j=\mul + 1}^r (1-\spt_{x}\lambda_j /\lambda_{1})^{-1/2}
            }
            \exp
            \left(-x+\frac{\mul}{2}-xg(x)\right)
        }
        {
            \sqrt{
                \frac{2x^2}{\mul\left\{1- 2xg(x)/\mul\right\}^2} 
                +
                \frac{1}{2}
                \sum_{j=\mul + 1}^r 
                \frac{\lambda_j^2}{{(\lambda_{1}-\lambda_j \spt_{x})^2}}
            }
        }
        \\
        &=
        \frac{
            2^{\frac{\mul-1}{2}}
            x^{\frac{\mul - 2}{2}}
            \left(
                1
                +
                \frac{2x^2 g(x)}{\mul x- 2 x^2 g(x)}
            \right)^{\mul/2}
            \plr{
                \prod_{j=\mul + 1}^r (1- \spt_{x}\lambda_j /\lambda_{1})^{-1/2}
            }
            \exp
            \left(-x+\frac{\mul}{2}-xg(x)\right)
        }
        {
            \mul^{\frac{\mul-1}{2}}
            \sqrt{
                \frac{1}{\left\{1- 2xg(x)/\mul\right\}^2} 
                +
                \frac{\mul}{4x^{2}}
                \sum_{j=\mul + 1}^r 
                \frac{\lambda_j^2}{{(1-\lambda_j \spt_{x})^2}}
            }
        },
    \end{align*}
    as desired. This completes the proof of \cref{supp-lemma:saddle-point-approx-explicit}.
\end{proof}

\subsection{Towards proving \cref{supp-lemma:fW-convergence}}
\label{supp-pf:fW-convergence}

\subsubsection{Technical lemmas}
It will be seen shortly that the random variable $W_{x}$ is distributed as a sum of $r$ non-identical gamma random variables with shared shape parameter $1/2$. 
Although the density of a gamma random variable with shape parameter $1/2$ has a singularity at zero, \cref{supp-lemma:gamma-Lipschitz} shows that it is still bounded and Lipschitz on a closed interval bounded away from zero. Crucially, the upper bound and Lipschitz constant only depends on the left end point of the interval, and does not depend on the scale parameter.

\begin{lemma}[Bounded and Lipschitz family of gamma densities on a closed interval]
\label{supp-lemma:gamma-Lipschitz}
    Let $g \sim \operatorname{Gamma}(1/2, \theta)$ where $\theta >0$. For any interval $[p_{1}, p_{2}]$ such that $0<p_{1}<p_{2} <\infty$, there exist finite positive constants $C_1(p_{1})$ and $C_1(p_{1})$ only depending on the left end point of the interval such that
    \begin{align*}
        f_{g}(x;\theta) \leq C_{1}(p_{1})
        \qquad
        \text{and}
        \qquad
        |f'_{g}(x;\theta)| \leq C_{2}(p_{1})
    \end{align*}
    for all $\theta>0$ and $x\in[p_{1},p_{2}]$.
\end{lemma}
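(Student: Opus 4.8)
The plan is to write the $\operatorname{Gamma}(1/2,\theta)$ density explicitly as $f_{g}(x;\theta) = \frac{1}{\Gamma(1/2)\,\theta^{1/2}}\,x^{-1/2}\,e^{-x/\theta}$ for $x>0$, and then obtain both bounds by a single change of variables that eliminates the dependence on $\theta$. On the interval $[p_{1},p_{2}]$ every negative power $x^{-\beta}$ is bounded above by $p_{1}^{-\beta}$, so the only substantive task is to bound the $\theta$-dependent factors $\theta^{-1/2}e^{-x/\theta}$ and $\theta^{-3/2}e^{-x/\theta}$ uniformly over $\theta>0$.

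First I would handle $f_{g}$ itself. Substituting $u=x/\theta>0$ gives $\theta^{-1/2}=u^{1/2}x^{-1/2}$, hence $\theta^{-1/2}e^{-x/\theta}=x^{-1/2}\,u^{1/2}e^{-u}\le x^{-1/2}\sup_{u\ge 0}u^{1/2}e^{-u}=x^{-1/2}(2e)^{-1/2}$, where the supremum is attained at $u=1/2$. Combining with $x^{-1/2}\le p_{1}^{-1/2}$ yields $f_{g}(x;\theta)\le \frac{1}{\Gamma(1/2)\sqrt{2e}}\,p_{1}^{-1}$, so one may take $C_{1}(p_{1})=\bigl(\Gamma(1/2)\sqrt{2e}\,p_{1}\bigr)^{-1}$, which depends only on $p_{1}$.

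Next I would differentiate, obtaining $f_{g}'(x;\theta)=-\frac{e^{-x/\theta}}{\Gamma(1/2)\,\theta^{1/2}}\bigl(\tfrac12 x^{-3/2}+\theta^{-1}x^{-1/2}\bigr)$, so that $|f_{g}'(x;\theta)|\le \frac{1}{\Gamma(1/2)}\bigl(\tfrac12 x^{-3/2}\,\theta^{-1/2}e^{-x/\theta}+x^{-1/2}\,\theta^{-3/2}e^{-x/\theta}\bigr)$. The first summand is controlled by the estimate already derived for $\theta^{-1/2}e^{-x/\theta}$ together with $x^{-3/2}\le p_{1}^{-3/2}$. For the second, the same substitution $u=x/\theta$ gives $\theta^{-3/2}e^{-x/\theta}=x^{-3/2}\,u^{3/2}e^{-u}\le x^{-3/2}\sup_{u\ge 0}u^{3/2}e^{-u}=x^{-3/2}(3/2)^{3/2}e^{-3/2}$, and then $x^{-1/2}\cdot x^{-3/2}\le p_{1}^{-2}$. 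Collecting the two contributions gives $|f_{g}'(x;\theta)|\le C_{2}(p_{1})$ with $C_{2}(p_{1})=\frac{1}{\Gamma(1/2)}\bigl(\tfrac12 (2e)^{-1/2}+(3/2)^{3/2}e^{-3/2}\bigr)p_{1}^{-2}$, again a function of $p_{1}$ alone, which completes the argument.

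There is no genuine obstacle: the only point requiring care is the uniformity in $\theta$, which is precisely what the substitution $u=x/\theta$ handles, since it turns each $\theta^{-\beta}e^{-x/\theta}$ into $x^{-\beta}$ times the universally bounded function $u\mapsto u^{\beta}e^{-u}$. The upper endpoint $p_{2}$ enters only through ensuring $[p_{1},p_{2}]$ is compact and plays no role in the constants.
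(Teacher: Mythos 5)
Your argument is correct and mathematically identical to the paper's: both bound $f_{g}$ and $|f_{g}'|$ by maximizing the $\theta$-dependent factors $\theta^{-\beta}e^{-x/\theta}$ pointwise in $\theta$ for each fixed $x\in[p_{1},p_{2}]$, and your substitution $u=x/\theta$ is just a reparametrization of the paper's direct calculus step (your critical points $u=1/2$ and $u=3/2$ correspond exactly to the paper's maximizers $\theta=2x$ and $\theta=2x/3$). Your constants $C_{1}(p_{1})$ and $C_{2}(p_{1})$ agree numerically with those in the paper, so the two proofs coincide up to presentation.
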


\begin{proof}
    Applying the chain rule to the $\operatorname{Gamma}(1/2, \theta)$ density 
    \begin{align}
        f_{g}(x;\theta) 
        =
        \frac{1}
        {\Gamma(1/2)\theta^{1/2}}
        x^{-1/2}
        \exp(-x/\theta)
        \label{supp-eq:gamma-pdf}
    \end{align}
    yields
    \begin{align}
        f'_{g}(x;\theta)
        = 
        -
        \frac{f_{g}(x;\theta)}{2x}
        -
        \frac{f_{g}(x;\theta)}{\theta}.
        \label{supp-eq:gamma-derivative}
    \end{align}
    Differentiating $f_{g}(x;\theta)$ and $f_{g}(x;\theta)/\theta$ over $\theta>0$, respectively, and solving for zero and checking the second derivative yields that $f_{g}(x;\theta)$ and $f_{g}(x;\theta)/\theta$ are maximized at $\theta = 2x$ and $\theta = 2x/3$ respectively. Hence, plugging in the maximizers into \cref{supp-eq:gamma-pdf,supp-eq:gamma-derivative} and using the fact that $x\geq p_{1}$ yields
    \begin{align*}
        f_{g}(x;\theta)
        \leq
        \frac{\exp(-1/2)}{\Gamma(1/2)\sqrt{2}x}
        \leq \frac{\exp(-1/2)}{\Gamma(1/2)\sqrt{2}p_{1}} 
        \eqqcolon C_{1}(p_{1})
    \end{align*}
    and
    \begin{align*}
        |f'_{g}(x;\theta)|
        \leq 
        \alr{
            -
            \frac{f_{g}(x;\theta)}{2x}
        }
        +
        \alr{
            \frac{f_{g}(x;\theta)}{\theta}
        }
        &\leq
        \frac{\exp(-1/2)}{\Gamma(1/2)2^{3/2}x^2}
        +
        \frac{3^{3/2}\exp(-3/2)}{\Gamma(1/2)2^{3/2}x^2}
        \\
        &\leq
        \frac{\exp(-1/2)+3^{3/2}\exp(-3/2)}
        {\Gamma(1/2)2^{3/2}p_{1}^2}
        \\
        &\eqqcolon C_{2}(p_{1}),
    \end{align*}
    for all $x\in [p_{1},p_{2}]$ and $\theta > 0$.
    This completes the proof of \cref{supp-lemma:gamma-Lipschitz}.
\end{proof}

\cref{supp-lemma:Fourier-inversion-formula} provides a tool for expressing probability density functions in terms of their characteristic functions. Importantly, a generalization is made to incorporate the characteristic function of $W_{x}$, which is not absolutely integrable in general.

\begin{lemma}[Fourier inversion formula for distributions with Lipschitz densities]
\label{supp-lemma:Fourier-inversion-formula}
    Let $F(z)$ be a CDF with absolutely continuous probability density function $f(z)$ which is Lipschitz continuous on a closed interval $[a, b]$. Let $\varphi(s)$ denote the characteristic function associated with $F$. Then,
    \begin{align*}
        f(z) 
        =
        \lim_{T\to\infty}
        \frac{1}{2\pi}
        \int_{-T}^{T}
        \exp(-isz) \varphi(s) 
        \dd s
    \end{align*}
    for $z\in (a, b)$.
\end{lemma}

\begin{proof}
    Define the integral
    \begin{align*}
        I_{T}(z)
        \coloneqq 
        \frac{1}{2\pi}
        \int_{-T}^{T}
        \exp(-isz)
        \varphi(s)
        \dd s.
    \end{align*}
    Plugging in $\varphi(s) = \int_{-\infty}^{\infty}\exp(isy)f(y)\dd y$ to the above yields
    \begin{align}
        I_{T}(z)
        &=
        \frac{1}{2\pi}
        \int_{-T}^{T}
        \exp(-isz)
            \int_{-\infty}^{\infty}
            \exp(isy)f(y)
            \dd y
        \dd s
        \nonumber
        \\
        &=
        \frac{1}{2\pi}
        \int_{-\infty}^{\infty}
        f(y)
            \int_{-T}^{T}
            \exp(-is(z-y))
            \dd s
        \dd y
        \nonumber
        \\
        &=
        \frac{1}{\pi}
        \int_{-\infty}^{\infty}
        f(y)
        \clr{
            \frac{\exp(iT(z-y)) - \exp(-iT(z-y))}
            {2i(z-y)}
        }
        \dd y
        \nonumber
        \\
        &=
        \frac{1}{\pi}
        \int_{-\infty}^{\infty}
        f(y)
        \clr{
            \frac{\sin(T(z-y))}{z-y}
        }
        \dd y
        \label{supp-eq:inversion-integral-T}
    \end{align}
    On the other hand, using the Dirichlet integral identity $\int_{-\infty}^{\infty} \frac{\sin(T(z-y))}{(z-y)} \dd y = \pi$, the density function $f(z)$ can be expressed as 
    \begin{align}
        f(z)
        =
        \frac{1}{\pi}
        \int_{-\infty}^{\infty}
        f(z)
        \clr{
            \frac{\sin(T(z-y))}{z-y}
        }
        \dd y.
        \label{supp-eq:Dirichlet-integral-form}
    \end{align}
    By \cref{supp-eq:inversion-integral-T,supp-eq:Dirichlet-integral-form} it follows that
    \begin{align}
        f(z) - I_{T}(z)
        &= 
        \frac{1}{\pi}
        \int_{-\infty}^{\infty}
        \clr{
            \frac{f(z) - f(y)}{z-y}
        }
        \sin(T(z-y))
        \dd y
        \nonumber
        \\
        &=
        \frac{1}{\pi}
        \int_{a}^{b}
        \clr{
            \frac{f(z) - f(y)}{z-y}
        }
        \sin(T(z-y))
        \dd y
        \nonumber
        \\
        &\qquad
        +
        \frac{1}{\pi}
        \int_{\R\setminus[a,b]}
        \frac{f(z)}{z-y}
        \sin(T(z-y))
        \dd y
        -
        \frac{1}{\pi}
        \int_{\R\setminus[a,b]}
        \frac{f(y)}{z-y}
        \sin(T(z-y))
        \dd y.
        \label{supp-eq:integral-error}
    \end{align}
    We show that each of the three integrals in \cref{supp-eq:integral-error} converges to zero as $T\to\infty$. 
    First, since $f$ is Lipschitz on $[a,b]$ by definition, the function 
    $
        h_{1}(y)
        \coloneqq
        \Is{[a,b]}
        \clr{f(z) - f(y)}/\plr{z-y}
    $ 
    is bounded, and moreover, supported on a finite interval, hence absolutely integrable.
    Viewing the first integral as the imaginary part of the Fourier transformation of 
    $h_{1}(x)$, the Riemann-Lebesgue lemma yields
    \begin{align*}
        \lim_{T\to\infty}
        \frac{1}{\pi}
        \int_{-\infty}^{\infty}
        h_{1}(y)
        \sin(T(z-y))
        \dd y
        =
        0.
    \end{align*}
    
    The second integral can be written as
    \begin{align*}
        \frac{f(z)}{\pi}
        \int_{\R\setminus[a,b]}
        \frac{\sin(T(z-y))}{z-y}
        \dd y
        = 
        \frac{f(z)}{\pi}
        \int_{T(z-b)}^{\infty}
        \frac{\sin(u)}{u}
        \dd u
        + 
        \frac{f(z)}{\pi}
        \int^{-T(a-z)}_{-\infty}
        \frac{\sin(u)}{u}
        \dd u
        \to
        0
    \end{align*}
    for $z\in (a,b)$ as $T\to\infty$, since $\int_{-\infty}^{\infty} \frac{\sin(u)}{u} \dd u = \pi$ is a finite integral.

    In the third integral, the function $h_{2}(y)\coloneqq \I{(-\infty,b)\cup (a,\infty)} f(y)/(z-y)$ is bounded and absolutely integrable, since the denominator $z-y$ is bounded away from zero, and the probability density $f(y)$ is absolutely integrable. 
    Therefore, viewing the third integral as the imaginary part of the Fourier transform of $h_{2}(y)$, it vanishes as $T\to\infty$ by the Riemann-Lebesgue lemma.
    
    Combining the above results yields
    $
       \lim_{T\to \infty} \clr{f(z) - I_{T}(z)}
       = 
       0
    $ for $z \in (a,b)$, 
    which completes the proof of \cref{supp-lemma:Fourier-inversion-formula}.
\end{proof}

\subsubsection{Proof of \cref{supp-lemma:fW-convergence}}
    \begin{proof} 
    First, applying \cref{supp-eq:cgf2-r-gtr-1} to the definition of $w_{x}(x)$ in \cref{supp-eq:density-change-of-variables} yields
    \begin{align}
        w_{x}(x)
        \coloneqq
        \frac{x}{\sqrt{K_{\xp^2}''(\spt_{x})}} 
        &=
        \left[
            \frac{2}{\mul\left\{1- 2xg(x)/\mul\right\}^2}
            +
            \frac{1}{2x^2}
            \sum_{j=\mul + 1}^r 
            \frac{\lambda_j^2}{{(\lambda_{1}-\lambda_j \spt_{x})^2}}
        \right]^{-1/2}
        \nonumber \\
        &=
        \frac{\sqrt{\mul}\{1-2xg(x)/\mul\}}{\sqrt{2}}
        \left[
            {1 +\frac{\{1-2xg(x)/\mul\}^2}{4x^2}
            \sum_{j=\mul + 1}^r \frac{\lambda_j^2}{{(\lambda_{1}-\lambda_j \spt_{x})^2}}}
        \right]^{-1/2}
        \nonumber
        \\
        &= 
        \frac{\sqrt{\mul}\{1-2xg(x)/\mul \}}
        {\sqrt{2}\{1+q(x)\}^{1/2}}
        \label{supp-eq:w-reciprocal}
        \\
        &=
        \sqrt{\frac{\mul}{2}}
        \clr{1 + O(x^{-1})}
        ,
        \label{supp-eq:w-reciprocal-simplified}
    \end{align}
    where we have used the fact that $q(x) \coloneqq \frac{\{1-2xg(x)/\mul\}^2}{4x^2}\sum_{j=\mul + 1}^r \frac{\lambda_j^2}{{(\lambda_{1}-\lambda_j \spt_{x})^2}}$ satisfies $q(x)=O(x^{-2})$, since $g(x)=O(x^{-2})$ from \cref{supp-lemma:saddle-point}. Next, the explicit form of the characteristic function of $W_x$ is obtained via \cref{supp-eq:transformed-cf}. By \cref{supp-eq:mgf-cgf-forms-r-gtr-1}, it holds that
    \begin{align*}
        M_{\xp^2}
        \left(
        {\spt_{x}}+\frac{\ir s}{\sqrt{K_{\xp^2}''(\spt_{x})}}
        \right) 
        =
        \left[
            \frac{1}{2x}\{\mul-2xg(x)\}
            -
            \frac{\ir s}{\sqrt{K_{\xp^2}''(\spt_{x})}} 
        \right]^{-\mul /2}
        \prod_{j=\mul + 1}^r
        \left(
            1-\frac{\lambda_j}{\lambda_{1}} \spt_{x}-\frac{\ir\lambda_j s}{\lambda_{1}\sqrt{K_{X^2}''(\spt_{x})}}
        \right)^{-1/2}.
    \end{align*}
    Hence, by \cref{supp-eq:transformed-cf,supp-eq:w-reciprocal} it follows that
    \begin{align}
        \varphi_{W_x}(s)
        &=
        \frac{
            M_{\xp^2}
            \left(
                {\spt_{x}}+\frac{\ir s}{\sqrt{K_{\xp^2}''(\spt_{x})}}
            \right)
        }
        {
        M_{\xp^2}(\spt_{x})
        }
        \nonumber\\
        &=
        \left[
            1 
            -
            \frac{2\ir sx}{\sqrt{K_{\xp^2}''(\spt_{x})}}
            \frac{1}{\{\mul - 2xg(x)\}}
        \right]^{-\mul/2}
        \frac{
        \prod_{j=\mul + 1}^r
        \left(
            1-\frac{\lambda_j}{\lambda_{1}} \spt_{x}-\frac{\ir \lambda_j s}{\lambda_{1}\sqrt{K_{\xp^2}''(\spt_{x})}}
        \right)^{-1/2}
        }
        {
        \prod_{j=\mul + 1}^r 
        \plr{1-\frac{\lambda_j}{\lambda_{1}} \spt_{x}}^{-1/2}
        }
        \nonumber \\
        &= 
        \blr{
            1 - \frac{\sqrt{2}\ir s}{\sqrt{\mul}\{1+q(x)\}^{1/2}}
        }^{-\mul/2}
        \prod_{j=\mul + 1}^{r}
        \left(
            1
            -
            \frac{\ir \lambda_j s}
            {(\lambda_{1}- \lambda_j \spt_{x}) 
            \sqrt{K_{\xp^2}''(\spt_{x})}}
        \right)^{-1/2}.
        \label{supp-eq:cf-W}
    \end{align}
    Furthermore, since $q(x)= O(x^{-2})$, $\sqrt{K''_{X^2}(\spt_{x})}= O(x)$, and ${\lambda_j s}/{(\lambda_{1}-\lambda_j \spt_{x})} = O(1)$ for any fixed $s\in\R$ and $j= \mul +1,\dots, r$ as $x$ grows, \cref{supp-eq:cf-W} satisfies
    \begin{align}
        \varphi_{W_x}(s)
        =
        \left(1 - {\ir s\sqrt{2/\mul}}\right)^{-\mul/2}
        (1 + O(x^{-1})) 
        =
        \varphi_{W}(s)
        (1 + O(x^{-1}))
        \label{supp-eq:cf-delta}
    \end{align} 
    for all $s\in\R$, where $W$ is a limiting random variable distributed as $\operatorname{Gamma}(\mul/2, \sqrt{2/\mul})$.
    
    Shifting the attention to density functions, note that the form of $\varphi_{W_{x}}(s)$ in \cref{supp-eq:cf-W} implies $W_{x} = \sum_{j=1}^r{\omega}_{j,x}$, where the random variables $\omega_{1,x},\dots, \omega_{r,x}$ are independently distributed as 
    \begin{align*}
        \omega_{j,x} 
        \sim
        \begin{cases}
            \operatorname{Gamma}
            \left(
                \frac{1}{2}
                ,
                \frac{\sqrt{2}}{\sqrt{\mul}\{1+q(x)\}^{1/2}}
            \right)
            & j=1,\dots, \mul
            \\
            \operatorname{Gamma}
            \left(
                \frac{1}{2}
                ,
                \frac{\lambda_j}{(1-\lambda_j\spt_{x})\sqrt{K''_{X^2}(\spt_{x})}}
            \right) 
            & j=\mul + 1,\dots,r.
        \end{cases} 
    \end{align*}
    Since a finite convolution of absolutely continuous density functions is absolutely continuous, the random variable $W_{x}$ has an absolutely continuous density function $f_{W_{x}}$. 
    Furthermore, define $S_{x} \coloneqq \sum_{j=2}^r{\omega}_{j,x}$, and the interval $[p_{1}, p_{2}]$, where $0 < p_{1} < \sqrt{2/\mul} < p_{2} < \infty$.
    Then, $f_{W_{x}} = f_{\omega_{1,x}} * f_{S_{x}}$, where $*$ denotes the convolution operation defined by 
    \begin{align}
        (f_{\omega_{1,x}}*f_{S_{x}})(s)
        \coloneqq
        \int_{-\infty}^{\infty} f_{\omega_{1,x}}(t)f_{S_{x}}(s-t)\dd t. 
        \label{supp-eq:convolution}
    \end{align}
    By \cref{supp-lemma:gamma-Lipschitz}, it holds that
    \begin{align*}
        f_{W_{x}}(z) 
        =
        (f_{\omega_{1,x}} * f_{S_{x}}) (z)
        =
        \int_{-\infty}^{\infty} f_{\omega_{1,x}}(t)f_{S_{x}}(z-t)\dd t
        \leq
        C_{1}(p_{1})\int_{-\infty}^{\infty} f_{S_{x}}(z-t)\dd t
        =
        C_{1}(p_{1}),
    \end{align*}
    for all $z\in \R_{\geq 0}$ and $x \in \R$ since $C_{1}(p_{1})$ does not depend on $x$.
    
    In addition, since both functions in the convolution in \cref{supp-eq:convolution} are density functions that integrate to one over $\R$, by the differentiation property of convolutions along with \cref{supp-lemma:gamma-Lipschitz} and Jensen's inequality, it holds that
    \begin{align*}
        |f'_{W_{x}}(z)|
        =
        |(f'_{\omega_{1,x}}*f_{S_{x}})(z)|
        =
        \alr{
            \int_{-\infty}^{\infty} f'_{\omega_{1,x}}(t)f_{S_x}(z-t)\dd t
        }
        \leq
        C_{2}(p_{1})
        \int_{-\infty}^{\infty} f_{S_x}(z-t)\dd t
        = C_{2}(p_{1})
    \end{align*}
    for $z\in[p_{1}, p_{2}]$.

    Since $f_{W_{x}}$ and $f_{W}$ are absolutely continuous, bounded, and Lipschitz on the interval $[p_{1}, p_{2}]$, with the upper bound and Lipschitz constant independent of $x$, \cref{supp-lemma:Fourier-inversion-formula} yields
    \begin{align*}
        f_{W_{x}}(z) = \frac{1}{2\pi}\lim_{T\to\infty}\int_{-T}^{T}\exp(-isz)\varphi_{W_{x}}(s) \dd s
        ,
        \quad
        \text{ and }
        \quad
        f_{W}(z) = \frac{1}{2\pi}\lim_{T\to\infty}\int_{-T}^{T}\exp(-isz)\varphi_{W}(s) \dd s
    \end{align*}
    for $z\in (p_{1}, p_{2})$. 
    Thus, by \cref{supp-eq:cf-delta}, there exists a constant $C>0$ such that
    \begin{align}
        f_{W_{x}}(z) - f_{W}(z)
        &= 
        \lim_{T\to\infty}
        \frac{1}{2\pi}
        \int_{-T}^{T}
        \exp(-isz)
        \clr{
            \varphi_{W_{x}}(s) - \varphi_{W}(s)
        }
        \dd s
        \nonumber
        \\
        &=
        \lim_{T\to\infty}
        \frac{1}{2\pi}
        \int_{-T}^{T}
        \exp(-isz)
        \varphi_{W}(s)
        \frac{C}{x}
        \dd s
        \nonumber
        \\
        &=
        \frac{C}{x}
        f_{W}(z)
        \label{supp-eq:pdf-converge-rate-pointwise}
    \end{align}
    for $z\in (p_{1}, p_{2})$.
    By \cref{supp-eq:w-reciprocal,supp-eq:pdf-converge-rate-pointwise}, there exists a constant $C_{0}>0$ such that for $x$ large enough, 
    \begin{align}
        f_{W_{x}}(w_{x}(x)) - f_{W}(\sqrt{\mul/2}) 
        =
        \clr{
            f_{W_{x}}(w_{x}(x)) - f_{W}(w_{x}(x))
        }
        +
        \clr{
            f_{W}(w_{x}(x)) - f_{W}(\sqrt{\mul/2})
        }
        \leq
        \frac{C_{0}}{x},
        \label{supp-eq:pdf-convergence-rate-final}
    \end{align}
    where we have used the fact $w_{x}(x) \in (p_{1}, p_{2})$ for $x$ large enough, and that $|f_{W}(w_{x}(x)) - f_{W}(\sqrt{\mul/2})|\leq C_{1} |w_{x}(x)- \sqrt{\mul/2}| = O(x^{-1})$ for some constant $C_{1}$, since the $\operatorname{Gamma}(\mul/2, \sqrt{2/\mul})$ density $f_{W}$ is Lipschitz on $[p_{1},p_{2}]$.
    
    By scrutinizing the $\operatorname{Gamma}(\mul/2, \sqrt{2/\mul})$ density, it holds that
    $f_{W }
        \left(
            \sqrt{\mul / 2}
        \right)
        =
        \frac{
            (\mul/2)^{\frac{\mul-1}{2}}
            \exp(-\mul/2)
        }
        {
        \Gamma(\mul/2)
        }
    $, which alongside \cref{supp-eq:pdf-convergence-rate-final}, implies that 
    \begin{align*}
        f_{W_{x}}(w_{x}(x)) 
        =
        \clr{
            \frac{
            (\mul/2)^{\frac{\mul-1}{2}}
            \exp(-\mul/2)
            }
            {
            \Gamma(\mul/2)
            }
        }
        \clr{
            1 + O(x^{-1})   
        }
    \end{align*}
    for $x$ large enough.
    This completes the proof of \cref{supp-lemma:fW-convergence}.
\end{proof}

\section{Proofs of auxiliary lemmas for \cref{\mainlabel{body-theorem:Gumbel-convergence}}}

\subsection{Proof of \cref{supp-lemma:survival-function}}
\label{supp-pf:survival-function}

\begin{proof}
    First, suppose $x >0$. By integration by parts, $\overline{F}_{H}(x)$ has the form
    \begin{align}
        \overline{F}_{H}(x) 
        &=
        \int_{x}^{\infty}\frac{2}{\lambda_{1}^{\mul/2}\Gamma(\mul/2)}u^{\mul-1}\exp(-u^{2}/\lambda_{1}) \dd u
        \nonumber\\
        &=
        \frac{2}{\lambda_{1}^{\mul/2}\Gamma(\mul/2)}
        \clr{\frac{\lambda_{1}}{2} x^{\mul-2} \exp(-x^2/\lambda_{1}) 
        +
        \frac{\lambda_{1}(\mul-2)}{2} \int_{x}^{\infty} u^{\mul-3}\exp(-u^{2} /\lambda_{1})\dd u}.
        \label{supp-eq:IBP-1}
    \end{align}
    Using the fact that $u\geq x$ in the integral in \cref{supp-eq:IBP-1}, and applying integration by parts once more yields
    \begin{align}
        \int_{x}^{\infty} u^{\mul-3}\exp(-u^{2}/\lambda_{1}) \dd u
        &\leq
        \frac{1}{x^{2}}\int_{x}^{\infty} u^{\mul-1}\exp(-u^{2}/\lambda_{1})\dd u
        \nonumber\\
        &= \frac{1}{x^2} \clr{\frac{\lambda_{1}}{2}x^{\mul-2} \exp(-x^2/\lambda_{1}) +\frac{\lambda_{1}(\mul-2)}{2} \int_{x}^{\infty} u^{\mul-3} \exp(-u^2/\lambda_{1})\dd u}.
        \nonumber 
    \end{align}
    Combining the integrals yields
    \begin{align*}
        \clr{1-\frac{\lambda_{1}(\mul-2)}{2x^{2}}} \int_{x}^{\infty} u^{\mul-3}\exp(-u^{2}/\lambda_{1}) \dd u
        \leq
        \frac{1}{x^2} \clr{\frac{\lambda_{1}}{2}x^{\mul-2} \exp(-x^2/\lambda_{1})},
    \end{align*}
    that is,
    \begin{align*}
        \frac{\int_{x}^{\infty} u^{\mul-3}\exp(-u^{2}/\lambda_{1}) \dd u}
        {\frac{\lambda_{1}}{2}x^{\mul-2} \exp(-x^2/\lambda_{1})}
        \leq
        \frac{1}{x^2 - \frac{\lambda_{1} (\mul -2)}{2}}.
    \end{align*}
    Therefore, taking out the factor $\frac{\lambda_{1}}{2}x^{\mul-2} \exp(-x^2/\lambda_{1})$ in \cref{supp-eq:IBP-1} yields
    \begin{align}
        \overline{F}_{H}(x) 
        &=
        \frac{\lambda_{1}^{1-\mul/2}}{\Gamma(\mul/2)}x^{\mul-2}\exp(-x^2/\lambda_{1})(1 + r(x)),
        \label{supp-eq:survival-function}
    \end{align}
    where 
    \begin{align*}
        r(x) 
        \coloneqq
        \frac{\lambda_{1}(\mul - 2)}{2}
        \frac{\int_{x}^{\infty} u^{\mul-3}\exp(-u^{2}/\lambda_{1}) \dd u}
        {\frac{\lambda_{1}}{2}x^{\mul-2} \exp(-x^2/\lambda_{1})}
        \in
        \left(
            0,
            \;
            \frac{\lambda_{1}(\mul -2)}{2x^2 - \lambda_{1}(\mul -2)}
        \right].        
    \end{align*}
    If $x\leq 0$, then $\overline{F}_{H}(x) = 1$, since $H$ is non-negative with probability one.
\end{proof}

\subsection{Proof of \cref{supp-lemma:generalized-gamma-normalizing-constants}}
\label{supp-pf:generalized-gamma-normalizing-constants}

\begin{proof}
    As discussed in \cref{\mainlabel{body-remark:scaled-chi-properties}}, the generalized gamma distribution has a scaling property with respect to the scale parameter, namely, $H \eqd \sqrt{\lambda_{1}} H_{1}$. Thus, for i.i.d.~random variables $(H_{1,i})_{i\in\dpar{n}}$ equal in distribution to $H_{1}$ and i.i.d.~random variables $(H_{i})_{i\in\dpar{n}}$ equal in distribution to $H$, it holds that
    \begin{align}
        \alpha_{n}\inv\plr{\max_{i\in\dpar{n}}H_{1,i} -\beta_{n}}
        \eqd
        \plr{\sqrt{\lambda_{1}}\alpha_{n}}\inv\plr{\max_{i\in\dpar{n}}H_{i} -\sqrt{\lambda_{1}}\beta_{n}}.
        \label{supp-eq:generalized-gamma-scaling-property}
    \end{align}
    Thus, it suffices to first derive the normalizing sequences for $H_{1}$, and then to scale the sequences by $\sqrt{\lambda_{1}}$ to obtain the normalizing sequences for $H$.
    
    \noindent
    \textbf{(i)} (Gumbel domain of attraction). 
    The maximum domain of attraction of $H_{1}$ (more generally, of a continuous random variable a with a differentiable probability density function) is closely related to the ratio 
    \begin{align}
        \frac{\overline{F}_{H_{1}}(x)}{f_{H_{1}}(x)} 
        =
        \frac{\frac{1}{\Gamma(\mul/2)}x^{\mul-2}\exp(-x^2)(1 + r(x))}{\frac{2}{\Gamma(\mul/2)}x^{\mul-1}\exp(-x^{2})}
        =
        \frac{1}{2x}(1 + r(x)),
        \label{supp-eq:auxiliary-function}
    \end{align}
    which is due to \cref{supp-lemma:survival-function} with $\lambda_{1}$ replaced with $1$.
    The derivative with respect to $x$ of this ratio is
    \begin{align*}
       \plr{\frac{\overline{F}_{H_{1}}(x)}{f_{H_{1}}(x)}}' 
        = 
        -\frac{1}{2x^{2}} + O(x^{-4})\to 0, 
        \; \text{ as } \; x\to\infty,
    \end{align*}
    hence, by \citet[Theorem 1.1.8]{de_haan_extreme_2010}, $H_{1}$ is in the maximum domain attraction of the Gumbel distribution.

    \noindent
    \textbf{(ii)} (Normalizing sequences). By \citet[Theorem 1.1.8]{de_haan_extreme_2010}, the normalizing sequences for $\max_{i\in\dpar{n}}H_{1,i}$ are obtained by solving the equation
    \begin{align}
        n\overline{F}_{H_{1}}(b_{n,*}) 
        =
        1
        \label{supp-eq:quantile-equation}
    \end{align}
    and then computing $a_{n,*} = {\overline{F}_{H_{1}}(b_{n,*})}/{f_{H_{1}}(b_{n,*})}$.
    The exact solution of \cref{supp-eq:quantile-equation} is difficult to solve analytically, hence we aim to find an approximate solution $b_{n}$ that satisfies $(b_{n}-b_{n,*})/a_{n,*}\to 0$ as $n\to\infty$.
    First, the equation $n\overline{F}_{H_{1}}(b_{n,*}) =1$ can be written as
    \begin{align*}
        &b_{n,*}^{p-2}\exp(-b_{n,*}^{2})(1+r(b_{n,*})) 
        = 
        n^{-1}\Gamma(\mul/2).
    \end{align*}
    Taking logarithms on each side yields
    \begin{align}
        b_{n,*}^{2}- \frac{\mul-2}{2}\log b_{n,*}^{2} 
        =
        \log n -\log\Gamma(\mul/2) + \log(1 + r(b_{n,*})).
        \label{supp-eq:quantile-equation-log}
    \end{align}
    For convenience of notation, let the difference of the first two terms on the right-hand side of \cref{supp-eq:quantile-equation-log} be denoted as $L_{n} \coloneqq \log n -\log\Gamma(\mul/2)$. Replacing $\log b_{n,*}^{2}$ on the left-hand side with $\log L_{n}$, and dropping the term $\log(1-r(b_{n,*}))$ on the right hand side yields an approximation of $b_{n,*}$ given by
    \begin{align}
        \widehat{b}_{n}^{2} 
        =
        L_{n} + \frac{\mul-2}{2}\log L_{n}.
        \label{supp-eq:quantile-equation-solution}
    \end{align}
    Taking square roots yields
    \begin{align}
        \widehat{b}_{n} 
        &=
        \plr{L_{n} + \frac{\mul-2}{2}\log L_{n}}^{1/2}
        \nonumber\\
        &=
        \sqrt{L_{n}}\plr{1+\frac{\mul-2}{2}\frac{\log L_{n}}{L_{n}}}^{1/2}
        \nonumber\\
        &=
        \sqrt{L_{n}} + \frac{\mul-2}{4} \frac{\log L_{n}}{\sqrt{L_{n}}} + O\plr{\frac{\log^2 L_{n}}{L_{n}^{3/2}}}
        \nonumber\\
        &= 
        \sqrt{\log n} 
        - \frac{\log\Gamma(\mul/2)}{2\sqrt{\log n}} 
        + \frac{(\mul-2)\log\log n}{4\sqrt{\log n}}
        + O\plr{\frac{(\log\log n)^2}{(\log n)^{3/2}}},
        \label{supp-eq:bn-hat}
    \end{align}
    where the penultimate line is obtained by taking the Taylor series expansion of $\sqrt{1+y}$ around $y=0$. The last line is from 
    \begin{align*}
        \sqrt{L_{n}} 
        &=
        \sqrt{\log n - \log \Gamma(\mul/2)}
        \\
        &=
        \sqrt{\log n}\plr{1 - \frac{\log\Gamma(\mul/2)}{\log n}}^{1/2}
        \\
        &=
        \sqrt{\log n} - \frac{\log\Gamma(\mul/2)}{2\sqrt{\log n}} + O\plr{(\log n)^{-3/2}},
    \end{align*}
    which is obtained by taking the Taylor series expansion of $\sqrt{1-y}$ around $y=0$, and
    \begin{align*}
        \log L_{n} 
        &=
        \log(\log n - \log \Gamma(\mul/2))
        \\
        &= 
        \log\log n + \log \plr{1-\frac{\log\Gamma(\mul/2)}{\log n}}
        \\
        &= 
        \log\log n -\frac{\log\Gamma(\mul/2)}{\log n} + O\plr{(\log n)^{-2}},
    \end{align*}
    which is obtained by taking the Taylor series expansion of $\log(1-x)$ around $x=0$.

    For the scaling sequence $a_{n}$, an approximation of $a_{n,*}$ is obtained by plugging $\widehat{b}_{n}$ into \cref{supp-eq:auxiliary-function}, namely,
    \begin{align}
        \widehat{a}_{n} 
        = 
        \frac{\overline{F}_{H_{1}}(\widehat{b}_{n})}{f_{H_{1}}(\widehat{b}_{n})} 
        =
        \frac{1}{2\widehat{b}_{n}}(1+r(\widehat{b}_{n}))
        =
        \frac{1}{2\sqrt{\log n}} + O((\log n)^{-1}).
        \label{supp-eq:an-hat}
    \end{align}

    \noindent
    \textbf{(iii)} (Approximation error).
    To study the approximation error of $\widehat{b}_{n}$, define the function 
    \begin{align*}
        g_{n}(b)
        \coloneqq
        b^2 -\frac{p-2}{2}\log b^2 - L_{n} - \log(1+r(b)), 
    \end{align*}
    which is the residual of \cref{supp-eq:quantile-equation-log} at a given value $b$. Since $r(x) = O(x^{-2})$, it can be seen that $\log(1 + r(b)) = O(b^{-2})$ by taking the Taylor series expansion of $\log (1+x)$ around $x=0$. The residual for $\widehat{b}_{n}$ satisfies
    \begin{align}
        g_{n}(\widehat{b}_{n}) 
        &=
        \frac{\mul-2}{2}\clr{\log L_{n} - \log\plr{L_{n} - \frac{\mul-2}{2}\log L_{n}}}
        \nonumber\\
        &=\frac{\mul-2}{2}\clr{-\log\plr{1 - \frac{\mul-2}{2}\frac{\log L_{n}}{L_{n}}}}
        \nonumber\\
        &= -\frac{(\mul-2)^2}{4} \frac{\log L_{n}}{L_{n}} + O\plr{\plr{\frac{\log L_{n}}{L_{n}}}^{2}}
        \nonumber\\
        &= O\plr{\frac{\log \log n}{\log n}}
        \label{supp-eq:quantile-equation-residual-log}
    \end{align}
    since $L_{n} = O(\log n)$. Here $g_{n}(b_{n,*})=0$, hence the mean value theorem yields
    \begin{align*}
        g_{n}(\widehat{b}_{n}) 
        =
        g'_{n}(\xi) (\widehat{b}_{n} - b_{n,*}), \quad \xi\in (\min\{\widehat{b}_{n}, b_{n,*}\},\max\{\widehat{b}_{n}, b_{n,*}\})
    \end{align*}
    which implies that
    \begin{align*}
        \delta_{b}
        \coloneqq
        \widehat{b}_{n} - b_{n,*} 
        =
        O\plr{\frac{\log \log n}{(\log n)^{3/2}}},
    \end{align*}
    since $b'(\xi)=O(\sqrt{\log n})$.
    Furthermore, \cref{supp-eq:quantile-equation-residual-log} yields $\log(n\overline{F}_{H_{1}}(\widehat{b}_{n})) = O(\log\log n/ \log n)$, which implies that
    \begin{align}
        n\overline{F}_{H_{1}}(\widehat{b}_{n}) 
        =
        \exp\plr{O\plr{\frac{\log\log n}{\log n}}} 
        =
        1 + O\plr{\frac{\log\log n}{\log n}}.
        \label{supp-eq:quantile-equation-residual}
    \end{align}
    For the approximation error of $\widehat{a}_{n}$, the function
    \begin{equation*}
        A(x) 
        \coloneqq
        \frac{\overline{F}_{H_{1}}(x)}{f_{H_{1}}(x)} 
        =
        \frac{1}{2x}(1+r(x))
    \end{equation*}
    has bounded slope on a domain bounded away from zero. Since both $\widetilde{b}_{n}$ and $b_{n,*}$ are growing at rate $\log n$, $A(\widehat{b}_{n}) = \widehat{a}_{n}$ and $A(b_{n,*}) = a_{n,*}$, it holds that for $n$ large enough, 
    \begin{equation*}
        \frac{|\widehat{a}_{n} - a_{n,*}|}{|\widehat{b}_{n}-b_{n,*}|}\leq 1,
    \end{equation*}
    which implies that
    \begin{equation*}
        \delta_{a}
        \coloneqq
        \widehat{a}_{n} - a_{n,*} 
        =
        O\plr{\frac{\log \log n}{(\log n)^{3/2}}},
    \end{equation*}
    as $n\to\infty$. In conclusion, by Slutsky's lemma,
    \begin{align*}
        \widehat{a}_{n}\inv
        \plr{
            \max_{i\in\dpar{n}}H_{1,i} - \widehat{b}_{n}
        } 
        =
        a_{n,*}\inv
        \plr{
            \max_{i\in\dpar{n}}H_{1,i} - {b}_{n,*}
        } 
        \plr{
            \frac{1}{1+\frac{\delta_{a}}{a_{n,*}}}
        } 
        +\frac{\delta_{b}}{\widehat{a}_{n}} \rightsquigarrow \gumbelrv,
    \end{align*}
    since ${\delta_{a}}/{a_{n,*}} = O(\log\log n / \log n)$ and ${\delta_{b}}/{\widehat{a}_{n}} = O(\log\log n / \log n)$. 
    By the same reasoning, keeping only the largest order term in $\widehat{a}_{n}$ and discarding the $o(\widehat{a}_{n})$ terms in $\widehat{b}_{n}$ does not affect the convergence in distribution. Thus, scaling both sequences by $\sqrt{\lambda_{1}}$ yields the final sequences
    \begin{align}
        a_{n}
        =
        \frac{\sqrt{\lambda_{1}}}{2\sqrt{\log n}},
        \qquad
        b_{n}
        = 
        \sqrt{\lambda_{1}\log n} 
        + \frac{\sqrt{\lambda_{1}}(\mul-2)\log\log n}{4\sqrt{\log n}}
        - \frac{\sqrt{\lambda_{1}}\log\Gamma(\mul/2)}{2\sqrt{\log n}}.
    \end{align}
    Lastly, using the fact that
    \begin{align*}
        \overline{F}_{H}(x) 
        =
        \P(H > x)
        =
        \P
        \plr{
            \frac{H}{\sqrt{\lambda_{1}}} > \frac{x}{\sqrt{\lambda_{1}}}
        }
        =
        \overline{F}_{H_{1}}
        \plr{
            \frac{x}{\sqrt{\lambda_{1}}}
        },
    \end{align*}
    \cref{supp-eq:quantile-equation-residual} directly yields
    \begin{align}
        n\overline{F}_{H}({b}_{n})
        =
        n\overline{F}_{H_{1,\mul}}(\widehat{b}_{n})
        =
        1 + O\plr{\frac{\log\log n}{\log n}},
        \label{supp-eq:quantile-equation-residual-scaled}
    \end{align}
    which completes the proof of \cref{supp-lemma:generalized-gamma-normalizing-constants}.
\end{proof}

\subsection{Proof of \cref{supp-lemma:first-order-convergence}}
\label{supp-pf:first-order-convergence}
\begin{proof} 
    Let $\m{\Psi}\coloneqq \mh{U}\RU- \m{U} - \m{EVS}\inv$.
    By the triangle inequality and the reverse triangle inequality,
    \begin{align}
    \label{supp-eq:squeeze}
       \| \m{EVS}\inv\|_{2,\infty} - \|\m{\Psi}\|_{2,\infty}
       \leq
       \tinorm{\m{EVS}\inv + \m{\Psi}} 
       \leq 
       \| \m{EVS}\inv\|_{2,\infty} + \|\m{\Psi}\|_{2,\infty}.
    \end{align}
    In particular,
    \begin{gather*}
        \tinorm{\mh{U}\RU- \m{U} }
        = 
        \tinorm{\m{EVS}\inv + \m{\Psi}} 
        = 
        \| \m{EVS}\inv\|_{2,\infty}+\widetilde{\gamma}_{n}, 
    \end{gather*}
    where $|\widetilde{\gamma}_{n}|\leq \|\m{\Psi}\|_{2,\infty}$.
    Letting $\gamma_{n} \coloneqq a_{n}\inv \widetilde{\gamma}_{n}$, it holds that
    \begin{align*}
        a_{n}^{-1}\plr{\|\mh{U}\RU- \m{U} \|_{2,\infty} -b_{n}}
        &=
        a_{n}^{-1}\plr{\|\m{EVS}\inv\|_{2,\infty} +\widetilde{\gamma_{n}}-b_{n}}
        \\
        &= 
        a_{n}^{-1}\plr{\|\m{EVS}\inv\|_{2,\infty}-b_{n}} 
        +
        \gamma_{n}.
    \end{align*}
    By \cref{\mainlabel{body-lemma:first-order-approximation}} and recalling $a_{n} = O(\sigma/(s_r\sqrt{\log n}))$, it holds under \cref{\mainlabel{body-assumption:noise},\mainlabel{body-assumption:matrix-size},\mainlabel{body-assumption:delocalization},\mainlabel{body-assumption:snr}} that there exists a constant $C_{\gamma}>0$ such that
    \begin{align*}
        |\gamma_{n}|
        =
        a_{n}\inv |\widetilde{\gamma}_{n}|
        \leq
        a_{n}\inv \tinorm{\m{\Psi}} 
        \leq
        C_{\gamma}
        \clr{
            \sqrt{\rlog}
            \plr{
                \frac{\sigma\sqrt{n}}
                {s_{r}}
                +
                \sqrt{
                    \frac{\mu r}{n}
                }
            }
            +
            \frac{\sigma\sqrt{\mu r n\log n}}
            {s_{r}}
        },
    \end{align*}
    with probability exceeding $1-O(n^{-9})$.
    This completes the proof of \cref{supp-lemma:first-order-convergence}.
\end{proof}

\section{Additional material for non-Gaussian noise distributions}
\label{supp-section:additional-experiments-non-Gaussian}

\subsection{Balanced two-block stochastic blockmodel}
\label{supp-section:balanced-two-block-SBM}

In this section, the behavior of the statistic $\tstat$ is investigated when the underlying low-rank signal matrix has balanced two-block signal structure, namely
\begin{align*}
    \m{M} 
    =
    \m{Z}\m{B}\m{Z}\T
    \in
    \R^{n \times n},
    \qquad
    \text{where}
    \qquad
    \m{Z}
    = 
    \begin{bmatrix}
        \1_{n/2} & \m{0}
        \\ 
        \m{0} & \1_{n/2}
    \end{bmatrix}
    \in
    \R^{n \times 2},
    \qquad
    \m{B} 
    =
    \begin{bmatrix}
        p & q
        \\ 
        q & p
    \end{bmatrix}
    \in \R^{2 \times 2}
    .
\end{align*}
In the context of stochastic blockmodels (SBMs) \citep{holland_stochastic_1983}, the matrix $\m{Z}$ represents the block membership matrix, where each row of $\m{Z}$ indicates the membership of the corresponding node in the network, while the entries $p$ and $q$ in $\m{B}$ denote the within and between block connectivity parameters, respectively. 
In \cref{supp-fig:SBM1,supp-fig:SBM2,supp-fig:SBM3}, the values of $p$ are chosen as $p\in\{\log n/ \sqrt{n}, \sqrt{\log / n}, 1/\sqrt{n}\}$, respectively, and the value of $q$ is set as $q = p/\log n$.
The three choices of $p$ are labeled as ``strong signal strength'', ``medium signal strength'', and ``weak signal strength'', respectively.

For $\m{M}$ as above, three different types of data matrices are simulated: a Bernoulli SBM adjacency matrix, a Poisson SBM adjacency matrix, and a Gaussian SBM adjacency matrix.
The Bernoulli SBM adjacency matrix $\mh{M}_{1}$ is obtained by simulating within-group entries from $\ber{p}$, and between-group entries from $\ber{q}$. 
The corresponding error matrix is $\m{E}_{1} = \mh{M}_{1} - \m{M}$, whose within-group entries have variance $p(1-p)$, and between-group entries have variance $q(1-q)$.
The Poisson SBM adjacency matrix $\mh{M}_{2}$ is obtained by simulating within-group entries from $\pois{p}$, and between-group entries from $\pois{q}$.
The corresponding error matrix is $\m{E}_{2} = \mh{M}_{2} - \m{M}$, whose within-group entries have variance $p$, and between-group entries have variance $q$.
The Gaussian SBM error matrix $\m{E}_{3}$ is generated so that the first two moments of the entries match those of $\m{E}_{1}$, namely the within-group entries have distribution $\NN(0,p(1-p))$ and the between-group entries have distribution $\NN(0,q(1-q))$.
To be clear, the noise matrices $\m{E}_{1}$, $\m{E}_{2}$, and $\m{E}_{3}$ here do not satisfy \cref{\mainlabel{body-assumption:noise}} in the main text.

In \cref{supp-fig:SBM1,supp-fig:SBM2,supp-fig:SBM3}, each column presents a histogram and a corresponding quantile-quantile (QQ) plot of $\tstat$.
Here, $n = 3000$ across all figures, while the signal strength varies based on the choice of parameters $p$ and $q$.
\cref{supp-fig:SBM1,supp-fig:SBM2,supp-fig:SBM3} represent strong, medium, and weak signals, respectively.
In each setting, the noise matrix $\m{E}$ is independently simulated 1800 times to create a random sample of $\mh{M}$, keeping $\m{M}$ fixed.

The simulations show that $\tstat$ behaves similarly under the Bernoulli and Poisson SBMs.
In both cases, $\tstat$ aligns reasonably well with the standard Gumbel distribution when the signal is strong, as shown in \cref{supp-fig:SBM1}.
However, for moderate or weak signal strength (\cref{supp-fig:SBM2,supp-fig:SBM3}), there is a dramatic discrepancy compared to the standard Gumbel distribution.
In contrast, under the Gaussian SBM (right-most column of \cref{supp-fig:SBM1,supp-fig:SBM2,supp-fig:SBM3}), $\tstat$ exhibits a less dramatic departure from the standard Gumbel distribution, even in the moderate and weak signal regimes.

\begin{figure}[t]
    \centering
    \begin{minipage}{.33\textwidth}
        \includegraphics[width=5.9cm]{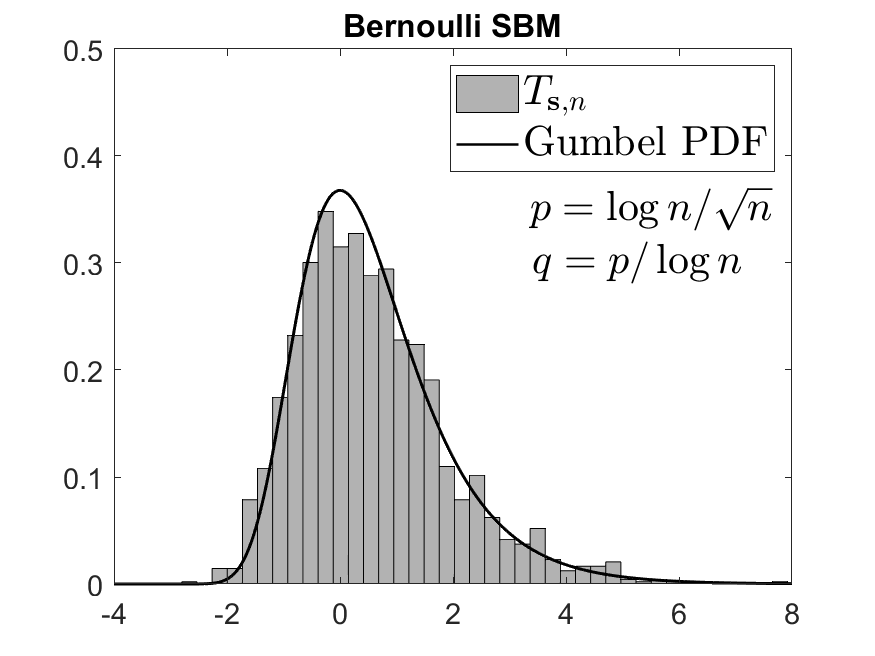}
    \end{minipage}%
    \begin{minipage}{.33\textwidth}
        \includegraphics[width=5.9cm]{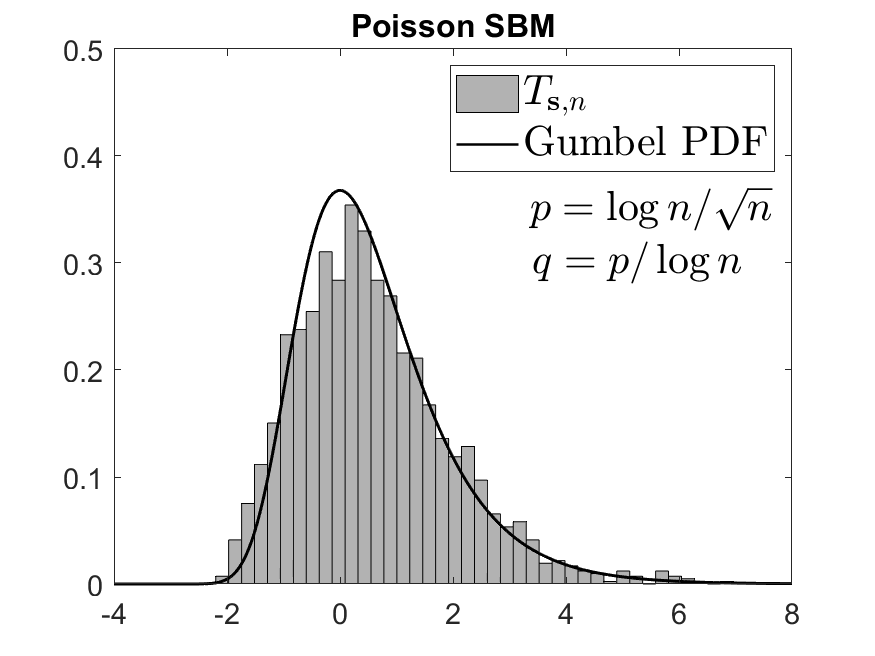}
    \end{minipage}%
    \begin{minipage}{.33\textwidth}
        \includegraphics[width=5.9cm]{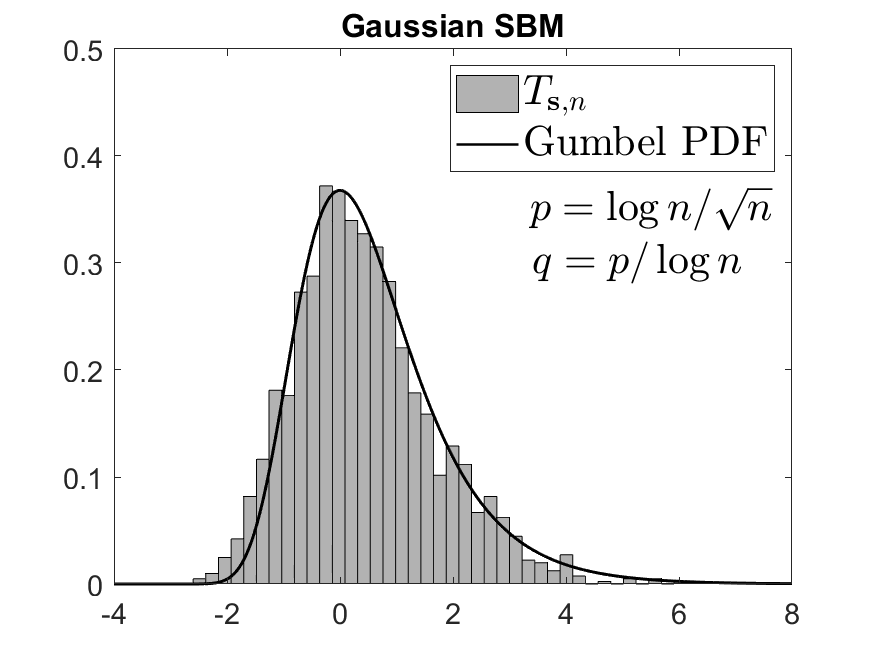}
    \end{minipage}
    \begin{minipage}{.33\textwidth}
        \includegraphics[width=5.9cm]{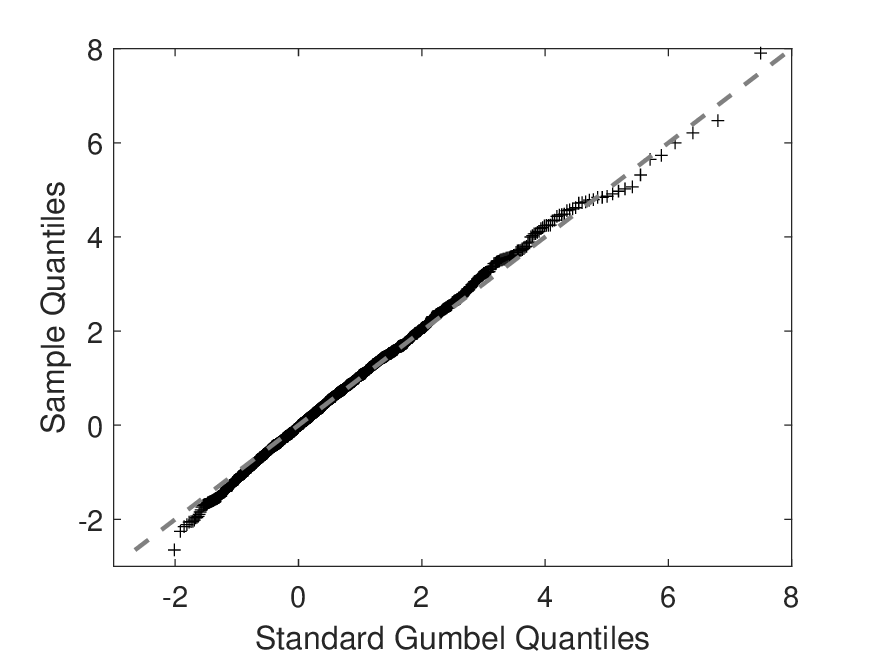}
    \end{minipage}%
    \begin{minipage}{.33\textwidth}
            \includegraphics[width=5.9cm]{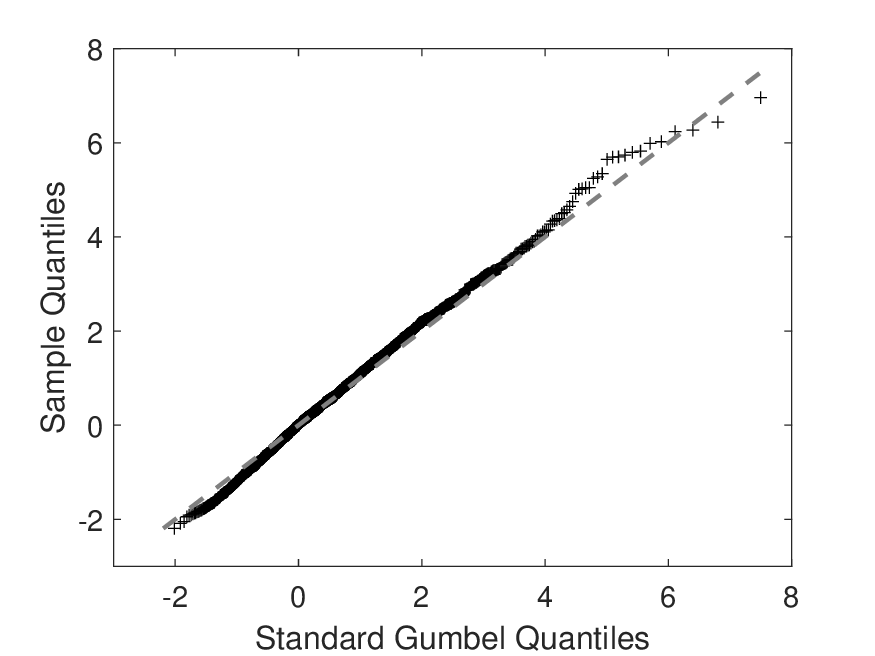}
    \end{minipage}%
    \begin{minipage}{.33\textwidth}
        \includegraphics[width=5.9cm]{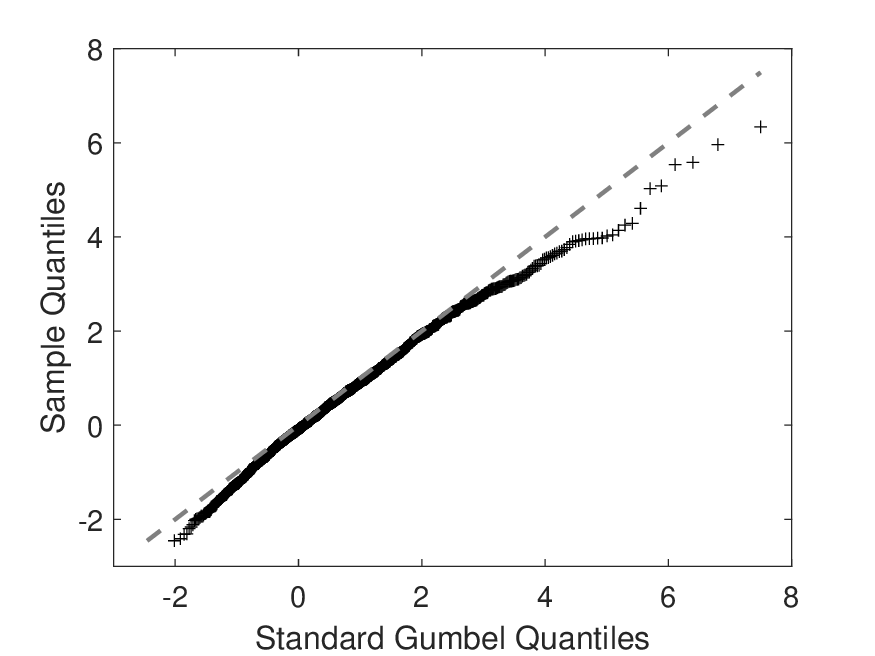}
    \end{minipage}
    \caption{Simulation under strong signal strength. See \cref{supp-section:balanced-two-block-SBM}.} 
    \label{supp-fig:SBM1}
\end{figure}

\begin{figure}[t]
    \centering
    \begin{minipage}{.33\textwidth}
        \includegraphics[width=5.9cm]{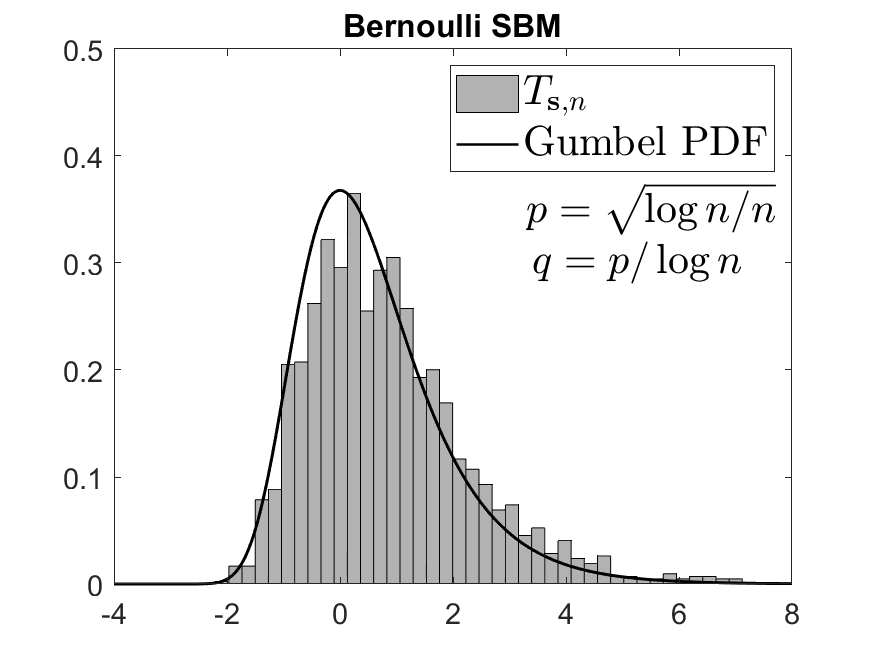}
    \end{minipage}%
    \begin{minipage}{.33\textwidth}
        \includegraphics[width=5.9cm]{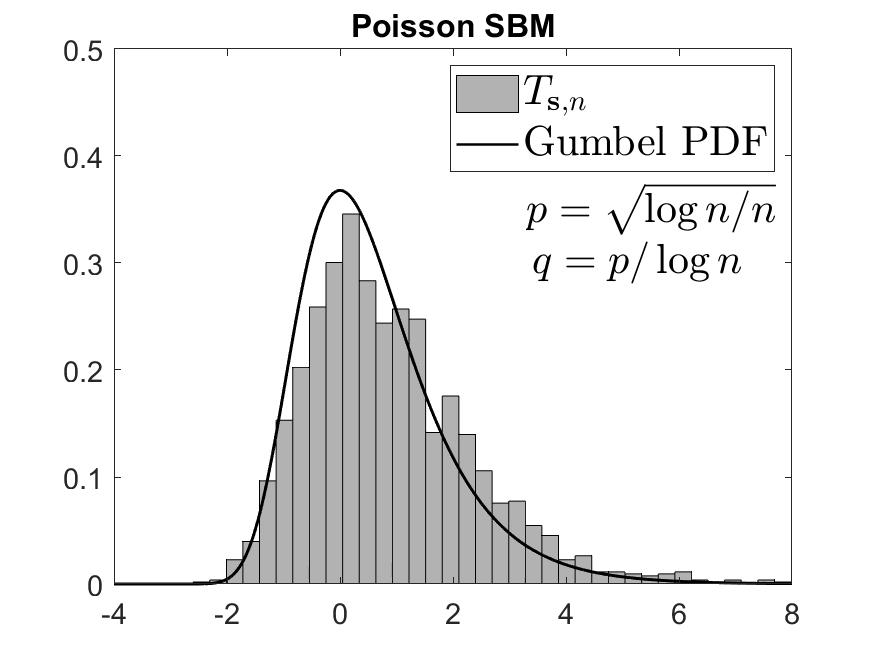}
    \end{minipage}%
    \begin{minipage}{.33\textwidth}
        \includegraphics[width=5.9cm]{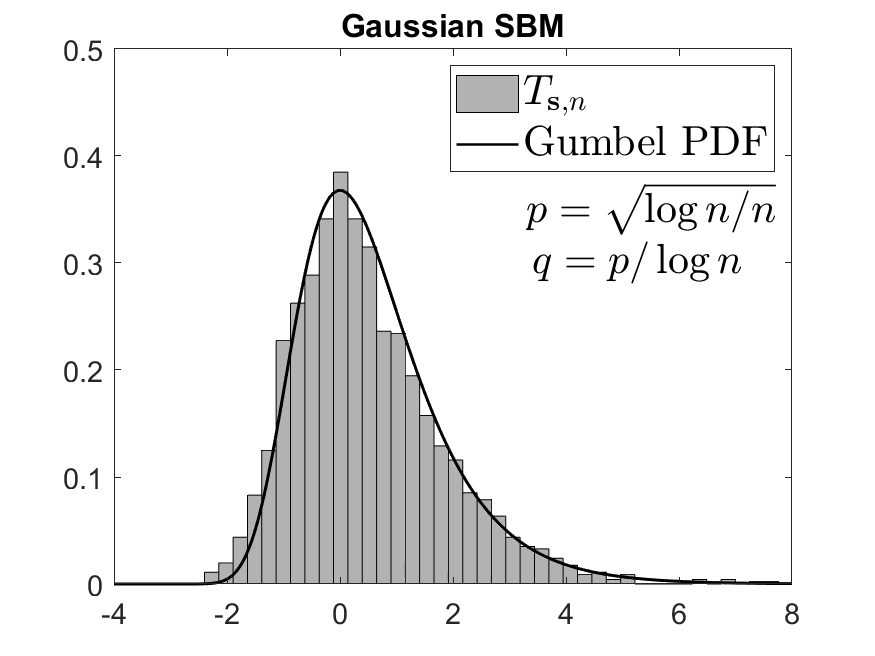}
    \end{minipage}
    \begin{minipage}{.33\textwidth}
        \includegraphics[width=5.9cm]{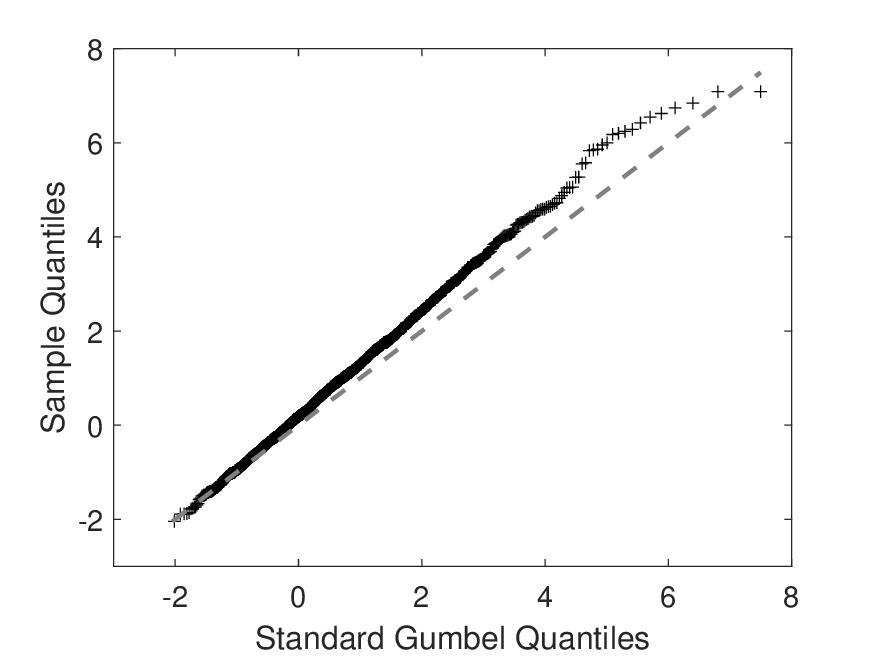}
    \end{minipage}%
    \begin{minipage}{.33\textwidth}
        \includegraphics[width=5.9cm]{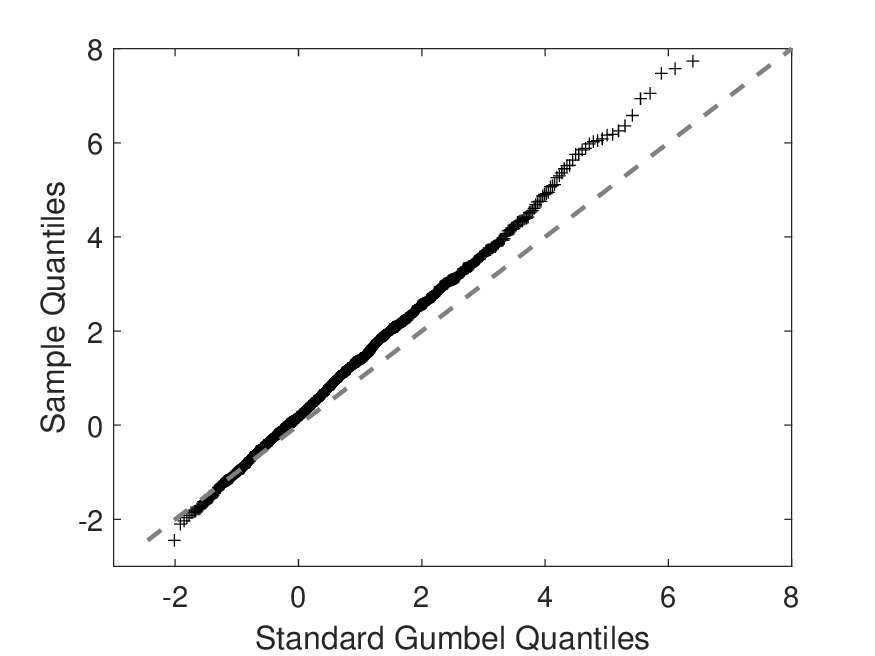}
    \end{minipage}%
    \begin{minipage}{.33\textwidth}
        \includegraphics[width=5.9cm]{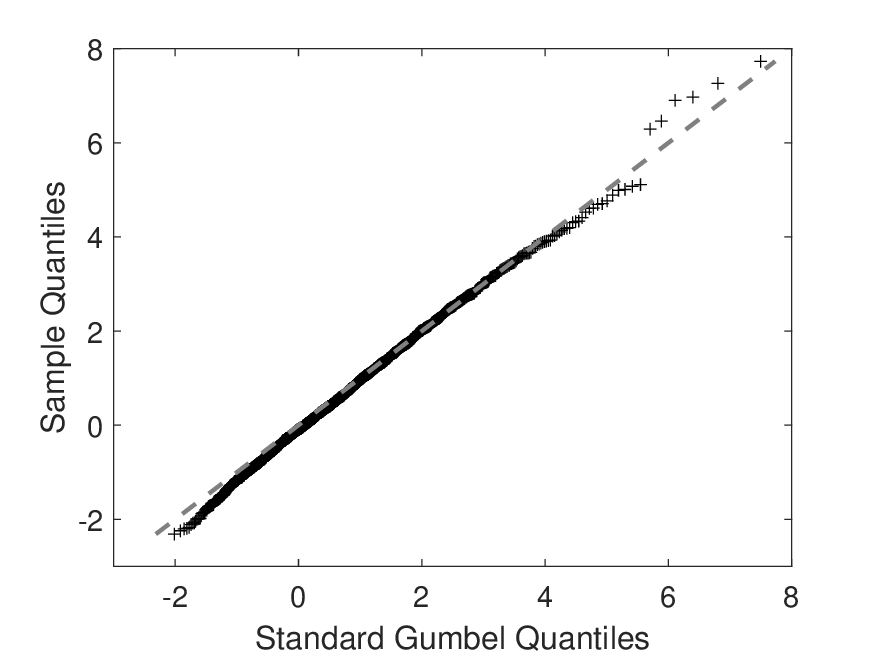}
    \end{minipage}
    \caption{Simulation under medium signal strength. See \cref{supp-section:balanced-two-block-SBM}.} 
    \label{supp-fig:SBM2}
\end{figure}

\begin{figure}[t]
    \centering
    \begin{minipage}{.33\textwidth}
        \includegraphics[width=5.9cm]{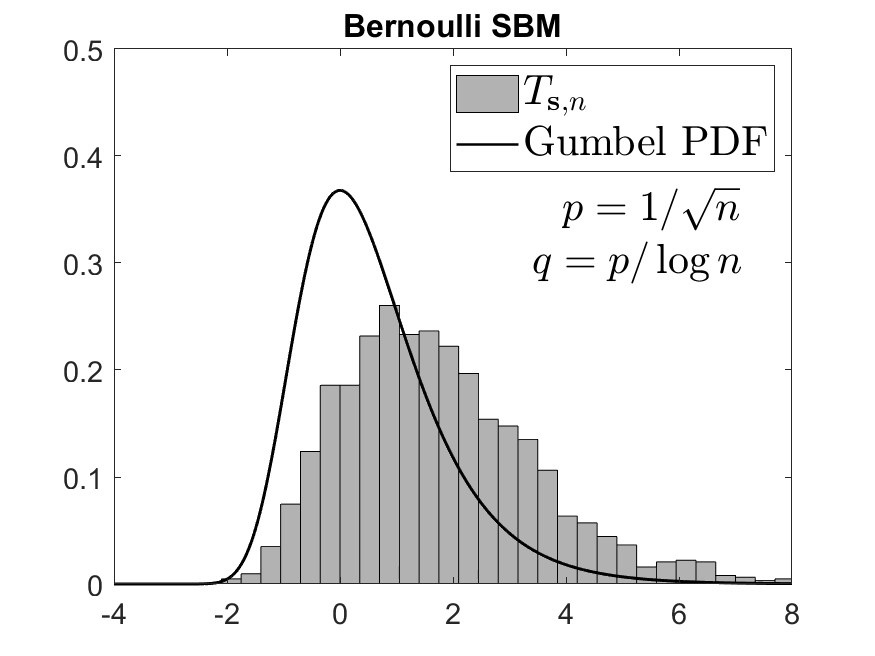}
    \end{minipage}%
    \begin{minipage}{.33\textwidth}
        \includegraphics[width=5.9cm]{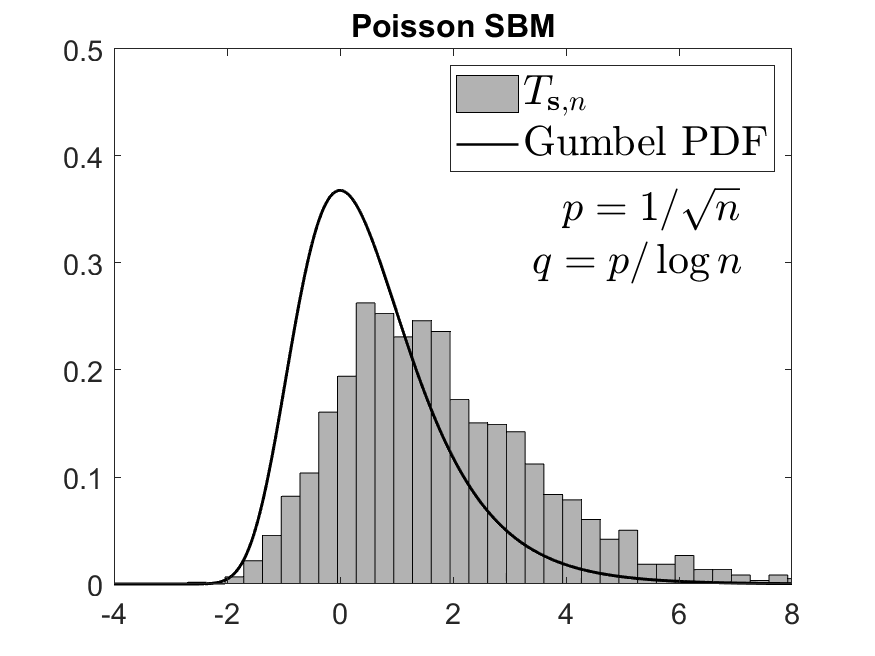}
    \end{minipage}%
    \begin{minipage}{.33\textwidth}
        \includegraphics[width=5.9cm]{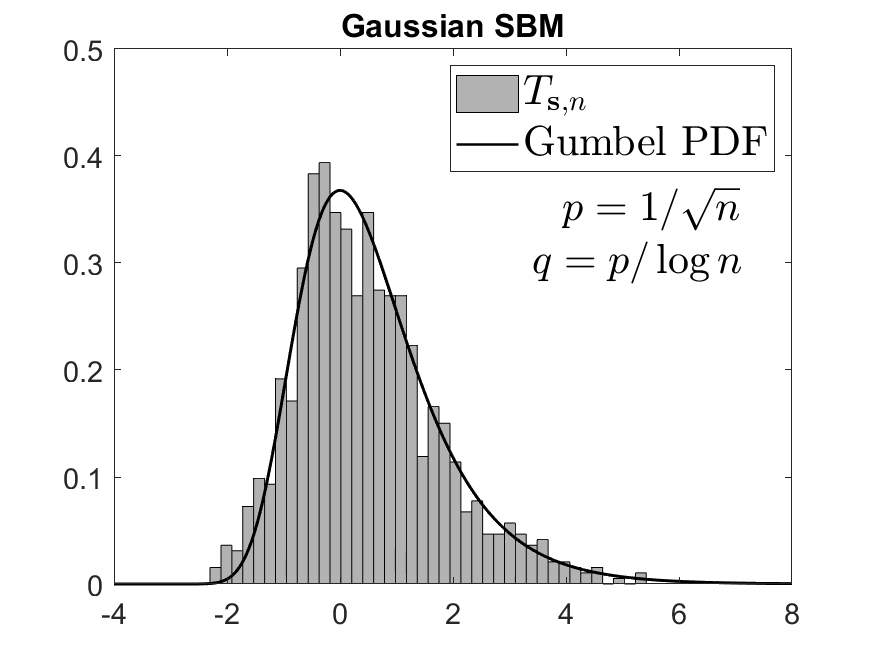}
    \end{minipage}
    \begin{minipage}{.33\textwidth}
        \includegraphics[width=5.9cm]{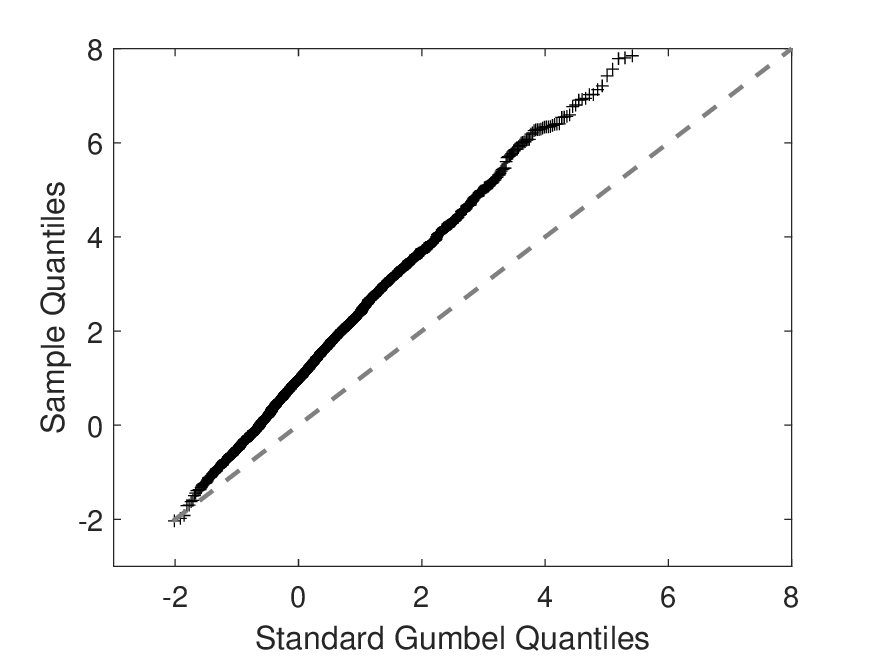}
    \end{minipage}%
    \begin{minipage}{.33\textwidth}
        \includegraphics[width=5.9cm]{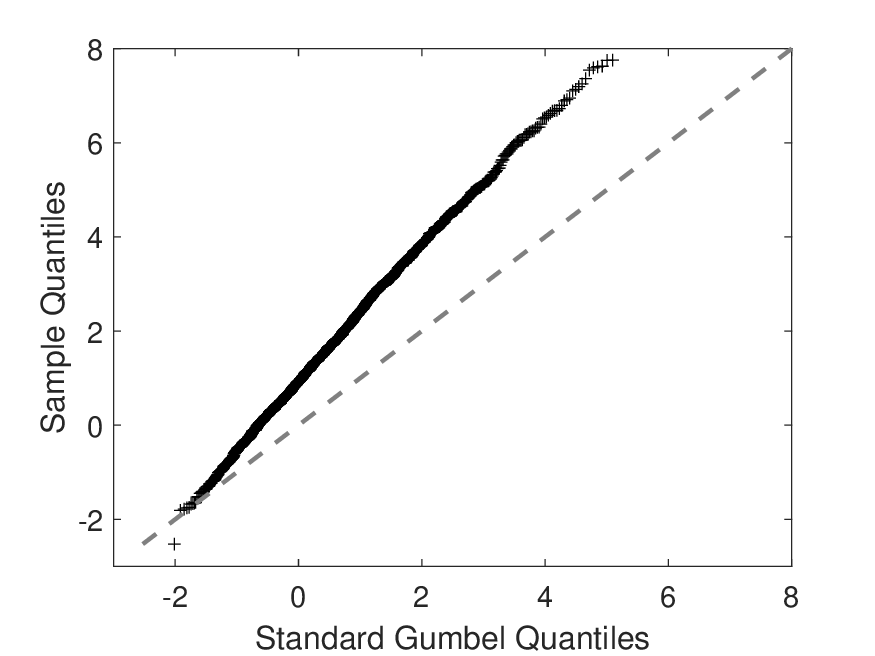}
    \end{minipage}%
    \begin{minipage}{.33\textwidth}
        \includegraphics[width=5.9cm]{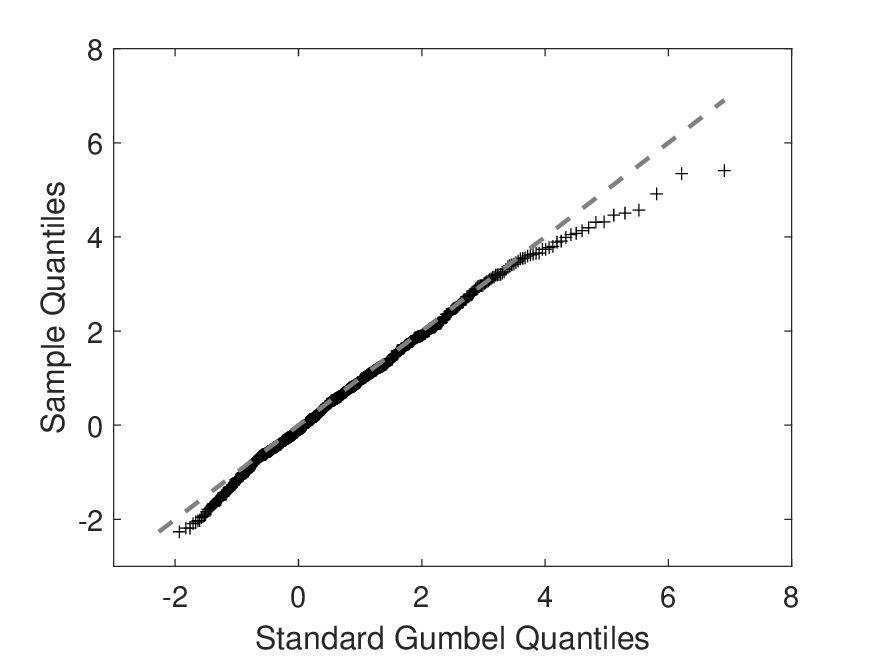}
    \end{minipage}
    \caption{Simulation under weak signal strength. See \cref{supp-section:balanced-two-block-SBM}.}
    \label{supp-fig:SBM3}
\end{figure}

\end{document}